\definecolor{darkgreen}{rgb}{0,0.5,0}
\definecolor{darkred}{rgb}{0.7,0,0}
\def\({\left (}
\def\){\right )}
\def\<{\left\langle}
\def\>{\right\rangle}
\newtheorem{thm}{Theorem}[section]
\newtheorem{lem}[thm]{Lemma}
\newtheorem{prop}[thm]{Proposition}
\newtheorem{defn}[thm]{Definition}
\newtheorem{rem}[thm]{Remark}
\numberwithin{equation}{section}
\newcommand{\norm}[1]{\left\Vert#1\right\Vert}
\newcommand{\abs}[1]{\left\vert#1\right\vert}
\newcommand{\set}[1]{\left\{#1\right\}}
\newcommand{\Real}{\mathbb R}
\newcommand{\pfrac}[2]{\frac{\partial #1}{\partial #2}}
\begin{document}

\title{Analysis aspects of Ricci flow on conical surfaces}

\author{Hao Yin}

\address{Hao Yin,  School of Mathematical Sciences,
University of Science and Technology of China, Hefei, China}
\email{haoyin@ustc.edu.cn }
\thanks{The research is supported by NSFC 11471300.}
\maketitle
\begin{abstract}
In this paper, we establish a framework for the analysis of linear parabolic equations on conical surfaces and use them to study the conical Ricci flow. In particular, we prove the long time existence of the conical Ricci flow for general cone angle and show that this solution has the optimal regularity, namely, the time derivatives of the conformal factor are bounded and for each fixed time, the conformal factor has an explicit asymptotic expansion near the cone points.
\end{abstract}

\tableofcontents

\section{Introduction}
In this paper, we study the (normalized) Ricci flow on surfaces with conical singularities. Let $S$ be a smooth Riemann surface and $\set{p_i}$ be finitely many prescribed points on $S$. For each $p_i$, we assign a weight $\beta_i>-1$. We are interested in the class of metrics $g$ which are smooth and compatible with the conformal structure of $S$ away from $p_i$ while having a conical singularity of order $\beta_i$ at $p_i$, i.e. in a conformal coordinate chart $U$ around $p_i$, $g$ is given by
\begin{equation*}
	e^{2u} r^{2\beta} (dx^2+dy^2),
\end{equation*}
where $r=\sqrt{x^2+y^2}$ and $u$ is at least continuous around $p_i$. These metrics are incomplete, and hence there are many different ways of talking about Ricci flows starting from such metrics. For example, the conical singularities can be smoothed out immediately to become the ordinary Ricci flow on closed manifolds as in \cite{simon2001deformation,ramos2011smoothening,simon2013local}, or the conical singularities can be pushed to infinity immediately to become some Ricci flow on complete noncompact surfaces as in \cite{topping2010ricci}. Here we are interested in a special type of Ricci flow which preserves the conical singularities while deforms the smooth part into nicer and nicer geometry. 

To describe the flow, it is convenient to have a background metric $\tilde{g}$ which is exactly the cone metric of order $\beta_i$ near small neighborhood of each $p_i$. Throughout this paper, we shall have $S$, $\set{p_i}$, $\set{\beta_i}$ and $\tilde{g}$ fixed, which we denote by $(S,\beta,\tilde{g})$ for simplicity. Consider the family of metrics $g(t)$ given by $e^{2u(t)}\tilde{g}$ with $u(t)$ being `good' near $p_i$ so that $g(t)$ is still conical at $p_i$ and $u(t)$ satisfies the equation 
\begin{equation}
	\partial_t u = e^{-2u} \tilde{\triangle}u + \frac{r}{2} -e^{-2u} \tilde{K}
	\label{eqn:rfu}
\end{equation}
on $S\setminus \set{p_i}$ where $\tilde{\triangle}$ and $\tilde{K}$ are the Laplacian and the Gauss curvature of $\tilde{g}$ and $r$ is some normalization constant. The exact meaning of `being good' depends on the approach we take to study the problem and is also a central theme of the present paper. 

This singularity-preserving flow was first studied by the author in \cite{yin2010ricci}, where a local existence result was claimed. The study was continued in \cite{yin2013ricci} which proves the long time existence of the flow if $\beta_i\in (-1,0)$ and gives some convergence results in certain cases. Unfortunately, the proofs in both papers \cite{yin2010ricci,yin2013ricci} contain gaps. We refer the interested readers to the historical remark near the end of this section. 

In the mean time, the topic of Ricci flow with conical singularity attracts the attention of many authors. Mazzeo, Rubinstein and Sesum \cite{mazzeo2013ricci}, backed up by the mircolocal analysis method developed by Mazzeo \cite{mazzeo1991elliptic}, Bahuaud and Vertman \cite{bahuaud2014yamabe}, Jeffres and Loya \cite{jeffres2003regularity}, Mooers \cite{mooers1999heat} and others, proved the long time existence of \eqref{eqn:rfu} when $\beta_i\in (-1,0)$, showed the convergence to constant curvature metric if some stability condition is satisfied and speculated an interesting phenomenon when there is no constant curvature metric in the conformal class. This last phenomenon, which is called the one dimensional Hamilton-Tian conjecture by the authors in \cite{mazzeo2013ricci}, was proved by Phong, Song, Strum and Wang \cite{phong2014ricci,phong2015convergence}.

We should also mention that the higher dimensional generalization of this flow, i.e. the K\"ahler Ricci flow singular along a smooth divisor was studied by Chen and Wang \cite{chen2015bessel,chen2014long} and Wang \cite{wang2014smooth}. They developed in \cite{chen2015bessel} a parabolic version of Donaldson's $C^{2,\alpha}$ estimate \cite{donaldson2012kahler}. Naturally, their proof can be reduced to complex dimension one to give some results.

In this paper, we discuss the PDE aspect of the conical Ricci flow by developing a weak solution theory to the linear parabolic equation on surfaces with conical singularities. This includes a new space of functions in which the weak solution lies, various linear estimates using either the maximum principle or the energy method, some smoothing estimates. We refer to Section \ref{sec:weaksolution} and Section \ref{sec:smoothing} for the exact statement of these results. Using these PDE results, we are able to prove the following main theorems.

The first is an existence result. 
\begin{thm}\label{thm:main1}
	Suppose $(S,\beta,\tilde{g})$ is a conical surface with a background metric fixed. For any function $u_0:S\to \Real$ satisfying that $u_0$ and $\tilde{\triangle} u_0$ lie in $\mathcal W^{2,\alpha}$, there exist $T>0$ depending only on the $\mathcal W^{2,\alpha}$ norm of $u_0$ and $\tilde{\triangle} u_0$ and $u(t)\in \mathcal V^{2,\alpha,[0,T]}$ solving \eqref{eqn:rfu}. If $r$ is chosen so that $r/2$ is the average of Gauss curvature of the metric $e^{2u_0} \tilde{g}$, then $u(t)$ is defined on $[0,\infty)$ and for any $T\in [0,\infty)$, $u\in \mathcal V^{2,\alpha,[0,T]}$.
\end{thm}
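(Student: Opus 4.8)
The plan is to prove Theorem~\ref{thm:main1} in two stages: first a short-time existence statement obtained by a fixed-point / continuity argument in the function space $\mathcal V^{2,\alpha,[0,T]}$, and then a global-in-time extension using \emph{a priori} estimates that rely crucially on the special choice of the normalization constant $r$. For the short-time part, I would freeze the nonlinearity: write \eqref{eqn:rfu} as a linear parabolic equation $\partial_t u = \tilde\triangle u + F(u)$ after rescaling time, where the zeroth-order terms $e^{-2u}\tilde\triangle u + r/2 - e^{-2u}\tilde K$ are treated as a perturbation. Using the linear weak-solution theory and the smoothing/Schauder-type estimates announced in Section~\ref{sec:weaksolution} and Section~\ref{sec:smoothing}, one sets up a contraction mapping on a ball in $\mathcal V^{2,\alpha,[0,T]}$ centered at the solution of the linear heat equation with initial data $u_0$; the hypothesis that both $u_0$ and $\tilde\triangle u_0$ lie in $\mathcal W^{2,\alpha}$ is exactly what is needed to control $F(u)$ (which involves $\tilde\triangle u$) in the right space and to make the nonlinear map map the ball to itself and be a contraction for $T$ small depending only on the stated norms. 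This gives the first assertion.

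For the global part, the key point is that $e^{2u}\tilde g$ has fixed area and, when $r/2$ equals the average Gauss curvature of $e^{2u_0}\tilde g$, the flow \eqref{eqn:rfu} is the \emph{normalized} Ricci flow, so the total area $\int_S e^{2u}\,dV_{\tilde g}$ and the average curvature are preserved in time. The strategy is the standard one adapted to the conical setting: derive \emph{a priori} bounds on $\|u(t)\|_{\mathcal W^{2,\alpha}}$ and $\|\tilde\triangle u(t)\|_{\mathcal W^{2,\alpha}}$ that depend only on the elapsed time and the initial data, so that the short-time existence can be iterated indefinitely without the existence interval shrinking to zero. Concretely I would (i) obtain $L^\infty$ (in fact two-sided) bounds on $u$ via the maximum principle applied to \eqref{eqn:rfu} and to the evolution of the scalar curvature $R = e^{-2u}(-\tilde\triangle u \cdot 2 + 2\tilde K)$ (equivalently $\partial_t u$), using that conical points do not violate the maximum principle in this weak framework; (ii) bootstrap these $C^0$ bounds through the linear smoothing estimates to get control of $\tilde\triangle u$ and then of the full $\mathcal W^{2,\alpha}$ norm; and (iii) feed these uniform-in-$T$ bounds back into the local existence theorem to continue the solution, concluding $u\in\mathcal V^{2,\alpha,[0,T]}$ for every $T$.

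I expect the main obstacle to be step (ii): upgrading the scalar $C^0$ control into the $\mathcal W^{2,\alpha}$ and $\tilde\triangle u \in \mathcal W^{2,\alpha}$ bounds \emph{uniformly in time}, because near the cone points the usual interior parabolic Schauder estimates fail and one must instead invoke the conical linear theory of Sections~\ref{sec:weaksolution}--\ref{sec:smoothing} with care about how the constants depend on the length of the time interval. In particular, one needs the asymptotic-expansion regularity near the cone points to persist under the flow, which forces a somewhat delicate iteration: bound $\partial_t u$ first (it satisfies a linear parabolic equation with coefficients controlled by the $C^0$ bound on $u$), then read off $\tilde\triangle u = e^{2u}(\partial_t u - r/2) + \tilde K$ and place it in $\mathcal W^{2,\alpha}$, then recover $u$ itself. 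A secondary subtlety is ensuring that the lower bound on $u$ (hence on the conformal factor) does not degenerate in finite time, which is where one uses that the normalization fixes the sign structure of the average curvature and prevents collapse; for cone angles with some $\beta_i > 0$ this is more delicate than the case $\beta_i\in(-1,0)$ treated in \cite{yin2013ricci}, and handling it in the unified weak framework is the real novelty the theorem is claiming.
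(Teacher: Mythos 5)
Your short-time argument is in the same spirit as the paper's (the paper runs a Picard-type iteration, freezing the coefficient $e^{-2u_{i-1}}$ in front of $\tilde{\triangle}u_i$ and using the linear theory of Section \ref{sec:weaksolution}; note that your phrasing, which puts $e^{-2u}\tilde{\triangle}u$ into the ``perturbation'' $F(u)$, would lose derivatives in a contraction scheme, so the principal part must stay with the unknown as in \eqref{eqn:ui}). The real problem is your global step (i)--(ii), which is essentially the strategy the paper is written to avoid. First, a direct maximum principle on \eqref{eqn:rfu} does not give a two-sided $C^0$ bound on $u$: at a spatial minimum one only gets $\partial_t u_{\min}\geq \frac{r}{2}-e^{-2u_{\min}}\tilde{K}$, and since $e^{-2u_{\min}}$ blows up as $u_{\min}\to-\infty$, this differential inequality allows $u_{\min}$ to reach $-\infty$ in finite time; no sign structure from the normalization rescues this. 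Second, the evolution of the curvature is $\partial_t K=e^{-2u}\tilde{\triangle}K+K(2K-r)$, whose quadratic reaction term permits finite-time blow-up under the maximum principle, so ``maximum principle applied to the evolution of $R$'' gives at best a lower curvature bound; the classical fix (Hamilton's potential function and gradient estimate) is exactly what fails when some $\beta_i>0$, as the introduction points out. Your claim that ``$\partial_t u$ satisfies a linear parabolic equation with coefficients controlled by the $C^0$ bound on $u$'' is not correct: the zeroth-order coefficient is $-2(\partial_t u-\tfrac r2)$ and $\partial_t(e^{-2u})=-2e^{-2u}\partial_t u$, so every route through the linear smoothing estimates of Section \ref{sec:smoothing} (which require $\partial_t a$ bounded) presupposes the very curvature bound you are trying to prove --- the argument is circular.

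The paper closes these gaps with two different devices. The $C^0$ bound on $u$ (Lemma \ref{lem:c2}) comes from the K\"ahler-geometry potential: one solves $\tilde{\triangle}\varphi=e^{2u}-V_0/\tilde{V}$ and arranges $\varphi'-r\varphi=2u+2h_0$, so that $\partial_t\varphi$ satisfies the clean linear equation $\partial_t(\partial_t\varphi)=e^{-2u}\tilde{\triangle}(\partial_t\varphi)+r\,\partial_t\varphi$, to which Theorem \ref{thm:unique} applies; boundedness of $\partial_t\varphi$ then yields the $C^0$ bound on $u$. The curvature bound is then obtained not by a maximum principle or by smoothing estimates but by a parabolic rescaling/blow-up argument (Section \ref{subsec:curvature}): if $K$ blew up one rescales at near-maximal points of $\abs{\partial_t u}$ and derives a contradiction with the uniform H\"older continuity of $u$ furnished by Theorem \ref{thm:dgspecial}, a De Giorgi--Nash--Moser estimate specifically engineered for the structure $a=e^{-2u}$ (via $v=e^{2u}$, putting the equation in divergence form) so that it does \emph{not} require $\partial_t a$ bounded. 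Without these two ingredients --- the potential-function $C^0$ estimate and the non-circular H\"older estimate feeding a blow-up argument --- your outline does not yield long-time existence for general cone angles.
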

For the exact definition of $\mathcal W^{2,\alpha}$ and $\mathcal V^{2,\alpha,[0,T]}$, see Section \ref{sec:weaksolution}. It suffices for now to remark that these spaces consist of functions which are $C^{2,\alpha}$ away from the singularities and which have some bounded integral norm near the singularities so that it behaves like the weak solution in the standard theory of parabolic equation on smooth domains (see Chapter III of \cite{ladyzhenskaya1968linear} for example).

Theorem \ref{thm:main1} holds for any $\beta_i>-1$, while the long time existence results in \cite{yin2013ricci} and \cite{mazzeo2013ricci} are both restricted to the case of sharp cone angles, i.e. $\beta_i\in (-1,0)$ (note that this is equivalent to $\beta_i \in (0,1)$ in \cite{mazzeo2013ricci}). The reason behind this is that the method used by Hamilton \cite{hamilton1988ricci} requires the gradient of some potential function to be bounded, which may not be true if some $\beta_i>0$. The proof here focuses on the conformal factor $u$ instead of the curvature $K$. We get an apriori bound for the $C^0$ norm of $u$ first by using the method from K\"ahler geometry and then turn it into a bound of $K$ by pure PDE methods for quasilinear parabolic equations.  

Given the existence of the solution in Theorem \ref{thm:main1}, it is natural to ask how smooth the solution is near a singular point. Usually, the higher order regularity of a nonlinear PDE is proved by successively taking derivatives. In our case, we can not take spacial derivative due to the singularity and we study first the time derivatives.

\begin{thm}
	\label{thm:main2}
	Let $u(t)$ be the solution given in Theorem \ref{thm:main1}. For all $k\in \mathbb N$ and $0<\delta<T<\infty$, $\partial_t^k u$ lies in $\mathcal V^{2,\alpha,[\delta,T]}$. In particular, $\partial_t^k u$ is bounded in $S\times [\delta,T]$.
\end{thm}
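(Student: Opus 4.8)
\emph{Proof strategy.} The plan is to argue by induction on $k$; the case $k=0$ is just the restriction to $[\delta,T]$ of the conclusion of Theorem~\ref{thm:main1}, so assume $k\ge 1$. The mechanism is the same at every order. Writing $v_j:=\partial_t^j u$ and differentiating \eqref{eqn:rfu} in $t$ repeatedly --- using \eqref{eqn:rfu} and its already-obtained $t$-derivatives each time to rewrite every spatial Laplacian $\tilde\triangle v_j$ with $j<k$ in terms of $v_{j+1}$ and lower order quantities, leaving no Laplacians --- one finds that $v_k$ satisfies a \emph{linear} parabolic equation
\begin{equation*}
	\partial_t v_k = e^{-2u}\tilde\triangle v_k + b_k\, v_k + f_k .
\end{equation*}
Here the principal operator $e^{-2u}\tilde\triangle$ is uniformly parabolic, since $u$ is bounded, and $e^{-2u}$ inherits the regularity of $u\in\mathcal V^{2,\alpha}$; the coefficient $b_k$ and the source $f_k$ are universal polynomials in the lower order derivatives $v_1,\dots,v_{k-1}$. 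For $k=1$ this reads $\partial_t v_1 = e^{-2u}\tilde\triangle v_1 + (r-2v_1)v_1$, so that $b_1=r-2v_1$ now involves $v_1$ itself, but it is still a bounded coefficient, because $v_1=\partial_t u=r/2-K_{g(t)}$ and the $C^0$ bound on the Gauss curvature $K_{g(t)}$ is part of the conclusion of Theorem~\ref{thm:main1}.

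Granted this reduction, the proof is a bootstrap against the linear theory of Sections~\ref{sec:weaksolution} and \ref{sec:smoothing}. For $k=1$, $v_1=\partial_t u$ is a weak solution of the above linear equation on $S\times(0,T]$ with bounded zeroth order coefficient, so the smoothing estimate of Section~\ref{sec:smoothing} places it in $\mathcal V^{2,\alpha,[\delta,T]}$ for every $0<\delta<T<\infty$. For the inductive step, suppose $v_1,\dots,v_{k-1}\in\mathcal V^{2,\alpha,[\delta,T]}$ for all $0<\delta<T<\infty$. On each strip $S\times[\delta,T]$ the polynomials $b_k$ and $f_k$, together with the coefficient $e^{-2u}$, then belong to the coefficient and source spaces required by the linear theory --- using that the spaces involved are stable under products and under composition with smooth functions --- while $v_k=\partial_t v_{k-1}$ is a weak solution of its linear equation on that strip. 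A further application of the smoothing estimate gives $v_k\in\mathcal V^{2,\alpha,[\delta',T]}$ for every $\delta'>\delta$, hence for all $0<\delta'<T<\infty$. This completes the induction; the final assertion is the embedding $\mathcal V^{2,\alpha,[\delta,T]}\hookrightarrow L^\infty(S\times[\delta,T])$.

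As usual on a conical surface, the real content sits at the cone points $p_i$, and I expect the following bookkeeping --- not any new idea --- to be the main obstacle: (i) checking that $\mathcal V^{2,\alpha}$ and the weak spaces in which the linear theory places its solutions and accepts its data are genuinely closed under the products and the map $s\mapsto e^{-2s}$ producing $b_k$, $f_k$ and the coefficient, i.e.\ that the weighted integral norms near $p_i$ behave well under these operations; (ii) verifying that $v_k=\partial_t v_{k-1}$ qualifies near $p_i$ as a weak solution with enough a priori integrability on the initial slice $\{t=\delta\}$ for the smoothing estimate to accept it; and (iii) keeping all constants uniform so that $T$ can be sent to infinity, using the $[0,\infty)$ statement of Theorem~\ref{thm:main1}. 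The genuine analytic ingredients --- uniform parabolicity from the $C^0$ bound on $u$, and the interior-in-time smoothing that respects the conical norms --- are already supplied by the linear theory, so the present argument only has to feed the successively differentiated equations into it.
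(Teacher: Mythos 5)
Your induction is fine for $k\geq 3$, where it coincides with the paper's Section \ref{subsec:higher}, but it has a genuine gap at the first nontrivial step $k=2$, and that step is the bulk of the paper's proof. To obtain $v_2=\partial_t v_1$ you must feed the equation for $v_1$, namely $\partial_t v_1=e^{-2u}\tilde{\triangle} v_1+(r-2v_1)v_1$, into the smoothing result, and Theorem \ref{thm:smoothing} requires not only that the coefficients $a,b,f$ be controlled but that their time derivatives lie in $\mathcal V^{0,\alpha}$. Every way of writing $-2v_1(v_1-\tfrac r2)$ as $b\,v_1+f$ produces $\partial_t b$ or $\partial_t f$ containing $v_2=\partial_t^2u$, the very quantity being estimated, so the linear smoothing theorem cannot be invoked as a black box here: the application is circular. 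The same difficulty is hidden in your item (ii), which you dismiss as bookkeeping: before the linear theory can ``accept'' $v_2$, one must know that it qualifies as a weak solution with suitable data on the slice $\{t=\delta\}$ and that the solution produced by the linear theory from that rough slice data is in fact equal to $\partial_t v_1$; establishing this identification is the core of the argument, not a routine verification.

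What the paper does at $k=2$ (Section \ref{subsec:dtK}) is to work with the curvature $K=\tfrac r2-\partial_t u$, whose evolution $\partial_t K=e^{-2u}\tilde{\triangle} K+K(2K-r)$ has the special feature that the differentiated equation \eqref{eqn:w} for $w=\partial_t K$ has coefficients depending only on $u$ and $K$, which are already controlled, and not on $w$ itself. One then (a) proves an $L^q$ bound for $\partial_t K$ on a time slice via the H\"older estimate of Theorem \ref{thm:dg} and an interpolation, as in Lemma \ref{lem:lp}; (b) uses Lemma \ref{lem:growth} to build a candidate solution $\tilde w$ with the blow-up rate $(t-t_0)^{-1/q}$ near the initial time; and (c) --- the step your proposal does not supply --- shows that $\tilde w$ actually equals $\partial_t K$. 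Because the $K$-equation is quadratic, the linear identification Lemma \ref{lem:same} does not apply, and the paper runs a separate Gronwall-type iteration, \eqref{eqn:H2}--\eqref{eqn:iterate}, designed to absorb the singular factor $(t-t_0)^{-1/q}$ coming from $\tilde w$. Only after $\partial_t^2u\in\mathcal V^{2,\alpha,*}$ is secured in this way does the scheme you describe --- the linear equations $(E_l)$ with coefficients polynomial in lower derivatives, plus Theorem \ref{thm:smoothing} and uniqueness --- apply verbatim to yield all higher $k$, exactly as in Section \ref{subsec:higher}.
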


To describe the regularity property of $u(t)$ near a singular point, we take the polar coordinates $(\rho,\theta)$ around a singular point such that
\begin{equation*}
	\tilde{g}= d\rho^2 + \rho^2(\beta+1)^2 d\theta^2,
\end{equation*}
which is the standard cone metric of order $\beta$. Here $\rho$ is the Riemannian distance to the cone point with respect to $\tilde{g}$. We prove
\begin{thm}
	\label{thm:main3}
	Let $u(t)$ be the solution given in Theorem \ref{thm:main1}. For any $t>0$ and $q>0$ fixed, we have
	\begin{equation*}
		u(t)= \sum_{v\in \mathcal T^q} a_v v + \tilde{O}(q)
	\end{equation*}
	in a neighborhood of the singular point for some real numbers $a_{v}$,
	where 
	\begin{equation*}
	\mathcal T^q =\set{ \rho^{2j+\frac{k}{\beta+1}}\cos l \theta, \rho^{2j+\frac{k}{\beta+1}}\sin l \theta \,|\, l,j,k,\frac{k-l}{2}\in \mathbb N \cup \set{0}; 2j+\frac{k}{\beta+1}< q}
	\end{equation*}
	and $\tilde{O}(q)$ is some error term satisfying that for any $k_1,k_2 \in \mathbb N\cup\set{0}$,
	\begin{equation*}
		\abs{(\rho\partial_\rho)^{k_1} \partial_\theta^{k_2} \tilde{O}(q)} \leq C(k_1,k_2) \rho^q
	\end{equation*}
	in the same neighborhood as above.
\end{thm}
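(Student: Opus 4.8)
The plan is to bootstrap from Theorem \ref{thm:main2} (which gives control of all time derivatives $\partial_t^k u$) to an elliptic problem for $u(t)$ at each fixed time, and then to extract the asymptotic expansion from the structure of the model operator on the cone. First I would fix $t>0$ and set $f(t) := \partial_t u(t) - r/2 + e^{-2u}\tilde K$, so that equation \eqref{eqn:rfu} reads $\tilde\triangle u(t) = e^{2u} f(t)$, an elliptic equation on $S\setminus\{p_i\}$. By Theorem \ref{thm:main2}, $\partial_t u(t)$ lies in $\mathcal V^{2,\alpha,[\delta,T]}$, and since $u$ itself is at least $C^{2,\alpha}$ away from the singular set and bounded near it, the right-hand side $e^{2u}f(t)$ is a function with some controlled regularity near $p_i$. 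The strategy is then to treat $\tilde\triangle u = h$ near a cone point where $h$ has an asymptotic expansion in the set $\mathcal T^{q'}$ for appropriate $q'$, and to show that the solution $u$ inherits an expansion in $\mathcal T^q$ — this is the standard phenomenon that the indicial roots of the cone Laplacian are precisely the exponents $2j + k/(\beta+1)$ appearing in $\mathcal T^q$.

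The key steps, in order, are: (1) separate variables for the model operator $\tilde\triangle = \partial_\rho^2 + \rho^{-1}\partial_\rho + \rho^{-2}(\beta+1)^{-2}\partial_\theta^2$, writing a Fourier series in $\theta$; on the $l$-th Fourier mode the operator becomes an Euler equation whose homogeneous solutions are $\rho^{\pm l/(\beta+1)}$, and whose particular solutions against a source $\rho^s$ are $\rho^{s+2}$ unless $s+2$ is an indicial root, in which case a $\log\rho$ can appear — I will need to check (using the parity constraint $(k-l)/2 \in \mathbb N$ and the boundedness of $u$) that the log terms never actually occur and that only the exponents listed in $\mathcal T^q$ survive; (2) set up an iteration: knowing $u = \tilde O(q_0)$ for some small $q_0 > 0$ (which follows from $u$ being bounded and, with a little work, Hölder near $p_i$), feed this into the equation to get $h = \tilde O(q_0)$ plus lower-order explicit terms, solve the model equation to improve $u$ to $\sum_{v\in\mathcal T^{q_0+2}} a_v v + \tilde O(q_0+2)$ (the gain of $2$ coming from the two orders of integration, with possible intermediate gains of $1/(\beta+1)$ from the homogeneous solutions), and repeat until the threshold $q$ is reached; (3) control the error terms in the $\tilde O$-sense, i.e. verify the weighted bounds $|(\rho\partial_\rho)^{k_1}\partial_\theta^{k_2}\tilde O(q)| \le C\rho^q$ — here I would use interior Schauder estimates on dyadic annuli $\{\rho \sim 2^{-m}\}$, rescaled to unit size, to convert the integral/sup control near $p_i$ into pointwise derivative bounds with the correct powers of $\rho$, since $\rho\partial_\rho$ and $\partial_\theta$ are the scaling-invariant vector fields on the cone.

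The main obstacle I anticipate is step (3): carefully propagating the error estimates through the iteration while keeping track of how the nonlinearity $e^{2u}$ and the curvature term $e^{-2u}\tilde K$ interact with the expansion. Multiplying two functions that each have a $\mathcal T^q$-expansion produces a function whose expansion lies in $\mathcal T^q$ again (the exponent set is closed under addition since $2\mathbb N$ and $\frac{1}{\beta+1}\mathbb N$ are, and the parity/trigonometric-product identities respect the constraint $(k-l)/2 \in \mathbb N$), but verifying this bookkeeping — including that $\tilde K$ itself, coming from the background cone metric, contributes only terms already in the list — requires care. A secondary technical point is the base case: one must first establish \emph{some} decay $u(t) = O(\rho^{\epsilon})$ beyond mere boundedness to start the iteration, which should follow from the $\mathcal V^{2,\alpha}$ membership via a De Giorgi–Nash–Moser or barrier argument on the cone, but needs to be stated. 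Once the iteration is primed and the multiplication/closure lemma for $\mathcal T^q$-expansions is in place, the remaining work is the routine (if lengthy) induction described above.
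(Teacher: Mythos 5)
Your overall machinery (separation of variables on the cone, the indicial-root computation showing no $\log$ terms arise, closure of the exponent set under multiplication, and dyadic-annulus interior Schauder estimates for the $\tilde{O}$-bounds) is the same as in Section \ref{sec:expansion} of the paper. But the iteration as you set it up has a genuine gap. You fix $t$ and write $\tilde{\triangle} u = e^{2u}\left(\partial_t u - \frac{r}{2} + e^{-2u}\tilde{K}\right)$, and in your step (2) you claim that knowing $u=\sum_{v} a_v v + \tilde{O}(q_0)$ lets you conclude that the right-hand side has an expansion up to order $q_0$. That fails for the factor $\partial_t u$: an asymptotic expansion of the fixed-time slice $u(\cdot,t)$ gives no expansion at all for the separate function $\partial_t u(\cdot,t)$, for which Theorem \ref{thm:main2} only provides boundedness, i.e.\ an expansion up to order $0$. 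So your scheme produces an expansion of $u$ only up to order (just below) $2$ and then stops improving, because the source term never gets better than $\tilde{O}(0)$ no matter how much you learn about $u$ itself.

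The repair is the paper's simultaneous induction on the whole family of time derivatives: for every $l\geq 0$, $\partial_t^l u$ satisfies an elliptic equation $(E_l)$ whose right-hand side is $e^{2u}$ times $\partial_t^{l+1}u$ plus a polynomial in lower time derivatives, and the induction hypothesis $\mathcal C^q$ asserts that \emph{every} $\partial_t^l u$ has an expansion up to order $q$. Lemma \ref{lem:goodxi} (closure of expansions under sums, products and composition with smooth functions) then upgrades each right-hand side to order $q$, and Lemma \ref{lem:orderup} upgrades every $\partial_t^l u$ to some order $q'>q+1$ at once; the base case $\mathcal C^0$ is exactly the content of Theorem \ref{thm:main2} for all $k$, which is why bounds on all time derivatives, not just $\partial_t u$, are needed. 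Two smaller points: near $p$ the background metric is the exact cone metric, so $\tilde{K}\equiv 0$ there and your worry about its contribution to the expansion is moot; and your base case should not ask for decay $u=O(\rho^{\epsilon})$ --- the constant term is part of the expansion, and ``expansion up to order $0$'' is just the scaled-derivative bound $\tilde{O}(0)$, which the weighted H\"older norm together with interior Schauder estimates on annuli already provides.
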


In Theorem 2.1 of \cite{mazzeo2013ricci}, an asymptotic expansion of the metric $g(t)$ is given in the form
\begin{equation*}
	g\sim \left( \sum_{j,k\geq 0}\sum_{l=0}^{N_{j,k}} a_{jkl}(y) r^{j+k/\beta}(\log r)^l \right) \abs{z}^{2\beta-2} \abs{dz}^2.
\end{equation*}
Please note that the $\beta$ and $r$ in the above equation are $\beta+1$ and $\rho$ in this paper. Theorem \ref{thm:main3} provides more information by putting more restrictions on the terms that appear in the expansion. In particular, it is proved that there is no log term involved. Moreover, as one can always expand $a_{jk0}$ in the trigonometric series
\begin{equation*}
	a_{jk0}(\theta)\sim \sum_{l} a_l \cos l\theta + b_l \sin l\theta,
\end{equation*}
Theorem \ref{thm:main3} also puts restrictions on the possible value of $k$ and $l$. 

As shown in the proof of Theorem \ref{thm:main3}, this refined information of expansion is related to both the nature of the singularity and the structure of the equation. In a forthcoming paper \cite{yin2016}, the authors show that a more complicated nonlinear structure of the equation leads to more singular terms in the expansion. In particular, we shall see $\log$ terms there.

The paper is organized as follows. In Section \ref{sec:weaksolution}, we define the function space in which the weak solution lies, construct the weak solution via approximation and prove various estimates for it. The estimates presented in this section are of the kind that the control over the solution is inherited from the corresponding property of the initial data. In Section \ref{sec:smoothing}, we show how a linear parabolic equation can `create' regularity. These results are key to the proof of higher regularity later. The results in these two sections are somewhat independent from the application. A more detailed introduction is included at the beginning of each section. The rest three sections are devoted to the proof of three main theorems respectively.

To conclude the introduction, we discuss the conventions and the notations used throughout the entire paper. We assume without loss of generality that there is only one singular point of order $\beta$, which is denoted by $p$. Around $p$, there is a conformal coordinate system $(x,y)$ of $S$, which is defined for all $\set{x^2+y^2\leq 1}$. We fix a background metric $\tilde{g}$, which is smooth away from the singular point and is the standard cone metric
\begin{equation*}
	\tilde{g}= (x^2+y^2)^\beta (dx^2+dy^2) 
\end{equation*}
in the above mentioned coordinate neighborhood of $p$.
We use a tilde to indicate quantities related to this metric, such as $\tilde{\nabla}$, $\tilde{\triangle}$ and so on.  

By `polar coordinates', we mean $(\rho,\theta)$ defined by
\begin{equation*}
	x=r\cos \theta, \quad y=r\sin \theta
\end{equation*}
and
\begin{equation*}
	\rho=\frac{1}{\beta+1} r^{\beta+1}.
\end{equation*}
It is not hard to see that $\rho$ is the Riemannian distance to $p$ measured by $\tilde{g}$. Throughout this paper, we write $B_*$ for the subset $\set{(\rho,\theta)\,|\, \rho<*}$ and in the case that $*$ is $1$, we simply write $B$.

\begin{rem}
	This paper follows the basic strategy taken by the author's previous papers \cite{yin2010ricci, yin2013ricci}. The proof of the main theorem in \cite{yin2010ricci} contains a gap and therefore the local existence result claimed there (with a rather weak initial data) is not proved. Under stronger assumptions of the initial data, a local existence proof was given in \cite{yin2013ricci}. However, there are other problems in that paper. The second version of \cite{yin2013ricci} added an appendix fixing one gap caused by the compatibility condition of initial and boundary data of parabolic equations. Recently, another gap was found and the attempt to fix it directly motivates the definition of weak solution used in this paper. See Remark \ref{rem:gap} for details.

	To make things clear, we give a detailed discussion of the whole problem totally independent of \cite{yin2010ricci,yin2013ricci}. It turns out that many seemingly important technical problems in \cite{yin2013ricci} disappear naturally in this new exposition. This paper is not just a revised version of \cite{yin2013ricci} and much stronger results are obtained here.
\end{rem}

\vskip 1cm
\noindent
{\it Acknowledgment.}
A major part of this paper was finished during the author's visit in Warwick university in 2015. He would like to thank the Mathematics Institute for the wonderful working environment and Professor Topping for making it available to him.

\section{Weak solution of linear equation}
\label{sec:weaksolution}
In this section, we develop a weak solution theory of linear equation of the type
\begin{equation*}
	\partial_t u = a(x,t) \tilde{\triangle} u + b(x,t) u + f(x,t).
\end{equation*}
We will start in Section \ref{subsec:functionspace} with the definition of spaces of functions in which the weak solution lies. Briefly speaking, it is a combination of the H\"older space and the Sobolev space. More precisely, it is the usual $C^{k,\alpha}$ space away from the singularity and equipped with a H\"older norm weighted naturally by the distance to the singular point, while near the singularity, it is similar (a little stronger) to the Sobolev space $V_2^{1,0}$ used in the book \cite{ladyzhenskaya1968linear}. The definition works well with both the maximum principle and the energy estimates.

In Section \ref{subsec:approximation}, we construct a weak solution via approximation by surfaces with boundary using the estimates proved in Section \ref{subsec:bvp}. In Section \ref{subsec:maximum}, we prove a maximum principle, which extend the estimates in Theorem \ref{thm:linear} obtained for the weak solution obtained by approximation to other weak solutions. Finally, in Section \ref{subsec:timederivative}, we prove stronger estimates for linear equation with stronger assumptions on the initial data and the coefficients of the equation. This estimate is to be used in the proof of local existence of the Ricci flow.

\subsection{Functions spaces}\label{subsec:functionspace}
We start by recalling some weighted H\"older space defined in \cite{yin2010ricci}. The elliptic version is $C^{l,\alpha}(S,\beta)$ and the parabolic version is $C^{l,\alpha}( (S,\beta)\times [0,T])$. In this paper, for simplicity, we denote them by $\mathcal E^{l,\alpha}$ and $\mathcal P^{l,\alpha,[0,T]}$ respectively. 

Let $(\rho,\theta)$ be the polar coordinates defined in the introduction and $U$ be a neighborhood of $\overline{S\setminus B}$. We define the $\mathcal E^{l,\alpha}$ norm to be 
\begin{equation*}
	\norm{f}_{\mathcal E^{l,\alpha}}= \sup_{k=0 \ldots \infty} \norm{f(2^{-k} \rho,\theta)}_{C^{l,\alpha}(B_1\setminus B_{1/2})} + \norm{f}_{C^{l,\alpha}(U)}.
\end{equation*}
Here $B_r$ is $\set{(\rho,\theta)\in B \,|\, \rho<r}$, $C^{l,\alpha}(B_1\setminus B_{1/2})$ and $C^{l,\alpha}(U)$ are just the usual H\"older norms. Similarly, if $f$ is a function defined on space time $S\times [0,T]$, we define (by regarding $f$ as a function of $(\rho,\theta,t)$ in $B$)
\begin{equation*}
	\norm{f}_{\mathcal P^{l,\alpha,[0,T]}}= \sup_{k=0 \ldots \infty} \norm{f(2^{-k} \rho,\theta,4^{-k} t)}_{C^{l,\alpha}(B_1\setminus B_{1/2}\times [0,4^k T])} + \norm{f}_{C^{l,\alpha}(U\times [0,T])}.
\end{equation*}
To see that the definition is independent of our choice of coordinate system $(x,y)$, we refer to Section 2 of \cite{yin2010ricci}.

In spite of the tedious definition, it is not difficult to understand the meaning of these weighted H\"older space. Away from the singularity, they are just the normal H\"older space. Near a singularity, the $\mathcal E^{l,\alpha}$ norm is the bound for up to $l-$th derivatives which one may obtain for a bounded harmonic function via applying the interior estimate on a ball away from the singularity. A similar characterization is true for $\mathcal P^{l,\alpha,[0,T]}$ if we replace the harmonic function by a solution to the linear heat equation defined on $S\times [0,T]$.

These spaces are too weak for a useful discussion of the Ricci flow equation because they contain almost no information at the singular point. We define stronger function spaces by requiring some integral norm to be bounded. For time-independent functions, we define $\mathcal W^{l,\alpha}$ to be the set of function $u$ in $\mathcal E^{l,\alpha}$ satisfying
\begin{equation*}
	\abs{u}_{\mathcal W}:=\left( \int_S \abs{\tilde{\nabla} u}^2 d\tilde{V}\right)^{1/2} < +\infty.
\end{equation*}
$\mathcal W^{l,\alpha}$ is a Banach space equipped with the norm
\begin{equation*}
	\norm{u}_{\mathcal W^{l,\alpha}} := \norm{u}_{\mathcal E^{l,\alpha}} + \abs{u}_{\mathcal W}.
\end{equation*}

For functions defined on $S\times [0,T]$, we define $\mathcal V^{l,\alpha,[0,T]}$ to be the set of function $u$ in $\mathcal P^{l,\alpha,[0,T]}$ satisfying 
\begin{equation*}
	\abs{u}_{\mathcal V^{[0,T]}}:=\max_{t\in [0,T]} \abs{u(t)}_{\mathcal W} +  \left( \iint_{S\times [0,T]} \abs{\partial_t u}^2 dtd\tilde{V} \right)^{1/2}< +\infty.
\end{equation*}
The norm of $\mathcal V^{l,\alpha,[0,T]}$ is defined to be
\begin{equation*}
	\norm{u}_{\mathcal V^{l,\alpha,[0,T]}}= \norm{u}_{\mathcal P^{l,\alpha,[0,T]}} + \abs{u}_{\mathcal V^{[0,T]}}.
\end{equation*}

	For later reference, we need the following variations of $\mathcal W^{l,\alpha}$ and $\mathcal V^{l,\alpha,[0,T]}$:
	\begin{itemize}
		\item 
	If $\Omega$ is a domain in $S$ containing $p$, or a part of the infinite cone $\Real^+\times S^1$ with standard cone metric $\tilde{g}= d\rho^2 + \rho^2(\beta+1)^2 d\theta^2$ containing the cone tip, we can define $\mathcal W^{l,\alpha}(\Omega)$ and $\mathcal V^{l,\alpha,[0,T]}(\Omega)$ similarly. 
\item
	For any open interval $(t_1,t_2)$, $\mathcal V^{l,\alpha,(t_1,t_2)}$ is the set of function $f$, which belongs to $\mathcal V^{l,\alpha,[t_1+\delta,t_2-\delta]}$ for any $\delta>0$. Similar convention holds for $[t_1,t_2)$ and $(t_1,t_2]$.
	\end{itemize}

\begin{defn}\label{defn:weak}
	Suppose a function $u$ is defined on $S\times [t_1,t_2]$ which is at least $C^2$ away from the singularity. If it satisfies some linear equation or the Ricci flow equation classically away from the singularity and $\abs{u}_{\mathcal V^{[0,T]}}$ is finite, then we call it a 'weak' solution.
\end{defn}

In the rest of this section, we discuss the implications of being a weak solution. The next two lemmas show the advantage of having $\abs{u}_{\mathcal W}$ and $\abs{u}_{\mathcal V^{[0,T]}}$ bounded respectively. 

\begin{lem}
	\label{lem:basic}
	Suppose that $u$ and $v$ are two functions in $\mathcal W^{2,\alpha}$. If $\tilde{\triangle} u$ is integrable on $S$, or $v\cdot \tilde{\triangle} u$ is bounded from below (or above) by some integrable function, then
	\begin{equation*}
		\int_{S} {\tilde\triangle} u=0 \quad \mbox{and} \quad \int_{S} v\cdot{\tilde \triangle u} =-\int_{S} \tilde\nabla u\cdot \tilde\nabla v.
	\end{equation*}
\end{lem}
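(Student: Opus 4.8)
The plan is to excise a small geodesic ball $B_\epsilon$ about the cone point $p$, apply the classical Green identity on the compact surface-with-boundary $S\setminus B_\epsilon$ (whose boundary is $\partial B_\epsilon$, since $S$ is closed), and then let $\epsilon\to 0$ along a well-chosen sequence of radii. Since $u,v\in\mathcal E^{2,\alpha}$ they are $C^{2,\alpha}$ on $\overline{S\setminus B_\epsilon}$ for every sufficiently small $\epsilon>0$, and $\partial B_\epsilon=\set{\rho=\epsilon}$ is a smooth circle, so
\begin{equation*}
	\int_{S\setminus B_\epsilon} v\,\tilde\triangle u\,d\tilde V = -\int_{S\setminus B_\epsilon}\tilde\nabla u\cdot\tilde\nabla v\,d\tilde V + \int_{\partial B_\epsilon} v\,\partial_\nu u\,ds,
\end{equation*}
where $ds$ is the induced length element on $\partial B_\epsilon$ and $\nu$ is a unit normal along it (only $\abs{\partial_\nu u}\le\abs{\tilde\nabla u}$ will be used). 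Observe that $v\equiv 1$ lies in $\mathcal W^{2,\alpha}$, so once the general identity is proved the first assertion is its special case $v\equiv 1$.

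The crux is to show the boundary integral tends to $0$ along a subsequence $\epsilon_k\downarrow 0$. From the definition of $\mathcal E^{l,\alpha}$, $v$ is bounded on $S$ with $\norm{v}_{L^\infty}\le C\norm{v}_{\mathcal E^{2,\alpha}}$; hence, by Cauchy--Schwarz,
\begin{equation*}
	\abs{\int_{\partial B_\epsilon} v\,\partial_\nu u\,ds}\le \norm{v}_{L^\infty}\int_{\partial B_\epsilon}\abs{\tilde\nabla u}\,ds \le \norm{v}_{L^\infty}\,\abs{\partial B_\epsilon}^{1/2}\,h(\epsilon)^{1/2},
\end{equation*}
where $h(\epsilon):=\int_{\partial B_\epsilon}\abs{\tilde\nabla u}^2\,ds$. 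In the cone metric $\tilde g=d\rho^2+\rho^2(\beta+1)^2\,d\theta^2$ one has $\abs{\partial B_\epsilon}=2\pi(\beta+1)\epsilon$, and Fubini applied to the polar volume form $d\tilde V=(\beta+1)\rho\,d\rho\,d\theta$ (equivalently, the coarea formula for $\rho$) gives
\begin{equation*}
	\int_0^1 h(\epsilon)\,d\epsilon \le \int_S\abs{\tilde\nabla u}^2\,d\tilde V=\abs{u}_{\mathcal W}^2<\infty.
\end{equation*}
If $\epsilon\,h(\epsilon)\ge c>0$ held for all small $\epsilon$, then $h(\epsilon)\ge c/\epsilon$ near $0$ and the last integral would diverge; therefore $\liminf_{\epsilon\to 0}\epsilon\,h(\epsilon)=0$, and we may choose $\epsilon_k\downarrow 0$ with $\epsilon_k\,h(\epsilon_k)\to 0$, along which the boundary term vanishes.

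It remains to pass to the limit in the two bulk integrals along $\set{\epsilon_k}$. For the Dirichlet term, $\abs{\tilde\nabla u\cdot\tilde\nabla v}\le\tfrac12(\abs{\tilde\nabla u}^2+\abs{\tilde\nabla v}^2)\in L^1(S)$ because $u,v\in\mathcal W^{2,\alpha}$, so dominated convergence yields $\int_{S\setminus B_{\epsilon_k}}\tilde\nabla u\cdot\tilde\nabla v\to\int_S\tilde\nabla u\cdot\tilde\nabla v$. For $\int_{S\setminus B_{\epsilon_k}} v\,\tilde\triangle u$, I split according to the hypothesis: if $\tilde\triangle u\in L^1(S)$, then $v\,\tilde\triangle u\in L^1(S)$ (as $v$ is bounded) and dominated convergence applies directly; if instead $v\,\tilde\triangle u$ is bounded below (resp. above) by some $g\in L^1(S)$, then $v\,\tilde\triangle u-g\ge0$ (resp. $g-v\,\tilde\triangle u\ge0$), and monotone convergence together with $\int_{S\setminus B_{\epsilon_k}}g\to\int_S g$ shows $\int_{S\setminus B_{\epsilon_k}} v\,\tilde\triangle u$ converges in $[-\infty,+\infty]$ to $\int_S v\,\tilde\triangle u$. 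But the right-hand side of the Green identity has already been shown to converge to the finite number $-\int_S\tilde\nabla u\cdot\tilde\nabla v$, so this limit is finite and the identity $\int_S v\,\tilde\triangle u=-\int_S\tilde\nabla u\cdot\tilde\nabla v$ follows; specializing to $v\equiv 1$ gives $\int_S\tilde\triangle u=0$. The only genuinely delicate step is the subsequence extraction for the boundary integral, and it is precisely the finite Dirichlet energy built into the space $\mathcal W^{2,\alpha}$ that makes the cone point removable in this integration by parts, so I anticipate no serious obstacle.
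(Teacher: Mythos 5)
Your proof is correct and follows essentially the same route as the paper: excise $B_\epsilon$, integrate by parts, use the finiteness of $\int_S\abs{\tilde\nabla u}^2\,d\tilde V$ to extract a sequence of radii along which the boundary flux vanishes, and pass to the limit using the integrability or one-sided bound hypothesis. The only cosmetic differences are that you select radii via $\liminf_{\epsilon\to0}\epsilon\,h(\epsilon)=0$ rather than the paper's $\varepsilon/\rho_i$ bound, and you split the two bulk integrals (dominated convergence for the Dirichlet term, monotone convergence for $v\,\tilde\triangle u$) where the paper treats the combined integrand with its one-sided integrable bound.
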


\begin{proof}
	Since the first inequality follows from the second one by letting $v\equiv 1$, we prove the second one only.
	By the definition of $\mathcal W^{2,\alpha}$, there is small $\delta>0$ with
	\begin{equation*}
    \int_0^\delta \int_{\partial B_\rho} \abs{\tilde{\nabla} u}^2 d\sigma d\rho <\infty.
  \end{equation*}
  Recall that here $B_\rho$ is the ball of radius $\rho$ centered at the singularity measured with respect to the cone metric $\tilde{g}$.
  For any $\varepsilon>0$, we claim that there is a sequence $\rho_i$ going to zero such that
  \begin{equation}\label{eqn:rhoi}
	  \int_{\partial B_{\rho_i}} \abs{\tilde{\nabla} u}^2 d\sigma \leq \frac{\varepsilon}{\rho_i}.
  \end{equation}
  If the claim is not true, then there exists some $\varepsilon>0$ and $\delta_0>0$ such that for any $\rho\leq \delta_0$ ,
  \begin{equation*}
	  \int_{\partial B_\rho} \abs{\tilde{\nabla} u}^2 d\sigma>\frac{\varepsilon}{\rho},
  \end{equation*}
  which is a contradiction to the finiteness of $\int_{B_{\delta_0}} \abs{\tilde{\nabla} u}^2 d\tilde{V}$.

  For each $\rho_i$, the integration by parts gives
  \begin{equation}\label{eqn:ibp}
	  \int_{S\setminus B_{\rho_i}} v \cdot \tilde{\triangle} u + \tilde{\nabla} u \cdot \tilde{\nabla} v d\tilde{V} = \int_{\partial B_{\rho_i}} v\cdot \pfrac{u}{\nu} d\sigma,
  \end{equation}
  where $\nu$ is the outward normal vector to $\partial B_{\rho_i}$.

  By \eqref{eqn:rhoi} and the H\"older inequality, we get
  \begin{equation*}
	  \int_{\partial B_{\rho_i}} \abs{\tilde{\nabla} u}d\sigma \leq C \left( \int_{\partial B_{\rho_i}} \abs{\tilde{\nabla} u}^2 d\sigma\right)^{1/2} \rho_i^{1/2}<\varepsilon.
  \end{equation*}
  Together with \eqref{eqn:ibp} and the boundedness of $v$, it implies that
  \begin{equation*}
	  \lim_{i\to \infty} \abs{\int_{S\setminus B_{\rho_i}} v \cdot \tilde{\triangle} u + \tilde{\nabla} u \cdot \tilde{\nabla} v d\tilde{V}} \leq \varepsilon.
  \end{equation*}
  The assumptions of the lemma imply that the integrand $v\cdot \tilde{\triangle} u + \tilde{\nabla} u\cdot \tilde{\nabla} v$ is bounded from below (or above) by some integrable function, which allows us to conclude that
  \begin{equation*}
	 \int_{S} v \cdot \tilde{\triangle} u + \tilde{\nabla} u \cdot \tilde{\nabla} v d\tilde{V} =0.
  \end{equation*}
\end{proof}

\begin{lem}
	\label{lem:basic2}
	Suppose that $u\in \mathcal P^{2,\alpha,[0,T]}$ and that
	\begin{equation}\label{eqn:basica}
		\iint_{S\times [0,T]} \abs{\partial_t u} d\tilde{V} ds < +\infty.
	\end{equation}
	Then for any $C^1$ functions $\Psi:\Real\to \Real$ and $\varphi(x,t):S\times [0,T]\to \Real$ satisfying
	\begin{equation}\label{eqn:basicb}
		\max_{S\times[0,T]} \abs{\varphi}+\abs{\partial_t \varphi}< +\infty,
	\end{equation}
	$\int_S \Psi(u) \varphi d\tilde{V}$ is an absolutely continuous function of $t$ and 
	\begin{equation}\label{eqn:basic2}
		\frac{d}{dt}\int_S \Psi(u)\varphi d\tilde{V} = \int_S \Psi'(u) \partial_t u \varphi d\tilde{V} + \int_S \Psi(u) \partial_t \varphi d\tilde{V}
	\end{equation}
	for almost every $t$. In particular, the result holds for $u\in \mathcal V^{2,\alpha,[0,T]}$.
\end{lem}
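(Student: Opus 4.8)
The plan is to first establish the integral version of \eqref{eqn:basic2}, namely that
\[
\int_S \Psi(u(t_2))\varphi(t_2)\,d\tilde V - \int_S \Psi(u(t_1))\varphi(t_1)\,d\tilde V = \int_{t_1}^{t_2}\Big(\int_S \Psi'(u)\,\partial_t u\,\varphi + \Psi(u)\,\partial_t \varphi\,d\tilde V\Big)\,ds
\]
for every $0\le t_1\le t_2\le T$, and then to read off absolute continuity and the pointwise formula from the fact that an indefinite Lebesgue integral of an $L^1$ function is absolutely continuous with derivative equal to the integrand almost everywhere. So the real work is the displayed identity, and the only delicate region is a neighborhood of the cone point $p$.

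To prove the identity I would exhaust $S$ by the sets $S\setminus B_\delta$. On the compact set $\overline{S\setminus B_\delta}$ the $\mathcal P^{2,\alpha,[0,T]}$ bound on $u$ is an honest parabolic $C^{2,\alpha}$ bound, so $u$, $\partial_t u$, $\Psi(u)$ and $\Psi'(u)$ are all bounded and continuous there, while $\varphi$ and $\partial_t\varphi$ are bounded by \eqref{eqn:basicb}; hence the classical Leibniz rule applies and
\[
\frac{d}{dt}\int_{S\setminus B_\delta}\Psi(u)\varphi\,d\tilde V = \int_{S\setminus B_\delta}\big(\Psi'(u)\,\partial_t u\,\varphi + \Psi(u)\,\partial_t\varphi\big)\,d\tilde V .
\]
Integrating this in $t$ from $t_1$ to $t_2$ gives the displayed identity above with $S$ replaced by $S\setminus B_\delta$.

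Now I would let $\delta\to 0$. For the left-hand side, $u$ is bounded on $B$ (the $C^0$ part of the $\mathcal P^{2,\alpha}$ norm), $\Psi$ is continuous and $\varphi$ is bounded, while $\tilde V(B_\delta)\to 0$ because $\tilde g = d\rho^2 + \rho^2(\beta+1)^2 d\theta^2$; hence $\int_{B_\delta}\Psi(u(t_i))\varphi(t_i)\,d\tilde V\to 0$. For the right-hand side I would use dominated convergence on $S\times[t_1,t_2]$: one has $|\Psi'(u)\,\partial_t u\,\varphi|\le C|\partial_t u|$, which lies in $L^1(S\times[t_1,t_2])$ by \eqref{eqn:basica}, and $|\Psi(u)\,\partial_t\varphi|\le C'$, which is integrable there since $\tilde V(S)<\infty$, so the integrals over $B_\delta\times[t_1,t_2]$ vanish in the limit. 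This yields the integral identity for all $t_1\le t_2$. Setting $g(s):=\int_S(\Psi'(u)\,\partial_t u\,\varphi + \Psi(u)\,\partial_t\varphi)\,d\tilde V$, Fubini shows $g$ is defined for a.e.\ $s$ and $g\in L^1([0,T])$, so $t\mapsto \int_S\Psi(u(t))\varphi(t)\,d\tilde V$ is an indefinite integral of an $L^1$ function, hence absolutely continuous with derivative $g(t)$ for a.e.\ $t$, which is \eqref{eqn:basic2}. The final assertion is immediate: for $u\in\mathcal V^{2,\alpha,[0,T]}$, Cauchy--Schwarz on the finite-measure space $S\times[0,T]$ upgrades the $L^2$ bound on $\partial_t u$ in the definition of $\mathcal V$ to the $L^1$ bound \eqref{eqn:basica}.

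The main obstacle, such as it is, is the passage $\delta\to 0$ at the cone point: one must use that $\mathcal P^{2,\alpha}$ regularity forces $u$ (and hence $\Psi(u),\Psi'(u)$) to stay bounded up to $p$, so that the pieces of the integral over $B_\delta$ are genuinely negligible, and one must notice that the bare $L^1$-in-spacetime control of $\partial_t u$ together with the boundedness of $\Psi'(u)$ and $\varphi$ is exactly what makes dominated convergence work for the $\Psi'(u)\partial_t u\,\varphi$ term --- no pointwise bound on $\partial_t u$ near $p$ is available, and none is needed. Everything away from $p$ is the standard differentiation-under-the-integral argument.
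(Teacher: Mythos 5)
Your proof is correct and follows essentially the same route as the paper's: both reduce the lemma to the integral identity over $[t_1,t_2]$, justified by the absolute integrability of $\Psi'(u)\,\partial_t u\,\varphi+\Psi(u)\,\partial_t\varphi$ coming from \eqref{eqn:basica}, \eqref{eqn:basicb} and the boundedness of $u$, and then read off absolute continuity and the a.e.\ derivative from the indefinite Lebesgue integral. The only difference is cosmetic: the paper gets the identity in one stroke by Fubini on $S\times[t_1,t_2]$ (the cone point being a null set), while you exhaust by $S\setminus B_\delta$ and pass to the limit by dominated convergence; your Cauchy--Schwarz remark for the $\mathcal V^{2,\alpha,[0,T]}$ case matches the paper's final assertion.
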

\begin{proof}
For any $0\leq t_1<t_2\leq T$, the Fubini theorem implies that
\begin{eqnarray*}
	&& \int_{t_1}^{t_2} \int_S \Psi'(u) \partial_t u \varphi + \Psi(u) \partial_t \varphi d\tilde{V} dt \\
	&=&   \int_S \left( \int_{t_1}^{t_2} \partial_t (\Psi(u)\varphi(x,t)) dt \right) d\tilde{V} \\
	&=&  \left. \int_S \Psi(u)\varphi(x,t) d\tilde{V}\right|_{t_1}^{t_2},
\end{eqnarray*}
where the first line is absolutely integrable by \eqref{eqn:basica}, \eqref{eqn:basicb} and the boundedness of $u$.
This implies that $\int_S \Psi(u)\varphi(x,t) d\tilde{V}$ is absolutely continuous and \eqref{eqn:basic2} holds.
\end{proof}

\begin{rem}
	\label{rem:gap} In \cite{yin2013ricci}, when $u$ is a solution to the linear parabolic equation or the Ricci flow equation, $\int_S (u_+)^2 d\tilde{V}$ is taken as a differentiable function of $t$ for many times without further justification. By Lemma \ref{lem:basic2}, this is not a problem as long as one has $\iint_{S\times [0,T]} \abs{\partial_t u}^2 d\tilde{V}dt< \infty$. Since this quantity appears naturally in the energy estimate of linear parabolic equations, we add it to the definition of weak solution.
\end{rem}

\subsection{Estimates of boundary value problems}\label{subsec:bvp}
In this section, $M$ is a compact surface with nonempty boundary and a Riemannian metric $\tilde{g}$. The reuse of $\tilde{g}$ is not likely to cause confusion because when we apply the results of this section, $(M,\tilde{g})$ will be a part of $(S,\tilde{g})$. We prove two apriori estimates for the $C^{2,\alpha}$ solutions of linear parabolic equations on $M$. One of them is a $C^0$ estimate implied by the maximum principle and the other is an inequality involving the integrals used in Definition \ref{defn:weak}. The key point is that these estimates are independent of the geometry of $\partial M$. In the next section, we consider a sequence of surfaces with boundary which approximates the conical surfaces and the geometry of this sequence is not uniform.

Consider the linear boundary value problem
\begin{equation}
\left\{
	\begin{array}[]{ll}
		\partial_t u = a(x,t) \tilde{\triangle} u + b(x,t) u + f(x,t) & \quad \mbox{on } M\times [0,T] \\
		u(0)= u_0 &\quad \mbox{on } M\\
		\partial_\nu u =0 &\quad \mbox{on } \partial M.
	\end{array}
	\right.
	\label{eqn:Sk}
\end{equation}

We also require a compatibility condition on the initial and boundary data.
\begin{equation}
	\partial_\nu u_0=0 \quad \mbox{on } \partial M.
	\label{eqn:compatible}
\end{equation}

\begin{prop}\label{prop:bvp}
	For $a,b,f$ in $C^\alpha(M\times [0,T])$ with $0<\lambda<\min_{M\times [0,T]} a$ and $u_0\in C^{2,\alpha}(M)$ satisfying \eqref{eqn:compatible}, there is a unique $u(x,t)$ in $C^{2,\alpha}(M\times [0,T])$ solving \eqref{eqn:bvp} such that for $t\in [0,T]$,
	\begin{equation}
		\norm{u(t)}_{C^0(M)}\leq e^{C_1 t} \left( \norm{u_0}_{C^0(M)} + \int_0^t e^{-C_1 s}  C_2 ds\right),
		\label{eqn:Mc0}
	\end{equation}
	where $C_1=\norm{b}_{C^0(S\times [0,T])}$ and $C_2=\norm{f}_{C^0(S\times [0,T])}$,
	and
\begin{equation} 
	\label{eqn:Menergy} 
	 \int_M \abs{\tilde{\nabla} u}^2(t) d\tilde{V} + \int_0^t \int_M \abs{\partial_t u}^2 dsd\tilde{V}
	\leq  C_3 \int_M \abs{\tilde{\nabla} u_0}^2 d\tilde{V} + C_4 t,
\end{equation} 
where $C_3$ depends on $\norm{a}_{C^0(S\times [0,T])}$ and $C_4$ depends on the $C^0$ norm of $a, b, f, u$ and $\lambda$.
\end{prop}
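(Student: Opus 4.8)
The plan is to establish existence and uniqueness first by invoking standard parabolic theory, and then to derive the two quantitative estimates \eqref{eqn:Mc0} and \eqref{eqn:Menergy} by the maximum principle and an energy (multiplier) argument respectively, being careful that all constants depend only on the data listed and \emph{not} on the geometry of $\partial M$. Since $M$ is a fixed compact smooth manifold with boundary and the equation is uniformly parabolic (because $a\geq\lambda>0$) with H\"older coefficients, and since $u_0\in C^{2,\alpha}(M)$ satisfies the compatibility condition \eqref{eqn:compatible} ($\partial_\nu u_0=0$ on $\partial M$), the existence of a unique solution $u\in C^{2,\alpha}(M\times[0,T])$ (in the parabolic H\"older sense) is the classical linear theory for the initial–Neumann–boundary value problem; I would cite Chapter IV of \cite{ladyzhenskaya1968linear}. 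The content of the proposition is really the two estimates and their uniform character.

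First, for \eqref{eqn:Mc0}: set $C_1=\norm{b}_{C^0}$, $C_2=\norm{f}_{C^0}$ and consider the comparison function $w(t)=e^{C_1 t}\bigl(\norm{u_0}_{C^0}+\int_0^t e^{-C_1 s}C_2\,ds\bigr)$, which solves the ODE $w'=C_1 w+C_2$ with $w(0)=\norm{u_0}_{C^0}$, hence is a spatially constant supersolution: $\partial_t w - a\tilde\triangle w - b w = C_1 w + C_2 - bw \ge C_2 - (C_1-b)w\ge f$ pointwise once one checks the sign (using $\abs{b}\le C_1$ and $w\ge 0$). Since the competitor is spatially constant, $\partial_\nu w=0=\partial_\nu u$ on $\partial M$, so the parabolic maximum principle for the Neumann problem — which does not see the shape of $\partial M$ — gives $u\le w$; applying the same to $-u$ gives $\abs{u}\le w$, which is \eqref{eqn:Mc0}.

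Next, for \eqref{eqn:Menergy}: multiply the equation by $\partial_t u$ and integrate over $M$. The term $\int_M a\,\tilde\triangle u\,\partial_t u$ is handled by integration by parts in space; the boundary term $\int_{\partial M} a\,\partial_\nu u\,\partial_t u\,d\sigma$ vanishes identically by the Neumann condition $\partial_\nu u=0$ (and its time derivative), which is the crucial point making the estimate independent of $\partial M$. One is left with $\int_M a\,\tilde\triangle u\,\partial_t u = -\frac12\frac{d}{dt}\int_M a\abs{\tilde\nabla u}^2 + \frac12\int_M (\partial_t a)\abs{\tilde\nabla u}^2 - \int_M \partial_t u\,\tilde\nabla a\cdot\tilde\nabla u$, so that
\begin{equation*}
	\int_M \abs{\partial_t u}^2\,d\tilde V + \tfrac12\tfrac{d}{dt}\int_M a\abs{\tilde\nabla u}^2\,d\tilde V
	= \tfrac12\int_M (\partial_t a)\abs{\tilde\nabla u}^2 - \int_M \partial_t u\,\tilde\nabla a\cdot\tilde\nabla u + \int_M (bu+f)\partial_t u.
\end{equation*}
Absorb $\int \partial_t u\,\tilde\nabla a\cdot\tilde\nabla u$ and $\int(bu+f)\partial_t u$ into $\tfrac12\int\abs{\partial_t u}^2$ plus a remainder using Cauchy–Schwarz with a small parameter, use the $C^0$ bound on $u$ from \eqref{eqn:Mc0} to control the $\int(bu+f)^2$ part, and use $a\geq\lambda$ together with $\abs{\partial_t a}\le\norm{a}_{C^\alpha}$ to bound $\int(\partial_t a)\abs{\tilde\nabla u}^2$ by a constant times $\int a\abs{\tilde\nabla u}^2$. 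A Gr\"onwall argument on $t\mapsto\int_M a\abs{\tilde\nabla u}^2$ then yields $\int_M a\abs{\tilde\nabla u}^2(t)\le e^{Ct}\bigl(\int_M a\abs{\tilde\nabla u_0}^2 + C t\bigr)$ and, reinserting, a bound on $\int_0^t\!\int_M\abs{\partial_t u}^2$; dividing by $\lambda$ on the left and using $a\le\norm{a}_{C^0}$ on the right gives \eqref{eqn:Menergy} with $C_3$ depending only on $\norm{a}_{C^0}$ (after absorbing the $e^{Ct}$ into $C_3,C_4$ over the fixed interval) and $C_4$ on the $C^0$ norms of $a,b,f,u$ and on $\lambda$, as claimed.

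The main obstacle is bookkeeping rather than conceptual: one must be scrupulous that \emph{no} constant secretly depends on $\partial M$ — in particular the energy identity must be carried out so that every boundary contribution is killed by the homogeneous Neumann condition (for which one uses that $\partial_t u$ also satisfies $\partial_\nu(\partial_t u)=0$, or equivalently integrates $\tfrac{d}{dt}$ of the boundary-free identity), and the $C^0$ estimate must use only a spatially constant barrier so that the maximum principle is applied in a boundary-agnostic way. A minor point to address is that $\partial_t u$ a priori has only the regularity coming from $u\in C^{2,\alpha}(M\times[0,T])$, which is enough to justify all integrations by parts; no further smoothing is needed for the statement as written.
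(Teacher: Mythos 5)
Your existence/uniqueness citation and your $C^0$ bound are fine and essentially the paper's argument: the comparison function $h(t)=e^{C_1t}(\norm{u_0}_{C^0}+\int_0^te^{-C_1s}C_2\,ds)$ solves $h'=C_1h+C_2$, is spatially constant so the Neumann condition is automatic, and the maximum principle gives $\abs{u}\le h$ (your displayed inequality has a sign slip, $C_1w+C_2-bw=C_2+(C_1-b)w\ge C_2$, but the idea is the intended one).

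The energy estimate, however, has a genuine gap. You multiply the equation by $\partial_t u$ and integrate by parts the term $\int_M a\,\tilde\triangle u\,\partial_t u$, which produces $\int_M(\partial_t a)\abs{\tilde\nabla u}^2$ and $\int_M\partial_t u\,\tilde\nabla a\cdot\tilde\nabla u$. Under the hypotheses of the proposition $a$ is only in $C^\alpha(M\times[0,T])$: neither $\partial_t a$ nor $\tilde\nabla a$ need exist, and in particular your claim $\abs{\partial_t a}\le\norm{a}_{C^\alpha}$ is false (a H\"older norm never controls a derivative). Even granting extra smoothness of $a$, your constants would then depend on $\norm{\partial_t a}_{C^0}$ and $\norm{\tilde\nabla a}_{C^0}$ (and, through the Gr\"onwall factor, exponentially on $T$ via those quantities), whereas the statement requires $C_3$ to depend only on $\norm{a}_{C^0}$ and $C_4$ only on the $C^0$ norms of $a,b,f,u$ and on $\lambda$ --- this uniformity is exactly what is needed later, when the proposition is applied on the exhausting surfaces $S_k$ with coefficients that are merely bounded near the cone point. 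The paper avoids the problem by using the multiplier $a^{-1}\partial_t u$ instead of $\partial_t u$: then one estimates $\frac12\iint\abs{\partial_t u}^2a^{-1}\le\iint\partial_t u\,\tilde\triangle u+Ct$ by Young's inequality, and $\iint\partial_t u\,\tilde\triangle u$ integrates by parts (boundary term killed by $\partial_\nu u=0$, the identity justified by smooth approximation since $u$ is only $C^{2,\alpha}$ and $\tilde\nabla\partial_t u$ need not exist classically) to give $\frac12\int\abs{\tilde\nabla u_0}^2-\frac12\int\abs{\tilde\nabla u}^2(t)$, so no derivative of $a$ ever appears and the constants have exactly the stated dependence. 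Your argument as written does not prove the proposition; replacing your multiplier by $a^{-1}\partial_t u$ (and justifying the parabolic integration by parts by approximation rather than by asserting $C^{2,\alpha}$ suffices) repairs it.
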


\begin{proof}
	The existence and uniqueness of the solution $u$ in $C^{2,\alpha}(M\times [0,T])$ is well known (see Theorem 5.18 of \cite{lieberman1996second}). 
	For \eqref{eqn:Mc0}, we notice that the right hand side of \eqref{eqn:Mc0} is the solution of the ODE
	\begin{equation}
		\label{eqn:bvpode}
		\frac{dh}{dt}= C_1 h+C_2\quad \mbox{and} \quad h(0)=\norm{u_0}_{C^0(M)}.
	\end{equation}
	The proof of \eqref{eqn:Mc0} then reduces to the claim that $\abs{u}\leq h$. Subtracting \eqref{eqn:bvpode} from the equation of $u$ yields 
	\begin{equation*}
		\partial_t(u-h) = a(x,t) \tilde{\triangle} (u-h) + b(x,t)u - C_1 h + f-C_2.
	\end{equation*}
	By the definition of $C_1$ and $C_2$ and the fact that $h\geq 0$, we have
	\begin{equation*}
		\partial_t(u-h) \leq a(x,t) \tilde{\triangle} (u-h) + b(x,t) (u-h).
	\end{equation*}
	Moreover, $\partial_\nu(u-h)|_{\partial M}=0$. Since $u_0\leq h(0)$, the classical maximum principle for the linear parabolic equation on manifolds with boundary gives $u\leq h$ on $M\times [0,T]$. The lower bound is proved similarly.
	For the proof of \eqref{eqn:Menergy}, we consider the weighted integral
	\begin{equation*}
		\iint_{M\times [0,t]} \abs{\partial_t u}^2  a^{-1} ds d\tilde{V} 
		= \iint_{M\times [0,t]} \partial_t u \left( \tilde{\triangle} u + a^{-1} b u + a^{-1} f \right) ds d\tilde{V}
	\end{equation*}
	By the Young's inequality, there is a constant $C$ depending on $C^0$ norm of $b,f,u$ and $a^{-1}$ such that
	\begin{equation}\label{eqn:young}
		\frac{1}{2}\iint_{M\times [0,t]} \abs{\partial_t u}^2  a^{-1} ds d\tilde{V} 
		\leq \iint_{M\times [0,t]} \partial_t u \tilde{\triangle} u ds d\tilde{V} + C t. 
	\end{equation}
	Using integration by parts and the Fubini's Theorem, we have
	\begin{eqnarray}\label{eqn:iby}
		&&\iint_{M\times [0,t]} \partial_t u \tilde{\triangle} u ds d\tilde{V} \\ \nonumber  
		&=& -\int_0^t \int_M \tilde \nabla \partial_t u \cdot \tilde{\nabla} u d\tilde{V} ds \\ \nonumber
		&=& - \int_M \int_0^t \frac{1}{2} \partial_t \abs{\tilde{\nabla}u}^2 ds \tilde{V}\\\nonumber
		&=& \frac{1}{2} \int_M \abs{\tilde{\nabla }u}^2(0) d\tilde{V} -  \frac{1}{2} \int_M \abs{\tilde{\nabla }u}^2(t) d\tilde{V}.
	\end{eqnarray}
	The boundary term of integration by parts vanishes because of the boundary condition $\partial_\nu u|_{\partial M} =0$. The computation above involves some higher derivative of $u$ which does not exist for a function in $C^{2,\alpha}$. However, the first line and the last line make perfect sense for $C^{2,\alpha}$ functions. Therefore, the computation can be justified by smooth approximations. \eqref{eqn:Menergy} follows from \eqref{eqn:young} and \eqref{eqn:iby}
\end{proof}

\subsection{Weak solution of the linear equation via approximations}\label{subsec:approximation}
In this section, we construct a weak solution to the linear parabolic equation on conical surfaces and prove some estimates. The idea is to approximate $(S,\tilde{g})$ by a sequence of surfaces with boundary, solve a sequence of linear parabolic equations with Neumann boundary condition and take the limit of the sequence of solutions.

\begin{rem}
	Note that this is the only place in the paper where we use this approximation method. This is different from \cite{yin2013ricci} where the same type of approximation was used again and again. The idea is that all special properties we need are coded into the definition of weak solution, i.e. the finiteness of $\abs{\cdot}_{\mathcal V^{[0,T]}}$, which will be the starting point of many later discussions.
\end{rem}

\begin{thm}
	\label{thm:linear}
	For $a,b,f$ in $\mathcal V^{0,\alpha,[0,T]}$ with $0<\lambda<\min_{S\times[0,T]} a$ and $u_0\in \mathcal W^{2,\alpha}$, there exists a weak solution $u\in \mathcal V^{2,\alpha,[0,T]}$ to 
	\begin{equation}\label{eqn:linear1}
		\partial_t u = a(x,t)\tilde{\triangle} u + b(x,t) u + f(x,t)
	\end{equation}
	with $u(0)=u_0$ such that 
	\begin{equation*}
		\norm{u}_{\mathcal V^{2,\alpha,[0,T]}}\leq C
	\end{equation*}
	for a constant $C$ depending on $\lambda$, $\mathcal V^{0,\alpha,[0,T]}$ norms of $a,b,f$ and $\norm{u_0}_{\mathcal W^{2,\alpha}}$.
Moreover, we have
\begin{equation}
		\norm{u(t)}_{C^0(S)}\leq e^{C_1 t} \left( \norm{u_0}_{C^0(S)} +  \int_0^t e^{-C_1 s}  C_2 ds\right),
	\label{eqn:linear1c0}
\end{equation}
where $C_1=\norm{b}_{C^0(S\times [0,T])}$ and $C_2=\norm{f}_{C^0(S\times [0,T])}$, and
\begin{equation}
	\abs{u}_{\mathcal V^{[0,T]}} \leq C_3 \int_S \abs{\tilde{\nabla} u_0}^2 d\tilde{V} +  T C_4,
	\label{eqn:linear1energy}
\end{equation}
where $C_3$ depends on the $\norm{a}_{C^0(S\times [0,T])}$ and $C_4$ depends on the $C^0(S\times [0,T])$ norm of $a,b,f,u$ and $\lambda$.
\end{thm}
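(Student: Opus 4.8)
The plan is to construct the weak solution as a limit of solutions to boundary value problems on a suitable exhaustion of $(S,\tilde g)$, using Proposition \ref{prop:bvp} to obtain uniform estimates, and then to upgrade convergence away from the singular point using the weighted Schauder theory built into the spaces $\mathcal P^{l,\alpha}$. First I would fix an exhaustion: let $M_k = S\setminus B_{1/k}$, but modified near $\partial B_{1/k}$ so that it is a smooth compact surface with boundary and so that the metric $\tilde g$ is unchanged on $M_{k-1}$, say. On each $M_k$ I want to solve \eqref{eqn:Sk} with the given coefficients $a,b,f$ restricted to $M_k$ and with initial datum a modification $u_{0,k}$ of $u_0$ satisfying the compatibility condition \eqref{eqn:compatible}; one can take $u_{0,k}=u_0$ on $M_{k-1}$ and interpolate to satisfy $\partial_\nu u_{0,k}=0$ near $\partial M_k$ while keeping $\norm{u_{0,k}}_{C^{2,\alpha}(M_k)}$, $\int_{M_k}|\tilde\nabla u_{0,k}|^2$ bounded uniformly (this uses that $u_0\in\mathcal W^{2,\alpha}$, so in particular $\tilde\nabla u_0\in L^2$ and the local $C^{2,\alpha}$ norms of $u_0$ are controlled near the cone tip by $\norm{u_0}_{\mathcal E^{2,\alpha}}$). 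Proposition \ref{prop:bvp} then gives $u_k\in C^{2,\alpha}(M_k\times[0,T])$ satisfying \eqref{eqn:Mc0} and \eqref{eqn:Menergy} with constants $C_1,\dots,C_4$ that are independent of $k$ — this is exactly the point of having phrased Proposition \ref{prop:bvp} in a way that is insensitive to the geometry of $\partial M$.

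The second step is to extract a limit. The $C^0$ bound \eqref{eqn:Mc0} gives uniform boundedness of $u_k$, hence a uniform $C^0$ bound on $a\tilde\triangle u_k + b u_k + f = \partial_t u_k$ once we have interior estimates; more precisely, away from the cone point the equation \eqref{eqn:linear1} is a uniformly parabolic linear equation with $C^\alpha$ coefficients (in the weighted sense), and the definition of $\mathcal P^{2,\alpha,[0,T]}$ is precisely the scale-invariant package of interior Schauder estimates for such equations applied on the dyadic annuli $B_{2^{-j}}\setminus B_{2^{-j-1}}$. Thus on any fixed $S\setminus B_\eps$ and $[0,T]$ the $u_k$ (for $k$ large) are bounded in $C^{2,\alpha}$ with a bound depending only on the data and on $\eps$; by Arzelà–Ascoli and diagonalization we get a subsequence converging in $C^{2}_{\mathrm{loc}}(S\setminus\{p\}\times[0,T])$ to some $u$, which solves \eqref{eqn:linear1} classically on $S\setminus\{p\}$ with $u(0)=u_0$. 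Passing the uniform $\mathcal P^{2,\alpha,[0,T]}$ bound to the limit (it is a sup of closed conditions) gives $u\in\mathcal P^{2,\alpha,[0,T]}$ with the asserted bound on $\norm{u}_{\mathcal P^{2,\alpha,[0,T]}}$, and \eqref{eqn:Mc0} passes to the limit to give \eqref{eqn:linear1c0}.

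The third step is the integral part of the norm. From \eqref{eqn:Menergy} we have, uniformly in $k$,
\begin{equation*}
\sup_{t\in[0,T]}\int_{M_k}|\tilde\nabla u_k|^2(t)\,d\tilde V + \iint_{M_k\times[0,T]}|\partial_t u_k|^2\,dt\,d\tilde V \leq C_3\int_S|\tilde\nabla u_0|^2\,d\tilde V + C_4 T.
\end{equation*}
Fatou's lemma applied on the fixed region $S\setminus B_\eps$ (where $u_k\to u$ in $C^1$, so $|\tilde\nabla u_k|^2\to|\tilde\nabla u|^2$ pointwise, and $\partial_t u_k\to\partial_t u$ pointwise) together with letting $\eps\to 0$ yields the same bound for $u$, i.e. $\abs{u}_{\mathcal V^{[0,T]}}$ is finite and satisfies \eqref{eqn:linear1energy}; combined with the $\mathcal P^{2,\alpha,[0,T]}$ bound this gives $u\in\mathcal V^{2,\alpha,[0,T]}$ with $\norm{u}_{\mathcal V^{2,\alpha,[0,T]}}\leq C$, and by Definition \ref{defn:weak} $u$ is a weak solution. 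The main obstacle I anticipate is the bookkeeping in the first step: arranging the exhaustion $M_k$ and the modified data $u_{0,k}$ so that (a) the compatibility condition \eqref{eqn:compatible} holds on $\partial M_k$, (b) the constants in \eqref{eqn:Mc0} and \eqref{eqn:Menergy} are genuinely $k$-independent, and (c) the $C^{2,\alpha}$ and Dirichlet-energy norms of $u_{0,k}$ are controlled by $\norm{u_0}_{\mathcal W^{2,\alpha}}$ — the delicate point being that the correction needed to kill $\partial_\nu u_{0,k}$ lives in a thin collar near the cone tip whose $\tilde g$-area is small, so its contribution to $\int|\tilde\nabla u_{0,k}|^2$ must be shown to be bounded (indeed small) rather than blowing up as $k\to\infty$. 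Everything after that is soft: uniform estimates, Arzelà–Ascoli, Fatou, and taking limits.
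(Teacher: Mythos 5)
Your proposal is correct and follows essentially the same route as the paper: exhaust $S$ by $S_k=S\setminus B_{1/k}$, modify $u_0$ near $\partial S_k$ to satisfy the Neumann compatibility condition, apply Proposition \ref{prop:bvp} for $k$-independent $C^0$ and energy bounds, then use interior Schauder estimates, a diagonal limit, and Fatou to pass \eqref{eqn:Mc0}, \eqref{eqn:Menergy} to the limit. The one "obstacle'' you flag is resolved in the paper exactly as you anticipate, by the radial reparametrization $u_{0,k}(\rho,\theta)=u_0(\eta_\epsilon(\rho),\theta)$ with $\epsilon=\epsilon(k)$ small (which controls $\norm{u_{0,k}}_{C^0}$ and keeps $\norm{\tilde\nabla u_{0,k}}_{L^2}\leq 2\norm{\tilde\nabla u_0}_{L^2}$); note that a uniform-in-$k$ bound on $\norm{u_{0,k}}_{C^{2,\alpha}(M_k)}$ is neither needed (Proposition \ref{prop:bvp}'s constants involve only the $C^0$ norm and the Dirichlet energy of the initial data) nor available from $u_0\in\mathcal W^{2,\alpha}$.
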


Recall that we have assumed that $p$ is the only singular point in $S$ and $(\rho,\theta)$ is the polar coordinates around $p$.  Define 
\begin{equation*}
	S_k:=S \setminus \set{(\rho,\theta)|\, \rho<\frac{1}{k}}.
\end{equation*}
The restriction of $\tilde{g}$ to $S_k$ is still denoted by $\tilde{g}$.

To use the linear estimate for surfaces with boundary in Section \ref{subsec:bvp}, the initial data of the approximating problem must satisfy the compatibility condition \eqref{eqn:compatible}. Therefore, we must modify the initial data so that it satisfies the Neumann boundary condition on $\partial S_k$ while keeping various norms of the modification under control.   

By the definition of $S_k$, we know that $(\rho,\theta)$ parametrizes a neighborhood of $\partial S_k$ for $\rho\in [\frac{1}{k},1)$ and $\theta\in S^1$. For any $\epsilon>0$ small, let $\eta_\epsilon: [1/k,1)\to [1/k,1)$ be some smooth increasing function satisfying:
	\begin{enumerate}[(a)]
		\item $\eta_\epsilon(\rho)=\frac{1}{k}$ for $\rho\in [1/k,1/k+\epsilon]$;
		\item $\eta_\epsilon(\rho)=\rho$ for $\rho\in [1/k+2\epsilon,1)$;
		\item $\eta_\epsilon'(\rho)\leq 3$ for all $\rho\in [1/k,1)$.
	\end{enumerate}
\begin{lem}
	For each $k$ fixed, there is $\epsilon$ so small that if $u_{0,k}:S_k\to \Real$ is obtained from $u_0$ by a modification near $\partial S_k$ given by
\begin{equation*}
	u_{0,k}(\rho,\theta)= u_0(\eta_\epsilon(\rho),\theta),
\end{equation*}
then
\begin{equation*}
	\norm{u_{0,k}}_{C^0(S_k)} \leq \norm{u_0}_{C^0(S)}
\end{equation*}
and 
\begin{equation*}
	\norm{\tilde{\nabla} u_{0,k}}_{L^2(S_k,\tilde{g})} \leq 2\norm{\tilde{\nabla} u_0}_{L^2(S,\tilde{g})}. 
\end{equation*}
Moreover, we may take $\epsilon\to 0$ as $k\to \infty$.
\end{lem}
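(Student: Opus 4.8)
The boundary condition and the $C^0$ bound are immediate from the construction, so I would dispose of them first. Since $u_0\in\mathcal E^{2,\alpha}$, $u_0$ is $C^{2,\alpha}$ on every compact subset of $S\setminus\{p\}$; as $\eta_\epsilon$ is smooth and $\eta_\epsilon(\rho)\in[1/k,1)$, the function $u_{0,k}(\rho,\theta)=u_0(\eta_\epsilon(\rho),\theta)$ is $C^{2,\alpha}$ on $S_k$ (it agrees with $u_0$ where $\rho\ge 1/k+2\epsilon$ and with a $C^{2,\alpha}$ reparametrization on the collar $1/k\le\rho<1/k+2\epsilon$), its range is contained in that of $u_0$ so that $\norm{u_{0,k}}_{C^0(S_k)}\le\norm{u_0}_{C^0(S)}$, and because $\eta_\epsilon\equiv 1/k$ on $[1/k,1/k+\epsilon]$ we have $\partial_\rho u_{0,k}\equiv 0$ there, in particular $\partial_\nu u_{0,k}=0$ on $\partial S_k=\{\rho=1/k\}$, which is \eqref{eqn:compatible}. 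These hold for every $\epsilon$; smallness of $\epsilon$ is needed only for the Dirichlet energy bound.

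For the latter, write $A_\epsilon:=\{1/k\le\rho<1/k+2\epsilon\}$ and split $\int_{S_k}\abs{\tilde\nabla u_{0,k}}^2\,d\tilde V$ over $\{\rho\ge 1/k+2\epsilon\}$ and $A_\epsilon$. On the first region $u_{0,k}=u_0$, contributing at most $\int_S\abs{\tilde\nabla u_0}^2\,d\tilde V$. On $A_\epsilon$, using $\tilde g=d\rho^2+\rho^2(\beta+1)^2d\theta^2$,
\[
\abs{\tilde\nabla u_{0,k}}^2=(\eta_\epsilon')^2(\partial_\rho u_0)^2(\eta_\epsilon(\rho),\theta)+\frac{1}{\rho^2(\beta+1)^2}(\partial_\theta u_0)^2(\eta_\epsilon(\rho),\theta),
\]
and I would bound the two terms separately. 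For the radial term, use $(\eta_\epsilon')^2\le 3\eta_\epsilon'$ (from (c)) and, once $\epsilon\le 1/(2k)$, the inequality $\rho\le 1/k+2\epsilon\le 2\eta_\epsilon(\rho)$ on $A_\epsilon$; the substitution $s=\eta_\epsilon(\rho)$ (valid since $\eta_\epsilon$ is $C^1$ and monotone with $\eta_\epsilon(1/k)=1/k$, $\eta_\epsilon(1/k+2\epsilon)=1/k+2\epsilon$) then gives
\[
\int_{A_\epsilon}(\eta_\epsilon')^2(\partial_\rho u_0)^2(\eta_\epsilon(\rho),\theta)\,d\tilde V\le 6\int_{B_{1/k+2\epsilon}\setminus B_{1/k}}\abs{\tilde\nabla u_0}^2\,d\tilde V,
\]
which tends to $0$ as $\epsilon\to0$ since $\abs{\tilde\nabla u_0}^2\in L^1(S)$ and the annulus shrinks. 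For the angular term, $u_0$ is $C^{2,\alpha}$ on the fixed compact annulus $\{1/(2k)\le\rho\le 2/k\}$, so $\abs{\partial_\theta u_0}\le C_k$ there, and for $\epsilon\le 1/(2k)$
\[
\int_{A_\epsilon}\frac{(\partial_\theta u_0)^2(\eta_\epsilon(\rho),\theta)}{\rho^2(\beta+1)^2}\,d\tilde V\le\frac{2\pi C_k^2}{\beta+1}\int_{1/k}^{1/k+2\epsilon}\frac{d\rho}{\rho}=\frac{2\pi C_k^2}{\beta+1}\log(1+2k\epsilon),
\]
again tending to $0$ as $\epsilon\to0$. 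Hence for $\epsilon$ small enough (depending on $k$) the $A_\epsilon$-contribution is at most $3\int_S\abs{\tilde\nabla u_0}^2\,d\tilde V$ (the case $\int_S\abs{\tilde\nabla u_0}^2=0$, where $u_0$ and $u_{0,k}$ are constant, being trivial), so $\norm{\tilde\nabla u_{0,k}}_{L^2(S_k)}^2\le 4\norm{\tilde\nabla u_0}_{L^2(S)}^2$, which is the claimed inequality. Since the argument already forces $\epsilon\le 1/(2k)$, we automatically get $\epsilon\to0$ as $k\to\infty$.

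I expect the only real obstacle to be the radial term: a priori reparametrizing could inflate $\int\abs{\partial_\rho u_{0,k}}^2$ because $\eta_\epsilon'$ may be large on a short interval, and it is precisely the uniform slope bound $\eta_\epsilon'\le 3$ that, via $(\eta_\epsilon')^2\le 3\eta_\epsilon'$ and the change of variables, turns the extra radial energy into $6$ times the energy of $u_0$ on the shrinking collar, with a constant independent of $k$. Everything else --- the angular term (where $\rho\ge 1/k$ keeps the weight bounded for each fixed $k$), the $C^0$ bound, and the Neumann condition --- is routine bookkeeping.
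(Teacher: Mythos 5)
Your proof is correct and follows essentially the same route as the paper: both exploit that the modification only changes the Dirichlet energy on the collar $\{1/k\le\rho\le 1/k+2\epsilon\}$, whose contribution tends to zero as $\epsilon\to 0$ for each fixed $k$ (the paper simply bounds $\abs{\tilde\nabla(u_{0,k}-u_0)}$ by a constant depending on $u_0$ and $k$, lets the collar's area shrink, and concludes by the triangle inequality). Your change-of-variables treatment of the radial term is a harmless refinement — it gives a bound uniform in $k$, which is more than the lemma needs since $\epsilon$ is allowed to depend on $k$.
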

\begin{proof}
	The control of $C^0$ norm is obvious. For the second inequality, we assume $\norm{\tilde{\nabla} u_0}_{L^2(S,\tilde{g})}>0$, otherwise there is nothing to prove. It suffices to show
	\begin{equation*}
		\norm{\tilde{\nabla} (u_{0,k}-u_0)}_{L^2(S_k,\tilde{g})} <  \norm{\tilde{\nabla} u_0}_{L^2(S,\tilde{g})}.
	\end{equation*}
	Notice that $\tilde{\nabla} (u_{0,k}-u_0)$ is supported in 
	\begin{equation*}
		\set{(\rho,\theta)\,|\, \frac{1}{k}\leq \rho\leq \frac{1}{k}+2\epsilon}
	\end{equation*}
	and $\abs{\tilde{\nabla} u_0}$, hence $\abs{\tilde{\nabla} u_{0,k}}$ is bounded (by a constant depending on $u_0$ and $k$). Hence, we can make $\norm{\tilde{\nabla} (u_{0,k}-u_0)}_{L^2(S_k,\tilde{g})}$ as small as we want by choosing $\epsilon$ small.
\end{proof}

Now we can define an initial-boundary value problem
\begin{equation}
	\left\{
		\begin{array}[]{ll}
			\partial_t u = a(x,t)\tilde{\triangle} u + b(x,t) u + f(x,t) &\quad \mbox{on } S_k\times [0,T] \\
			u(0)= u_{0,k} &\quad \mbox{on } S_k \\
			\partial_\nu u=0 & \quad \mbox{on }\partial S_k \times [0,T].
		\end{array}
	\right.
	\label{eqn:bvp}
\end{equation}

Proposition \ref{prop:bvp} implies that we have a unique solution $u_k\in C^{2,\alpha}(S_k\times [0,T])$ to \eqref{eqn:bvp} satisfying
	\begin{equation}
		\norm{u_k(t)}_{C^0(S_k)}\leq e^{C_1 t}\left( \norm{u_0}_{C^0(S)} +  \int_0^t e^{-C_1 s}  C_2 ds\right),
		\label{eqn:Skc0}
	\end{equation}
	where $C_1=\norm{b}_{C^0(S_k\times [0,T])}$ and $C_2=\norm{f}_{C^0(S_k\times [0,T])}$, and
	\begin{equation}
		\label{eqn:Skenergy}
		 \int_{S_k} \abs{\tilde{\nabla} u_k}^2(t) d\tilde{V} + \int_0^t \int_{S_k} \abs{\partial_t u_k}^2 dsd\tilde{V}
		\leq C_3 \int_{S_k} \abs{\tilde{\nabla} u_0}^2 d\tilde{V} + C_4 t,
	\end{equation}
	where $C_3$ depends on $\norm{a}_{C^0(S_k\times [0,T])}$ and $C_4$ depends on the $C^0$ norm of $a,b,f,u_k$ and $\lambda$.

	With \eqref{eqn:Skc0} and the Schauder estimate, we obtain some uniform estimate for $u_k$ on $\Omega\times [0,T]$ (for $k$ sufficiently large), for any fixed compact set $\Omega \subset S\setminus \set{p}$. These estimates imply that after taking subsequence if necessary, $u_k$ converges to a solution $u$ to \eqref{eqn:linear1} (defined on $S\times [0,T]$) and the convergence is in $C^{2,\alpha'}$ on $\Omega\times [0,T]$ for $\alpha'\in (0,\alpha)$. 

	\eqref{eqn:Skc0} and \eqref{eqn:Skenergy} pass on to the limit to give \eqref{eqn:linear1c0} and \eqref{eqn:linear1energy} respectively. Moreover, the $\mathcal P^{2,\alpha,T}$ norm of $u$ follows from the equation \eqref{eqn:linear1} and the Schauder interior estimates. This concludes the proof of Theorem \ref{thm:linear}.

\subsection{Maximum principle and the uniqueness of weak solution}\label{subsec:maximum}
In this section, we prove a maximum principle for the weak solution of linear equations and as a corollary, we show that the weak solution obtained in Theorem \ref{thm:linear} is unique among all weak solutions if the time derivative of $a$ is bounded.

The maximum principle proved here uses the so-called energy method, which is well known in the weak solution theory of parabolic equations. There are other types of maximum principles in the literature using barrier functions. One of them is due to Jeffres \cite{jeffres2000uniqueness} and we refer to Section 5 of \cite{jeffres2011k} and Lemma 11.4 in \cite{chen2015bessel}. The other is due to Chen and Wang \cite{chen2014long}, see Theorem 6.2 there.

\begin{lem}\label{lem:maximum}
	Suppose that $u\in \mathcal P^{2,\alpha,[0,T]}$ satisfies pointwisely on $S\setminus \set{p}\times [0,T]$
	\begin{equation*}
		\partial_t u \leq a(x,t) \tilde{\triangle} u + b(x,t) u,
	\end{equation*}
	whose coefficients satisfy
	\begin{equation}\label{eqn:maxab}
		0< \lambda< a(x,t)< \lambda^{-1} \quad \mbox{and} \quad \abs{b},\abs{\partial_t a} < \lambda^{-1}
	\end{equation}
	for some $\lambda>0$.
	Assume that
	\begin{equation}\label{eqn:assmax}
		\max_{t\in [0,T]} \norm{\tilde{\nabla} u}_{L^2(S,\tilde{g})} + \iint_{S\times [0,T]} \abs{\partial_t u} d\tilde{V} dt < \infty.
	\end{equation}
	If $u(0)\leq 0$, then $u(x,t)\leq 0$ for all $t\in [0,T]$.
\end{lem}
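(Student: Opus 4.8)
\medskip
\noindent\textbf{Proof proposal.} The plan is a weighted energy estimate in the spirit of the weak theory in \cite{ladyzhenskaya1968linear}. Write $u_+=\max(u,0)$ and set
\[
 E(t):=\int_S u_+^2\,a^{-1}\,d\tilde V .
\]
I would prove that $E$ is absolutely continuous in $t$ with $E(0)=0$ and $E'(t)\le C(\lambda)\,E(t)$ for a.e. $t$, so that Gronwall forces $E\equiv 0$, hence $u_+\equiv0$ and $u\le0$. The weight $a^{-1}$ is not optional: multiplying the differential inequality $\partial_t u\le a\tilde\triangle u+bu$ by $a^{-1}$ turns $a\tilde\triangle u$ into the bare Laplacian $\tilde\triangle u$, which after integration by parts contributes $-\int_S|\tilde\nabla u_+|^2\le0$; if one differentiated the unweighted $\int_S u_+^2$ the integration by parts would instead leave the term $\int_S 2u_+\,\tilde\nabla a\cdot\tilde\nabla u$, which is uncontrollable because $a\in\mathcal V^{0,\alpha,[0,T]}$ carries no bound on $\tilde\nabla a$. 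The price of the weight is exactly the hypothesis $\abs{\partial_t a}<\lambda^{-1}$ in \eqref{eqn:maxab}: it guarantees $\varphi:=a^{-1}$ satisfies $\max_{S\times[0,T]}(\abs\varphi+\abs{\partial_t\varphi})<\infty$, which is what is needed to differentiate $E$ through Lemma \ref{lem:basic2}.

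First I would apply Lemma \ref{lem:basic2} with $\Psi(s)=(s_+)^2$ (a $C^1$ function, $\Psi'(s)=2\max(s,0)$) and $\varphi=a^{-1}$; hypothesis \eqref{eqn:assmax} supplies $\iint_{S\times[0,T]}\abs{\partial_t u}\,d\tilde V\,ds<\infty$ and \eqref{eqn:maxab} supplies the bounds on $a^{-1}$ and $\partial_t(a^{-1})=-a^{-2}\partial_t a$. This gives, for a.e. $t$,
\[
 E'(t)=\int_S 2u_+\,\partial_t u\,a^{-1}\,d\tilde V+\int_S u_+^2\,\partial_t(a^{-1})\,d\tilde V .
\]
Multiplying $\partial_t u\le a\tilde\triangle u+bu$ by $2u_+a^{-1}\ge0$ and using $u\,u_+=u_+^2$ yields the pointwise inequality $2u_+a^{-1}\partial_t u\le 2u_+\tilde\triangle u+2a^{-1}b\,u_+^2$ on $S\setminus\set p$, which I would integrate over $S$; this is legitimate for a.e. $t$ since the differential inequality shows $u_+\tilde\triangle u$ is bounded below by the integrable function $u_+a^{-1}(\partial_t u-bu)$ (using $\partial_t u(t)\in L^1(S)$ for a.e. $t$, with $u$ and $a^{-1}$ bounded). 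The identity $\int_S u_+\tilde\triangle u=-\int_S|\tilde\nabla u_+|^2\le0$ is then obtained exactly as in the proof of Lemma \ref{lem:basic}: for a.e. $t$ the coarea formula and $\norm{\tilde\nabla u(t)}_{L^2(S)}<\infty$ produce radii $\rho_i\to0$ with $\int_{\partial B_{\rho_i}}\abs{\tilde\nabla u}^2\,d\sigma\le \eps/\rho_i$, integration by parts on the smooth piece $S\setminus B_{\rho_i}$ against the Lipschitz function $u_+$ makes the inner boundary term $O(\norm{u_+}_{C^0}\sqrt\eps)$, and letting $i\to\infty$ and then $\eps\to0$ yields the identity, using $|\tilde\nabla u_+|^2\le|\tilde\nabla u|^2\in L^1$ and the lower bound just noted to pass to the limit.

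Putting these together with \eqref{eqn:maxab} (so $a^{-1}<\lambda^{-1}$, $\abs b<\lambda^{-1}$, $\abs{\partial_t(a^{-1})}<\lambda^{-3}$ and $u_+^2\le\lambda^{-1}u_+^2a^{-1}$) gives, for a.e. $t$,
\[
 E'(t)\le\int_S\big(2a^{-1}b+\partial_t(a^{-1})\big)u_+^2\,d\tilde V\le C(\lambda)\int_S u_+^2\,d\tilde V\le C(\lambda)\,E(t).
\]
Since $E$ is absolutely continuous, nonnegative, and $E(0)=0$ because $u(0)\le0$, Gronwall's inequality gives $E(t)\le E(0)e^{C(\lambda)t}=0$ for every $t\in[0,T]$. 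Hence $u_+(\cdot,t)=0$ almost everywhere on $S$, and since $u$ is continuous on $S\setminus\set p$ this forces $u(x,t)\le0$ there for all $t$.

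The step I expect to require the most care is the integration by parts near the cone point. We are given only a differential \emph{inequality}, so $\tilde\triangle u$ need not be integrable — only bounded below by an integrable function — and the natural cutoff $u_+$ is merely Lipschitz rather than an element of $\mathcal W^{2,\alpha}$, so Lemma \ref{lem:basic} cannot be quoted verbatim; one must reinspect that proof and verify that the good-radii selection, the boundary-term estimate, and the monotone/dominated convergence in the final limit all survive with these weaker inputs. Everything else — the energy identity from Lemma \ref{lem:basic2} and the Gronwall step — is then routine.
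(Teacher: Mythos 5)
Your proof is correct and takes essentially the same route as the paper: the same weighted energy $\int_S (u_+)^2 a^{-1}\,d\tilde V$, the same application of Lemma \ref{lem:basic2} with $\Psi(z)=(\max\{z,0\})^2$ and $\varphi=a^{-1}$, the same integration by parts yielding $-2\int_S\abs{\tilde\nabla u_+}^2\,d\tilde V$, and a Gronwall conclusion from $E(0)=0$. The only divergence is at the footnote level: where the paper handles $u_+\notin\mathcal W^{2,\alpha}$ by mollifying on the cylinder to produce nonnegative $\mathcal W^{2,\alpha}$ approximants and then passing to the limit via Fatou, you instead rerun the proof of Lemma \ref{lem:basic} directly with $u_+$ as the test function, which is equally valid since that argument only uses boundedness and finite Dirichlet energy of the test function together with the lower bound on $u_+\tilde\triangle u$ by an integrable function.
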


\begin{rem}
	For $u\in \mathcal V^{2,\alpha,[0,T]}$, the assumption \eqref{eqn:assmax} is trivially satisfied.
\end{rem}

\begin{proof}
	By setting $\Psi(z)=(\max\set{z,0})^2$ and $\varphi(x,t)= a^{-1}$, Lemma \ref{lem:basic2} implies that  $\int_S (u_+)^2 a^{-1} d\tilde{V}$ is absolutely continuous so that we can compute
	\begin{eqnarray*}
		\frac{d}{dt} \int_S (u_+)^2 a^{-1} d\tilde{V} &=&  \int_S 2 u_+ (\partial_t u) a^{-1}d\tilde{V} + \int_S (u_+)^2 \partial_t (a^{-1}) d\tilde{V} \\
		&\leq& \int_S 2 u_+ \tilde{\triangle} u d\tilde{V} + C \int_S (u_+)^2 d\tilde{V} \\
		&\leq & -2 \int_S  \abs{\tilde\nabla u_+}^2  d\tilde{V} + C \int_S (u_+)^2  a^{-1} d\tilde{V}. 
	\end{eqnarray*}
	Here in the above computation, we used Lemma \ref{lem:basic} to justify the integration by parts.
	\footnote{
		A little more effort is necessary here because $u_+$ is not in $\mathcal W^{2,\alpha}$. 
		
		We note that by Lemma 7.6 of \cite{trudinger1983elliptic} $\int_S \abs{\tilde{\nabla} u_+}^2 d\tilde{V}$ is finite, which implies that there exists a sequence of nonnegative $u_i\in \mathcal W^{2,\alpha}$ such that $u_i$ converges to $u_+$ in the sense that
		\begin{equation}\label{eqn:goodu}
			\lim_{i\to \infty} \sup_S \abs{u_i-u_+} + \int_S \abs{\tilde{\nabla} (u_i-u_+)}^2 d\tilde{V} =0.
		\end{equation}
		In fact, by setting $\rho=e^{s}$, we can regard $u_+$ as a function of $(s,\theta)$, which (by the definition of $\mathcal W^{2,\alpha}$) satisfies
		\begin{equation}\label{eqn:goodcylinder}
			\sup_{k\in \mathbb N} \norm{u_+(s-k,\theta)}_{C^{2,\alpha}([0,1]\times S^1)} + \int_{(-\infty,0]\times S^1} \abs{\partial_s u_+}^2 + \abs{\partial_\theta u_+}^2 dtd\theta <C.
		\end{equation}
	The standard mollification on the infinite cylinder $(-\infty,0]\times S^1$ gives nonnegative $u_i$, which approximates $u_+$ in the $C^0$ norm and the $L^2$ norm of gradient. It is exactly \eqref{eqn:goodu} if we regard $u_i$ as a function of $(\rho,\theta)$ (by the conformal invariance of the Dirichlet energy). Moreover, $u_i\in \mathcal W^{2,\alpha}$ because it also satisfies \eqref{eqn:goodcylinder}. By the Fubini theorem, \eqref{eqn:assmax} and the inequality satisfied by $u$, $\tilde{\triangle} u$ is bounded from below by some integrable function for almost every $t$.

	Since $u_i$ is nonnegative and uniformly bounded, we know $u_i\tilde{\triangle} u$ is bounded from below by some integrable function so that we can apply Lemma \ref{lem:basic} to get
	\begin{equation*}
		\int_S 2 u_i \tilde{\triangle} u + 2 \tilde{\nabla} u_i \cdot \tilde{\nabla} u d\tilde{V}=0.
	\end{equation*}
	Taking $i\to \infty$ yields
	\begin{equation*}
		\int_S 2 u_+ \tilde{\triangle} u + 2 \tilde{\nabla} u_+ \cdot \tilde{\nabla} u d\tilde{V}\leq 0.
	\end{equation*}
	Here we used the Fatou's lemma for the sequence $u_i \tilde{\triangle} u$, which is uniformly bounded from below by some integrable function.
}
	Since $\int_S (u_+)^2 a^{-1} d\tilde{V}$ is zero when $t=0$, it is zero for all $t\in [0,T]$.
\end{proof}

As a corollary of Lemma \ref{lem:maximum}, we have the following ODE comparison lemma.
\begin{lem}\label{lem:odecompare}
	Suppose that $u\in \mathcal P^{2,\alpha,[0,T]}$ satisfies pointwisely on $S\setminus \set{p}\times [0,T]$
	\begin{equation}\label{eqn:odeu}
		\partial_t u = a(x,t) \tilde{\triangle} u + b(x,t) u + f(x,t),
	\end{equation}
	whose coefficients satisfy \eqref{eqn:maxab}.
	If $u$ satisfies \eqref{eqn:assmax}, then for $t\in [0,T]$,
	\begin{equation*}
		\norm{u(t)}_{C^0(S)}\leq e^{C_1 t} \left( \norm{u(0)}_{C^0(S)} + \int_0^t e^{-C_1 s} \norm{f(s)}_{C^0(S)} ds \right)
	\end{equation*}
	where $C_1= \norm{b}_{C^0(S\times [0,T])}$.
\end{lem}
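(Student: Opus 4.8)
The plan is to mimic the proof of \eqref{eqn:Mc0} in Proposition \ref{prop:bvp}, replacing the classical parabolic maximum principle (which is not available at the cone point) by the energy-method maximum principle of Lemma \ref{lem:maximum}. Let $h:[0,T]\to\Real$ be the solution of the scalar ODE
\begin{equation*}
	h'(t)=C_1 h(t)+\norm{f(t)}_{C^0(S)},\qquad h(0)=\norm{u(0)}_{C^0(S)},
\end{equation*}
so that $h(t)=e^{C_1 t}\left(\norm{u(0)}_{C^0(S)}+\int_0^t e^{-C_1 s}\norm{f(s)}_{C^0(S)}\,ds\right)$ is exactly the right-hand side of the claimed estimate; in particular $h\geq 0$ on $[0,T]$. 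It suffices to prove $\abs{u}\leq h$ pointwise on $S\times[0,T]$.

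First I would set $w=u-h$. Since $h$ depends only on $t$, we have $\tilde{\triangle}w=\tilde{\triangle}u$ and $\tilde{\nabla}w=\tilde{\nabla}u$, and a direct computation from \eqref{eqn:odeu} gives
\begin{equation*}
	\partial_t w=a(x,t)\tilde{\triangle}w+b(x,t)w+(b(x,t)-C_1)h+\left(f(x,t)-\norm{f(t)}_{C^0(S)}\right).
\end{equation*}
Because $b\leq\norm{b}_{C^0(S\times[0,T])}=C_1$ and $h\geq 0$, the third term is $\leq 0$; the last term is $\leq 0$ by definition of the sup norm. Hence $w$ satisfies the homogeneous inequality $\partial_t w\leq a\tilde{\triangle}w+bw$ pointwise on $(S\setminus\set{p})\times[0,T]$; absorbing $f$ into $h$ is precisely what allows us to reduce to the $f=0$ form assumed in Lemma \ref{lem:maximum}.

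Next I would verify that Lemma \ref{lem:maximum} applies to $w$. The coefficients $a,b$ are unchanged, so \eqref{eqn:maxab} holds; from $\tilde{\nabla}w=\tilde{\nabla}u$ and \eqref{eqn:assmax} we get $\max_{t}\norm{\tilde{\nabla}w}_{L^2(S,\tilde{g})}<\infty$, while $\partial_t w=\partial_t u-h'$ with $h'$ bounded (since $\norm{f(t)}_{C^0(S)}$ is bounded) and $S$ of finite $\tilde{g}$-volume, so $\iint_{S\times[0,T]}\abs{\partial_t w}\,d\tilde{V}\,dt<\infty$; finally $w(0)=u(0)-\norm{u(0)}_{C^0(S)}\leq 0$. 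The one point needing a line of care is $w\in\mathcal P^{2,\alpha,[0,T]}$: $u$ is by hypothesis, and $h$ is a function of $t$ alone whose time-regularity is inherited from that of $t\mapsto\norm{f(t)}_{C^0(S)}$; should $f$ fail to be parabolically H\"older, one can instead repeat verbatim the short energy computation in the proof of Lemma \ref{lem:maximum}, with $\Psi(z)=(\max\set{z,0})^2$ and $\varphi=a^{-1}$, applied to $w$ in place of $u$. Either way Lemma \ref{lem:maximum} yields $w\leq 0$, i.e.\ $u\leq h$.

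Finally, applying the same argument to $-u$, which satisfies \eqref{eqn:odeu} with $f$ replaced by $-f$ and has $\norm{(-u)(0)}_{C^0(S)}=\norm{u(0)}_{C^0(S)}$ and $\norm{-f(t)}_{C^0(S)}=\norm{f(t)}_{C^0(S)}$, gives $-u\leq h$, hence $\abs{u}\leq h$ on $S\times[0,T]$, which is the assertion. The only obstacle worth flagging is the bookkeeping in the previous paragraph to bring $w$ under the hypotheses of Lemma \ref{lem:maximum}; all the genuine difficulty associated with the cone point has already been absorbed into that lemma, so nothing deeper is needed here.
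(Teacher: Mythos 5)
Your proposal is correct and follows essentially the same route as the paper: define $h$ as the solution of the comparison ODE, subtract to get $\partial_t(u-h)\leq a\tilde{\triangle}(u-h)+b(u-h)$, and apply Lemma \ref{lem:maximum} to $u-h$ (and symmetrically to $-u-h$). Your extra bookkeeping — checking \eqref{eqn:assmax} for $w$ and noting that one can run the energy computation of Lemma \ref{lem:maximum} directly on $w$ if the time-regularity of $h$ is in doubt — is a careful elaboration of steps the paper leaves implicit, not a different argument.
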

\begin{proof}
	The proof is similar to that of Proposition \ref{prop:bvp}. Let $h(t)$ be the solution to the ODE
	\begin{equation}\label{eqn:odeh}
		\frac{d}{dt}h= C_1 h(t) + \norm{f(t)}_{C^0(S)} \quad \mbox{and} \quad h(0)= \norm{u(0)}_{C^2(S)}
	\end{equation}
	and notice that $h(t)$ is exactly the right hand side of the inequality we want to prove.
	Subtracting \eqref{eqn:odeh} from \eqref{eqn:odeu} yields
	\begin{equation*}
		\partial_t (u-h) \leq a \tilde{\triangle} (u-h) + b(u-h).
	\end{equation*}
	Applying Lemma \ref{lem:maximum} to the above inequality shows that $u\leq h$ on $S\times [0,T]$. The other side of the inequality can be proved similarly.
\end{proof}

Next, we give a theorem about the uniqueness of the weak solution. Once we know a weak solution is `the' weak solution, then it coincides with the solution given by Theorem \ref{thm:linear} and satisfies some linear estimates.
\begin{thm}
	\label{thm:unique} Suppose that $u$ is a weak solution on $S\times [0,T]$ to the equation
	\begin{equation}\label{eqn:linear2}
		\partial_t u = a(x,t)\tilde{\triangle} u + b(x,t) u + f(x,t)
	\end{equation}
	with $u(0)=u_0\in \mathcal W^{2,\alpha}$. Assume that $a,b,f$ are in $\mathcal V^{0,\alpha,[0,T]}$ and 
	\begin{equation}\label{eqn:moreover}
		\max_{S\times [0,T]} \abs{\partial_t a}< \infty.
	\end{equation}
	Then $u$ is the same as the solution given in Theorem \ref{thm:linear}. In particular, \eqref{eqn:linear1c0} and \eqref{eqn:linear1energy} holds for $u$.
\end{thm}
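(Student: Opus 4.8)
The plan is to prove uniqueness by showing the difference of two weak solutions satisfies the hypotheses of Lemma \ref{lem:maximum}, applied in both directions. Let $u$ be a weak solution as in the statement, and let $u^*$ be the solution constructed in Theorem \ref{thm:linear}. Set $w = u - u^*$. Both $u$ and $u^*$ lie in $\mathcal P^{2,\alpha,[0,T]}$ (for $u^*$ this is part of Theorem \ref{thm:linear}; for $u$ it is part of being a weak solution), so $w \in \mathcal P^{2,\alpha,[0,T]}$. Since both functions satisfy \eqref{eqn:linear2} classically on $S\setminus\set{p}\times[0,T]$ with the same $a,b,f$, subtraction gives
\begin{equation*}
	\partial_t w = a(x,t)\,\tilde{\triangle} w + b(x,t)\, w \quad\text{on } S\setminus\set{p}\times[0,T],
\end{equation*}
and $w(0) = u_0 - u_0 = 0$. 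The coefficient bounds \eqref{eqn:maxab} hold: the two-sided bound on $a$ follows from $0<\lambda<\min a$ together with $a \in \mathcal V^{0,\alpha,[0,T]} \subset C^0(S\times[0,T])$, the bound on $b$ from $b \in \mathcal V^{0,\alpha,[0,T]}$, and the bound on $\partial_t a$ is exactly the extra hypothesis \eqref{eqn:moreover}. (One should note that Lemma \ref{lem:maximum} is stated with the constant $\lambda^{-1}$ as a single common bound; since all three quantities are finite, one simply replaces $\lambda$ by a smaller positive number if necessary.)

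Next I would verify the integral condition \eqref{eqn:assmax} for $w$. Both $u$ and $u^*$ satisfy $\abs{\cdot}_{\mathcal V^{[0,T]}}<\infty$ — for $u$ because it is a weak solution (Definition \ref{defn:weak}), for $u^*$ because $u^*\in\mathcal V^{2,\alpha,[0,T]}$ by Theorem \ref{thm:linear}. In particular $\max_{t}\norm{\tilde\nabla u}_{L^2}$, $\max_t\norm{\tilde\nabla u^*}_{L^2}$, $\iint \abs{\partial_t u}^2$, and $\iint\abs{\partial_t u^*}^2$ are all finite; by the triangle inequality in $L^2(S,\tilde g)$ and in $L^2(S\times[0,T])$, and by Cauchy--Schwarz to pass from the finite $L^2$-in-time bound on $\partial_t w$ to an $L^1$-in-time bound, the quantity in \eqref{eqn:assmax} is finite for $w$. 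Thus Lemma \ref{lem:maximum} applies to $w$ with initial data $w(0)=0\le 0$, yielding $w\le 0$ on $S\times[0,T]$. Applying the same argument to $-w$ (which satisfies the same differential inequality — in fact equality — and the same integral bounds) gives $w\ge 0$, hence $w\equiv 0$, i.e. $u=u^*$.

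Finally, since $u$ coincides with the solution from Theorem \ref{thm:linear}, the estimates \eqref{eqn:linear1c0} and \eqref{eqn:linear1energy} proved there hold verbatim for $u$, which is the last assertion. I do not expect a genuine obstacle here: the proof is essentially bookkeeping, checking that the hypotheses of Lemma \ref{lem:maximum} transfer to the difference. The only mild subtlety — and the one point worth stating explicitly — is the passage from the $L^2$-in-spacetime control of $\partial_t w$ (which is what $\abs{\cdot}_{\mathcal V^{[0,T]}}$ provides) to the $L^1$-in-spacetime control demanded in \eqref{eqn:assmax}; this is immediate since $S\times[0,T]$ has finite $\tilde g$-volume, but it is the place where one uses that we are on a compact surface rather than an unbounded cone. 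The role of hypothesis \eqref{eqn:moreover} is precisely to supply the $\partial_t a$ bound that Lemma \ref{lem:maximum} requires but that membership in $\mathcal V^{0,\alpha,[0,T]}$ alone does not guarantee.
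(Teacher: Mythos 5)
Your proposal is correct and follows essentially the same route as the paper: subtract the solution $\tilde u$ produced by Theorem \ref{thm:linear}, observe that the difference is a weak solution of the homogeneous equation with zero initial data, and apply Lemma \ref{lem:maximum} to both the difference and its negative, after which the estimates \eqref{eqn:linear1c0} and \eqref{eqn:linear1energy} transfer automatically. The only difference is that you spell out the verification of \eqref{eqn:maxab} and \eqref{eqn:assmax} (including the Cauchy--Schwarz passage from the $L^2$ bound on $\partial_t$ to the $L^1$ bound), which the paper leaves implicit.
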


The last part of the above theorem is very useful in later proofs of this paper. We use it to obtain $C^0$ and $\abs{\cdot}_{\mathcal V^{[0,T]}}$ estimates for weak solutions as long as \eqref{eqn:moreover} holds.

\begin{proof}
	Let $\tilde{u}$ be the weak solution to \eqref{eqn:linear2} given by Theorem \ref{thm:linear}. Then $u-\tilde{u}$ is a weak solution to the homogeneous equation
	\begin{equation*}
		\partial_t (u-\tilde{u}) =a(x,t) \tilde{\triangle} (u-\tilde{u}) + b(x,t)(u-\tilde{u})
	\end{equation*}
	with $(u-\tilde{u})(0)=0$. Applying Lemma \ref{lem:maximum} to both $u-\tilde{u}$ and $\tilde{u}-u$ proves the theorem. 
\end{proof}

For future use, we prove an estimate on the growth of $L^p$ norm for a weak solution to the linear equation.
	\begin{lem}
	\label{lem:fora}
	Suppose $a,b,f,u_0$ satisfy the same assumption as in Theorem \ref{thm:linear} and we further assume that
	\begin{equation}
		\max_{S\times [0,T]} \abs{\partial_t a}< \infty.
		\label{eqn:fora1}
	\end{equation}
	If $u$ is a weak solution to \eqref{eqn:linear1}, then for any $p>1$, we have
	\begin{equation*}
		\int_S \abs{u}^p(t) d\tilde{V} \leq C, \quad \forall t\in [0,T] 
	\end{equation*}
	where $C$ depends on $p$, $T$, the $C^0$ norm of $a, a^{-1},b,f,\partial_t a$ and $\int_S \abs{u_0}^p d\tilde{V}$.
\end{lem}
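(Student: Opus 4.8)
The plan is to establish a differential inequality for the weighted $L^p$ mass $\int_S \abs{u}^p a^{-1}\,d\tilde{V}$ and close it with Grönwall's inequality. Since the nonlinearity $z\mapsto \abs{z}^p$ is in general not $C^2$ (and not even $C^1$ at the origin when $1<p<2$), I would carry out the computation with the smooth regularization $\Psi_\eps(z)=(z^2+\eps^2)^{p/2}$, whose derivatives satisfy the pointwise bounds $\abs{\Psi_\eps'(z)\,z}\le p\,\Psi_\eps(z)$, $\abs{\Psi_\eps'(z)}\le p\,\Psi_\eps(z)^{(p-1)/p}$, and $\Psi_\eps''(z)=p(z^2+\eps^2)^{p/2-2}\bigl((p-1)z^2+\eps^2\bigr)\ge 0$, and only let $\eps\to 0$ at the very end.

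First I would note that, because assumption \eqref{eqn:fora1} is exactly \eqref{eqn:moreover}, Theorem \ref{thm:unique} applies and $u$ coincides with the weak solution constructed in Theorem \ref{thm:linear}; in particular $u\in \mathcal V^{2,\alpha,[0,T]}$. From this I extract the facts needed below: $u$ is bounded on $S\times[0,T]$, $\max_{t}\norm{\tilde{\nabla}u(t)}_{L^2(S,\tilde{g})}<\infty$, and $\iint_{S\times[0,T]}\abs{\partial_t u}^2\,d\tilde{V}\,ds<\infty$ — hence also $\iint_{S\times[0,T]}\abs{\partial_t u}\,d\tilde{V}\,ds<\infty$ and, via $\tilde{\triangle}u=a^{-1}(\partial_t u-bu-f)$, $\tilde{\triangle}u(t)\in L^1(S)$ for a.e.\ $t$. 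Moreover, for a.e.\ $t$ one has $u(t)\in\mathcal W^{2,\alpha}$, and since $\Psi_\eps$ is smooth and $u$ is bounded, $\Psi_\eps'(u(t))\in\mathcal W^{2,\alpha}$ as well: it lies in $\mathcal E^{2,\alpha}$ by the chain rule and has finite Dirichlet energy because $\abs{\tilde{\nabla}\Psi_\eps'(u)}=\abs{\Psi_\eps''(u)}\,\abs{\tilde{\nabla}u}$ with $\Psi_\eps''$ bounded on the range of $u$.

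Next I would differentiate $\int_S\Psi_\eps(u)a^{-1}\,d\tilde{V}$. Applying Lemma \ref{lem:basic2} with $\Psi=\Psi_\eps$ and $\varphi=a^{-1}$ — admissible exactly as in the proof of Lemma \ref{lem:maximum}, using $a\ge\lambda$ and the boundedness of $\partial_t a$, so that $\abs{\partial_t(a^{-1})}\le\lambda^{-2}\norm{\partial_t a}_{C^0}$ — shows this quantity is absolutely continuous with
\[
\frac{d}{dt}\int_S\Psi_\eps(u)a^{-1}\,d\tilde{V}=\int_S\Psi_\eps'(u)\,\partial_t u\,a^{-1}\,d\tilde{V}+\int_S\Psi_\eps(u)\,\partial_t(a^{-1})\,d\tilde{V}
\]
for a.e.\ $t$. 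Substituting $\partial_t u=a\,\tilde{\triangle}u+bu+f$ and integrating by parts via Lemma \ref{lem:basic} (legitimate since $u,\Psi_\eps'(u)\in\mathcal W^{2,\alpha}$ and $\tilde{\triangle}u\in L^1(S)$ for a.e.\ $t$),
\[
\int_S\Psi_\eps'(u)\,\tilde{\triangle}u\,d\tilde{V}=-\int_S\Psi_\eps''(u)\,\abs{\tilde{\nabla}u}^2\,d\tilde{V}\le 0,
\]
so the second-order term only helps. The remaining zeroth-order terms are controlled by the pointwise bounds above, together with $\lambda\le a\le\lambda^{-1}$ (to trade $\int_S\Psi_\eps(u)\,d\tilde{V}$ for $\int_S\Psi_\eps(u)a^{-1}\,d\tilde{V}$) and Young's inequality $\Psi_\eps(z)^{(p-1)/p}\le\tfrac{p-1}{p}\Psi_\eps(z)+\tfrac1p$, yielding
\[
\frac{d}{dt}\int_S\Psi_\eps(u)a^{-1}\,d\tilde{V}\le C_5\int_S\Psi_\eps(u)a^{-1}\,d\tilde{V}+C_6,
\]
where $C_5,C_6$ depend only on $p$ and the $C^0$ norms of $a,a^{-1},b,f,\partial_t a$ (and the fixed volume of $S$).

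Finally, Grönwall's inequality gives $\int_S\Psi_\eps(u)(t)a^{-1}(t)\,d\tilde{V}\le e^{C_5 T}\bigl(\lambda^{-1}\int_S\Psi_\eps(u_0)\,d\tilde{V}+C_6/C_5\bigr)$, so $\int_S\abs{u}^p(t)\,d\tilde{V}\le\lambda^{-1}\int_S\Psi_\eps(u)(t)a^{-1}(t)\,d\tilde{V}$ is bounded uniformly in $\eps$; letting $\eps\to 0$ and using dominated convergence (here $u_0$ is bounded and $S$ has finite volume) replaces $\int_S\Psi_\eps(u_0)\,d\tilde{V}$ by $\int_S\abs{u_0}^p\,d\tilde{V}$, which yields the assertion with a constant $C$ of the stated dependence. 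I expect the only delicate point to be the rigorous integration by parts against the nonsmooth power nonlinearity — which is precisely why the $\eps$-regularization is introduced — and, relatedly, checking that the regularization serves only as a technical device (it uses the boundedness of $u$) so that the final constant depends on the initial data solely through $\int_S\abs{u_0}^p\,d\tilde{V}$ and not through $\norm{u_0}_{C^0}$.
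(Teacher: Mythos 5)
Your proof is correct, but it follows a genuinely different route from the paper's. The paper never regularizes the nonlinearity $z\mapsto\abs{z}^p$: instead it uses Theorem \ref{thm:linear} and Theorem \ref{thm:unique} (this is where \eqref{eqn:fora1} enters, just as in your first step) to decompose $u=u^+-u^-+u_f$, where $u_f$ solves the inhomogeneous equation with zero initial data and is therefore bounded in $C^0$ purely in terms of the coefficients — which disposes of $f$ with no Young-type absorption — and $u^{\pm}$ solve the homogeneous equation with initial data $u_0^{\pm}\in\mathcal W^{2,\alpha}$ chosen positive and satisfying $\int_S\abs{u_0^{\pm}}^p d\tilde{V}\le\int_S\abs{u_0}^p d\tilde{V}+1$; by Lemma \ref{lem:maximum} $u^{\pm}\ge 0$, so $(u^{\pm})^p$ is directly admissible and satisfies $\partial_t(u^+)^p\le a\tilde{\triangle}(u^+)^p+pb(u^+)^p$, after which the same weighted quantity $\int_S(u^+)^p a^{-1}d\tilde{V}$ is differentiated via Lemma \ref{lem:basic2}, the Laplacian term is killed by $\int_S\tilde{\triangle}(u^+)^p d\tilde{V}=0$ (Lemma \ref{lem:basic}), and Gr\"onwall concludes. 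Your $\Psi_\eps$-regularization keeps the original $u$, uses convexity of $\Psi_\eps$ to discard the gradient term after integrating by parts, and handles $f$ by Young's inequality; the cost is the extra bookkeeping (verifying $\Psi_\eps'(u(t))\in\mathcal W^{2,\alpha}$ and $\tilde{\triangle}u(t)\in L^1(S)$ for a.e.\ $t$ so that Lemmas \ref{lem:basic} and \ref{lem:basic2} apply, plus the $\eps\to0$ limit), while the gain is that no auxiliary solutions, no positivity/splitting of the initial data, and no maximum principle beyond the uniqueness theorem are needed. Your closing remark is the right point to scrutinize and it checks out: the boundedness of $u$ (inherited from $\norm{u_0}_{C^0}$ through \eqref{eqn:linear1c0}) is used only qualitatively to justify the computations, whereas the quantitative constants come from the universal pointwise bounds on $\Psi_\eps'$, $\Psi_\eps''$ and the $C^0$ norms of $a,a^{-1},b,f,\partial_t a$, so the final constant depends on the initial data only through $\int_S\abs{u_0}^p d\tilde{V}$, exactly as in the paper.
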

\begin{proof}
	We will split $u$ into three parts, $u^+$, $u^-$ and $u_f$ and estimate the $L^p$ norm of them separately. To see this, we let $u_f$ be the solution to
	\begin{equation*}
		\partial_t u_f = a \tilde{\triangle} u_f + b u_f + f
	\end{equation*}
	with $u_f(0)=0$ given by Theorem \ref{thm:linear}. For the initial data $u_0$, we write
	\begin{equation*}
		u_0= u_0^+ - u_0^-,
	\end{equation*}
	where $u_0^+$ and $u_0^-$ are two functions in $\mathcal W^{2,\alpha}$ such that they are bounded from below by some positive number and 
	\begin{equation*}
		\int_S \abs{u^{\pm}_0}^p d\tilde{V} \leq  \int_S \abs{u}^p d\tilde{V} +1.
	\end{equation*}
	We can obtain $u_0^\pm$ by splitting $u_0$ into its positive and negative parts, adding a small positive constant and smoothing them out if necessary. Theorem \ref{thm:linear} gives us $u^{\pm}(t)$ satisfying
	\begin{equation*}
		\partial_t u^\pm(t) = a \tilde{\triangle} u^\pm + b u^\pm
	\end{equation*}
	with $u^\pm(0)=u_0^\pm$. By Theorem \ref{thm:unique}, we know $u=u^+-u^-+u_f$. Moreover, $u_f$ is bounded on $S\times [0,T]$, so it remains to bound the $L^p$ norm of $u^{\pm}(t)$. Since the proof is the same, we give only the proof of $u^+$. It is a consequence of Lemma \ref{lem:maximum} that $u^+(t)$ is nonnegative. Direct computation shows away from $p$
	\begin{equation}\label{eqn:uplusp}
		\partial_t (u^+)^p \leq a \tilde{\triangle} (u^+)^p + p b (u^+)^p.
	\end{equation}
	By the fact that $u^+\in \mathcal V^{2,\alpha,[0,T]}$ and that $\partial_t a$ is bounded, we have
	\begin{equation} \label{eqn:uplust}
		\iint_{S\times [0,T]} \abs{\partial_t ( (u^+)^p a^{-1})} d\tilde{V} ds < +\infty,
	\end{equation}
	which justifies the following computation
	\begin{eqnarray*}
		\frac{d}{dt} \int_S (u^+)^p a^{-1} d\tilde{V} &=& \int_S \partial_t (u^+)^p a^{-1} + (u^+)^p \partial_t (a^{-1})d\tilde{V}  \\
		&\leq& \int_S \tilde{\triangle} (u^+)^p d\tilde{V} + C \int_S (u^+)^p a^{-1} d\tilde{V} \\
		&\leq&  C \int_S (u^+)^p a^{-1} d\tilde{V}.
	\end{eqnarray*}
	Here in the last line above, we used Lemma \ref{lem:basic} to see that $\int_S \tilde{\triangle} (u^+)^p d\tilde{V}$ vanishes.\footnote{In this case, $u^+$ and hence $(u^+)^p$ is in $\mathcal W^{2,\alpha}$ and by \eqref{eqn:uplusp} and \eqref{eqn:uplust}, $\tilde{\triangle} (u^+)^p$ is bounded from below by an integrable function for almost every $t$.} The control over $\int_S (u^+)^p d\tilde{V}$ follows from the above inequality by integration over time.
\end{proof}

\subsection{Estimates of the time derivative of solution}\label{subsec:timederivative}

In the study of PDE, when we assume that the coefficients in the equation and the initial boundary data are smoother (or have stronger estimates), we naturally expect that the solution is smoother(or have stronger estimates). This is usually proved by taking derivative of the equation. In the conical setting, there is no natural spacial derivatives to take near a singular point. However, we can still take the time derivative.

For example, if we take the time derivative of the equation
\begin{equation}
	\partial_t u = a(x,t) \tilde{\triangle} u +  f(x,t)
	\label{eqn:nob}
\end{equation}
and write $w$ for $\partial_t u$, we obtain
\begin{equation}
	\partial_t w = a \tilde{\triangle} w + \partial_t a \frac{w-f}{a} + \partial_t f.
	\label{eqn:dt}
\end{equation}

\begin{thm}
	\label{thm:stronglinear}
	In addition to the assumptions that $a,f\in \mathcal V^{0,\alpha,[0,T]}$, $u_0\in \mathcal W^{2,\alpha}$ and $\norm{a^{-1}}_{C^0(S\times [0,T])} \leq C$, which is assumed in Theorem \ref{thm:linear}, if $\partial_t a$, $\partial_t f$ are in $\mathcal V^{0,\alpha,[0,T]}$ and 
	\begin{equation*}
		w_0:=a(x,0)\tilde{\triangle} u_0 + f(x,0) \in \mathcal W^{2,\alpha},
	\end{equation*}
	then $\partial_t u$ is a weak solution to \eqref{eqn:dt}. Moreover, it is the unique weak solution to \eqref{eqn:dt} and 
	$\norm{\partial_t u}_{\mathcal V^{2,\alpha,[0,T]}}$ is bounded by a constant depending on the $\mathcal V^{0,\alpha,[0,T]}$ norm of $a,f,\partial_t a,\partial_t f$, the $\mathcal W^{2,\alpha}$ norm of $u_0$ and $w_0$ and $\norm{a^{-1}}_{C^0(S\times [0,T])}$. 
\end{thm}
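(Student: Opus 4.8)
The plan is to produce the solution of the differentiated equation \eqref{eqn:dt} directly from Theorem \ref{thm:linear}, then identify its time--primitive with $u$ using the uniqueness statement Theorem \ref{thm:unique}, which is available because $\partial_t a$ is bounded.

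First I would recast \eqref{eqn:dt} as $\partial_t w = a\tilde{\triangle}w + \hat b\, w + \hat f$ with $\hat b = \partial_t a/a$ and $\hat f = \partial_t f - (\partial_t a)f/a$, and check $\hat b,\hat f\in\mathcal V^{0,\alpha,[0,T]}$. This is routine from the algebra of the spaces: a product of two $\mathcal V^{0,\alpha,[0,T]}$ functions stays in $\mathcal V^{0,\alpha,[0,T]}$ (the weighted H\"older bound is multiplicative on each dyadic annulus, and the $L^2$ bounds on $\tilde\nabla(\cdot)$ and $\partial_t(\cdot)$ follow from the boundedness of $\mathcal P^{0,\alpha,[0,T]}$ functions together with the Leibniz rule), while $1/a\in\mathcal V^{0,\alpha,[0,T]}$ because $a$ is H\"older and bounded below by $\lambda$, so $1/a$ is H\"older with the same exponent and $\tilde\nabla(1/a)=-\tilde\nabla a/a^2$, $\partial_t(1/a)=-\partial_t a/a^2$ lie in $L^2$. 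Since $w_0\in\mathcal W^{2,\alpha}$ and $0<\lambda<\min a$, Theorem \ref{thm:linear} gives a weak solution $\tilde w\in\mathcal V^{2,\alpha,[0,T]}$ of \eqref{eqn:dt} with $\tilde w(0)=w_0$ and $\|\tilde w\|_{\mathcal V^{2,\alpha,[0,T]}}$ bounded in terms of the stated data; and since the leading coefficient of \eqref{eqn:dt} is again $a$ with $\partial_t a$ bounded, Theorem \ref{thm:unique} shows $\tilde w$ is the unique weak solution of \eqref{eqn:dt}.

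Next I would set $v(x,t):=u_0(x)+\int_0^t \tilde w(x,s)\,ds$ and verify that $v$ is a weak solution of \eqref{eqn:nob} with $v(0)=u_0$. As $\tilde w\in\mathcal V^{2,\alpha,[0,T]}$ is $C^{2,\alpha}$ in space uniformly in $t$ away from $p$, one may differentiate under the integral sign: $v$ is $C^2$ off $p$, $\partial_t v=\tilde w$, and $\tilde\triangle v=\tilde\triangle u_0+\int_0^t\tilde\triangle\tilde w(\cdot,s)\,ds$ is $C^1$ in $t$ with $\partial_t\tilde\triangle v=\tilde\triangle\tilde w$. Put $\phi:=a\tilde\triangle v+f-\tilde w$ on $(S\setminus\set{p})\times[0,T]$. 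Then $\phi(\cdot,0)=a(\cdot,0)\tilde\triangle u_0+f(\cdot,0)-w_0=0$ by the very definition of $w_0$, and differentiating in $t$ and substituting \eqref{eqn:dt} for $\tilde w$ makes the $\tilde\triangle\tilde w$ and $\partial_t f$ terms cancel, leaving $\partial_t\phi=(\partial_t a/a)\phi$. For each fixed $x\neq p$ this is a homogeneous linear ODE in $t$ with zero initial value, so $\phi\equiv0$; hence $\partial_t v=\tilde w=a\tilde\triangle v+f$, i.e. $v$ solves \eqref{eqn:nob} classically off $p$. Finally $\max_{t}|v(t)|_{\mathcal W}\le|u_0|_{\mathcal W}+T\max_{t}|\tilde w(t)|_{\mathcal W}<\infty$ and $\iint_{S\times[0,T]}|\partial_t v|^2\,d\tilde V\,ds=\iint_{S\times[0,T]}|\tilde w|^2\,d\tilde V\,ds<\infty$ (since $\tilde w$ is bounded and $S$ is compact), so $|v|_{\mathcal V^{[0,T]}}<\infty$ and $v$ is a weak solution in the sense of Definition \ref{defn:weak}.

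Because $\partial_t a$ is bounded, Theorem \ref{thm:unique} also applies to \eqref{eqn:nob}, so its unique weak solution with initial data $u_0$ is simultaneously the $u$ of the statement and the $v$ just built; therefore $\partial_t u=\partial_t v=\tilde w$, which yields all three conclusions at once: $\partial_t u$ is a, in fact the, weak solution of \eqref{eqn:dt}, and $\|\partial_t u\|_{\mathcal V^{2,\alpha,[0,T]}}=\|\tilde w\|_{\mathcal V^{2,\alpha,[0,T]}}$ is controlled as claimed. I expect the only delicate point to be the step that upgrades ``$\phi$ vanishes pointwise off $p$'' to the genuine weak--solution status of $v$: one must be sure $v$ carries exactly the regularity (spatial $C^{2,\alpha}$, and $t$-differentiability of $\tilde\triangle v$) needed to run the ODE argument and that $|v|_{\mathcal V^{[0,T]}}$ is finite --- both follow from $\tilde w\in\mathcal V^{2,\alpha,[0,T]}$, and this is precisely why it is cleaner to construct the primitive $v$ than to attempt to differentiate $u$ directly (difference quotients would also work, but then one must pass uniform estimates to the limit by hand).
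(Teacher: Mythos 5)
Your proposal is correct and is essentially the paper's own argument: construct the solution $\tilde w$ of the differentiated equation \eqref{eqn:dt} with initial data $w_0$ via Theorem \ref{thm:linear}, form its time--primitive, show the primitive solves \eqref{eqn:nob} classically off $p$ through the homogeneous ODE satisfied by your $\phi$ (the paper's $H$, up to sign), verify the $\mathcal V^{[0,T]}$-finiteness of the primitive, and identify it with $u$ by Theorem \ref{thm:unique}, which applies since $\partial_t a$ is bounded. The only point you add is the explicit check that the recast coefficients $\hat b=\partial_t a/a$ and $\hat f=\partial_t f-(\partial_t a)f/a$ lie in $\mathcal V^{0,\alpha,[0,T]}$, which the paper leaves implicit.
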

\begin{proof}
	We start by observing that all the rest conclusions in Theorem \ref{thm:stronglinear} follow from the claim that $\partial_t u$ is a weak solution to \eqref{eqn:dt}. Since $\partial_t a$ is in $\mathcal V^{0,\alpha,[0,T]}$ (hence is bounded), Theorem \ref{thm:unique} applies directly to show $\partial_t u$ is the unique weak solution so that we have the bound of $\norm{\partial_t u}_{\mathcal V^{2,\alpha,[0,T]}}$ as given in Theorem \ref{thm:linear}.

	To see the claim holds, let $\tilde{w}$ be the weak solution to \eqref{eqn:dt} (given by Theorem \ref{thm:linear}) with the initial data $w_0$ and set
	\begin{equation}\label{eqn:utw}
		\tilde{u}(t)= u(0)+ \int_0^t \tilde{w}(s)ds.
	\end{equation}
	Theorem \ref{thm:stronglinear} is proved if we can show $u(t)=\tilde{u}(t)$, which implies that $\partial_t u$ (being the same as $\partial_t \tilde{u} = \tilde{w}$) is a weak solution to \eqref{eqn:dt}.

	The aim of showing $u(t)=\tilde{u}(t)$ is further reduced to checking that $\tilde{u}(t)$ is a weak solution to \eqref{eqn:nob}, because $u(t)$ and $\tilde{u}(t)$ will then be two weak solutions with the same initial data so that Theorem \ref{thm:unique} can be applied. Here we used the fact that $\partial_t a$ is bounded.

	The rest of the proof is devoted to proving $\tilde{u}(t)$ is a weak solution to \eqref{eqn:nob}. First, we show that it satisfies \eqref{eqn:nob} pointwisely away from the singularity. At $t=0$,
	\begin{equation}\label{eqn:H0}
		\partial_t \tilde{u}|_{t=0}= \tilde{w}(0)= a(x,0)\tilde{\triangle} u_0 + f(x,0)= a(x,0) \tilde{\triangle} \tilde{u}_0 + f(x,0).
	\end{equation}
	For $t>0$, we set
	\begin{equation*}
		H=\partial_t \tilde{u} - a \tilde{\triangle} \tilde{u} -f=\tilde{w}-a\tilde{\triangle} \tilde{u} -f
	\end{equation*}
	and compute
	\begin{eqnarray*}
		\partial_t H &=& \partial_t \tilde{w} -\partial_t a \tilde{\triangle} \tilde{u} - a \tilde{\triangle} \tilde{w} - \partial_t f\\
		&=&  \partial_t \tilde{w} -\partial_t a a^{-1} (\tilde{w}-f -H)- a \tilde{\triangle} \tilde{w} - \partial_t f\\
		&=& \partial_t a a^{-1} H.
	\end{eqnarray*}
	Here in the last line, we used the equation satisfied by $\tilde{w}$. Now, since $H(0)=0$ by \eqref{eqn:H0}, $H\equiv 0$ for $t\geq 0$.

	To see that $\tilde{u}$ is a weak solution, we notice first that
	\begin{equation}\label{eqn:tildeu2}
		\int_0^T \int_S \abs{\partial_t \tilde{u}}^2 d\tilde{V}ds = \int_0^T \int_S \abs{\tilde{w}^2} d\tilde{V}ds<\infty,
	\end{equation}
	because $\tilde{w}$ is bounded. For the Dirichlet energy bound of $\tilde{u}$, we take any compact domain $W$ in $S$ away from the singular point and use the dominated convergence theorem and the fact that $\tilde{\nabla} \tilde{w}$ is bounded on $W\times [0,t]$ to get
	\begin{equation*}
		\tilde{\nabla} \tilde{u} = \tilde{\nabla} u_0 + \int_0^t \tilde{\nabla} \tilde{w} \quad \mbox{in} \quad W.
	\end{equation*}
	The Schwartz inequality, the H\"older inequality and the Fubini theorem imply that
	\begin{equation}\label{eqn:tildeu1}
		\int_S \abs{\tilde{\nabla} \tilde{u}}^2 (t) d\tilde{V} \leq 2 \int_S \abs{\tilde{\nabla} u_0}^2 d\tilde{V} +  2 t \int_0^t \int_S \abs{\tilde{\nabla} \tilde{w}}^2 d\tilde{V} dt. 
	\end{equation}

	\eqref{eqn:tildeu1} and \eqref{eqn:tildeu2} together proves that $\tilde{u}$ is a weak solution to \eqref{eqn:nob} and hence concludes the proof of the theorem.
\end{proof}

\section{Smoothing effect of linear equation}\label{sec:smoothing}
The estimates proved in previous section are as good as the initial data. For example, a weak solution $u$ is bounded because the initial data $u_0$ is bounded and to show that $\partial_t u$ is in $\mathcal V^{2,\alpha,[0,T]}$, we need to assume that $\partial_t u|_{t=0}$ is in $\mathcal W^{2,\alpha}$. However, as is well known for the linear parabolic equation on smooth manifolds, rough initial data can be smoothed out. Of course, the regularity of the solution is still restricted by the regularity of the coefficients of the equation. In this section, we collect a few results in this direction for the linear parabolic equation on conical surfaces. They play essential roles in proving higher regularity of the conical Ricci flow.

\subsection{H\"older regularity for bounded solution}\label{subsection:calpha}
We first define the H\"older spaces on a conical surfaces. Recall that we have a background metric $\tilde{g}$ on $S$, hence, for any $x,y\in S$ (including the singular point), we have a well defined distance function $\tilde{d}(x,y)$, which is the infimum of the lengths of all smooth paths connecting $x$ and $y$. For any $\alpha\in (0,1)$, we define $C^\alpha(S)$ to be the set of bounded functions $u$ satisfying
\begin{equation*}
	\abs{u}_{C^\alpha(S)} := \sup_{x,y\in S} \frac{\abs{u(x)-u(y)}}{ \tilde{d}(x,y)^\alpha} < +\infty
\end{equation*}
and
\begin{equation*}
	\norm{u}_{C^\alpha(S)}:= \norm{u}_{C^0(S)} + \abs{u}_{C^\alpha(S)}
\end{equation*}
is defined to be the $C^\alpha$ norm. Similarly, we have a parabolic version for functions defined on $S\times [0,T]$ for some $T>0$. For $(x,t)$ and $(y,s)$ in $S\times [0,T]$, we define the space-time H\"older space $C^\alpha(S\times [0,T])$ to be the set of bounded function $u$ such that
\begin{equation*}
	\abs{u}_{C^{\alpha}(S\times [0,T])}:= \sup_{(x,t),(y,s)\in S\times [0,T]} \frac{\abs{u(x,t)-u(y,s)}}{ (\tilde{d}(x,y)+\sqrt{t-s})^{\alpha} }
\end{equation*}
and
\begin{equation*}
	\norm{u}_{C^\alpha(S\times [0,T])}:= \norm{u}_{C^0(S\times[0,T])} + \abs{u}_{C^\alpha(S\times[0,T])}.
\end{equation*}

\begin{rem}
	The definition above depends on $\tilde{g}$.
\end{rem}

Our first result is the following H\"older regularity result. Since it is a local result, we state it in a neighborhood of the singular point $p$. Of course, the result also holds in a ball away from the singularity. In fact, the proof given below is a modification of the proof in the smooth case which is well known. 

\begin{thm}\label{thm:dg}
	Suppose that $u\in \mathcal V^{2,\alpha,[0,T]}(B)$ is a weak solution to 
\begin{equation}
	\partial_t u = a(x,t) \tilde{\triangle} u +b(x,t)u+ f(x,t).
	\label{eqn:dg1}
\end{equation}
	If there is $C_1>0$ such that 
	\begin{equation*}
		a(x,t)+\frac{1}{a(x,t)}+ \abs{\partial_t a}+ \abs{b(x,t)} + \abs{f(x,t)}\leq C_1
	\end{equation*}
	on $B\times [0,T]$, then for any $\delta>0$, there is some $\alpha'>0$ depending on $\beta$ and $C^0$ norm of $a$ and $a^{-1}$ such that
	\begin{equation*}
		\norm{u}_{C^{\alpha'}(B_{1-\delta}\times [\delta,T])}\leq C_2
	\end{equation*}
	for some $C_2>0$ depending on $\delta, C_1$ and $C^0$ norm of $u$.
\end{thm}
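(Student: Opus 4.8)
The plan is to adapt the classical De Giorgi–Nash–Moser argument for divergence-form (or, as here, non-divergence-form with $L^\infty$ coefficients reduced to divergence form) parabolic equations to the conical surface, exploiting that near the cone point the metric $\tilde g = d\rho^2 + \rho^2(\beta+1)^2 d\theta^2$ is, after passing to the logarithmic cylindrical coordinate $s = \log\rho$, uniformly equivalent to a flat product metric on the half-cylinder $(-\infty,0]\times S^1$. Concretely, I would first reduce to the homogeneous equation: since $\abs{b}, \abs{f}\le C_1$ and $u$ is already known to be bounded on $B\times[0,T]$ by the energy/maximum estimates of Section~\ref{sec:weaksolution} (the $\mathcal V$ norm controls $\norm{u}_{C^0}$ via Lemma~\ref{lem:maximum} together with $\abs{\partial_t a}$ being bounded), the terms $bu + f$ act as a bounded forcing term and can be absorbed in the standard way (e.g.\ by subtracting off a particular solution of $\partial_t v = a\tilde\triangle v$ with the same data, or simply by tracking the extra bounded right-hand side through the iteration). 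So it suffices to prove an interior $C^{\alpha'}$ estimate for a bounded weak solution of $\partial_t u = a(x,t)\tilde\triangle u$ with $\lambda \le a \le \lambda^{-1}$ and $\abs{\partial_t a}$ bounded.

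Next I would set up the local energy (Caccioppoli) inequality. Writing the equation in cylindrical coordinates $(s,\theta)$ on the cylinder, $\tilde\triangle$ becomes the flat Laplacian $\partial_s^2 + \partial_\theta^2$ and the volume form becomes the flat $ds\,d\theta$ (this is the conformal invariance of the Dirichlet energy already used in the footnote to Lemma~\ref{lem:maximum}), so $\partial_t u = a(\partial_s^2 + \partial_\theta^2)u$ is a uniformly parabolic equation on a flat cylinder with merely bounded measurable coefficient $a$. The justification of the integration by parts in the energy inequality — which is the one genuinely conical point — is exactly what Lemma~\ref{lem:basic} and Lemma~\ref{lem:basic2} were built for: cutting off by a function supported away from the cone tip poses no problem, and the membership $u\in\mathcal V^{2,\alpha,[0,T]}(B)$ guarantees $\iint \abs{\partial_t u}^2 < \infty$ and $\int\abs{\tilde\nabla u}^2 < \infty$, so all the test-function manipulations are licit. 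With the Caccioppoli inequality in hand, the De Giorgi level-set iteration (or equivalently Moser iteration using the Sobolev inequality on the flat cylinder, which is uniform at every scale) yields an oscillation-decay estimate: $\operatorname{osc}_{Q_{r/2}} u \le \gamma\,\operatorname{osc}_{Q_r} u$ for a universal $\gamma<1$ on parabolic cylinders $Q_r$ of every scale down to the cone tip, and this self-improving oscillation decay is equivalent to a $C^{\alpha'}$ bound with $\alpha' = \alpha'(\lambda,\beta)$. Patching the near-tip estimate with the standard interior parabolic Schauder/De Giorgi estimate on the part of $B_{1-\delta}$ bounded away from $p$, and using the $\mathcal P^{2,\alpha}$-type control away from the singularity, gives $\norm{u}_{C^{\alpha'}(B_{1-\delta}\times[\delta,T])}\le C_2$.

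The main obstacle — really the only non-routine issue — is making sure the De Giorgi iteration is genuinely \emph{scale-invariant} down to the cone point. On the flat cylinder the geometry is translation-invariant in $s$, so the Sobolev constant, the Poincaré constant, and the shape of the parabolic cylinders $Q_r$ (which in the $\tilde d$-metric near $p$ are comparable to Euclidean half-balls of radius $r$ in the cylindrical picture) are all uniform; the one subtlety is that a ball $B_\rho$ around $p$ is \emph{not} a product cylinder but a capped cone, so near the very tip one must either work with the cylindrical annuli $\{2^{-k-1}\le\rho\le 2^{-k}\}$ directly (the oscillation decay from annulus to annulus, iterated in $k$, gives the Hölder modulus as $\rho\to 0$) or note that the cone $B_\rho\setminus\{p\}$ still satisfies a uniform Sobolev inequality with a constant independent of $\rho$ after rescaling $\rho\mapsto\rho/r$. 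Either way, no completeness of the metric is needed, only the uniform local geometry; I expect this to be exactly the ``modification of the proof in the smooth case'' the statement alludes to, and the weak-solution lemmas of Section~\ref{sec:weaksolution} supply precisely the integrability needed to run it rigorously.
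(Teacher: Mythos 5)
There is a genuine gap, and it sits at the center of your argument: the claim that in cylindrical coordinates $s=\log\rho$ the equation becomes $\partial_t u = a(\partial_s^2+\partial_\theta^2)u$, a \emph{uniformly} parabolic equation on a flat cylinder. Conformal invariance in two dimensions applies to the Dirichlet energy and to the elliptic operator, but not to the heat operator: under $\tilde g = \rho^2\bigl(ds^2+(\beta+1)^2d\theta^2\bigr)$ the equation $\partial_t u = a\,\tilde\triangle u$ transforms into $\partial_t u = a\,\rho^{-2}\bigl(\partial_s^2+(\beta+1)^{-2}\partial_\theta^2\bigr)u$, because the time derivative does not pick up the conformal factor. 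The diffusion coefficient $a\rho^{-2}=a\,e^{-2s}$ blows up as $s\to-\infty$, so the equation on the cylinder is degenerate, and the asserted uniformity of the Caccioppoli/Sobolev constants and of the oscillation decay ``at every scale down to the cone tip'' does not follow as stated. Relatedly, a metric ball $B_r(p)$ is the half-infinite cylinder $\{s<\log r\}$ in this picture, not a half-ball of radius $r$, so the parabolic cylinders you iterate on are not what you describe. One can repair part of this by rescaling each dyadic annulus $\{2^{-k-1}\le\rho\le 2^{-k}\}$ parabolically (space by $2^k$, time by $4^k$), which restores uniform parabolicity annulus by annulus; but interior estimates on annuli do not by themselves yield oscillation decay on parabolic cylinders \emph{centered at the tip}, which is exactly the point of the theorem, so this route needs a further argument that your sketch does not supply. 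A secondary gap: you never explain how the Caccioppoli inequality is derived for the non-divergence operator $a\tilde\triangle$ with $a$ only bounded measurable in space; the boundedness of $\partial_t a$ must enter here (test with $a^{-1}(u-k)_+\xi^2$, paying a term in $\partial_t(a^{-1})$), and this is not routine bookkeeping.

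For comparison, the paper sidesteps the cylinder entirely: it uses the coordinates $(x_1,x_2)=(\rho\cos\theta,\rho\sin\theta)$, in which $\tilde g$ has bounded measurable coefficients and is bi-Lipschitz to the Euclidean metric on a ball containing the tip as an \emph{interior} point. Then \eqref{eqn:dg1} becomes \eqref{eqn:new}, a divergence-form equation up to the factor $(\sqrt g)^{-1}$; multiplying the test function by $\sqrt g$ cancels this factor, at the price of a term in $\partial_t\sqrt g$, which is where $\abs{\partial_t a}\le C_1$ is used. After that, the solution is checked to lie in the De Giorgi class $\mathcal B_2$ of Ladyzhenskaya--Solonnikov--Uraltseva and their Theorem 7.1 gives the H\"older bound uniformly across the singular point, with the weak-solution integrability from $\mathcal V^{2,\alpha,[0,T]}(B)$ justifying the test-function manipulations (as you correctly anticipated). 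If you want to keep a cylinder-flavored proof, you would have to build the parabolic scaling into the time variable as well and still produce a tip-centered oscillation decay; the paper's coordinate choice is precisely what makes that step unnecessary.
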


It is closely related to the well known H\"older regularity result for linear parabolic equations of divergence form with bounded coefficients, for example, Section III.10 of \cite{ladyzhenskaya1968linear}, which is proved by Di Giorgi-Nash-Moser iteration. The idea is that in some natural coordinates, the linear equation here can be shown to have bounded (but not continuous) coefficients. This feature of conical singularity has been observed and utilized by Chen and Wang \cite{chen2014long} in the elliptic case. In the parabolic case, we have some extra difficulty caused by the fact that \eqref{eqn:dg1} is not of a divergence form. This is why we assume $\norm{\partial_t a}_{C^0(S\times [0,T])}$ is finite.

For the proof of Theorem \ref{thm:dg}, by using some natural coordinates, we first transform Theorem \ref{thm:dg} into Theorem \ref{thm:dg1} below which has nothing to do with the conical surface. For this purpose, let $(x_1,x_2)$ be $(\rho\cos \theta, \rho\sin \theta)$ and compute $\tilde{g}$ in terms of $(x_1,x_2)$ to see
\begin{equation*}
	\tilde{g}= \frac{1}{\rho^2} \left( (x_1^2+(\beta+1)^2 x_2^2) dx_1^2 - 2(\beta^2+2\beta)x_1x_2 dx_1dx_2 + (x_2^2+(\beta+1)^2 x_1^2) dx_2^2 \right).	
\end{equation*}
The observation is that these coefficients are bounded (not continuous at $(0,0)$). If we use $g_{ij}$ to denote the coefficients of $a^{-1}\tilde{g}$, we can rewrite (\ref{eqn:dg1}) in coordinates $(x_1,x_2)$ as
\begin{equation}\label{eqn:new}
	\partial_t u = \frac{1}{\sqrt{g}}\partial_i \left( g^{ij}\sqrt{g} \partial_j u \right) +bu + f.
\end{equation}
Here $g=\mathrm{det} g_{ij}$.
Since the identity map from $(B, \tilde{g}_{ij}dx_idx_j)$ to $(B,{\delta_{ij}}dx_idx_j)$ is a bi-Lipschitz map,\footnote{The best way to see this is to notice that in the polar coordinates,
	\begin{equation*}
		\tilde{g}_{ij} dx_i dx_j = d\rho^2 + \rho^2(\beta+1)^2 d\theta^2 \quad \mbox{and} \quad \delta_{ij} dx_i dx_j = d\rho^2 + \rho^2 d\theta^2.
	\end{equation*} } the assumption that $u$ is a weak solution becomes
\begin{equation}\label{eqn:weak}
	\max_{t\in [0,T]} \int_B \abs{\partial_i u}^2 dx + \int_0^T \int_B \abs{\partial_t u}^2 dx dt<  +\infty.
\end{equation}
Moreover, the H\"older space (and norm) defined with these two metrics are equivalent. In summary, to show Theorem \ref{thm:dg}, it suffices to prove

\begin{thm}
	\label{thm:dg1}
	Suppose that $u:C^{2,\alpha}(B\setminus \set{0}\times [0,T])\to \Real$ is a classical solution to \eqref{eqn:new} satisfying \eqref{eqn:weak}. Assume that the coefficients of \eqref{eqn:new} satisfy 
	\begin{enumerate}
		\item $\frac{1}{\lambda} \delta_{ij}\leq g_{ij}\leq \lambda \delta_{ij}$ for some $\lambda>0$;
		\item $\abs{\partial_t g}+\abs{b}+ \abs{f}\leq C_1$ for some $C_1$.
	\end{enumerate}
	If $u$ is bounded on $B\setminus \set{0} \times [0,T]$, then for each $\sigma>0$, there is $\alpha'>0$ depending only on $\lambda$ and $C_2$ depending on $\sigma$, $\lambda$, $T$ and $\norm{u}_{C^0(B\setminus\set{0}\times [0,T])}$ such that
	\begin{equation*}
		\norm{u}_{C^{\alpha'}(B_{1-\sigma}\times [\sigma,T])}\leq C_2.
	\end{equation*}
\end{thm}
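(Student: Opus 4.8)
The plan is to show that, although $u$ is only a classical solution of \eqref{eqn:new} on the punctured cylinder, the finiteness of the energy in \eqref{eqn:weak} forces $u$ to be a genuine weak solution of the same uniformly parabolic divergence-form equation across the cone tip $\set{0}$; once this is established, the conclusion is a routine adaptation of the classical De Giorgi--Nash--Moser interior H\"older estimate, and since $u$ is already assumed bounded only the oscillation-decay half of that argument is needed.

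\emph{Step 1 (removability of the cone tip).} Multiplying \eqref{eqn:new} by $\sqrt g:=\sqrt{\det g_{ij}}$, which by the uniform ellipticity lies in $[\lambda^{-1},\lambda]$, puts the equation in the form $\sqrt g\,\partial_t u = \partial_i(g^{ij}\sqrt g\,\partial_j u) + \sqrt g(bu+f)$. For small $\eps>0$ I use the logarithmic cutoff $\zeta_\eps$ equal to $0$ on $B_{\eps^2}$, equal to $1$ off $B_\eps$, and equal to $\log(\abs{x}/\eps^2)/\log(1/\eps)$ in between, which satisfies $\int_B \abs{\nabla\zeta_\eps}^2\,dx = 2\pi/\log(1/\eps)\to 0$. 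For a test function $\varphi\in C_c^\infty(B\times(0,T))$ I multiply the equation by $\varphi\zeta_\eps$ and integrate by parts in space over $B\setminus\set{0}$ (the boundary terms vanish, $\zeta_\eps$ near $0$ and $\varphi$ near $\partial B$), which is legitimate since everything is smooth away from the origin, to get
\begin{equation*}
	\iint \sqrt g\,\partial_t u\,\varphi\zeta_\eps\,dx\,dt + \iint g^{ij}\sqrt g\,\partial_j u\,\partial_i(\varphi\zeta_\eps)\,dx\,dt = \iint \sqrt g\,(bu+f)\,\varphi\zeta_\eps\,dx\,dt .
\end{equation*}
As $\eps\to0$ every term converges to the one with $\zeta_\eps$ replaced by $1$, by dominated convergence --- using $u,\partial_t u\in L^2$ and $u$ bounded for three of the four terms, and $\iint\abs{\nabla u}^2<\infty$ for the main part of the second --- except the piece $\iint g^{ij}\sqrt g\,\partial_j u\,\varphi\,\partial_i\zeta_\eps$, which by Cauchy--Schwarz is bounded by $C\norm{\varphi}_{C^0}\big(\iint_{B_\eps\times[0,T]}\abs{\nabla u}^2\big)^{1/2}\big(\iint\abs{\nabla\zeta_\eps}^2\big)^{1/2}\to 0$. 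Thus $u$ is a weak solution on all of $B\times(0,T]$, lying in $L^\infty([0,T];H^1(B))$ with $\partial_t u\in L^2(B\times[0,T])$.

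\emph{Step 2 (Caccioppoli inequality and De Giorgi iteration).} I test the weak identity from Step~1 with $\zeta^2\eta^2(u-k)_{\pm}$ ($\zeta$ a spatial cutoff, $\eta$ a cutoff in time, $k\in\Real$ a level), which produces the usual energy inequality on parabolic subcylinders of $B\times(0,T]$. The only nonstandard feature is that, because the weight $\sqrt g$ multiplying $\partial_t u$ depends on time, differentiating $\int\sqrt g\,\zeta^2\eta^2(u-k)_{\pm}^2$ in $t$ leaves an extra term $\int(\partial_t\sqrt g)\,\zeta^2\eta^2(u-k)_{\pm}^2$; since $\abs{\partial_t g}\le C_1$ gives $\abs{\partial_t\sqrt g}\le C(\lambda,C_1)$, this term is absorbed exactly like the zeroth-order term $bu$ --- and this is the sole role of that hypothesis. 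With the energy inequality in hand, the measure-shrinking lemma and the oscillation-decay iteration of De Giorgi apply verbatim, as they use nothing about the coefficients beyond boundedness and uniform ellipticity (cf.\ Section~III.10 of \cite{ladyzhenskaya1968linear}, or Chapter~VI of \cite{lieberman1996second}), and they need no preliminary local boundedness step since $u$ is bounded by hypothesis. This yields $\mathrm{osc}_{Q_{r/2}}u \le \gamma\,\mathrm{osc}_{Q_r}u$ for some $\gamma=\gamma(\lambda)<1$ on all interior parabolic cylinders, and the standard iteration then gives $\norm{u}_{C^{\alpha'}(B_{1-\sigma}\times[\sigma,T])}\le C_2$ with $\alpha'=\alpha'(\lambda)>0$ and $C_2$ depending only on $\sigma,\lambda,T,C_1$ and $\norm{u}_{C^0(B\setminus\set{0}\times[0,T])}$.

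\emph{Main obstacle.} The heart of the matter is Step~1: one must exploit the energy hypothesis \eqref{eqn:weak} to show that the single puncture at the cone tip contributes no defect to the weak formulation, so that the flat parabolic De Giorgi--Nash--Moser theory becomes available on cylinders that contain the origin. This is precisely where two-dimensionality enters --- a point has vanishing $H^1$-capacity, which is what makes the logarithmic cutoff $\zeta_\eps$ effective; in higher dimensions one would need a stronger decay of $\nabla u$ near the tip for the same argument. Everything after Step~1 is a routine transcription of the smooth theory, modulo the mild bookkeeping forced by the time-dependent weight $\sqrt g$.
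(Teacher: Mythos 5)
Your proposal is correct and follows essentially the same route as the paper's Appendix A: multiply by $\sqrt{g}$ to exploit the divergence structure, use the finite-energy hypothesis \eqref{eqn:weak} to justify integration by parts across the cone tip, derive the Caccioppoli/level-set energy inequalities with the extra $\partial_t\sqrt{g}$ term absorbed via the bound on $\partial_t g$, and conclude by the De Giorgi oscillation iteration (which the paper invokes in packaged form through the class $\mathcal B_2$ and Theorem 7.1 of Chapter II of \cite{ladyzhenskaya1968linear}). The only cosmetic differences are that the paper removes the puncture via a sequence of good radii with small flux (as in Lemma \ref{lem:basic}) rather than your logarithmic capacity cutoff, and your side remark that higher dimensions would require extra decay of $\nabla u$ is inessential (and in fact a point has zero $H^1$-capacity in every dimension at least two).
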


This is almost a special case of Theorem 10.1 in Chapter III of \cite{ladyzhenskaya1968linear} except that the principal part of \eqref{eqn:new} is not a divergence form. We will show that this is not a problem if we assume $\partial_t g$ is bounded as above. The idea is that instead of multiplying the equation by some test function $v$, we multiply the equation by $(\sqrt{g})v$, which cancels the $(\sqrt{g})^{-1}$ in front of $\partial_i(g^{ij}\sqrt{g}\partial_j u)$ so that the integration by parts works (using \eqref{eqn:weak}) as if the equation is of the divergence form. Of cause, the price of doing so is an extra term involving $\partial_t(\sqrt{g})$, which is assumed to be bounded. Given this observation, some routine computation shows that the proof in \cite{ladyzhenskaya1968linear} still works. We shall give complete details to this in the appendix and for now, let's assume this theorem, hence Theorem \ref{thm:dg}.

As a corollary of the above result, we have the following Liouville type theorem.
\begin{lem}
	\label{lem:liouville}
	Let $\Real^+\times S^1$ be the infinite cone with metric $\tilde{g}=d\rho^2 + (\beta+1)^2 \rho^2 d\theta^2$. Suppose $u$ is a bounded solution to the standard heat equation 
	\begin{equation}\label{eqn:heat}
		\partial_t u =\triangle_{\tilde{g}} u
	\end{equation}
	on $(\Real^+\times S^1)\times (-\infty,0)$. If  
	\begin{equation*}
		\max_{t\in [-T,0]} \norm{\tilde{\nabla} u}_{L^2(\set{\rho<R},\tilde{g})} + \left( \iint_{\set{\rho<R}\times [-T,0]} \abs{\partial_t u}^2 d\tilde{V}dt \right)^{1/2} < +\infty
	\end{equation*}
	for any $R>0$ and $T>0$, then $u$ is a constant.
\end{lem}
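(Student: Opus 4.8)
The plan is to use the parabolic Harnack/H\"older estimate from Theorem \ref{thm:dg} together with a rescaling (blow-down) argument to show that $u$ has oscillation decaying to zero on large balls, forcing it to be constant. First I would observe that since $u$ is bounded and solves the heat equation on the infinite cone for all negative times, the integral hypotheses in the statement are exactly the conditions needed to call the restriction of $u$ to each $B_R \times [-T,0]$ a weak solution in the sense of Definition \ref{defn:weak}; thus Theorem \ref{thm:dg} applies (with $a\equiv 1$, $b\equiv f\equiv 0$, so that all the $C_1$-type bounds hold trivially) and yields, for each $R>0$, a uniform H\"older estimate
\begin{equation*}
    \norm{u}_{C^{\alpha'}(B_{R/2}\times[-R^2/4,0])} \leq C(R,\norm{u}_{C^0}),
\end{equation*}
with $\alpha'>0$ depending only on $\beta$ (through the bi-Lipschitz constant of the cone metric in the coordinates of Theorem \ref{thm:dg1}), \emph{not} on $R$.

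The key step is to exploit the scaling invariance of the heat equation on the cone. For $R>0$ set $u_R(\rho,\theta,t):= u(R\rho,\theta,R^2 t)$; because $\tilde g=d\rho^2+(\beta+1)^2\rho^2 d\theta^2$ is scale invariant (the dilation $(\rho,\theta)\mapsto(R\rho,\theta)$ is a homothety), $u_R$ again solves \eqref{eqn:heat} on $(\Real^+\times S^1)\times(-\infty,0)$, has the same $C^0$ bound, and still satisfies the integral bounds on every $B_{R'}\times[-T,0]$. Applying the uniform H\"older estimate to $u_R$ on, say, $B_{1}\times[-1,0]$ and then undoing the scaling gives
\begin{equation*}
    \underset{B_{R}\times[-R^2,0]}{\mathrm{osc}}\, u \;=\; \underset{B_{1}\times[-1,0]}{\mathrm{osc}}\, u_R \;\leq\; C \big(\tfrac{1}{R}\big)^{-\alpha'}\cdot(\text{something}),
\end{equation*}
so I need to be a bit more careful: the right comparison is to fix a unit region and let the \emph{domain of the estimate} grow. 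Concretely, apply the H\"older estimate of Theorem \ref{thm:dg} on $B_{R}\times[-R^2,0]$ to control the oscillation of $u$ on $B_{1}\times[-1,0]$ in terms of $R^{-\alpha'}\norm{u}_{C^0}$; letting $R\to\infty$ forces the oscillation of $u$ on $B_1\times[-1,0]$ to vanish, hence $u$ is constant there, and translating the time interval (which is legitimate since $u$ is defined up to $t=0$ and the equation is autonomous) and dilating the space shows $u$ is constant on all of $(\Real^+\times S^1)\times(-\infty,0)$.

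The main obstacle I anticipate is tracking the dependence of the constant $C_2$ in Theorem \ref{thm:dg} on the radius of the domain after rescaling: one must verify that the H\"older exponent $\alpha'$ is genuinely independent of $R$ (it is, since it only depends on $\beta$ and the ellipticity constant $\lambda$ of the metric $g_{ij}$, both of which are scale invariant) and that the seminorm bound picks up exactly the factor $R^{-\alpha'}$ coming from the chain rule under $(\rho,\theta,t)\mapsto(R\rho,\theta,R^2t)$, against a $C^0$ bound that is $R$-independent. A minor technical point is checking that the hypotheses of Theorem \ref{thm:dg} (finiteness of $\abs{u}_{\mathcal V}$ on each bounded piece) transfer to the rescaled functions $u_R$ and to time-translates; this follows directly from the stated hypothesis, which postulates the bounds for \emph{all} $R>0$ and $T>0$, so no new estimate is needed. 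Once the uniform-in-$R$ oscillation decay is in hand, the conclusion is immediate.
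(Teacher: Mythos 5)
Your proposal is correct and is essentially the paper's own proof: the paper sets $u_n(\rho,\theta,t)=u(n\rho,\theta,n^2t)$, notes these are uniformly bounded solutions of \eqref{eqn:heat}, applies Theorem \ref{thm:dg} to get a uniform $C^{\alpha'}$ bound on $B_{1/2}\times[-1/2,0]$ independent of $n$, and concludes that the oscillation of $u$ on any fixed compact set is $O(n^{-\alpha'})\to 0$, forcing $u$ to be constant. Your self-correction in the middle (fix a unit region, let the domain of the rescaled estimate grow) lands exactly on this argument, and your checks that $\alpha'$ and the constant are scale-independent and that the weak-solution hypotheses transfer to $u_R$ are the right points to verify.
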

\begin{proof}
	Set
	\begin{equation*}
		u_n(\rho,\theta,t)= u(n\rho,\theta,n^2 t).
	\end{equation*}
	$u_n$ satisfies \eqref{eqn:heat} and is uniformly bounded on $B\times [-1,0]$. Theorem \ref{thm:dg} then implies a uniform $C^{\alpha'}$ norm on $B_{1/2}\times [-1/2,0]$. This is impossible for large $n$ unless $u$ is a constant.
\end{proof}

The application of Theorem \ref{thm:dg} is restricted by the assumption $\partial_t a$ being bounded. In particular, in \eqref{eqn:rfu}, we have $a=e^{-2u}$ and therefore unless we know $\partial_t u$ is bounded, we should not assume $\partial_t a$ is bounded. Fortunately, we have
\begin{thm}
	\label{thm:dgspecial}
	Suppose that $u\in \mathcal V^{2,\alpha,[0,T]}(B)$ is a weak solution to
	\begin{equation}
		\partial_t u = e^{-2u} \tilde{\triangle}u + f(x,t)
		\label{eqn:special}
	\end{equation}
	with 
	\begin{equation*}
		\norm{f}_{C^0(B\times [0,T])}+ \norm{u}_{C^0}(B\times [0,T])\leq C_1.
	\end{equation*}
	Then for any $\sigma>0$, there is some $\alpha'>0$ depending on $C_1$ and $C_2$ depending on $C_1$ and $\sigma$ such that
	\begin{equation*}
		\norm{u}_{C^{\alpha'}(B_{1-\sigma}\times [\sigma,T])}\leq C_2.
	\end{equation*}
\end{thm}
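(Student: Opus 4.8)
The idea is to remove the obstruction that prevented us from applying Theorem~\ref{thm:dg} directly, namely the lack of control on $\partial_t a$ for $a=e^{-2u}$. Multiplying \eqref{eqn:special} by $e^{2u}>0$ and setting $w:=\tfrac12 e^{2u}$, we get
\begin{equation*}
	\partial_t w = e^{2u}\partial_t u = \tilde{\triangle} u + e^{2u} f .
\end{equation*}
Since $\tfrac{1}{2w}\tilde{\nabla} w=\tilde{\nabla} u$, the right-hand side equals $\tilde{\mathrm{div}}\bigl(\tfrac{1}{2w}\tilde{\nabla} w\bigr)+2wf$, so $w$ solves, classically on $B\setminus\{p\}\times[0,T]$,
\begin{equation*}
	\partial_t w = \tilde{\mathrm{div}}\!\left(\frac{1}{2w}\,\tilde{\nabla} w\right) + 2wf .
\end{equation*}
The point is that the coefficient $\tfrac{1}{2w}=e^{-2u}$ now sits \emph{inside} a divergence-form principal operator instead of in front of $\tilde{\triangle}$, and for divergence-form parabolic equations the De Giorgi--Nash--Moser H\"older estimate needs only bounded, measurable, uniformly elliptic coefficients, with no regularity in $t$ at all.

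The plan is then to repeat, for $w$, the coordinate reduction used in passing from Theorem~\ref{thm:dg} to Theorem~\ref{thm:dg1}. In the coordinates $(x_1,x_2)=(\rho\cos\theta,\rho\sin\theta)$ the metric $\tilde g$ has bounded, measurable, uniformly elliptic coefficients, and a direct computation gives that its volume density is the constant $\beta+1$; hence $\tilde{\triangle}=\partial_i(\tilde g^{ij}\partial_j\,\cdot\,)$ in these coordinates and the equation for $w$ becomes
\begin{equation*}
	\partial_t w = \partial_i\!\bigl(\hat a^{ij}(x,t)\,\partial_j w\bigr) + \hat f(x,t), \qquad \hat a^{ij}=\frac{\tilde g^{ij}}{2w},\quad \hat f=2wf,
\end{equation*}
on $B\setminus\{p\}\times[0,T]$. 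Because $e^{-2C_1}\le 2w\le e^{2C_1}$ and $\tilde g^{ij}$ is bi-Lipschitz to $\delta_{ij}$ with constants depending only on $\beta$, we have $\lambda\delta_{ij}\le\hat a^{ij}\le\lambda^{-1}\delta_{ij}$ and $|\hat f|\le C$ with $\lambda,C$ depending only on $\beta$ and $C_1$. Moreover $w$ is bounded and lies in the energy class on $B$: from $\tilde{\nabla}w=e^{2u}\tilde{\nabla}u$, $\partial_t w=e^{2u}\partial_t u$ and $u\in\mathcal V^{2,\alpha,[0,T]}(B)$ we get $\max_{t\in[0,T]}\int_B|\tilde{\nabla}w|^2\,d\tilde{V}+\iint_{B\times[0,T]}|\partial_t w|^2\,d\tilde{V}\,dt<\infty$, i.e.\ exactly condition \eqref{eqn:weak} written for $w$.

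With these ingredients the conclusion follows from the standard interior De Giorgi--Nash--Moser estimate for divergence-form parabolic equations with bounded measurable coefficients (Theorem~III.10.1 of \cite{ladyzhenskaya1968linear}, whose proof is in any case revisited in the appendix): for each $\sigma>0$ there are $\alpha'=\alpha'(\beta,C_1)$ and $C_2=C_2(\beta,C_1,\sigma,T)$ with $\|w\|_{C^{\alpha'}(B_{1-\sigma}\times[\sigma,T])}\le C_2$. One small preliminary point is needed: the estimate is applied to $w$ as a weak solution on the \emph{punctured} ball, so one must first observe that, since a point has zero $H^1$-capacity in the plane, a bounded energy-class function solving the equation weakly on $B\setminus\{p\}$ solves it weakly on all of $B$ --- this is the removable-singularity mechanism already exploited in Lemma~\ref{lem:basic} (the $\rho_i\to0$ argument). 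Finally, $w\mapsto u=\tfrac12\log(2w)$ is bi-Lipschitz on $[\tfrac12 e^{-2C_1},\tfrac12 e^{2C_1}]$, so the H\"older bound transfers from $w$ to $u$, and transferring back from the coordinates $(x_1,x_2)$ (the relevant H\"older norms being equivalent, as before) yields the asserted bound for $\|u\|_{C^{\alpha'}(B_{1-\sigma}\times[\sigma,T])}$.

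The only genuinely new idea is the substitution $w=\tfrac12 e^{2u}$, which converts the ``bad'' principal coefficient $e^{-2u}$ into a harmless divergence-form coefficient. The main thing to be careful about is precisely that the cited De Giorgi--Nash--Moser statement applies in the present setting --- a weak solution that is bounded, in the energy class, with bounded measurable coefficients and bounded right-hand side and no continuity in time --- and that the puncture at $p$ is removable; both are already built into the framework of Section~\ref{sec:weaksolution} and the reduction preceding Theorem~\ref{thm:dg1}.
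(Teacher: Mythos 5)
Your proposal is correct and is essentially the paper's own argument: the substitution $w=\tfrac12 e^{2u}$ is the paper's $v=e^{2u}$ up to a constant, and in both cases the point is that the equation becomes divergence-form in the coordinates $(x_1,x_2)$ with bounded measurable, uniformly elliptic coefficients (time-independent density), so the De Giorgi--Nash--Moser argument behind Theorem \ref{thm:dg1} applies and the H\"older bound transfers back to $u$ via the bi-Lipschitz map $\log$. Your additional remarks on the energy class for $w$ and on the removability of the puncture are exactly the justifications the paper implicitly relies on.
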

\begin{proof}
	Setting $v=e^{2u}$, \eqref{eqn:special} becomes
	\begin{equation*}
		\partial_t v = \tilde{\triangle} \log v + f.
	\end{equation*}
	Taking $(\rho,\theta)$ coordinates, and setting $(x_1,x_2)$ as before, we can rewrite the above equation as
	\begin{equation*}
		\partial_t v = \frac{1}{\sqrt{\tilde{g}}} \partial_i \left(  \tilde{g}^{ij} \sqrt{\tilde{g}} \frac{1}{v} \partial_j v \right) + f.
	\end{equation*}
	This is only slightly different from \eqref{eqn:new} with an additional $1/v$. We still have $\lambda$ depending on $C^0$ norm of $u$ such that
	\begin{equation*}
		\frac{1}{\lambda}\delta_{ij}\leq \tilde{g}^{ij} \sqrt{\tilde{g}} \frac{1}{v} \leq \lambda \delta_{ij}
	\end{equation*}
	and $\partial_t \tilde{g}=0$. It is then evident from here that Theorem \ref{thm:dgspecial} follows from (the proof of) Theorem \ref{thm:dg1}\footnote{In fact, the actual structure of $g^{ij}\sqrt{g}$ in \eqref{eqn:new} is not important. What we need is only the fact that this matrix is comparable with $\delta_{ij}$.}.
\end{proof}

\subsection{Smoothing estimate for rough initial data}\label{subsec:rough}
The main result of this section strengthens Theorem \ref{thm:stronglinear} in the sense that we drop the assumption of $\tilde{\triangle} u_0$ there and show that the regularity and estimate of $\partial_t u$ remain true on $S\times [t_0,T]$ for any $t_0>0$.

\begin{thm}\label{thm:smoothing}
Suppose that 
\begin{enumerate}
	\item the $\mathcal V^{0,\alpha,[0,T]}$ norms of $a,b,f,\partial_t a$, $\partial_t b$ and $\partial_t f$ are bounded by $C_1$;
	\item $\norm{u_0}_{\mathcal W^{2,\alpha}}\leq C_2$;
	\item $a>\lambda>0$ on $S\times [0,T]$.
\end{enumerate}
Let $u$ be the weak solution to 
\begin{equation}\label{eqn:linear}
	\partial_t u = a(x,t)\tilde{\triangle} u + b(x,t) u +  f(x,t)
\end{equation}
with $u(0)=u_0$ given by Theorem \ref{thm:linear}. Then for each $t_0>0$, $w=\partial_t u$ is a weak solution to 
\begin{equation}
	\partial_t w = a \tilde{\triangle} w + (\partial_t a a^{-1} +b) w + \left( \partial_t f + \partial_t b u - \partial_t a a^{-1} (bu +f) \right) 
	\label{eqn:lineardt}
\end{equation}
on $[t_0,T]$ and that
\begin{equation*}
	\norm{\partial_t u}_{\mathcal V^{2,\alpha,[t_0,T]}} \leq C(C_1,C_2,\lambda,T,t_0).
\end{equation*}
\end{thm}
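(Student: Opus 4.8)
The plan is to construct $w:=\partial_t u$ and verify, on $[t_0,T]$, the two requirements of Definition \ref{defn:weak} together with the $\mathcal V^{2,\alpha}$ bound, in three steps: (i) a time difference-quotient argument with a temporal cut-off, which yields the energy control $\abs{w}_{\mathcal V^{[t_0,T]}}$ and shows $w$ is a weak solution; (ii) a De Giorgi--Nash--Moser local boundedness estimate, which gives $\norm{w}_{C^0}$ near $p$; (iii) interior parabolic Schauder estimates on dyadic shells, which upgrade this to the $\mathcal P^{2,\alpha}$ bound.

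For (i), fix $0<h<t_0/8$, set $u^h(x,t):=h^{-1}\big(u(x,t+h)-u(x,t)\big)$ on $S\times[0,T-h]$, and choose $\chi\in C^\infty([0,T])$ with $\chi\equiv 0$ on $[0,t_0/8]$ and $\chi\equiv 1$ on $[t_0/2,T]$. Away from $p$ the solution $u$ is classical, and differentiating \eqref{eqn:linear} in $t$ while substituting $\tilde{\triangle}u=a^{-1}(\partial_t u-bu-f)$ shows (after a routine interior Schauder bootstrap, using $\partial_t a,\partial_t b,\partial_t f\in\mathcal V^{0,\alpha}$) that $\partial_t u$ is $C^{2,\alpha}$ in the parabolic sense on $S\setminus\set{p}$ and solves \eqref{eqn:lineardt} there classically. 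Hence for each fixed $h$ the function $v^h:=\chi u^h$ is a weak solution in the sense of Definition \ref{defn:weak}, with $v^h(0)=0$, to a linear equation with coefficients $a(x,t+h),b(x,t+h)\in\mathcal V^{0,\alpha}$ (uniformly in $h$, with bounded time derivative) and forcing $\hat f^h=\chi g^h+\chi' u^h$, where $g^h$ is built from difference quotients of $a,b,f$ — bounded by $\norm{\partial_t a}_{C^0}$, $\norm{\partial_t b}_{C^0}$, $\norm{\partial_t f}_{C^0}$ — times $\tilde{\triangle}u$, $u$ and $1$. Using $\iint_{S\times[0,T]}\abs{\tilde{\triangle}u}^2\,d\tilde V\,dt<\infty$ (from $\tilde{\triangle}u=a^{-1}(\partial_t u-bu-f)$ and $u\in\mathcal V^{2,\alpha,[0,T]}$) and the elementary inequality $\iint\abs{u^h}^2\le\iint\abs{\partial_t u}^2$, one gets $\iint_{S\times[0,T-h]}\abs{\hat f^h}^2\,d\tilde V\,dt\le C(C_1,C_2,\lambda,T,t_0)$ uniformly in $h$. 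The energy estimate of Theorem \ref{thm:linear} (more precisely Proposition \ref{prop:bvp}, in the evident variant in which the right-hand side is only $L^2$: Young's inequality as in \eqref{eqn:young} absorbs $\hat f^h\,\partial_t v^h$), applied through the $S_k$-approximation, then gives the $h$-uniform bound
\[
\max_{t\in[0,T-h]}\int_S\abs{\tilde\nabla v^h}^2\,d\tilde V+\iint_{S\times[0,T-h]}\abs{\partial_t v^h}^2\,d\tilde V\,dt\le C(C_1,C_2,\lambda,T,t_0).
\]
Letting $h\to 0$ (away from $p$, $u^h\to\partial_t u$ and $v^h\to\chi\,\partial_t u$ in $C^{2,\alpha'}_{\mathrm{loc}}$, and the left side is weakly lower semicontinuous) and restricting to where $\chi\equiv 1$ shows that $\partial_t u$ is a weak solution of \eqref{eqn:lineardt} on $[t_0/2,T]$ with $\abs{\partial_t u}_{\mathcal V^{[t_0/2,T]}}\le C(C_1,C_2,\lambda,T,t_0)$.

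Step (ii) is the crux, and the main obstacle. Membership of $u$ in $\mathcal V^{2,\alpha,[0,T]}$ only forces $\partial_t u$ into $L^2(S\times[0,T])$ — pointwise it may grow like $\rho^{-2}$ near $p$ — so boundedness is not formal, and this is where $\partial_t b,\partial_t f\in\mathcal V^{0,\alpha}$ are used: in \eqref{eqn:lineardt} both the zeroth order coefficient $\partial_t a\,a^{-1}+b$ and the forcing $\partial_t f+\partial_t b\,u-\partial_t a\,a^{-1}(bu+f)$ are then bounded (here $u$ must be bounded, which it is). Writing \eqref{eqn:lineardt} in the chart $(x_1,x_2)=(\rho\cos\theta,\rho\sin\theta)$ of Section \ref{subsection:calpha} — in which $\tilde{\triangle}$ becomes a uniformly elliptic divergence-form operator with bounded measurable coefficients, cf. \eqref{eqn:new}, and $p$ is merely the origin — it becomes a uniformly parabolic equation with bounded coefficients and bounded right-hand side, to which the De Giorgi--Nash--Moser local boundedness estimate (the parabolic version underlying Theorem \ref{thm:dg}, carried out in the appendix) applies, provided one knows $\sup_{t\in[t_0/2,T]}\norm{\partial_t u(t)}_{L^2(S)}<\infty$. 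The gradient part of this $L^2$ bound is step (i), and the spatial average of $\partial_t u(\cdot,t)$ equals the time derivative of the spatial average of $u(\cdot,t)$, which is bounded because $u$ and $\partial_t^2 u$ both lie in $L^2(S\times[t_0/2,T])$, so the spatial average of $u$ lies in $W^{2,2}([t_0/2,T])\hookrightarrow C^1$; the Poincaré inequality on $S$ then controls $\norm{\partial_t u(t)}_{L^2(S)}$. This gives $\norm{\partial_t u}_{C^0(S\times[\frac34 t_0,T])}\le C(C_1,C_2,\lambda,T,t_0)$. (Equivalently, choose a slice $s\in(t_0/2,\frac34 t_0)$ with $\partial_t^2 u(s)\in L^2(S)$; then $\tilde{\triangle}\partial_t u(s)\in L^2(S)$ by \eqref{eqn:lineardt}, hence $\partial_t u(s)\in C^0(S)$ by the elliptic De Giorgi--Nash--Moser estimate on $S$, and Lemma \ref{lem:odecompare} applied to \eqref{eqn:lineardt} on $[s,T]$ propagates the $C^0$ bound forward.)

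Finally, for (iii), with $\partial_t u$ bounded near $p$ one rescales \eqref{eqn:lineardt} onto the dyadic shells $B_1\setminus B_{1/2}$: there the rescaled coefficients are bounded in $C^\alpha$ (this is the defining feature of $\mathcal V^{0,\alpha}$), the rescaled forcing is bounded, and the rescaled $\partial_t u$ is bounded in $C^0$, so interior parabolic Schauder gives a bound uniform over the shells, i.e. $\norm{\partial_t u}_{\mathcal P^{2,\alpha,[t_0,T]}}\le C$ (away from $p$ this is standard Schauder). Together with $\abs{\partial_t u}_{\mathcal V^{[t_0,T]}}\le\abs{\partial_t u}_{\mathcal V^{[t_0/2,T]}}<\infty$ from step (i), this gives $\partial_t u\in\mathcal V^{2,\alpha,[t_0,T]}$ with norm $\le C(C_1,C_2,\lambda,T,t_0)$; since $t_0>0$ is arbitrary, the theorem follows. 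The genuine difficulty, as indicated, is step (ii): the $C^0$ bound on $\partial_t u$ at the cone point, which requires the full De Giorgi--Nash--Moser iteration rather than any of the earlier maximum-principle or energy arguments, and which in turn forces the hypotheses on $\partial_t b,\partial_t f$.
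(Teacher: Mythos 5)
Your route is genuinely different from the paper's. The paper never differentiates the equation by quotients: it bounds $\partial_t u(t_1)$ in $L^q(S)$ at a single slice $t_1=t_0/2$ (via the H\"older estimate of Theorem \ref{thm:dg} plus interpolation, Lemma \ref{lem:lp}), then solves \eqref{eqn:lineardt} forward from that rough $L^q$ data by approximating the data with $\mathcal W^{2,\alpha}$ functions and running Lemma \ref{lem:fora} and the Moser-type smoothing estimate (Lemma \ref{lem:moser}) on those genuinely $\mathcal V^{2,\alpha}$ (hence bounded) approximants, and finally identifies the resulting $\tilde w$ with $\partial_t u$ by a Gronwall/energy argument (Lemma \ref{lem:same}). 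Your plan — difference quotients in time for the energy seminorm, then De Giorgi--Nash--Moser local boundedness applied directly to $\partial_t u$, then Schauder on dyadic shells — would, if carried out, avoid the delicate identification step; but as written it has two real gaps, both at the cone point, and both are exactly what the paper's architecture is built to sidestep.

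First, the $h$-uniform energy bound for $v^h=\chi u^h$. The ``evident variant'' of Proposition \ref{prop:bvp} ``applied through the $S_k$-approximation'' produces an estimate for a solution \emph{constructed} by approximation, not for your given $v^h$; to transfer it you need uniqueness, and Theorem \ref{thm:unique} does not apply because the forcing $\hat f^h$ contains $\tilde{\triangle}u\sim\rho^{-2}$ and so is not in $\mathcal V^{0,\alpha}$ (nor is the approximation limit then known to lie in $\mathcal P^{2,\alpha}$, so Lemma \ref{lem:maximum} is unavailable for the difference). Proving the energy identity directly for $v^h$ on $S$ is also not covered by Lemma \ref{lem:basic}, since the test function $\partial_t v^h$ is not known to be bounded near $p$, and naive cutoff arguments do not close. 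The repair is to take the difference quotients of the approximating solutions $u_k$ on $S_k$ themselves (there the integration by parts is classical and the Neumann condition kills the boundary term; $\iint_{S_k}\abs{\tilde{\triangle}u_k}^2\le C$ uniformly by \eqref{eqn:Skc0}--\eqref{eqn:Skenergy}), and let $k\to\infty$ before $h\to 0$ — this is available precisely because $u$ is the Theorem \ref{thm:linear} solution, but it must be said. Second, in step (ii) you apply the parabolic De Giorgi--Nash--Moser local boundedness (and, in your parenthetical alternative, Lemma \ref{lem:odecompare}) to $w=\partial_t u$, which at that stage is \emph{not} known to be bounded near $p$ nor to lie in $\mathcal P^{2,\alpha}$; Theorem \ref{thm:dg}, Lemma \ref{lem:moser} and Lemma \ref{lem:odecompare} all assume exactly that, so the parenthetical route is circular, and the main route needs a local sup estimate for merely finite-energy solutions, with the Caccioppoli inequalities justified at the cone point via bounded truncations of $w$ (Lemma \ref{lem:basic} only admits bounded test functions). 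That extension is standard in spirit but is not in the paper, and avoiding it is the whole point of the paper's strategy of running the iteration only on the bounded approximants $w_{2,n}$ and identifying afterwards. With these two points filled in (and the minor endpoint/limiting issues in $h\to 0$ handled), your argument would go through; steps (iii) and the Poincar\'e/mean-value control of $\sup_t\norm{\partial_t u(t)}_{L^2(S)}$ are fine.
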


The proof consists of three steps. We state the goal of each step in the form of a lemma. Note that for each lemma, we assume the same assumptions as in Theorem \ref{thm:smoothing}. The first step is 

\begin{lem}
	\label{lem:lp}
	For $t_1=t_0/2>0$, there exists $q>1$ such that 
	\begin{equation*}
		\norm{\partial_t u (t_1)}_{L^q(S,\tilde{g})}\leq C (C_1,C_2,\lambda,t_1).
	\end{equation*}
	Moreover, by the H\"older inequality, we may always assume that $1<q<2$.
\end{lem}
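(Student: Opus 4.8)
The plan is to realize $w = \partial_t u$ as a weak solution of a linear equation on a short time interval and then exploit the $L^p$-growth estimate of Lemma \ref{lem:fora}, using the Dirichlet energy of $u_0$ as the only available control at the initial time. First I would observe that, at $t=0$, we do not know $\tilde\triangle u_0 \in \mathcal W^{2,\alpha}$, so Theorem \ref{thm:stronglinear} does not apply directly. The workaround is to integrate: since $u\in\mathcal V^{2,\alpha,[0,T]}$, we have $\iint_{S\times[0,T]}\abs{\partial_t u}^2\,d\tilde V\,ds<\infty$, so for a.e. $s_0\in(0,t_1)$ the slice $w(s_0)=\partial_t u(s_0)$ lies in $L^2(S,\tilde g)$; morever, by the interior Schauder theory away from $p$ and the equation \eqref{eqn:linear}, $w$ is $C^{2,\alpha}$ in space-time away from $p$ on any slab $[s_0,T]$, and it satisfies \eqref{eqn:lineardt} classically away from $p$. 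The coefficient $\partial_t a\,a^{-1}+b$ and the inhomogeneous term $\partial_t f+\partial_t b\,u-\partial_t a\,a^{-1}(bu+f)$ are bounded on $S\times[0,T]$ by assumptions (1)--(3), and $\partial_t a$ is bounded, so the hypotheses of Lemma \ref{lem:fora} (with \eqref{eqn:fora1}) are met for $w$ on the interval $[s_0,T]$ — \emph{provided} we also know $w(s_0)$ has finite $\abs{\cdot}_{\mathcal V}$-type control so that $w$ qualifies as a weak solution there.

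The next step is precisely to upgrade $w(s_0)\in L^2$ to the weak-solution status needed: I would pick $s_0\in(0,t_1)$ so that additionally $\int_S\abs{\tilde\nabla u}^2(s_0)\,d\tilde V$ is controlled (which it is, uniformly in $s_0$, by $\abs{u}_{\mathcal V^{[0,T]}}\le C(C_1,C_2,\lambda,T)$ from \eqref{eqn:linear1energy}) and so that $w(s_0)\in L^2$. Then $w$ restricted to $[s_0,T]$ is a weak solution of \eqref{eqn:lineardt} in the sense of Definition \ref{defn:weak}: the spatial $C^{2,\alpha}$ regularity is automatic away from $p$, and $\abs{w}_{\mathcal V^{[s_0,T]}}$ is finite because $u\in\mathcal V^{2,\alpha,[0,T]}$ already gives $\max_t\abs{w(t)}_{\mathcal W}<\infty$ and $\iint\abs{\partial_t w}^2<\infty$ is supplied by the $\mathcal V$-regularity of $\partial_t u$ once we know it — but to avoid circularity here I would instead argue directly: apply Lemma \ref{lem:fora} to \eqref{eqn:lineardt} on $[s_0,T]$ \emph{after first establishing} $w$ is a weak solution there by the approximation/uniqueness machinery, namely that the unique weak solution of \eqref{eqn:lineardt} with initial data $w(s_0)\in\mathcal W^{2,\alpha}$-closure agrees with $w$. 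Concretely, mollify $w(s_0)$ in the cylindrical variable (as in the footnote to Lemma \ref{lem:maximum}) to get $w_{0,i}\in\mathcal W^{2,\alpha}$ with $w_{0,i}\to w(s_0)$ in $L^2$ and $L^2$-gradient; solve \eqref{eqn:lineardt} by Theorem \ref{thm:linear}; and use Lemma \ref{lem:maximum}/Theorem \ref{thm:unique} to identify the limit with $w$. This gives $\int_S\abs{w}^q(t)\,d\tilde V\le C$ for all $t\in[s_0,T]$ and any $q>1$, with $C=C(q,C_1,C_2,\lambda,t_1)$; taking $t=t_1$ and $1<q<2$ (lowering $q$ by Hölder on the finite-measure surface $S$ only decreases the norm) yields the claim.

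The main obstacle I expect is the clean justification that $w=\partial_t u$ really is a weak solution of \eqref{eqn:lineardt} on $[s_0,T]$ starting from data that is only $L^2$, not in $\mathcal W^{2,\alpha}$ — that is, making the identification with the Theorem \ref{thm:linear} solution without assuming the conclusion. The resolution is exactly the combination of (i) the a.e.-$s_0$ trace existence from the space–time $L^2$ bound on $\partial_t u$, (ii) the uniform Dirichlet-energy bound on the $u$-slices from \eqref{eqn:linear1energy}, so that $w(s_0)$ can be approximated in the relevant norms, and (iii) the uniqueness of weak solutions (Theorem \ref{thm:unique}, valid since $\partial_t a$ is bounded) to pin down $w$ on $[s_0,T]$. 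Once this is in place, Lemma \ref{lem:fora} does the rest mechanically, and the passage from the chosen $s_0<t_1$ to the fixed time $t_1=t_0/2$ is harmless since all constants are uniform in $s_0\in(0,t_1)$. The only mild subtlety is that the inhomogeneous term in \eqref{eqn:lineardt} involves $u$ itself, but $u$ is bounded (by \eqref{eqn:linear1c0}) so this term is bounded and Lemma \ref{lem:fora} applies verbatim.
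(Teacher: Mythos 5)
Your plan has a genuine gap at its central step: the identification of $w=\partial_t u$ with the weak solution of \eqref{eqn:lineardt} built from mollified $L^2$ slice data. You propose to make this identification "by the approximation/uniqueness machinery, namely Lemma \ref{lem:maximum}/Theorem \ref{thm:unique}," but both of these require the function to which they are applied (here the difference between $w$ and the constructed solution) to satisfy \eqref{eqn:assmax}, i.e.\ $\max_t\norm{\tilde\nabla(\cdot)}_{L^2}<\infty$ and space--time integrability of its time derivative. For $w=\partial_t u$ this means controlling $\tilde\nabla\partial_t u$ and $\partial_t^2 u$, and nothing in the hypothesis $u\in\mathcal V^{2,\alpha,[0,T]}$ gives that; indeed obtaining such control on $\partial_t u$ is precisely the conclusion of Theorem \ref{thm:smoothing}, not something available when proving its first lemma. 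Your intermediate assertion that "$u\in\mathcal V^{2,\alpha,[0,T]}$ already gives $\max_t\abs{w(t)}_{\mathcal W}<\infty$" is false: the definition of $\abs{\cdot}_{\mathcal V^{[0,T]}}$ bounds $\abs{u(t)}_{\mathcal W}$ and $\iint\abs{\partial_t u}^2$, not the Dirichlet energy of $\partial_t u$. Relatedly, Lemma \ref{lem:fora} assumes initial data in $\mathcal W^{2,\alpha}$, so it cannot be applied to $w$ on $[s_0,T]$ with data $w(s_0)\in L^2$ until the approximation-and-identification step is carried out --- which is exactly the step that fails as you have set it up. This identification problem is genuinely delicate: in the paper it is the content of Lemma \ref{lem:same}, which avoids comparing at the level of $w$ altogether by integrating in time, comparing $u$ with $\tilde u(t)=u(t_1)+\int_{t_1}^t\tilde w$, and using the growth estimates of Lemma \ref{lem:growth} to justify the energy argument for $\tilde u-u$. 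A repaired version of your strategy would essentially have to reproduce that integrated comparison starting from an a.e.\ $L^2$ slice, i.e.\ re-run Lemmas \ref{lem:growth} and \ref{lem:same} with $q=2$, which is a different (and heavier) route than what is needed here.

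For contrast, the paper's proof of Lemma \ref{lem:lp} is short and direct, and does not use the linearized equation at all: away from $p$, $\partial_t u(t_1)$ is bounded by interior estimates; near $p$, Theorem \ref{thm:dg} gives $u(t_1)\in C^{\alpha'}$, so on a ball of radius comparable to $\rho_0$ around a point at distance $\rho_0$ from the cone tip the oscillation of the rescaled solution is $O(\rho_0^{\alpha'})$, while the rescaled interior Schauder estimate bounds its $C^{2,\alpha}$ norm; interpolation of H\"older norms (Lemma \ref{lem:D2}) then yields $\abs{\tilde{\triangle}u(\rho_0,\theta_0,t_1)}\leq C\rho_0^{-2+\alpha''}$, which is in $L^q(S,\tilde g)$ for some $q>1$, and hence so is $\partial_t u(t_1)=a\tilde\triangle u+bu+f$ at $t_1$. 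This gives the quantitative constant $C(C_1,C_2,\lambda,t_1)$ claimed in the lemma without any uniqueness or identification argument.
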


The second step is to construct a solution to \eqref{eqn:lineardt} with initial data $\partial_t u(t_1)$, which lies in $\mathcal V^{2,\alpha,[t_1+\delta,T]}$ for all $\delta>0$. Since the initial data is only in $L^q$, we can not expect the $\mathcal W^{2,\alpha}$ norm of the solution to be bounded up to $t=t_1$. However, we shall have a growth estimate of it in terms of $t-t_1$. Precisely,
\begin{lem} \label{lem:growth}
	There is a solution $\tilde{w}$ to \eqref{eqn:lineardt} such that
	\begin{enumerate}
		\item for any $\delta>0$,
			\begin{equation*}
				\norm{\tilde{w}}_{\mathcal V^{2,\alpha,[t_1+\delta,T]}}\leq C (C_1,C_2,\lambda,T,t_1,\delta).
			\end{equation*}
		\item for $q>1$ in Lemma \ref{lem:lp}, 
			\begin{equation*}
				\norm{\tilde{w}(t)}_{C^0(S)} + \norm{\tilde{\nabla} \tilde{w}(t)}_{L^2(S,\tilde{g})} \leq \frac{C(C_1,C_2,\lambda,t_0)}{(t-t_1)^{1/q}}.
			\end{equation*}
	\end{enumerate}
\end{lem}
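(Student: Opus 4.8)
The plan is to build $\tilde w$ by the approximation-by-smooth-data scheme already used for Theorem~\ref{thm:linear}, and then to squeeze out of the approximating sequence estimates that see the initial datum only through its $L^q$ norm, at the price of the stated blow-up as $t\downarrow t_1$. Write $w_0:=\partial_t u(t_1)$, which by Lemma~\ref{lem:lp} lies in $L^q(S,\tilde g)$ with $1<q<2$, and pick $w_{0,n}\in\mathcal W^{2,\alpha}$ with $w_{0,n}\to w_0$ in $L^q$ and $\|w_{0,n}\|_{L^q}\le 2\|w_0\|_{L^q}$. One checks that the zeroth-order coefficient $\partial_t a\,a^{-1}+b$ and the inhomogeneous term $F:=\partial_t f+\partial_t b\,u-\partial_t a\,a^{-1}(bu+f)$ of \eqref{eqn:lineardt} are bounded and belong to $\mathcal V^{0,\alpha,[0,T]}$ (using $u\in\mathcal V^{2,\alpha,[0,T]}$ and the hypotheses on $a,b,f$ and their time derivatives), so Theorem~\ref{thm:linear} furnishes weak solutions $\tilde w_n\in\mathcal V^{2,\alpha,[t_1,T]}$ of \eqref{eqn:lineardt} with $\tilde w_n(t_1)=w_{0,n}$. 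I would immediately split $\tilde w_n=W_n+V$, where $V$ is the fixed, $n$-independent weak solution of \eqref{eqn:lineardt} with zero initial datum — uniformly bounded on $[t_1,T]$ with finite $\mathcal V^{[0,T]}$-norm by \eqref{eqn:linear1c0}--\eqref{eqn:linear1energy} — so that $W_n$ solves the homogeneous equation $\partial_t W=a\tilde\triangle W+(\partial_t a\,a^{-1}+b)W$ with $W_n(t_1)=w_{0,n}$, and only $W_n$ needs further work. By uniqueness (Theorem~\ref{thm:unique}, valid since $\partial_t a$ is bounded) each piece is the one produced by Theorem~\ref{thm:linear}.

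Next I would propagate the $L^q$ norm of $W_n$: repeating the computation in the proof of Lemma~\ref{lem:fora} gives $\|W_n(t)\|_{L^q(S,\tilde g)}\le C(C_1,\lambda,T)\,\|w_{0,n}\|_{L^q}\le C$ for all $t\in[t_1,T]$, uniformly in $n$. The core of the argument is then a parabolic smoothing (ultracontractivity) estimate. Near $p$, in the coordinates $(x_1,x_2)=(\rho\cos\theta,\rho\sin\theta)$ and after multiplying through by $\sqrt g$ exactly as in the reduction of Theorem~\ref{thm:dg} to Theorem~\ref{thm:dg1}, the equation for $W_n$ is a uniformly parabolic divergence-form equation with bounded measurable coefficients on a flat disc; away from $p$ it is classical; and $(S,\tilde g)$, being bi-Lipschitz with conformally invariant Dirichlet energy to this flat model near $p$, carries the two-dimensional Sobolev inequality. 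Hence the De Giorgi--Nash--Moser local boundedness estimate holds on parabolic cylinders $Q^-_r=B_r\times(t_0-r^2,t_0)$ of arbitrary scale contained in a chart, with any exponent $q>0$ on the right:
\[
	\sup_{Q^-_{r/2}}|W_n|\ \le\ C(q,\lambda)\Big(\tfrac{1}{|Q^-_r|}\iint_{Q^-_r}|W_n|^q\Big)^{1/q}.
\]
Choosing $r^2\sim t-t_1$ and summing over a bounded-overlap cover of $S$ (the cone point being a harmless interior point of the flat model), the $L^q$ bound gives $\|W_n(t)\|_{C^0(S)}\le C(t-t_1)^{-1/q}$ and, using that $2/q>1$, the sharper $\|W_n(t)\|_{L^2(S,\tilde g)}^2\le C(t-t_1)^{1-2/q}$ (the surface dimension $2$ makes $\tfrac{2}{2q}=\tfrac1q$). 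Feeding this $L^2$ bound into the energy identities for the homogeneous equation — first $\tfrac12\tfrac{d}{dt}\!\int_S W_n^2 a^{-1}+\int_S|\tilde\nabla W_n|^2\le C\!\int_S W_n^2$ on $[\tfrac{t_1+t}{2},t]$, then $\tfrac{d}{dt}\!\int_S|\tilde\nabla W_n|^2\le C\!\int_S W_n^2$ with a time-weighted integration to convert the spacetime gradient bound into a pointwise-in-time one — yields $\|\tilde\nabla W_n(t)\|_{L^2(S,\tilde g)}\le C(t-t_1)^{-1/q}$; the integrations by parts are legitimate since $\tilde w_n(t),W_n(t)\in\mathcal W^{2,\alpha}$, via Lemma~\ref{lem:basic} and Lemma~\ref{lem:basic2}. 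Together with the control of $V$ this is conclusion~(2) for each $\tilde w_n$, with constant independent of $n$.

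Finally I would pass to the limit. The uniform bounds above, interior Schauder estimates on compact subsets of $(S\setminus\{p\})\times(t_1,T]$, and the weighted Schauder theory near $p$ give, along a subsequence, $\tilde w_n\to\tilde w$ in $C^{2,\alpha'}_{\mathrm{loc}}$, with $\tilde w$ solving \eqref{eqn:lineardt} classically off $p$ and inheriting (by local uniform convergence and Fatou) the estimate of conclusion~(2). For conclusion~(1): since $\partial_t a$ is bounded, Theorem~\ref{thm:dg} makes $\tilde w$ Hölder on $S\times[t_1+\tfrac\delta4,T]$, the weighted Schauder estimates put $\tilde w(t_1+\tfrac\delta2)\in\mathcal E^{2,\alpha}$, and conclusion~(2) gives $\tilde\nabla\tilde w(t_1+\tfrac\delta2)\in L^2(S,\tilde g)$, so $\tilde w(t_1+\tfrac\delta2)\in\mathcal W^{2,\alpha}$; re-solving \eqref{eqn:lineardt} on $[t_1+\tfrac\delta2,T]$ with this datum via Theorem~\ref{thm:linear} and invoking Theorem~\ref{thm:unique} identifies the solution with $\tilde w$, giving $\tilde w\in\mathcal V^{2,\alpha,[t_1+\delta,T]}$ with the claimed bound.

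The hard part will be the smoothing step, and in particular obtaining the exponent $1/q$ (rather than the weaker $\tfrac1q+\tfrac12$) for the gradient: this forces the sharp $L^q\!\to\!L^2$ ultracontractivity, instead of a crude $\|W_n(t)\|_{L^2}\lesssim\|W_n(t)\|_{C^0}$, and requires running the Moser iteration uniformly across the cone point by passing to the flat bi-Lipschitz coordinates.
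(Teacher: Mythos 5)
Your overall architecture is the same as the paper's: split off the inhomogeneous part with zero data, approximate the $L^q$ datum $\partial_t u(t_1)$ by $\mathcal W^{2,\alpha}$ data, propagate the $L^q$ norm via Lemma \ref{lem:fora}, run a Moser/De Giorgi local boundedness estimate at scale $\sqrt{t-t_1}$ over a bounded-overlap cover including the cone point (in the flat bi-Lipschitz coordinates, after multiplying by $\sqrt{g}$), pass to a limit, and restart at $t_1+\delta$ for conclusion (1). The genuine divergence, and the gap, is in the gradient half of conclusion (2). You derive it from the differential inequality $\frac{d}{dt}\int_S\abs{\tilde{\nabla} W_n}^2\,d\tilde V\le C\int_S W_n^2\,d\tilde V$, citing Lemmas \ref{lem:basic} and \ref{lem:basic2}. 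Neither lemma covers this step: Lemma \ref{lem:basic2} only gives differentiability of zeroth-order integrals $\int_S\Psi(u)\varphi\,d\tilde V$, not of the Dirichlet energy, and Lemma \ref{lem:basic} requires the test function --- here $\partial_t W_n(t)$ --- to lie in $\mathcal W^{2,\alpha}$, in particular to be bounded near the cone point, which is not known for your approximants (their data are only controlled in $L^q$, so Theorem \ref{thm:stronglinear} does not apply to bound $\partial_t W_n$). The paper never establishes $\frac{d}{dt}\int_S\abs{\tilde{\nabla}\cdot}^2=-2\int_S\partial_t(\cdot)\,\tilde{\triangle}(\cdot)\,d\tilde V$ for weak solutions on the conical surface; it only has the integrated inequality \eqref{eqn:linear1energy}, obtained by approximation from surfaces with boundary, and avoiding precisely such formal manipulations is the point of the framework (cf.\ Remark \ref{rem:gap}). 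Two repairs: (i) the paper's own route, which is pointwise in time and purely local --- the sup bound plus the H\"older estimate of Theorem \ref{thm:dg} plus interpolation (Lemma \ref{lem:D2}) give $\abs{\tilde{\nabla} W_n}\le C\rho^{-1+\alpha/2}(t-t_1)^{-1/q}$ on each ball of the cover, which is square integrable in two dimensions; this is \eqref{eqn:moser2} of Lemma \ref{lem:moser}; or (ii) keep your energy scheme but in integrated form: your first energy inequality (which is justifiable) yields a good slice $s^*\in[\frac{t+t_1}{2},t]$ with controlled Dirichlet energy, and then \eqref{eqn:linear1energy} together with Theorem \ref{thm:unique} applied on $[s^*,t]$ with data $W_n(s^*)\in\mathcal W^{2,\alpha}$ gives the bound at time $t$ without differentiating the Dirichlet energy.

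Two smaller points. First, your data converge to $\partial_t u(t_1)$ only in $L^q$, so nothing in your limit passage guarantees that $\tilde w$ attains the initial value $\partial_t u(t_1)$, which is what Lemma \ref{lem:same} later uses; the paper additionally requires convergence $w_{2,n,0}\to\partial_t u(t_1)$ in $C^{2,\alpha}$ on compact subsets of $S\setminus\set{p}$ (possible since $\partial_t u(t_1)$ is $C^{2,\alpha}$ there by interior Schauder estimates), which gives uniform interior estimates up to $t=t_1$ away from the cone and hence the correct initial value in the limit. Second, you perform the restart for conclusion (1) after passing to the limit; at that point Theorem \ref{thm:unique} (or Lemma \ref{lem:maximum}) cannot be invoked for $\tilde w$ until you know $\iint\abs{\partial_t\tilde w}^2\,d\tilde V\,dt<\infty$ on $[t_1+\delta,T]$, which you have not established. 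The paper instead restarts each approximant $w_{2,n}$ at $t_1+\delta$ --- where it is already a weak solution by construction, so Theorem \ref{thm:unique} applies --- obtains uniform $\mathcal V^{2,\alpha,[t_1+\delta,T]}$ bounds, and only then passes to the limit; reordering your argument this way closes both this issue and supplies the compactness you need.
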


This solution $\tilde{w}$ satisfies all the requirements for $\partial_t u$ in Theorem \ref{thm:smoothing}. The last step for the proof of Theorem \ref{thm:smoothing} is
\begin{lem}
	\label{lem:same}
	$\tilde{w}$ constructed in Lemma \ref{lem:growth} is the same as $\partial_t u$ on $S\times [t_1,T]$.
\end{lem}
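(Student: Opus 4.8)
\emph{Plan.} The strategy mirrors the proof of Theorem \ref{thm:stronglinear}: integrate $\tilde w$ in time to build a candidate solution of the original equation \eqref{eqn:linear} and match it with $u$ by uniqueness. The only new feature, coming from the zeroth order term $bu$ in \eqref{eqn:linear} (which makes the forcing of \eqref{eqn:lineardt} depend on $u$ itself), is a mild coupling that is closed off by a Gr\"onwall argument.

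First I would set
\[
	\tilde u(t):=u(t_1)+\int_{t_1}^t\tilde w(s)\,ds,\qquad t\in[t_1,T].
\]
By Lemma \ref{lem:growth}(2), $\norm{\tilde w(s)}_{C^0(S)}+\norm{\tilde\nabla\tilde w(s)}_{L^2(S,\tilde g)}\leq C(s-t_1)^{-1/q}$ with $q>1$, so $(s-t_1)^{-1/q}$ is integrable on $[t_1,T]$; hence $\tilde u$ is well defined, bounded, continuous in $t$, $\tilde u(t_1)=u(t_1)$, $\partial_t\tilde u=\tilde w$, and (differentiating under the integral sign on compact subsets of $S\setminus\set p$, as in \eqref{eqn:tildeu1}) $\tilde\nabla\tilde u(t)=\tilde\nabla u(t_1)+\int_{t_1}^t\tilde\nabla\tilde w$, which together with the growth bound gives $\max_{[t_1,T]}\norm{\tilde\nabla\tilde u}_{L^2(S,\tilde g)}<\infty$ and $\iint_{S\times[t_1,T]}\abs{\tilde w}\,d\tilde V\,ds<\infty$. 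Next set $v:=u-\tilde u$ and $H:=\tilde w-a\tilde\triangle\tilde u-b\tilde u-f$, both viewed pointwise on $(S\setminus\set p)\times[t_1,T]$. Using that $\tilde w$ solves \eqref{eqn:lineardt} classically away from $p$ (exactly the $H$-computation from the proof of Theorem \ref{thm:stronglinear}, now producing one extra term because of $b$) one finds the coupled system
\[
	\partial_t v=a\tilde\triangle v+bv-H,\qquad \partial_t H=\partial_t a\,a^{-1}H+(\partial_t b-\partial_t a\,a^{-1}b)\,v,
\]
with $v(t_1)=0$ and $H(t_1)=0$; the latter uses that $u$ solves \eqref{eqn:linear} classically away from $p$ and that $\tilde w(t_1)=\partial_t u(t_1)$. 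Since $\partial_t a,a^{-1},b,\partial_t b$ are bounded, the $H$-equation is a pointwise linear ODE in $t$, and integrating it gives
\[
	\norm{H(t)}_{C^0(S)}\leq C\int_{t_1}^t\norm{v(s)}_{C^0(S)}\,ds,\qquad t\in[t_1,T].
\]

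For the reverse estimate I would apply the maximum principle to the $v$-equation. For each $\delta>0$, $\tilde w\in\mathcal V^{2,\alpha,[t_1+\delta,T]}$ by Lemma \ref{lem:growth}(1), so (with the step-one bounds) $v=u-\tilde u\in\mathcal V^{2,\alpha,[t_1+\delta,T]}$ and $H$ is bounded; hence the ODE comparison Lemma \ref{lem:odecompare} applies to $\partial_t v=a\tilde\triangle v+bv-H$ on $[t_1+\delta,T]$, and letting $\delta\to0$ (using continuity of $v$ in $t$ and $v(t_1)=0$) yields
\[
	\norm{v(t)}_{C^0(S)}\leq e^{C_1(t-t_1)}\int_{t_1}^t e^{-C_1(s-t_1)}\norm{H(s)}_{C^0(S)}\,ds\leq C\int_{t_1}^t\norm{H(s)}_{C^0(S)}\,ds.
\]
Combining with the bound for $H$, the function $g(t):=\norm{v(t)}_{C^0(S)}$ satisfies $g(t)\leq C\int_{t_1}^t\int_{t_1}^s g(\tau)\,d\tau\,ds$, so Gr\"onwall's inequality forces $g\equiv0$, hence $H\equiv0$ too. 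Therefore $\tilde u\equiv u$ on $[t_1,T]$ and consequently $\partial_t u=\partial_t\tilde u=\tilde w$, which is the assertion. (Equivalently, $H\equiv0$ says $\tilde u$ is a weak solution of \eqref{eqn:linear} with $\tilde u(t_1)=u(t_1)$, so one could conclude by Theorem \ref{thm:unique} in place of Gr\"onwall.)

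The main obstacle is the regularity bookkeeping near the cone point needed to apply Lemma \ref{lem:odecompare} to $v$: the Dirichlet-energy and $\iint\abs{\partial_t\cdot}^2$ bounds for $v$ on $[t_1+\delta,T]$ follow directly from Lemma \ref{lem:growth}, but membership of $\int_{t_1}^t\tilde w$ in the weighted H\"older part of $\mathcal P^{2,\alpha,[t_1+\delta,T]}$ near $p$ has to be obtained by combining the growth estimate $\norm{\tilde w(s)}_{C^0}\lesssim(s-t_1)^{-1/q}$ with interior parabolic Schauder estimates on the parabolic annuli of size $2^{-k}\times4^{-k}$ used to define that norm; the exponent inequality $1/q<1$ is precisely what makes the time integral converge after the parabolic rescaling.
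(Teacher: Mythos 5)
Your construction of $\tilde u(t)=u(t_1)+\int_{t_1}^t\tilde w$, the function $H$, the ODE for $H$ giving $\norm{H(t)}_{C^0(S)}\leq C\int_{t_1}^t\norm{u-\tilde u}_{C^0(S)}\,ds$, and the energy and $L^1$-in-time bounds for $\tilde u$ all coincide with the paper's Steps 1 and 2. Where you diverge is the closing argument: you want to apply the sup-norm comparison Lemma \ref{lem:odecompare} to $v=u-\tilde u$ on $[t_1+\delta,T]$ and finish with a double-integral Gr\"onwall, whereas the paper closes with an $L^2$ Gr\"onwall on $F(t)=\int_S(\tilde u-u)^2a^{-1}\,d\tilde V$, whose integration by parts is justified solely by Lemmas \ref{lem:basic} and \ref{lem:basic2} together with the weak bounds of its Step 2 (boundedness, finite Dirichlet energy, $\iint\abs{\partial_t\tilde u}\,d\tilde V\,dt<\infty$).

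The gap sits exactly at what you call the main obstacle, and your sketch does not close it. Lemma \ref{lem:odecompare} runs through Lemma \ref{lem:maximum}, whose proof (see its footnote, which uses the cylindrical $C^{2,\alpha}$ characterization \eqref{eqn:goodcylinder} of $\mathcal W^{2,\alpha}$ to mollify the positive part) genuinely uses membership of the solution in $\mathcal P^{2,\alpha}$; so you need scaled $C^{2,\alpha}$ control of $\int_{t_1}^{t}\tilde w(s)\,ds$ on the dyadic annuli around $p$. Your proposed route -- slice-wise interior Schauder for $\tilde w(s)$ on annuli of size $2^{-k}\times 4^{-k}$, then integration in $s$ using $1/q<1$ -- fails for the part of the integral with $s-t_1\lesssim 4^{-k}$: there the parabolic cylinder at spatial scale $2^{-k}$ necessarily reaches the initial slice $t=t_1$, where $\tilde w(t_1)=\partial_t u(t_1)$ is only $L^q$ (pointwise of size $\rho^{-2+\alpha''}$, by the proof of Lemma \ref{lem:lp}), so the slice-wise estimate is not of the integrable form $(s-t_1)^{-1/q}$ and the second derivatives are not controlled by the $C^0$ growth bound alone. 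A repair is plausible (e.g.\ via the identity $a\tilde{\triangle}\tilde u=\tilde w-b\tilde u-f-H$ together with the $\rho^{-2+\alpha''}$ bound on $\tilde{\triangle}u(t_1)$, or by reproving Lemma \ref{lem:maximum} under the weaker hypotheses of boundedness, finite energy and $C^2$ regularity away from $p$), but as written the application of Lemma \ref{lem:odecompare} is unjustified. This is precisely why the paper's Step 3 is an $L^2$ argument: it requires no weighted H\"older control of $\tilde u$ at the cone point, only the weak bounds that the growth estimate of Lemma \ref{lem:growth} actually provides.
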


So the proof of Theorem \ref{thm:smoothing} reduces to the proof of Lemma \ref{lem:lp}, Lemma \ref{lem:growth} and Lemma \ref{lem:same}.

\begin{proof}[Proof of Lemma \ref{lem:lp}]
	$\partial_t u(t_1)$ is bounded away from the singular point $p$ by the interior estimate of \eqref{eqn:linear}. To prove Lemma \ref{lem:lp}, it suffices to consider the growth of $\tilde{\triangle} u(t_1)$ (near the singular point) in $B$. 
	By Theorem \ref{thm:dg}, we know $u(t_1)$ is $C^{\alpha'}(B)$ for some $\alpha'>0$, which means that in terms of polar coordinates
\begin{equation}\label{eqn:step11}
	\abs{u(\rho,\theta,t_1)-u(0,\theta,t_1)}\leq C \rho^{\alpha'}.
\end{equation}
For any $y=(\rho_0,\theta_0)$ ($\rho_0\ne 0$), there is constant $\sigma$ depending only on $\beta$ such that the geodesic ball $B_{\sigma \rho_0}(y)$ (with respect to $\tilde{g}$) is diffeomorphic to a disk embedded in $S\setminus \set{p}$ and $\tilde{g}$ restricted to $B_{\sigma \rho_0}(y)$ is flat. Let $(x_1,x_2)$ be the Euclidean coordinates in $B_{\sigma \rho_0}(y)$ with $x_i=0$ at $y$ and regard $u$ as a function of $x_1,x_2$ and $t$. Set
\begin{equation*}
	v(z_1,z_2,t)= u (\sigma \rho_0 z_1, \sigma \rho_0 z_2, (\sigma \rho_0)^2 t + t_1).
\end{equation*}
$v$ is defined on $B^z\times [-1,0]$ (here $B^z$ is the unit ball in $z$ plane) and satisfies
\begin{equation*}
	\partial_t v = a \triangle_z v + (\sigma \rho_0)^2 b v + (\sigma\rho_0)^2 f.
\end{equation*}

By the interior estimates of linear parabolic equations, we know that 
\begin{equation}\label{eqn:interpolation}
\norm{v(0)}_{C^{2,\alpha}(B^z_{1/2})}\leq C
\end{equation}
for some $C$ depending on the $\mathcal V^{0,\alpha,[0,T]}$ norm of $a,b,f$ and $\lambda$. \eqref{eqn:step11} implies that
\begin{equation}\label{eqn:osc}
	\mbox{osc}_{B^z} v(0)\leq C \rho_0^{\alpha'}.
\end{equation}
By \eqref{eqn:interpolation} and \eqref{eqn:osc}, the interpolation of H\"older norm\footnote{In case the reader needs a proof, we refer to Lemma \ref{lem:D2} proved in the appendix.} gives $\alpha''>0$ such that
\begin{equation*}
	\max_{B^z_{1/2}}\abs{\frac{\partial^2}{\partial z_i \partial z_j} v(0)}\leq C \rho_0^{\alpha''} \quad i,j=1,2
\end{equation*}
which implies that 
\begin{equation*}
	\abs{\tilde{\triangle}u (\rho_0,\theta_0,t_1)}\leq C \rho_0^{-2+\alpha''}.
\end{equation*}
Therefore, there is some $q>1$ such that $\tilde{\triangle}u(t_1)$ is in $L^q(S,\tilde{g})$.
\end{proof}

\begin{proof}[Proof of Lemma \ref{lem:growth}]
The problem we want to solve is a linear parabolic equation with nonzero non-homogeneous term and nonzero initial data. By the linearity, it suffices to solve the following two problems separately and add up the solutions:
\begin{equation}
	\left\{
		\begin{array}[]{ll}
			\partial_t w_1 = a\tilde{\triangle} w_1 + \tilde{b} w_1  + \tilde{f} & \quad \mbox{ on } S\times [t_1,T] \\
			w_1(0)=0 &
		\end{array}
		\right.
	\label{eqn:w1}
\end{equation}
and
\begin{equation}
	\left\{
		\begin{array}[]{ll}
			\partial_t w_2 = a\tilde{\triangle} w_2 + \tilde{b} w_2  & \quad \mbox{ on } S\times [t_1,T] \\
			w_2(0)= \partial_t u (t_1). &
		\end{array}
		\right.
	\label{eqn:w2}
\end{equation}
Here for simplicity, we have set
\begin{equation*}
	\tilde{b}=\partial_t a a^{-1} + b \quad \mbox{and} \quad \tilde{f} = \partial_t f + \partial_t b u - \partial_t a a^{-1} (bu +f).
\end{equation*}
Note that by the assumptions of Theorem \ref{thm:smoothing}, we know $\tilde{b},\tilde{f}\in \mathcal V^{0,\alpha,[t_1,T]}$. By Theorem \ref{thm:linear}, there exists a solution $w_1$ to \eqref{eqn:w1} satisfying
\begin{equation*}
	\norm{w_1}_{\mathcal V^{2,\alpha,[t_1,T]}}\leq C(C_1,C_2,\lambda,T),
\end{equation*}
so that to show Lemma \ref{lem:growth}, it suffices to find a solution $w_2$ to \eqref{eqn:w2} which satisfies (1) and (2) in Lemma \ref{lem:growth}.

The most natural way of solving \eqref{eqn:w2} is to consider an approximation to $\partial_t u(t_1)\in L^q(S,\tilde{g})$. For that purpose, let $w_{2,n,0}$ be a sequence of $\mathcal W^{2,\alpha}$ functions such that
\begin{equation*}
	\lim_{n\to \infty} \norm{w_{2,n,0}- \partial_t u(t_1)}_{L^q(S,\tilde{g})}=0
\end{equation*}
and for each compact domain $W\subset S\setminus \set{p}$
\begin{equation}\label{eqn:calphainitial}
	\lim_{n\to \infty} \norm{w_{2,n,0}- \partial_t u(t_1)}_{C^{2,\alpha}(W)}=0.	
\end{equation}
Note that in the last line above, we used the fact that $\partial_t u$ satisfies \eqref{eqn:lineardt} pointwisely away from the singular point so that $\partial_t u(t_1)$ is in $C^{2,\alpha}(W)$ by the Schauder estimate and the assumptions of Theorem \ref{thm:smoothing}. For each $w_{2,n,0}$, Theorem \ref{thm:linear} gives a weak solution $w_{2,n}(t)$ defined on $S\times [t_1,T]$ satisfying
\begin{equation*}
	\partial_t w_{2,n} = a\tilde{\triangle} w_{2,n} + \tilde{b} w_{2,n} \quad \mbox{and} \quad  w_{2,n}(t_0)= w_{2,n,0}.
\end{equation*}
We claim the following apriori estimates for $w_{2,n}(t)$:
\begin{enumerate}[(a)]
	\item For some $C>0$,
		\begin{equation}
			\max_{t\in [t_1,T]} \norm{w_{2,n}(t)}_{L^q(S,\tilde{g})}\leq C;
			\label{eqn:wnlp}
		\end{equation}

	\item For any compact domain $W\subset S\setminus \set{p}$, there is $C>0$ such that
		\begin{equation*}
			\norm{w_{2,n}}_{C^{2,\alpha}(W\times [t_1,T])}\leq C;
		\end{equation*}
	\item For any $t\in (t_1,T]$,
	\begin{equation}\label{eqn:smoothorder}
		\norm{w_{2,n}(t)}_{C^0(S)} + \norm{\tilde{\nabla} w_{2,n}(t)}_{L^2(S,\tilde{g})} \leq \frac{C}{(t-t_1)^{1/q}};
	\end{equation}
\item For any $\delta>0$, 
	\begin{equation*}
		\norm{w_{2,n}}_{\mathcal V^{2,\alpha,[t_1+\delta,T]}}\leq C(C_1,C_2,\lambda,T,t_1,\delta).
	\end{equation*}
\end{enumerate}
Before we start proving (a)-(d) above, we show how they imply Lemma \ref{lem:growth}. Recall that our aim is to find $w_2$ solving \eqref{eqn:w2} and check that (1) and (2) in Lemma \ref{lem:growth} holds for $w_2$ in the place of $\tilde{w}$.

The estimate (b) above and \eqref{eqn:calphainitial} imply that $w_{2,n}$ subconverges to a solution of \eqref{eqn:w2}. By taking the limit, the estimates (c) and (d) become (2) and (1) for $w_2$ in Lemma \ref{lem:growth}.

Now, let's turn to the proof of the claim, i.e. (a)-(d). Notice that (a) is a direct consequence of Lemma \ref{lem:fora}, (b) follows from (a) by applying known parabolic interior estimates to $w_{2,n}$ on $Q\times [t_1,T]$, where $W \subset Q\subset \bar{Q}\subset S\setminus \set{p}$ and (d) follows from (c), by regarding $w_{2,n}(t)$ as a weak solution to \eqref{eqn:w2} with initial data $w_{2,n}(t_1+\delta)$ (which by (c) belongs to $\mathcal W^{2,\alpha}$) and applying Theorem \ref{thm:unique}.

Before we prove (c), we describe a cover of ${S}$ by balls. For some $t$ fixed, there is a sequence of points $p_1,\cdots,p_N$ in $S$, where $p_1=p$ is the singular point such that the following is true. Let $B_1$ be $B_{\sqrt{t}}(p)$, the geodesic ball centered at $p_1=p$ with radius $\sqrt{t}$ and for $i=2,\cdots,N$, let $B_i$ be $B_{c_\beta \sqrt{t}}(p_i)$. Here $c_\beta\in (0,1)$ is a constant depending only on $\beta$ and the geometry of $(S,\tilde{g})$ so that each $B_i (i>1)$ is topologically a ball and $p\notin B_i$ for $i>1$. We also denote by $B^\lambda_i$ the geodesic ball centered at $p_i$ but with the radius being $\lambda$ times that of $B_i$. We require that $B^{1/4}_i$ cover $S$ and that each point in $S$ is covered by $\bar{B_i}$ for at most $c$ times for a universal constant $c$. 
We need the following lemma
\begin{lem}
	\label{lem:moser} Suppose that $a,b\in \mathcal V^{0,\alpha,[0,t]}(B_i)$ and that $u\in \mathcal V^{2,\alpha,[0,t]}(B_i)$ is a weak solution to
	\begin{equation*}
		\partial_t u =a(x,t) \tilde{\triangle} u + bu.
	\end{equation*}
	Assume 
	\begin{equation*}
		\max_{B_i\times [0,t]} \abs{\partial_t a} < C_1
	\end{equation*}
	for some $C_1$. Then there exists $C_2$ depending on $\mathcal V^{0,\alpha,[0,t]}$ norms of $a,b$ and $C_1$ such that
	\begin{equation}\label{eqn:moser1}
		\max_{B^{1/2}_i \times [t/2,t]} \abs{u} \leq {C_2} \frac{1}{t^{2/q}}\left( \int_0^t \int_{B_i} \abs{u}^q d\tilde{V} dt\right)^{1/q}
	\end{equation}
	and
	\begin{equation}\label{eqn:moser2}
		\norm{\tilde{\nabla} u(t)}_{L^2(B^{1/4}_i,\tilde{g})} \leq {C_2} \frac{1}{t^{2/q}} \left( \int_0^t \int_{B_i} \abs{u}^q d\tilde{V} dt\right)^{1/q}
	\end{equation}
\end{lem}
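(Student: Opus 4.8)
The plan is to reduce Lemma~\ref{lem:moser} to the classical parabolic Moser iteration for equations in divergence form, exactly as was done for Theorem~\ref{thm:dg} via Theorem~\ref{thm:dg1}. First I would pass to the coordinates $(x_1,x_2)=(\rho\cos\theta,\rho\sin\theta)$ on the ball $B_i$. If $p\notin B_i$ this is harmless; if $B_i$ is the ball $B_{\sqrt t}(p)$ around the singular point, then in these coordinates $\tilde g$ has bounded (though discontinuous at the origin) coefficients, comparable to $\delta_{ij}$, and the equation $\partial_t u = a\tilde\triangle u + bu$ becomes $\partial_t u = \tfrac{1}{\sqrt g}\partial_i(g^{ij}\sqrt g\,\partial_j u)+bu$ with $g_{ij}$ the coefficients of $a^{-1}\tilde g$; since $\abs{\partial_t a}\le C_1$ and $a>\lambda$ (from the assumptions carried over from Theorem~\ref{thm:smoothing}), we also get $\abs{\partial_t g}$ bounded and $\tfrac1\Lambda\delta_{ij}\le g_{ij}\le\Lambda\delta_{ij}$. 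The weak-solution hypothesis $u\in\mathcal V^{2,\alpha,[0,t]}(B_i)$ translates, via the bi-Lipschitz equivalence of $\tilde g$ and $\delta$, into $\max_{s}\int_{B_i}\abs{\partial_i u}^2 + \iint\abs{\partial_t u}^2<\infty$, which is precisely what is needed to justify the integration by parts with test function $(\sqrt g)\,v$, as explained in the paragraph preceding Theorem~\ref{thm:dg1}.

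Next I would carry out the two standard estimates. For \eqref{eqn:moser1}, this is the local boundedness half of the De~Giorgi--Nash--Moser theorem for parabolic equations in divergence form (e.g.\ Theorem~8.1 in Chapter~III of \cite{ladyzhenskaya1968linear}, or the standard Moser iteration): a bounded weak solution on $B_i\times[0,t]$ satisfies $\sup_{B_i^{1/2}\times[t/2,t]}\abs u \le C\,(t^{-1}\,\abs{B_i}^{-1})^{1/2}\norm{u}_{L^2(B_i\times[0,t])}$; after rescaling $B_i$ to the unit ball (the rescaling $x\mapsto x/\sqrt t$, $s\mapsto s/t$ preserves the divergence structure, with new coefficients $tb$, $t\partial_t g$ still bounded since $t\le T$), and then using $\abs{B_i}\sim t$ and Hölder to compare the $L^2$ norm in space-time with the $L^q$ norm, one arrives at the stated $t^{-2/q}(\iint\abs u^q)^{1/q}$. (One should double-check the exponent: $\norm u_{L^2(B_i\times[0,t])}\le\abs{B_i\times[0,t]}^{1/2-1/q}\norm u_{L^q}\sim t^{2(1/2-1/q)}\norm u_{L^q}=t^{1-2/q}\norm u_{L^q}$, and the prefactor $t^{-1}$ from the Moser estimate then gives $t^{-2/q}$, matching the claim.) For \eqref{eqn:moser2}, I would use the Caccioppoli/energy inequality: multiply the equation by $u\,\sqrt g\,\chi^2$ with $\chi$ a cutoff supported in $B_i^{1/2}$ equal to $1$ on $B_i^{1/4}$, integrate over $B_i^{1/2}\times[t/2,t']$, and use the $L^\infty$ bound from \eqref{eqn:moser1} on the right-hand side to absorb the lower-order and cutoff terms, obtaining $\norm{\tilde\nabla u(t')}_{L^2(B_i^{1/4})}^2 \le C\,t^{-1}\norm u_{L^\infty(B_i^{1/2}\times[t/2,t])}^2\cdot\abs{B_i}$, which again after inserting \eqref{eqn:moser1} gives the claimed $t^{-2/q}(\iint\abs u^q)^{1/q}$.

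The main obstacle, and the reason this is a lemma rather than a one-line citation, is that \eqref{eqn:new} is \emph{not} in divergence form because of the factor $(\sqrt g)^{-1}$ in front of $\partial_i(g^{ij}\sqrt g\,\partial_j u)$, so the usual Moser iteration multiplying by $u^{p}$ directly does not close. The fix, already flagged by the author in the discussion after Theorem~\ref{thm:dg1}, is to multiply instead by $(\sqrt g)\,u^{p-1}\chi^2$ etc., which cancels the $(\sqrt g)^{-1}$ and restores the divergence structure at the price of an extra term $\int (\partial_t\sqrt g)\,u^p$; this is controlled because $\partial_t g$ is bounded, which in turn is where the hypothesis $\max_{B_i\times[0,t]}\abs{\partial_t a}<C_1$ is used. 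Once this modification is in place, the iteration and the energy estimate are entirely routine and follow the exposition in \cite{ladyzhenskaya1968linear} verbatim; I would either refer to the appendix (where the analogous point for Theorem~\ref{thm:dg1} is promised to be worked out in full) or note that the same computation applies here with the obvious changes.
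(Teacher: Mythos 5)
Your reduction to a bounded-coefficient divergence-form problem and the Moser iteration for \eqref{eqn:moser1} are in the spirit of what the paper does (the appendix runs the iteration intrinsically on the cone with test functions $\varphi_i u_+^{q_i-1}a^{-1}$, using $\abs{\partial_t a}\le C_1$ exactly where you use boundedness of $\partial_t g$), but there are two genuine gaps. The more serious one is \eqref{eqn:moser2}: testing the equation with $u\sqrt{g}\,\chi^2$ and integrating over $[t/2,t']$ is a Caccioppoli estimate, and what it controls is $\sup_{t'}\int u^2\chi^2(t')$ together with the \emph{space-time} integral $\iint_{[t/2,t']}\abs{\nabla u}^2\chi^2$; it does not bound $\int_{B_i^{1/4}}\abs{\tilde\nabla u(t)}^2$ at the final time slice, which is what the lemma asserts (and what is needed later, since $\tilde{w}(t_1+\delta)$ must lie in $\mathcal W^{2,\alpha}$ at a fixed time). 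Your displayed conclusion $\norm{\tilde\nabla u(t')}_{L^2(B_i^{1/4})}^2\le Ct^{-1}\norm{u}_{L^\infty}^2\abs{B_i}$ simply does not follow from that computation. The paper avoids this by deriving \eqref{eqn:moser2} from \eqref{eqn:moser1} combined with Theorem \ref{thm:dg}: the H\"older continuity across the cone point gives $\abs{u(\rho,\theta,t)-u(0,\theta,t)}\le C\rho^{\alpha'}\norm{u}_{C^0}$, interior parabolic estimates on balls of radius comparable to $\rho$ give $\abs{\tilde\nabla^2 u}\le C\rho^{-2}\norm{u}_{C^0}$, and the interpolation Lemma \ref{lem:D2} yields the pointwise bound $\abs{\tilde\nabla u(\rho,\theta,t)}\le C\rho^{-1+\alpha'/2}\norm{u}_{C^0}$, which is square-integrable near the tip. (One could also repair your route by a second, weighted energy estimate testing with $\partial_t u\,a^{-1}\chi^2$ after the Caccioppoli step, again using $\abs{\partial_t a}\le C_1$, but some such additional argument is indispensable.)

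The second gap is in your exponent bookkeeping for \eqref{eqn:moser1}: you pass from the classical $L^2$-based local boundedness to the $L^q$ right-hand side via $\norm{u}_{L^2}\le\abs{Q}^{1/2-1/q}\norm{u}_{L^q}$, but this H\"older inequality goes in that direction only when $q\ge 2$. In the application (Lemma \ref{lem:lp}) one has $1<q<2$, so the $L^q$ norm does not control the $L^2$ norm and your citation does not cover the needed range. The paper handles this with the standard Li--Schoen style iteration: first prove the estimate with $L^2$ (or any $q\ge2$) on nested cylinders $Q_i$, then interpolate $\norm{u}_{L^2(Q_{i+1})}\le\norm{u}_{C^0(Q_{i+1})}^{(2-q)/2}\norm{u}_{L^q}^{q/2}$ and iterate to absorb the sup-norm factor. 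You need either this extra step or a Moser iteration run directly with the exponent $q$.
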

The proof \eqref{eqn:moser1} of this lemma is a Moser iteration, which is more or less standard, while the proof of \eqref{eqn:moser2} is a combination of \eqref{eqn:moser1} with Theorem \ref{thm:dg}. In order not to distract the readers from the proof of Theorem \ref{thm:smoothing}, we move it to the appendix. 

We apply Lemma \ref{lem:moser} to $w_{2,n}$ and see immediately from \eqref{eqn:moser1} and (a) that
\begin{equation*}
	\norm{w_{2,n}(t)}_{C^0(S)}\leq \frac{C_3}{(t-t_1)^{1/q}},
\end{equation*}
which is the $C^0$ part of \eqref{eqn:smoothorder}. For the other half, 
\begin{eqnarray*}
	\int_S \abs{\tilde{\nabla} w_{2,n}(t)}^2 d\tilde{V} &\leq& \sum_{i=1}^N \int_{B_i^{1/4}} \abs{\tilde{\nabla} w_{2,n}(t)}^2 d\tilde{V} \\
	&\leq& C^2_2 \sum_{i=1}^N \frac{1}{(t-t_1)^{4/q}} \left( \int_{t_1}^t \int_{B_i} \abs{w_{2,n}}^q d\tilde{V} dt \right)^{2/q} \\
	&\leq& \frac{C_4}{(t-t_1)^{4/q}} \left( \int_{t_1}^t \int_S \abs{w_{2,n}(t)}^q d\tilde{V} dt\right)^{2/q}.
\end{eqnarray*}
Here in the last line above, we used the elementary inequality that if $2/q>1$ (as we have assumed in Lemma \ref{lem:lp}), then for $x,y\in [0, \infty)$, we have $x^{2/q}+y^{2/q}\leq (x+y)^{2/q}$. Together with (a), this concludes the proof of part (c) of the claim and hence the proof of Lemma \ref{lem:growth}.
\end{proof}

Finally, we give a proof of Lemma \ref{lem:same}.
\begin{proof}[Proof of Lemma \ref{lem:same}]
For $t>t_1$, we set
\begin{equation*}
	\tilde{u}(t)= u(t_1)+ \int_{t_1}^t \tilde{w}(t) dt.
\end{equation*}
Obviously, $\tilde{u}(t_1)=u(t_1)$.
The proof consists of three steps. First, we show that $\tilde{u}-u$ satisfies some homogeneous equation with an error term, i.e. \eqref{eqn:tildeuu} below. Then we show that although $\tilde{u}-u$ is not a weak solution in $\mathcal V^{2,\alpha,[t_1,T]}$, there is still some control over its energy and time derivative when $t$ is close to $t_1$, so that in the final step, we can invoke an argument similar to the proof of Lemma \ref{lem:maximum} to show that $\tilde{u}=u$. 

{\bf Step 1.}  When $t=t_1$,
\begin{equation*}
	\partial_t \tilde{u}|_{t=t_1} = \tilde{w}(t_1)= \partial_t u(t_1)= a \tilde{\triangle} u +bu +f = a\tilde{\triangle} \tilde{u}+ b\tilde{u} +f.
\end{equation*}
For $t>t_1$, we set
\begin{equation}\label{eqn:defH}
	H=\partial_t \tilde{u}- a \tilde{\triangle} \tilde{u} -b\tilde{u} -f = \tilde{w}-a\tilde{\triangle} \tilde{u} -b\tilde{u} -f
\end{equation}
and compute 
\begin{eqnarray*}
	&& \partial_t H \\
	&=&\partial_t \tilde{w}  -\partial_t a \tilde{\triangle} \tilde{u}  -a \tilde{\triangle} \tilde{w} -\partial_tb \tilde u - b \tilde{w} -\partial_t f \\ 
	&=& \partial_t a a^{-1} H - \partial_t a \frac{\tilde{w} -b \tilde{u}-f}{a} + \partial_t \tilde{w} -a \tilde{\triangle} \tilde{w} -\partial_t b \tilde{u} - b\tilde{w} -\partial_t f \\ 
	&=& \partial_t a a^{-1} H + (\partial_t a a^{-1} b -\partial_t b ) (\tilde{u}-u).
\end{eqnarray*}
Here in the last line above, we used the fact that $\tilde{w}$ satisfies \eqref{eqn:lineardt}.
For each fixed $x\in S\setminus \set{p}$, consider $H$ as a function of $t$ alone satisfying the above ODE with initial data $H(0)=0$. There exists $C_4$ and $C_5$ depending on the $C^0$ norm of $\partial_t a, \partial_t b$ , $a^{-1}$ and $b$ such that
\begin{equation*}
	\abs{\partial_t H} \leq C_4 \abs{H} + C_5 \abs{\tilde{u}-u},
\end{equation*}
from which we obtain $C_6>0$ depending on $C_4,C_5$ and $T$ such that
\begin{equation}
	\abs{H}(t) \leq C_6 \int_{t_1}^t \abs{\tilde{u}-u} ds.
	\label{eqn:H}
\end{equation}
We then subtract \eqref{eqn:defH} and \eqref{eqn:linear} to get
\begin{equation}\label{eqn:tildeuu}
	\partial_t (\tilde{u}-u) = a\tilde{\triangle} (\tilde{u}-u) + b(\tilde{u}-u) +H.
\end{equation}
Intuitively, \eqref{eqn:H}, \eqref{eqn:tildeuu} and some maximum principle type argument shall prove $\tilde{u}=u$ for $t\in [t_1,T]$. However, there is some technical issue in the maximum principle type argument, which requires the following claim. 

{\bf Step 2.} We claim that $\tilde{u}$ satisfies
\begin{equation}
	\norm{\tilde{\nabla} \tilde{u}(t)}_{L^2(S,\tilde{g})} < \infty \quad \mbox{for } \, t>t_1 
	\label{eqn:tildeuenergy}
\end{equation}
and
\begin{equation}
	\int_{t_1}^T\int_S \abs{\partial_t \tilde{u}} d\tilde{V} dt < +\infty.
	\label{eqn:tildeudt}
\end{equation}
\eqref{eqn:tildeudt} follows from (2) of Lemma \ref{lem:growth} since $\partial_t \tilde{u}= \tilde{w}$. To see \eqref{eqn:tildeuenergy}, we take compact set $W\subset S\setminus \set{p}$. By (2) in Lemma \ref{lem:growth} and the definition of $\tilde{u}$, we know $\tilde{u}$ is bounded on $S\times [t_1,T]$, which in turn gives that $H$ is bounded on $S\times [t_1,T]$ by \eqref{eqn:H}. Since $\tilde{u}(t_1)-u(t_1)=0$, we can apply the usual $L^p$ estimate for $\tilde{u}-u$ to the equation \eqref{eqn:tildeuu} on $W'\times [t_1,T]$ to see that $\tilde{\nabla} \tilde{u}$ is bounded on $W\times [t_1,T]$. Here $W'$ is a compact domain in $S\setminus \set{p}$  whose interior set contains $W$. This together with (2) in Lemma \ref{lem:growth} shows that for any $t\in [t_1,T]$,
\begin{equation*}
	\int_{t_1}^t \int_W \abs{\tilde{\nabla} \tilde{u}} \abs{\tilde{\nabla} \tilde{w}} d\tilde{V} ds < \infty.
\end{equation*}
The Fubini theorem implies that $\int_W \abs{\tilde{\nabla} \tilde{u}}^2 d\tilde{V}$ is absolutely continuous function on $[t_1,T]$ and
\begin{eqnarray*}
	\frac{d}{dt}\int_W \abs{\tilde{\nabla} \tilde{u}}^2 d\tilde{V} &=&  2 \int_W \tilde{\nabla} \tilde{u} \cdot \tilde{\nabla} \tilde{w} d\tilde{V} \\
	&\leq& 2 \norm{\tilde{\nabla} \tilde{u}}_{L^2(W,\tilde{g})} \norm{\tilde{\nabla} \tilde{w}}_{L^2(W,\tilde{g})} \\ 
	&\leq & \frac{C}{ (t-t_1)^{1/q}} \norm{\tilde{\nabla} \tilde{u}}_{L^2(W,\tilde{g})}.
\end{eqnarray*}
Since we know $\norm{\tilde{\nabla} \tilde{u}(t_1)}_{L^2(W,\tilde{g})}\leq \norm{\tilde{\nabla} u(t_1)}_{L^2(S,\tilde{g})}\leq C < +\infty$, we can integrate the above differential inequality to get another constant $C'$ such that
\begin{equation*}
	\norm{\tilde{\nabla} \tilde{u}(t)}_{L^2(W,\tilde{g})}\leq C'
\end{equation*}
for all $t\in [t_1,T]$. By the arbitrariness of $W$, \eqref{eqn:tildeuenergy} is proved.

{\bf Step 3.} Let $F(t)$ be $\int_S (\tilde{u}-u)^2 a^{-1} d\tilde{V}$. \eqref{eqn:tildeudt} and Lemma \ref{lem:basic2} imply that $F$ is an absolutely continuous function of $t$ so that we can compute
\begin{eqnarray*}
	\frac{d}{dt} F(t) &\leq & 2\int_S (\tilde{u}-u) (\tilde{\triangle} (\tilde{u}-u) + a^{-1}b (\tilde{u}-u) + a^{-1}H) d\tilde{V} + C F(t)\\
	&\leq& C F(t)+ C \int_S H^2 d\tilde{V}.
\end{eqnarray*}
Here in the last line above, the integration by parts is justified by Lemma \ref{lem:basic}, \eqref{eqn:tildeuenergy} and the fact that $\tilde{\triangle} (\tilde{u}-u)$ is integrable for almost every $t\in [t_1,T]$, which is a consequence of \eqref{eqn:defH}, \eqref{eqn:H} and \eqref{eqn:tildeudt}.
On the other hand, by the H\"older's inequality and the Fubini's theorem,
\begin{eqnarray*}
	\int_S H^2 d\tilde{V} &\leq& C(T) \int_S \int_{t_1}^t (\tilde{u}-u)^2 ds d\tilde{V} \\
	&\leq & C(T) \int_{t_1}^t F(s) ds.
\end{eqnarray*}
In summary, we have
\begin{equation}\label{eqn:Fdt}
	\frac{d}{dt} F(t)\leq C F(t) + C \int_{t_1}^t F(s) ds
\end{equation}
and $F(t_1)=0$. One can conclude from the above inequality that $F\equiv 0$. To see this, set
\begin{equation*}
	\tilde{F}(t)=\max_{s\in [t_1,t]} F(s).
\end{equation*}
It turns out that $\tilde{F}$ is also absolutely continuous and 
\begin{equation*}
	\frac{d}{dt} \tilde{F} (t)\leq \max\set{0, \frac{d}{dt} F(t)} \leq \max \set{0, CF(t)+ C \int_{t_1}^t F(s)ds} \leq C \tilde{F}(t).
\end{equation*}
Integrating over $t$ and noticing that $\tilde{F}(t_1)=0$ gives $\tilde{F}\equiv 0$, which finishes the proof of Step 3, and hence Lemma \ref{lem:same}. 
\end{proof}

\section{Global existence of conical Ricci flow}\label{sec:global}
In this section, we prove Theorem \ref{thm:main1}. The local existence is proved in Section \ref{subsec:local} with an explicit iteration process. As a result of the local existence result (Theorem \ref{thm:local}), we show that the solution exists as long as the curvature remains bounded (see Lemma \ref{lem:blowup}). In Section \ref{subsec:apriori} and Section \ref{subsec:curvature}, we prove apriori estimates on the conformal factor and the curvature respectively, which are used to show Theorem \ref{thm:main1}.

\subsection{Local existence of the Ricci flow}\label{subsec:local}
In this section, we prove the local existence of the Ricci flow equation
\begin{equation}\label{eqn:localexist}
	\partial_t u = e^{-2u} \tilde{\triangle} u +\frac{r}{2}- e^{-2u}\tilde{K}.
\end{equation}
Here $r$ is some constant. We will show if $r$ is properly chosen according to the Gauss-Bonnet formula, then the flow preserves the volume. 

\begin{thm}
	\label{thm:local} Suppose that $u_0:S\to \Real$ and the curvature $K_0$ of $e^{2u_0}\tilde{g}$ are both in $\mathcal W^{2,\alpha}$. Then there is $T>0$ depending on $\norm{u}_{\mathcal W^{2,\alpha}}$ and $\norm{K_0}_{\mathcal W^{2,\alpha}}$ and a weak solution $u\in \mathcal V^{2,\alpha,[0,T]}$ to \eqref{eqn:localexist}. Moreover, $\partial_t u$ is in $\mathcal V^{2,\alpha,[0,T]}$. 
\end{thm}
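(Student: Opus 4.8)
The plan is to solve \eqref{eqn:localexist} by a Picard iteration driven by the linear theory of Section~\ref{sec:weaksolution}. Set $w_0:=\frac r2-K_0$. The conformal-change identity $K_0=e^{-2u_0}(\tilde K-\tilde\triangle u_0)$ gives $\tilde\triangle u_0=\tilde K-e^{2u_0}K_0\in\mathcal W^{2,\alpha}$ (using $u_0,K_0\in\mathcal W^{2,\alpha}$ and that $\tilde g$ is the flat cone metric near $p$, so $\tilde K$ is smooth and supported away from $p$) and $e^{-2u_0}\tilde\triangle u_0+\frac r2-e^{-2u_0}\tilde K=w_0\in\mathcal W^{2,\alpha}$. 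Define $u^{(0)}\equiv u_0$ and, given $u^{(n)}$, let $u^{(n+1)}$ be the weak solution of
\[
\partial_t u^{(n+1)}=e^{-2u^{(n)}}\tilde\triangle u^{(n+1)}+\frac r2-e^{-2u^{(n)}}\tilde K,\qquad u^{(n+1)}(0)=u_0 .
\]
The coefficients $a_n:=e^{-2u^{(n)}}$, $f_n:=\frac r2-e^{-2u^{(n)}}\tilde K$ and their time derivatives lie in $\mathcal V^{0,\alpha,[0,T]}$ whenever $u^{(n)},\partial_t u^{(n)}\in\mathcal V^{2,\alpha,[0,T]}$ (using that $\tilde K$ vanishes near $p$, so $f_n\equiv\frac r2$ there), with $\|a_n^{-1}\|_{C^0}<\infty$ and $a_n\ge e^{-2\|u^{(n)}\|_{C^0}}>0$; moreover $u^{(n)}(0)=u_0$ forces $a_n(\cdot,0)\tilde\triangle u_0+f_n(\cdot,0)=w_0\in\mathcal W^{2,\alpha}$ for every $n$. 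Hence Theorem~\ref{thm:linear} and Theorem~\ref{thm:stronglinear} apply and produce $u^{(n+1)}\in\mathcal V^{2,\alpha,[0,T]}$ with $\partial_t u^{(n+1)}\in\mathcal V^{2,\alpha,[0,T]}$, so the induction is self-consistent (the base case $n=0$ is immediate since $u^{(0)}$ is $t$-independent).

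Next I would obtain a priori bounds uniform in $n$ on a short interval. The linearized equations have no zeroth-order term, so \eqref{eqn:linear1c0} reads $\|u^{(n+1)}(t)\|_{C^0}\le\|u_0\|_{C^0}+t\|f_n\|_{C^0}$; an induction then shows $\|u^{(n)}\|_{C^0(S\times[0,T])}\le\|u_0\|_{C^0}+1$ once $T\le T_1$ with $T_1$ depending only on $r$ and $\|u_0\|_{\mathcal W^{2,\alpha}}$, which pins down uniform ellipticity constants $\lambda\le a_n\le\Lambda$. Feeding this back into \eqref{eqn:linear1energy}, the interior Schauder estimates underlying Theorem~\ref{thm:linear}, and the bound of Theorem~\ref{thm:stronglinear}—and using that $u^{(n)}(0)=u_0$ and $\partial_t u^{(n)}(0)=w_0$ are fixed, so the $\mathcal V^{0,\alpha,[0,T]}$ norms of $a_n,f_n,\partial_t a_n,\partial_t f_n$ stay in a bounded neighbourhood of those built from $e^{-2u_0}$ once $T$ is small—one gets a constant $M$ depending only on $\|u_0\|_{\mathcal W^{2,\alpha}}$ and $\|K_0\|_{\mathcal W^{2,\alpha}}$ with $\|u^{(n)}\|_{\mathcal V^{2,\alpha,[0,T]}}+\|\partial_t u^{(n)}\|_{\mathcal V^{2,\alpha,[0,T]}}\le M$ for all $n$ and all $T\le T_1$. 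In particular the equation for $u^{(n)}$ gives $\tilde\triangle u^{(n)}=e^{2u^{(n-1)}}(\partial_t u^{(n)}-f_{n-1})$, so $\tilde\triangle u^{(n)}$ is bounded on $S\times[0,T]$ by a constant depending only on $M$.

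Then I would run the contraction. Since $u^{(n+1)}(0)=u^{(n)}(0)=u_0$, the difference $v_n:=u^{(n+1)}-u^{(n)}$ solves $\partial_t v_n=e^{-2u^{(n)}}\tilde\triangle v_n+g_n$ with $v_n(0)=0$, where $g_n=(e^{-2u^{(n)}}-e^{-2u^{(n-1)}})(\tilde\triangle u^{(n)}-\tilde K)$. By the previous step $\tilde\triangle u^{(n)}-\tilde K$ is uniformly bounded and $|e^{-2u^{(n)}}-e^{-2u^{(n-1)}}|\le 2\Lambda|v_{n-1}|$, so $\|g_n\|_{C^0(S\times[0,T])}\le L\|v_{n-1}\|_{C^0(S\times[0,T])}$ with $L=L(M)$. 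Applying the comparison Lemma~\ref{lem:odecompare} to $v_n$ (legitimate since $\partial_t a_n$ is bounded and $v_n\in\mathcal V^{2,\alpha,[0,T]}$, so \eqref{eqn:assmax} holds) yields $\|v_n(t)\|_{C^0}\le\int_0^t\|g_n(s)\|_{C^0}\,ds\le TL\,\|v_{n-1}\|_{C^0(S\times[0,T])}$. Hence for $T\le\min\{T_1,1/(2L)\}$ the series $\sum_n\|v_n\|_{C^0(S\times[0,T])}$ converges, so $u^{(n)}\to u$ uniformly on $S\times[0,T]$; the uniform $\mathcal P^{2,\alpha,[0,T]}$ bounds together with interior Schauder estimates upgrade this to $C^{2,\alpha'}$ convergence on compact subsets of $(S\setminus\{p\})\times[0,T]$, so $u$ solves \eqref{eqn:localexist} classically off $p$, and lower semicontinuity of the $\mathcal V$-type integral seminorms under this convergence (together with the equation and Schauder for the $\mathcal P^{2,\alpha,[0,T]}$ part) gives $u\in\mathcal V^{2,\alpha,[0,T]}$ and $\partial_t u\in\mathcal V^{2,\alpha,[0,T]}$ with $u(0)=u_0$.

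I expect the main obstacle to be closing the uniform a priori estimate of the second paragraph: the coefficient norms fed into Theorems~\ref{thm:linear} and \ref{thm:stronglinear} depend on $u^{(n)}$ itself, and the point is to exploit that the initial data—hence the compatibility datum $w_0$—is the same for every $n$, so that these constants do not degenerate as $n\to\infty$, which is precisely what forces $T$ to be chosen small in terms of $\|u_0\|_{\mathcal W^{2,\alpha}}$ and $\|K_0\|_{\mathcal W^{2,\alpha}}$. A secondary subtlety is that $g_n$ superficially involves the unbounded-looking $\tilde\triangle u^{(n)}$; this is tamed by observing that the flow equation itself forces $\tilde\triangle u^{(n)}$ to be bounded, so the contraction can be carried out in the plain $C^0$ norm rather than in an integral norm.
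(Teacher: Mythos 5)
Your overall scheme is the paper's: freeze the coefficient at the previous iterate, keep the same initial datum $u_0$ (so the compatibility datum $w_0=\frac r2-K_0\in\mathcal W^{2,\alpha}$ is fixed), invoke Theorems \ref{thm:linear} and \ref{thm:stronglinear} at each stage, and contract in $C^0$ using that $\tilde{\triangle}u^{(n)}$ is bounded. The genuine gap is precisely the step you flag as the "main obstacle" and then dispose of by assertion: the uniform-in-$n$ bound $M$ on $\norm{u^{(n)}}_{\mathcal V^{2,\alpha,[0,T]}}+\norm{\partial_t u^{(n)}}_{\mathcal V^{2,\alpha,[0,T]}}$. Your justification — that the $\mathcal V^{0,\alpha,[0,T]}$ norms of $a_n,f_n,\partial_t a_n,\partial_t f_n$ "stay in a bounded neighbourhood of those built from $e^{-2u_0}$ once $T$ is small" — is circular: how small $T$ must be for the weighted H\"older norm of $e^{-2u^{(n)}}$ on $S\times[0,T]$ to stay near that of $e^{-2u_0}$ depends on the space-time modulus of continuity of $u^{(n)}$, i.e. on the very norm you are trying to bound uniformly, so nothing prevents the constants delivered by Theorem \ref{thm:stronglinear} from growing with $n$ even for a fixed small $T$. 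Moreover your bound on $\tilde{\triangle}u^{(n)}$, which the contraction needs, is routed through the unproven $M$, and you never obtain a uniform $C^0$ bound on $\partial_t u^{(n)}$ independently of $M$ (that bound is also what makes $\partial_t a_n$ uniformly bounded, which your applications of Lemma \ref{lem:odecompare} and Theorem \ref{thm:unique} require).

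The paper closes exactly this hole with two ingredients absent from your proposal. First, the short-time $C^0$ induction is run simultaneously for $u_i$ and $w_i=\partial_t u_i$: the estimate \eqref{eqn:linear1c0} applied to \eqref{eqn:ui} and \eqref{eqn:wi} involves only $C^0$ norms of the coefficients, which at stage $i$ are controlled by the stage-$(i-1)$ $C^0$ bounds, so $\norm{u_i}_{C^0},\norm{w_i}_{C^0}\le\tilde C+1$ for $T$ depending only on $\tilde C$ (Lemma \ref{lem:time}); this alone gives the uniform bound on $\tilde{\triangle}u_i$ from the equation, with no higher norms involved. Second — the key idea — the uniform spatial H\"older bound is not extracted from the parabolic linear theory at all, but from a slice-wise elliptic argument: the $C^0$ bound on $\tilde{\triangle}u_i(t)$ gives $\triangle u_i(t)\in L^q(B)$ in conformal coordinates for some $q>1$, and the $L^q$ estimate, Sobolev embedding and a removable-singularity comparison yield $\max_t\norm{u_i(t)}_{C^{\alpha'}(S)}\le C(\tilde C)$ (claim \eqref{eqn:claimalpha}); only then is interior Schauder applied to \eqref{eqn:ui}, whose coefficient $e^{-2u_{i-1}}$ is by that point controlled in $\mathcal P^{0,\alpha'}$ uniformly in $i$, so no constants propagate and degenerate across iterations (Lemma \ref{lem:schauder}). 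The $\abs{\cdot}_{\mathcal V}$ seminorms of $u_i$ and $w_i$ then come from Theorem \ref{thm:unique} with constants depending only on $C^0$ data. With that block supplied, your contraction and limit passage go through as in the paper.
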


The proof is an iteration. We start by defining $u_1(x,t)\equiv u_0(x)$.
Let $u_i(i>1)$ to be the weak solution (given by Theorem \ref{thm:linear}) of 
\begin{equation}\label{eqn:ui}
	\left\{
		\begin{array}[]{l}
			\partial_t u_i = e^{-2u_{i-1}} \tilde{\triangle} u_i +\frac{r}{2}-e^{-2u_{i-1}} \tilde{K}  \\
				u_i(0)=u_0.
		\end{array}
		\right.
\end{equation}
The goal is to show that this sequence converges and gives us the solution we want on small time interval.

\begin{lem} For $u_i$ defined above, if we write $w_i (i\geq 1)$ for $\partial_t u_i$, then $w_i (i\geq 2)$ is the weak solution of
\begin{equation}
	\left\{
		\begin{array}[]{l}
	\partial_t w_i = e^{-2u_{i-1}}\tilde{\triangle} w_i - 2 w_{i-1}(w_i -\frac{r}{2}) \\
	w_i(0)= e^{-2u_0} \tilde{\triangle} u_0 +\frac{r}{2} - e^{-2u_0} \tilde{K}. 
		\end{array}
		\right.
	\label{eqn:wi}
\end{equation}
\end{lem}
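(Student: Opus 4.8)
The plan is to recognize the equation \eqref{eqn:ui} for $u_i$ as an instance of the linear equation \eqref{eqn:nob}, with variable leading coefficient $a=e^{-2u_{i-1}}$ and inhomogeneous term $f=\frac r2-e^{-2u_{i-1}}\tilde K$, and then to invoke Theorem \ref{thm:stronglinear} for this equation. The argument is carried out by induction on $i$: the statement presupposes (and one establishes it alongside, within the proof of Theorem \ref{thm:local}) that $u_{i-1}\in\mathcal V^{2,\alpha,[0,T]}$ and, for $i\geq 3$, that $w_{i-1}=\partial_t u_{i-1}\in\mathcal V^{2,\alpha,[0,T]}$. For the base case $i=2$ the coefficient $a=e^{-2u_0}$ is time-independent, so $\partial_t a\equiv 0$ and the verification below simplifies.

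First I would check the hypotheses of Theorem \ref{thm:stronglinear} for \eqref{eqn:ui}. Since $u_{i-1}$ and $w_{i-1}$ lie in $\mathcal V^{2,\alpha,[0,T]}\subset\mathcal V^{0,\alpha,[0,T]}$, and $\tilde K$ is smooth (and vanishes near the cone tip, where $\tilde g$ is the model cone), the closure of $\mathcal V^{0,\alpha,[0,T]}$ under products and under composition with the entire function $x\mapsto e^{-2x}$ yields that $a=e^{-2u_{i-1}}$, $f=\frac r2-e^{-2u_{i-1}}\tilde K$, $\partial_t a=-2w_{i-1}e^{-2u_{i-1}}$ and $\partial_t f=2w_{i-1}e^{-2u_{i-1}}\tilde K$ all belong to $\mathcal V^{0,\alpha,[0,T]}$; the $C^0$ bound on $u_{i-1}$ gives $\norm{a^{-1}}_{C^0(S\times[0,T])}=\norm{e^{2u_{i-1}}}_{C^0}<\infty$; and $u_0\in\mathcal W^{2,\alpha}$ by hypothesis. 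The one hypothesis requiring a genuine observation is that $w_0:=a(x,0)\tilde\triangle u_0+f(x,0)$ lies in $\mathcal W^{2,\alpha}$: here one uses the Gauss curvature formula $K_0=e^{-2u_0}(\tilde K-\tilde\triangle u_0)$ for the metric $e^{2u_0}\tilde g$, which gives $w_0=e^{-2u_0}\tilde\triangle u_0+\frac r2-e^{-2u_0}\tilde K=\frac r2-K_0$, and this belongs to $\mathcal W^{2,\alpha}$ because $K_0$ does and $\mathcal W^{2,\alpha}$ is a vector space containing the constants.

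Granting these hypotheses, Theorem \ref{thm:stronglinear} produces that $w_i=\partial_t u_i$ is the unique weak solution, with $\norm{w_i}_{\mathcal V^{2,\alpha,[0,T]}}<\infty$, to the time-differentiated equation \eqref{eqn:dt}, namely $\partial_t w_i=a\tilde\triangle w_i+\partial_t a\cdot\frac{w_i-f}{a}+\partial_t f$. It then remains only to substitute $a=e^{-2u_{i-1}}$, $\partial_t a=-2w_{i-1}e^{-2u_{i-1}}$, $f=\frac r2-e^{-2u_{i-1}}\tilde K$ and $\partial_t f=2w_{i-1}e^{-2u_{i-1}}\tilde K$: the two contributions containing $e^{-2u_{i-1}}\tilde K$ cancel, leaving $\partial_t w_i=e^{-2u_{i-1}}\tilde\triangle w_i-2w_{i-1}(w_i-\frac r2)$, which is the equation in \eqref{eqn:wi}. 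The initial value is read off from \eqref{eqn:ui} at $t=0$ together with $u_{i-1}(0)=u_i(0)=u_0$, giving $w_i(0)=e^{-2u_0}\tilde\triangle u_0+\frac r2-e^{-2u_0}\tilde K$, exactly as stated.

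I expect the only real obstacle to be organizational rather than analytic: setting up the induction so that $u_{i-1}$ and $\partial_t u_{i-1}$ already carry the $\mathcal V^{2,\alpha,[0,T]}$ regularity needed for them to serve as coefficients — which is precisely what Theorem \ref{thm:stronglinear} delivers at the previous stage — and appealing to the stability of the classes $\mathcal V^{0,\alpha,[0,T]}$ under multiplication and composition with smooth functions. No bounds uniform in $i$ are needed here; those enter only later, in the contraction argument establishing convergence of the iteration.
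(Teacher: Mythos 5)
Your proposal is correct and follows essentially the same route as the paper: an induction on $i$ in which Theorem \ref{thm:stronglinear} is applied to \eqref{eqn:ui} with $a=e^{-2u_{i-1}}$, $f=\frac r2-e^{-2u_{i-1}}\tilde K$, the key observation being $w_0=\frac r2-K_0\in\mathcal W^{2,\alpha}$, and the induction hypothesis supplying $\partial_t a,\partial_t f\in\mathcal V^{0,\alpha,[0,T]}$ via $w_{i-1}$. You merely spell out the substitution showing \eqref{eqn:dt} reduces to \eqref{eqn:wi}, which the paper leaves implicit.
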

\begin{proof}
	The proof is by induction. For $i=2$, we try to apply Theorem \ref{thm:stronglinear} with $a= e^{-2u_0}$, $f=\frac{r}{2}-e^{-2u_0}\tilde{K}$ and 
	\begin{equation*}
		w_0= e^{-2u_0} \tilde{\triangle} u_0 + \frac{r}{2} - e^{-2u_0}\tilde{K}= \frac{r}{2}-K_0.
	\end{equation*}
	Since $\partial_t a =\partial_t f =0$, it remains to check $w_0\in \mathcal W^{2,\alpha}$, which follows from the assumption that $K_0$ is in $\mathcal W^{2,\alpha}$. Theorem \ref{thm:stronglinear} proves that $w_1$ is a weak solution to \eqref{eqn:wi} for $i=2$. 

	Assume the lemma is proved for $i-1$. By the induction hypothesis, $\partial_t(e^{-2u_{i-1}})$ and $\partial_t (\frac{r}{2}-e^{-2u_{i-1}}\tilde{K})$ are in $\mathcal V^{0,\alpha,[0,T]}$. $w_0$ remains as before, so that we can apply Theorem \ref{thm:stronglinear} again to conclude the proof of the lemma.
\end{proof}

To show that $u_i$ converges to the solution we want, we need several apriori estimates. We start with a uniform $C^0$ estimate.
Set
\begin{equation*}
	\tilde{C}= \max \set{\norm{u_0}_{C^0(S)}, \norm{e^{-2u_0}\tilde{\triangle} u_0 +\frac{r}{2}- e^{-2u_0}\tilde{K}}_{C^0(S)}}.
\end{equation*}
\begin{lem}\label{lem:time}
	There exists some $T>0$ depending only on $\tilde{C}$ such that
	\begin{equation*}
		\norm{u_i(t)}_{C^0(S)}, \norm{w_i(t)}_{C^0(S)}\leq \tilde{C} +1
	\end{equation*}
	uniformly for all $i$ and $t\in [0,T]$.
\end{lem}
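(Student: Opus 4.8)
The plan is a simple induction on $i$, the workhorse at each stage being the ODE comparison Lemma \ref{lem:odecompare}. The base case $i=1$ is immediate: $u_1\equiv u_0$ is constant in time, so $\|u_1(t)\|_{C^0(S)}=\|u_0\|_{C^0(S)}\le\tilde C$, and $w_1=\partial_t u_1\equiv 0$. For the inductive step I will fix a $T>0$ (how, will be clear at the end) and assume $\|u_{i-1}(t)\|_{C^0(S)},\ \|w_{i-1}(t)\|_{C^0(S)}\le\tilde C+1$ for every $t\in[0,T]$, then deduce the same bounds for $u_i$ and $w_i$.

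The key observation is that, under this inductive hypothesis, the linear equations \eqref{eqn:ui} and \eqref{eqn:wi} satisfied by $u_i$ and $w_i$ have all their coefficients controlled purely in terms of $\tilde C$. Indeed the leading coefficient in both is $a=e^{-2u_{i-1}}$, for which $e^{-2(\tilde C+1)}\le a\le e^{2(\tilde C+1)}$ and $|\partial_t a|=2|w_{i-1}|\,e^{-2u_{i-1}}\le 2(\tilde C+1)e^{2(\tilde C+1)}$; for $u_i$ the zeroth order coefficient is $0$ and the forcing term $\tfrac r2-e^{-2u_{i-1}}\tilde K$ is bounded by $\tfrac{|r|}2+e^{2(\tilde C+1)}\|\tilde K\|_{C^0(S)}$ (here $\tilde K$ vanishes near $p$ and is smooth on the compact complement, so $\|\tilde K\|_{C^0(S)}<\infty$); for $w_i$ the zeroth order coefficient is $-2w_{i-1}$, of size $\le 2(\tilde C+1)$, and the forcing term $r\,w_{i-1}$ has size $\le|r|(\tilde C+1)$. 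Thus the structural hypotheses \eqref{eqn:maxab} of Lemma \ref{lem:odecompare} hold with a $\lambda>0$ depending only on $\tilde C$, while the integrability requirement \eqref{eqn:assmax} is automatic because $u_i,w_i\in\mathcal V^{2,\alpha,[0,T]}$ (the former by Theorem \ref{thm:linear}, the latter by Theorem \ref{thm:stronglinear}, as recorded in the preceding lemma).

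Applying Lemma \ref{lem:odecompare} to $u_i$ and to $w_i$, and using $\|u_0\|_{C^0(S)}\le\tilde C$ together with $\|w_i(0)\|_{C^0(S)}=\|\tfrac r2-K_0\|_{C^0(S)}\le\tilde C$, one gets bounds of the form
\[
\|u_i(t)\|_{C^0(S)}\le\tilde C+A(\tilde C)\,t,\qquad
\|w_i(t)\|_{C^0(S)}\le e^{B(\tilde C)t}\bigl(\tilde C+A(\tilde C)\,t\bigr),
\]
with $A(\tilde C),B(\tilde C)$ depending only on $\tilde C$ and the fixed data $r,\tilde K$, but \emph{not} on $i$. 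Both right-hand sides are continuous and increasing in $t$ and equal $\tilde C<\tilde C+1$ at $t=0$, so there is $T>0$, depending only on $\tilde C$, making both $\le\tilde C+1$ on $[0,T]$. Since this $T$ is the same for every $i$, the induction closes. The one point that needs care — and, in a proof this short, essentially the only one — is precisely this uniformity in $i$: it works because every constant above is extracted from the a priori bound $\tilde C+1$ supplied by the inductive hypothesis rather than from $i$ itself. A related subtlety worth flagging is that Lemma \ref{lem:odecompare} genuinely requires $\partial_t a$ to be bounded, which is exactly why the $C^0$ bound on $w_{i-1}=\partial_t u_{i-1}$ must be propagated alongside the one on $u_{i-1}$, in order to control $\partial_t(e^{-2u_{i-1}})$.
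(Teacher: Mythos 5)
Your proof is correct and follows essentially the same route as the paper: induction on $i$, using the inductive $C^0$ bound on $w_{i-1}$ to control the coefficients (in particular $\partial_t(e^{-2u_{i-1}})$) of the linear equations \eqref{eqn:ui} and \eqref{eqn:wi}, and then an ODE-comparison $C^0$ estimate with constants depending only on $\tilde C$, so that a single small $T(\tilde C)$ closes the induction. The only cosmetic difference is that you invoke Lemma \ref{lem:odecompare} directly, while the paper cites Theorem \ref{thm:unique} together with \eqref{eqn:linear1c0}; both rest on the same maximum principle and yield the identical bounds.
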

\begin{proof}
	Note that the conclusion of the lemma holds trivially for $i=1$. Assume that it is true for $i-1$. 
	$u_i$ and $w_i$ are weak solutions to \eqref{eqn:ui} and \eqref{eqn:wi} respectively, and by the boundedness of $w_{i-1}$, the coefficients in front of $\tilde{\triangle}$ in both \eqref{eqn:ui} and \eqref{eqn:wi} have bounded time derivatives, which allows us to apply Theorem  \ref{thm:unique} to see that
	\begin{equation}\label{eqn:c01}
		\norm{u_i(t)}_{C^0(S)}\leq e^{C_1 t} \left( \norm{u_0}_{C^0(S)} + \int_0^t e^{-C_1s} C_2 ds \right)
	\end{equation}
	and
	\begin{equation}\label{eqn:c02}
		\norm{w_i(t)}_{C^0(S)}\leq e^{C_3 t} \left( \norm{e^{-2u_0}\tilde{\triangle} u_0 +\frac{r}{2}- e^{-2u_0}\tilde{K}}_{C^0(S)} + \int_0^t e^{-C_3s} C_4 ds \right).
	\end{equation}
	According to Theorem \ref{thm:unique}, we have 
	\begin{align*}
		C_1 &=  0 & C_2&=\norm{\frac{r}{2}-e^{-2u_{i-1}}\tilde{K}}_{C^0(S)} \\
		C_3 &=\norm{-2w_{i-1}}_{C^0(S)} & C_4&= \norm{r w_{i-1}}_{C^0(S)}
	\end{align*}
	By induction hypothesis again, we can bound $C_1, \cdots, C_4$ by a number depending only on $\tilde{C}$. It follows from \eqref{eqn:c01} and \eqref{eqn:c02} that we can choose $T$ small depending only on $\tilde{C}$ so that the lemma holds for $i$.
\end{proof}

The next lemma provides $C^{2,\alpha}$ estimates of $u_i$ and $w_i$ away from the singularity.  
\begin{lem}\label{lem:schauder}
	For the constant $T$ in Lemma \ref{lem:time}, we have 
	\begin{equation*}
		\norm{u_i}_{\mathcal P^{2,\alpha,[0,T]}}, \norm{w_i}_{\mathcal P^{2,\alpha,[0,T]}}\leq C
	\end{equation*}
	for some constant $C$ depending on $\tilde{C}$ and the $\mathcal W^{2,\alpha}$ norm of $u_0$ and $e^{-2u_0}\tilde{\triangle} u_0 +\frac{r}{2} -e^{-2u_0} \tilde{K}$.
\end{lem}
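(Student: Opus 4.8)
The plan is to obtain both bounds uniformly in $i$ straight from the uniform $C^0$ estimates of Lemma \ref{lem:time}, by a two-stage bootstrap: first a De Giorgi--Nash type H\"older estimate, then an interior parabolic Schauder estimate carried out scale-by-scale on the dyadic annuli $B_1\setminus B_{1/2}$, which is exactly the information the weighted norm $\norm{\cdot}_{\mathcal P^{2,\alpha,[0,T]}}$ records. For $i=1$ there is nothing to do, since $u_1\equiv u_0\in\mathcal E^{2,\alpha}$ and $w_1=\partial_t u_1\equiv 0$, so I take $i\geq2$. The starting observation is that, by Lemma \ref{lem:time}, the functions $u_{i-1},w_{i-1},u_i,w_i$ are bounded in $C^0(S\times[0,T])$ by $\tilde{C}+1$; hence the leading coefficient $a_i:=e^{-2u_{i-1}}$ of the linear equations \eqref{eqn:ui} and \eqref{eqn:wi} is uniformly elliptic and --- the point that unlocks the De Giorgi estimate in this non-divergence setting --- has $C^0$-bounded time derivative $\partial_t a_i=-2w_{i-1}e^{-2u_{i-1}}$, while the zeroth order coefficient $-2w_{i-1}$ and the inhomogeneous terms $\frac{r}{2}-e^{-2u_{i-1}}\tilde{K}$ and $rw_{i-1}$ are $C^0$-bounded in terms of $\tilde{C}$ and $\norm{\tilde{K}}_{C^0}$ (with $\tilde{K}\equiv0$ near $p$, where $\tilde{g}$ is the flat cone metric).

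First I would run the H\"older step. Theorem \ref{thm:dg} applies to the equations for $u_i$ and for $w_i$ --- each of which is a weak solution in $\mathcal V^{2,\alpha,[0,T]}$, and whose hypotheses are exactly the $C^0$ bounds on $a_i$, $a_i^{-1}$, $\partial_t a_i$ and on the lower order terms just noted --- and produces an exponent $\alpha'>0$ and a constant, depending only on $\beta$, $\tilde{C}$ and the fixed geometry of $(S,\tilde{g})$, such that $\norm{u_i}_{C^{\alpha'}(B_{1-\delta}\times[\delta,T])}$ and $\norm{w_i}_{C^{\alpha'}(B_{1-\delta}\times[\delta,T])}$ are bounded uniformly in $i$ for each $\delta>0$; the same holds on compact subsets of $S\setminus\set{p}$ by the usual interior parabolic De Giorgi--Nash estimate. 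For $t$ near $0$ I would instead work on the fixed cylinder $(B_1\setminus B_{1/2})\times[0,4^kT]$, on which $u_i(2^{-k}\rho,\theta,4^{-k}t)$ solves a uniformly parabolic equation with bounded measurable coefficients (the cone metric being scale invariant) and has initial datum $u_0(2^{-k}\rho,\theta)$, bounded in $C^{2,\alpha}(B_1\setminus B_{1/2})$ in terms of $\norm{u_0}_{\mathcal E^{2,\alpha}}$; the De Giorgi estimate with H\"older initial data --- as in the appendix treatment of Theorem \ref{thm:dg}, now carried down to $t=0$ --- gives a $C^{\alpha'}$ bound uniform in both $k$ and $i$, and likewise for $w_i$ using $\norm{w_i(0)}_{\mathcal E^{2,\alpha}}=\norm{\frac{r}{2}-K_0}_{\mathcal E^{2,\alpha}}<\infty$. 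Consequently $\norm{u_{i-1}}_{\mathcal P^{0,\alpha'}}$ and $\norm{w_{i-1}}_{\mathcal P^{0,\alpha'}}$, hence the $\mathcal P^{0,\alpha'}$ norms of $a_i$, of $\partial_t a_i$, of $-2w_{i-1}$, and of the two inhomogeneous terms, are bounded uniformly in $i$ by a constant depending only on $\tilde{C}$ and the $\mathcal W^{2,\alpha}$ norms of $u_0$ and $\frac{r}{2}-K_0$.

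Then I would run the Schauder step, estimating $\norm{u_i}_{\mathcal P^{2,\alpha'}}$ and $\norm{w_i}_{\mathcal P^{2,\alpha'}}$ piece by piece in the definition of the weighted norm. On the neighbourhood $U$ of $\overline{S\setminus B}$ this is the ordinary interior parabolic Schauder estimate for a uniformly parabolic equation with $C^{\alpha'}$-bounded coefficients. On each dyadic annulus the rescaled function solves an equation of the same form on $(B_1\setminus B_{1/2})\times[0,4^kT]$ whose coefficients are $C^{\alpha'}$-bounded uniformly in $k$ (by the H\"older step) and $C^0$-bounded (by Lemma \ref{lem:time}), and whose initial datum --- $u_0(2^{-k}\rho,\theta)$, resp.\ $(\frac{r}{2}-K_0)(2^{-k}\rho,\theta)$ --- is $C^{2,\alpha}(B_1\setminus B_{1/2})$-bounded uniformly in $k$; interior parabolic Schauder, used with this initial datum on the first unit time slab and in its interior-in-space-and-time form on the later slabs $[j,j+1]$ with $j\geq1$, then bounds the $C^{2,\alpha'}$ norm of the rescaled function on the whole cylinder $(B_1\setminus B_{1/2})\times[0,4^kT]$ by a constant independent of $k$ and $i$. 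Taking the supremum over $k$ and combining with the estimate on $U$ yields $\norm{u_i}_{\mathcal P^{2,\alpha'}},\norm{w_i}_{\mathcal P^{2,\alpha'}}\leq C$ of the asserted dependence; if $\alpha'$ is smaller than the ambient $\alpha$, one either takes $\alpha$ to equal $\alpha'$ throughout (its precise value playing no role in the statements) or performs one further Schauder bootstrap.

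The delicate point --- and the reason the H\"older step cannot be skipped --- is that a direct Schauder estimate for $u_i$ would fail, since its Schauder constant depends on the H\"older modulus of the coefficient $e^{-2u_{i-1}}$, which is exactly the quantity one is trying to control, so the scheme appears circular. The circle is broken by the fact that the De Giorgi--Nash estimate (equivalently Theorem \ref{thm:dg}) delivers a H\"older bound whose constant depends only on the ellipticity ratio and on $C^0$ norms, never on a H\"older norm of the coefficients, together with the fact --- guaranteed by Lemma \ref{lem:time} --- that $\partial_t(e^{-2u_{i-1}})$ is $C^0$-bounded, which is precisely the extra hypothesis Theorem \ref{thm:dg} uses to treat an equation not of divergence form. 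A lesser technical nuisance is the limit $t\to0$: the initial datum $u_0$ lies only in $\mathcal E^{2,\alpha}$ and need not be continuous at $p$, which is why near $t=0$ one works on the rescaled annuli, away from $p$, where $u_0$ is genuinely $C^{2,\alpha}$.
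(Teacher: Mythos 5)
Your outline is correct in spirit, but it reaches the crucial H\"older step by a genuinely different route than the paper, and that route carries one extra burden. You get the uniform $\mathcal P^{0,\alpha'}$ control of $u_i$ (hence of the coefficient $e^{-2u_{i-1}}$) parabolically: Theorem \ref{thm:dg} for interior times, plus a De Giorgi-type estimate with $C^{2,\alpha}$ initial data on rescaled annuli for times near $0$. The latter is precisely where you go beyond what the paper supplies: Theorem \ref{thm:dg} and the appendix only give bounds on $B_{1-\delta}\times[\delta,T]$, and carrying the $\mathcal B_2$-class iteration down to the initial time (the analogue of the boundary version of Theorem 10.1 in \cite{ladyzhenskaya1968linear}) is standard but is additional work which in your sketch is asserted rather than proved. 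The paper avoids this entirely with an elliptic, time-slice argument: since $u_{i-1}$, $u_i$ and $w_i=\partial_t u_i$ are uniformly bounded by Lemma \ref{lem:time}, equation \eqref{eqn:ui} gives $\norm{\tilde{\triangle} u_i(t)}_{C^0(S)}\leq C(\tilde C)$ for every $t\in[0,T]$; in conformal coordinates $\triangle u_i(t)=(x^2+y^2)^{\beta}\tilde{\triangle}u_i(t)\in L^q(B)$ for some $q>1$ because $\beta>-1$, so the $L^q$ elliptic estimate, Sobolev embedding and a removable-singularity comparison with the Dirichlet solution give $\norm{u_i(t)}_{C^{\alpha'}(S)}\leq C(\tilde C)$ uniformly in $t$ and $i$, with $\alpha'$ depending only on $\beta$; combined with the Lipschitz-in-time bound $\abs{\partial_t u_i}\leq \tilde C+1$ this yields the uniform $\mathcal P^{0,\alpha',[0,T]}$ bound down to $t=0$, including across the cone point, with no parabolic regularity theory at all. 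From there the two arguments coincide: weighted interior Schauder on the dyadic annuli, using the uniformly $C^{2,\alpha}$-bounded rescaled initial data $u_0$ and $\frac r2-K_0$, followed by a second Schauder pass to go from $\alpha'$ to $\alpha$. Your diagnosis of why the scheme is not circular (the H\"older bound on the coefficient must come from an estimate whose constant needs only $C^0$ data) is exactly right; the paper simply realizes it with the slice-wise $L^q$ estimate instead of De Giorgi--Nash. One small correction: the option of ``taking $\alpha=\alpha'$ throughout'' is not really available, since $\alpha$ is fixed by the hypothesis that $u_0$ and $K_0$ lie in $\mathcal W^{2,\alpha}$, so the second bootstrap you mention as an alternative is in fact needed, as in the paper.
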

\begin{proof}
	Note that this lemma is not proved by induction. Instead, we have to make sure that in each step below, we obtain estimates that are uniform with respect to $i$.

	By the boundedness of $w_i$ and $u_i$ and \eqref{eqn:ui}, there is a constant $C$ depending on $\tilde{C}$ such that
	\begin{equation}\label{eqn:star2}
		\norm{\tilde{\triangle} u_i (t)}_{C^0(S)}\leq C(\tilde{C})
	\end{equation}
	for all $t\in [0,T]$. We claim that 
	\begin{equation}\label{eqn:claimalpha}
		\max_{t\in [0,T]}\norm{u_i(t)}_{C^{\alpha'}(S)}< C(\tilde{C})<\infty 
	\end{equation}
	 for some $\alpha'\in (0,1)$ depending only on $\beta$. It suffices to prove this in a neighborhood of $p$. Let $(x,y)$ be the conformal coordinates defined in $B$, then
	 \begin{equation*}
		 \tilde{\triangle} u_i(t) = (x^2+y^2)^{-\beta} \triangle u_i,
	 \end{equation*}
	 which implies (by \eqref{eqn:star2} and $\beta>-1$) that there is some $q>1$ such that $\triangle u_i\in L^q(B)$. By the $L^q$ estimate of $\triangle$ and the Sobolev embedding theorem, there is a function $v$ in $C^{\alpha'}(B)\cap W^{1,2}(B)$ with
	 \begin{equation*}
		 \triangle v = (x^2+y^2)^\beta \tilde{\triangle} u_i(t) \quad \mbox{on } \quad B
	 \end{equation*}
	 and
	 \begin{equation*}
		 v|_{\partial B}=u_i(t).
	 \end{equation*}
	 Since both $u_i(t)$ and $v$ are bounded on $B$ and their difference is a harmonic function on $B\setminus \set{0}$ which vanishes on $\partial B$, we know $u_i(t)=v$ and hence $u_i(t)\in C^{\alpha'}(B)$. This proves the claim.

	 Let's assume that $\alpha'<\alpha$ because the proof for the case $\alpha'\geq \alpha$ is simpler. \eqref{eqn:claimalpha} and the boundedness of $\partial_t u_i$ imply a uniform bound of
	 \begin{equation*}
		 \norm{u_i}_{\mathcal P^{0,\alpha',[0,T]}}< C(\tilde{C}),
	 \end{equation*}
	 which allows us to apply the interior Schauder estimate to \eqref{eqn:ui} to get a uniform bound of $\norm{u_i}_{\mathcal P^{2,\alpha',[0,T]}}$. Although $\alpha'<\alpha$, we have at least $\norm{u_i}_{\mathcal P^{0,\alpha,[0,T]}}$ is bounded, so that we can apply Schauder estimate again to \eqref{eqn:ui} to get the uniform bound $\norm{u_i}_{\mathcal P^{2,\alpha,[0,T]}}$ as we need. The estimate for $w_i$ follows easily by the Schauder estimate of \eqref{eqn:wi}.
\end{proof}

With Lemma \ref{lem:schauder}, $u_i$ subconverges. We now claim that if we choose $T$ to be even smaller, we can make $u_i$ converge without taking any subsequence.
\begin{lem}
	There exists $T>0$ (maybe smaller than given in Lemma \ref{lem:time}) such that $u_i$ is a Cauchy sequence in $C^0(S\times [0,T])$ norm.	
\end{lem}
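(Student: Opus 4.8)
The plan is to control the successive differences $v_i:=u_{i+1}-u_i$ in the $C^0(S\times[0,T])$ norm and show that they decay super‑geometrically in $i$, so that $u_i=u_1+\sum_{j=1}^{i-1}v_j$ converges. First I would write down the equation for $v_i$. For $i\ge 2$ both $u_{i+1}$ and $u_i$ solve \eqref{eqn:ui}; subtracting the two equations and regrouping the principal term gives
\begin{equation*}
	\partial_t v_i = e^{-2u_i}\tilde{\triangle}v_i + \bigl(e^{-2u_i}-e^{-2u_{i-1}}\bigr)\bigl(\tilde{\triangle}u_i-\tilde{K}\bigr),\qquad v_i(0)=0 .
\end{equation*}
Using \eqref{eqn:ui} once more, $\tilde{\triangle}u_i-\tilde{K}=e^{2u_{i-1}}(w_i-\tfrac r2)$, so by the uniform $C^0$ bounds on $u_{i-1}$ and $w_i$ from Lemma \ref{lem:time} the inhomogeneous term $g_i:=(e^{-2u_i}-e^{-2u_{i-1}})(\tilde{\triangle}u_i-\tilde{K})$ obeys the pointwise estimate $\abs{g_i}\le C(\tilde C)\,\abs{v_{i-1}}$ on $S\times[0,T]$. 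The case $i=1$ is slightly different, because $u_1\equiv u_0$ does not solve \eqref{eqn:ui}: here $v_1=u_2-u_0$ solves $\partial_t v_1=e^{-2u_0}\tilde{\triangle}v_1+(\tfrac r2-K_0)$ with $v_1(0)=0$, where $K_0$ is the curvature of $e^{2u_0}\tilde{g}$, and the forcing is bounded since $K_0\in\mathcal W^{2,\alpha}$.

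Next I would apply the ODE comparison Lemma \ref{lem:odecompare} to each $v_i$. Its hypotheses are met: $v_i\in\mathcal V^{2,\alpha,[0,T]}$ (a difference of two such functions), so \eqref{eqn:assmax} is automatic; the coefficient $a=e^{-2u_i}$ is bounded above and below and has time derivative $\partial_t a=-2e^{-2u_i}w_i$ bounded \emph{uniformly in $i$} by Lemma \ref{lem:time} (using also $w_i\in\mathcal V^{2,\alpha,[0,T]}$); and the zeroth order coefficient vanishes. Hence, with $f=g_i$ and zero initial data,
\begin{equation*}
	\norm{v_i(t)}_{C^0(S)}\ \le\ \int_0^t\norm{g_i(s)}_{C^0(S)}\,ds\ \le\ C(\tilde C)\int_0^t\norm{v_{i-1}(s)}_{C^0(S)}\,ds,\qquad t\in[0,T]
\end{equation*}
(one could equally quote Theorem \ref{thm:unique} together with \eqref{eqn:linear1c0}), while the same lemma applied to $v_1$ gives $\norm{v_1}_{C^0(S\times[0,T])}\le C(\tilde C,\norm{K_0}_{C^0})\,T$.

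Iterating the integral inequality starting from $v_1$ yields $\norm{v_i(t)}_{C^0(S)}\le \dfrac{(C(\tilde C)\,t)^{i-1}}{(i-1)!}\sup_{[0,T]}\norm{v_1}_{C^0(S)}$, which is summable over $i$ for every $T$; alternatively, choosing $T$ so small that $C(\tilde C)\,T\le\tfrac12$ already forces the geometric bound $\norm{v_i}_{C^0(S\times[0,T])}\le 2^{-(i-1)}\norm{v_1}_{C^0(S\times[0,T])}$. In either case $\sum_i\norm{v_i}_{C^0(S\times[0,T])}<\infty$, so $\set{u_i}$ is Cauchy in $C^0(S\times[0,T])$, as claimed. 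I expect the only real work to be the (routine) bookkeeping needed to verify the hypotheses of Lemma \ref{lem:odecompare} uniformly in $i$—namely that $\partial_t(e^{-2u_i})$ and $\tilde{\triangle}u_i-\tilde{K}$ are bounded independently of $i$—and both facts follow directly from the uniform estimates of Lemma \ref{lem:time}; there is no genuine analytic obstacle beyond this.
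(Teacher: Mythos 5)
Your proposal is correct and follows essentially the same route as the paper: subtract consecutive equations of \eqref{eqn:ui}, bound the forcing by $C(\tilde C)\,\norm{u_i-u_{i-1}}_{C^0}$ using the uniform bounds of Lemma \ref{lem:time} together with \eqref{eqn:ui}, and apply the $C^0$ estimate for weak solutions (Theorem \ref{thm:unique}/\eqref{eqn:linear1c0}, equivalently Lemma \ref{lem:odecompare}) to get a contraction for small $T$. Your Gr\"onwall-type iteration giving factorial decay (hence Cauchy without shrinking $T$) and the explicit treatment of the step $i=1$ are harmless refinements of the same argument.
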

\begin{proof}
	It follows from \eqref{eqn:ui} that
\begin{equation*}
	\pfrac{}{t}(u_{i+1}-u_i) = e^{-2 u_{i}} \tilde{\triangle} (u_{i+1}-u_i) + (e^{-2 u_{i}}-e^{-2u_{i-1}})\tilde{\triangle} u_i -(e^{-2 u_{i-1}}- e^{-2u_i})\tilde{K}.
\end{equation*}
Regard $u_{i+1}-u_i$ as a weak solution to
\begin{equation*}
	\partial_t (u_{i+1}-u_i) = a(x,t) \tilde{\triangle} (u_{i+1}-u_i) + f(x,t) 
\end{equation*}
where $a=e^{-2u_i}$ and $f=(e^{-2 u_{i}}-e^{-2u_{i-1}})\tilde{\triangle} u_i -(e^{-2 u_{i-1}}- e^{-2u_i})\tilde{K}$.
Since $w_i=\partial_t u_i$ and $u_i$ are uniformly bound, so is $\tilde{\triangle} u_i$ by \eqref{eqn:ui}, we have
\begin{equation*}
	\abs{f} \leq C \norm{u_i-u_{i-1}}_{C^0(S\times [0,T])}.
\end{equation*}
Moreover, $\partial_t a$ is bounded so that Theorem \ref{thm:unique} gives
\begin{equation*}
	\norm{u_{i+1}-u_i}_{C^0(S\times [0,T])}\leq T C \norm{u_i-u_{i-1}}_{C^0(S\times [0,T])}.
\end{equation*}
Hence, if we choose $T$ small, then the sequence $u_i$ is Cauchy in $C^0$ norm and the lemma is proved.
\end{proof}
Let $u$ be the limit of $u_i$. By Lemma \ref{lem:schauder}, the convergence of $u_i$ away from the singularity is in $C^{2,\alpha'}$ for any $\alpha'<\alpha$ and hence the limit $u$ satisfies the Ricci flow equation \eqref{eqn:rfu} pointwisely. To finish the proof of Theorem \ref{thm:local}, we still need
\begin{lem}
	For $T$ determined above,
	\begin{equation*}
		\abs{u_i}_{\mathcal V^{[0,T]}}, \abs{w_i}_{\mathcal V^{[0,T]}}\leq C(T).
	\end{equation*}
	In particular, $u$ is a weak solution.
\end{lem}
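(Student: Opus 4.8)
The plan is to read off the desired bounds directly from the energy estimate \eqref{eqn:linear1energy} attached to the construction of each $u_i$ and $w_i$, the point being that the constants $C_3,C_4$ there involve only $C^0$-type quantities, which are already controlled \emph{uniformly in $i$} by Lemma \ref{lem:time} together with the fixed data $u_0,K_0,\tilde K,\tilde g$. (In particular $C_3,C_4$ do not see the full $\mathcal V^{0,\alpha,[0,T]}$ norms of the coefficients, which are finite but not uniformly bounded in $i$, so no further input beyond Lemma \ref{lem:time} is needed.) After that, a routine passage to the limit using the local estimates of Lemma \ref{lem:schauder} and lower semicontinuity of the integral norms finishes the proof of Theorem \ref{thm:local}.

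For $u_i$: by construction $u_i$ is the weak solution of \eqref{eqn:ui} produced by Theorem \ref{thm:linear} with $a=e^{-2u_{i-1}}$, $b\equiv 0$, $f=\frac r2-e^{-2u_{i-1}}\tilde K$; since moreover $\partial_t a=-2w_{i-1}e^{-2u_{i-1}}$ is bounded by Lemma \ref{lem:time}, Theorem \ref{thm:unique} also applies, and in either case \eqref{eqn:linear1energy} holds for $u_i$. There $C_3$ depends only on $\norm{e^{-2u_{i-1}}}_{C^0(S\times[0,T])}$ and $C_4$ only on the $C^0$ norms of $e^{-2u_{i-1}}$, $\frac r2-e^{-2u_{i-1}}\tilde K$, $u_i$ and on $\lambda$, all bounded in terms of $\tilde C$ by Lemma \ref{lem:time}; and $\int_S\abs{\tilde\nabla u_0}^2\,d\tilde V<\infty$ since $u_0\in\mathcal W^{2,\alpha}$. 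Hence $\abs{u_i}_{\mathcal V^{[0,T]}}\le C(T)$ uniformly in $i$. For $w_i$ the argument is identical: the preceding lemma shows $w_i$ is the weak solution of \eqref{eqn:wi} (produced by Theorem \ref{thm:linear} through Theorem \ref{thm:stronglinear}) with $a=e^{-2u_{i-1}}$, $b=-2w_{i-1}$, $f=rw_{i-1}$ and initial data $w_{i,0}=\frac r2-K_0$; $\partial_t a$ is again bounded, so \eqref{eqn:linear1energy} applies and now $C_3,C_4$ depend only on the $C^0$ norms of $e^{-2u_{i-1}}$, $w_{i-1}$, $w_i$ and on $\lambda$, while $\int_S\abs{\tilde\nabla w_{i,0}}^2\,d\tilde V=\int_S\abs{\tilde\nabla K_0}^2\,d\tilde V<\infty$ since $K_0\in\mathcal W^{2,\alpha}$. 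Thus $\abs{w_i}_{\mathcal V^{[0,T]}}\le C(T)$ uniformly as well.

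It remains to pass to the limit. By Lemma \ref{lem:schauder} the sequences $u_i,w_i$ are bounded in $\mathcal P^{2,\alpha,[0,T]}$, so on every compact $\Omega\subset S\setminus\set{p}$ we have $u_i\to u$ and $w_i\to\partial_t u$ in $C^{2,\alpha'}(\Omega\times[0,T])$ for $\alpha'<\alpha$ (the identification $\lim_i w_i=\partial_t u$ following from $u_i(t)=u_0+\int_0^t w_i$). For each fixed $t$ and each such $\Omega$, $\int_\Omega\abs{\tilde\nabla u(t)}^2\,d\tilde V=\lim_i\int_\Omega\abs{\tilde\nabla u_i(t)}^2\,d\tilde V\le\sup_i\abs{u_i}_{\mathcal V^{[0,T]}}^2$; letting $\Omega$ exhaust $S\setminus\set{p}$ by monotone convergence bounds $\max_{t}\abs{u(t)}_{\mathcal W}$, and Fatou's lemma applied to $\abs{w_i}^2\to\abs{\partial_t u}^2$ a.e.\ bounds $\iint_{S\times[0,T]}\abs{\partial_t u}^2\,d\tilde V\,ds$; the same reasoning with $w_i$ in place of $u_i$ gives $\abs{\partial_t u}_{\mathcal V^{[0,T]}}<\infty$. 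Hence $\abs{u}_{\mathcal V^{[0,T]}}$ and $\abs{\partial_t u}_{\mathcal V^{[0,T]}}$ are finite, so $u$ is a weak solution, $u\in\mathcal V^{2,\alpha,[0,T]}$ and $\partial_t u\in\mathcal V^{2,\alpha,[0,T]}$, which completes the proof of Theorem \ref{thm:local}. I do not expect a genuine obstacle here; the only point requiring care is that the bound near $p$ for the limit is obtained by exhaustion and lower semicontinuity rather than from pointwise convergence, and that one must keep track of the fact that \eqref{eqn:linear1energy} is a $C^0$-type estimate so that Lemma \ref{lem:time} really does suffice to make the bounds uniform in $i$.
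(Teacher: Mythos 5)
Your proof is correct and follows essentially the same route as the paper: both regard $u_i$ and $w_i$ as weak solutions of \eqref{eqn:ui} and \eqref{eqn:wi}, note that $\partial_t(e^{-2u_{i-1}})$ is uniformly bounded so that Theorem \ref{thm:unique} and the energy estimate \eqref{eqn:linear1energy} apply, and observe that the constants $C_3,C_4$ there involve only $C^0$-type quantities controlled uniformly in $i$ by Lemma \ref{lem:time} together with $\norm{u_0}_{\mathcal W^{2,\alpha}}$ and $\norm{K_0}_{\mathcal W^{2,\alpha}}$. The only difference is that you spell out the passage to the limit (exhaustion, lower semicontinuity, Fatou) that the paper leaves implicit, which is a harmless and correct addition.
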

\begin{proof}
	Recall that $u_i$ and $w_i$ are the weak solutions to \eqref{eqn:ui} and \eqref{eqn:wi} respectively. Since $\partial_t (e^{-2u_{i-1}})$ is uniformly bounded, we obtain control of $\abs{u_i}_{\mathcal V^{[0,T]}}$ and $\abs{w_i}_{\mathcal V^{[0,T]}}$ by Theorem \ref{thm:unique}. Note that the constants $C_3$ and $C_4$ in \eqref{eqn:linear1energy} depend only on $C^0$ norm of the coefficients and initial data (for \eqref{eqn:ui} and \eqref{eqn:wi} respectively), which are uniformly bounded as in Lemma \ref{lem:time}.
\end{proof}

Based on Theorem \ref{thm:local}, we can discuss the blow-up criterion which serve as a starting point for the proof of long time existence. We start with a uniqueness result.
\begin{lem}
	\label{lem:uniqueRF}
	For $i=1,2$, suppose that $u_i\in V^{2,\alpha,[0,T]}$ is a weak solution to Ricci flow \eqref{eqn:rfu} for some $T>0$. If the Gauss curvature of $e^{2u_i}\tilde{g}$ are bounded on $S\times [0,T]$ and that $u_1(0)=u_2(0)$, then $u_1=u_2$ on $S\times [0,T]$.
\end{lem}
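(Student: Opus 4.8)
The plan is to show that the difference $w=u_1-u_2$ is a weak solution of a \emph{homogeneous linear} parabolic equation whose coefficients are bounded, and then to conclude by the energy-method maximum principle of Lemma \ref{lem:maximum}, applied to both $w$ and $-w$.

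First I would subtract the two copies of \eqref{eqn:rfu}. On $S\setminus\set{p}\times[0,T]$, where both $u_i$ are classical solutions, one gets
\begin{equation*}
	\partial_t w = e^{-2u_1}\tilde{\triangle} w + \left(e^{-2u_1}-e^{-2u_2}\right)\left(\tilde{\triangle} u_2-\tilde{K}\right).
\end{equation*}
Writing $K_i:=e^{-2u_i}(\tilde{K}-\tilde{\triangle} u_i)$ for the Gauss curvature of $e^{2u_i}\tilde{g}$, the equation \eqref{eqn:rfu} for $u_2$ reads $\partial_t u_2=\tfrac r2-K_2$, and it gives $\tilde{\triangle} u_2-\tilde{K}=-e^{2u_2}K_2$. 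Moreover, since $u_1,u_2\in\mathcal P^{2,\alpha,[0,T]}$ are bounded, the mean value theorem yields $e^{-2u_1}-e^{-2u_2}=g(x,t)\,w$ with $g$ bounded. Hence $w$ satisfies, pointwise away from $p$,
\begin{equation*}
	\partial_t w = a(x,t)\,\tilde{\triangle} w + b(x,t)\,w,\qquad a=e^{-2u_1},\quad b=-g\,e^{2u_2}K_2 .
\end{equation*}

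Next I would verify the hypotheses of Lemma \ref{lem:maximum}. Boundedness of $u_1$ gives $0<\lambda<a<\lambda^{-1}$; boundedness of $g$, $u_2$ and (crucially) of $K_2$ gives $\abs{b}\le\lambda^{-1}$. The bound on $\abs{\partial_t a}$ is exactly where the bounded-curvature assumption is used: from \eqref{eqn:rfu}, $\partial_t u_1=\tfrac r2-K_1$ is bounded, so $\partial_t a=-2e^{-2u_1}\partial_t u_1$ is bounded. Finally, $w\in\mathcal P^{2,\alpha,[0,T]}$ and, because $u_1,u_2\in\mathcal V^{2,\alpha,[0,T]}$, we have $\max_{[0,T]}\norm{\tilde{\nabla} w}_{L^2(S,\tilde g)}<\infty$ and $\iint_{S\times[0,T]}\abs{\partial_t w}\,d\tilde V\,dt<\infty$, so \eqref{eqn:assmax} holds. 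Since $u_1(0)=u_2(0)$ we have $w(0)=0$, so Lemma \ref{lem:maximum} gives $w\le 0$; applying it to $-w$, which solves the same equation, gives $w\ge 0$. Therefore $w\equiv 0$, i.e. $u_1=u_2$ on $S\times[0,T]$.

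I expect the only genuine subtlety — and the main obstacle — to be the point just highlighted: a priori a weak solution only has $\partial_t u_i\in L^2$ in time, so $\partial_t a$ need not be bounded and Lemma \ref{lem:maximum} would not apply; the hypothesis that the Gauss curvatures of $e^{2u_i}\tilde g$ are bounded removes this obstruction via the identity $\partial_t u_i=\tfrac r2-K_i$. The remaining verifications (that $w$, being a difference of $\mathcal V^{2,\alpha,[0,T]}$ functions rather than a solution produced by Theorem \ref{thm:linear}, still satisfies the integrability assumption \eqref{eqn:assmax}, and that the coefficients are bounded) are routine.
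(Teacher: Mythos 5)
Your proposal is correct and follows essentially the same route as the paper: subtract the two equations, use the mean value theorem to write $e^{-2u_1}-e^{-2u_2}$ as a bounded multiple of $u_1-u_2$, use the curvature bounds to control $\partial_t(e^{-2u_1})$ and $\tilde{\triangle}u_2$ (equivalently $e^{2u_2}K_2$), and then apply Lemma \ref{lem:maximum} to $\pm(u_1-u_2)$. Your explicit verification of \eqref{eqn:assmax} and of the coefficient bounds just spells out steps the paper leaves implicit.
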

\begin{proof}
We subtract the equation of $u_i$ to get
\begin{equation*}
	\partial_t (u_1-u_2)= e^{-2u_1}\tilde{\triangle} (u_1-u_2) + (e^{-2u_1}-e^{-2u_2})(\tilde{\triangle} u_2 -\tilde{K}),
\end{equation*}
where
\begin{equation*}
	e^{-2u_1}-e^{-2u_2} = - 2 (u_1-u_2) \int_0^1 e^{-2u_2-2t(u_1-u_2)} dt.	
\end{equation*}
The curvature bound of $u_1$ and $u_2$ implies a bound for $\partial_t (e^{-2u_1})$ and $\tilde{\triangle} u_2$ respectively so that we can apply Lemma \ref{lem:maximum} to show $u_1\equiv u_2$.
\end{proof}

Suppose $u_0$ is an initial data satisfying the assumptions in Theorem \ref{thm:local}. Let $T_{max}$ be the supremum of $T$ such that there is a solution $u\in \mathcal V^{2,\alpha,[0,T]}$ satisfying $u(0)=u_0$ and $\partial_t u \in \mathcal V^{2,\alpha,[0,T]}$. Theorem \ref{thm:local} implies that $T_{max}>0$. The next lemma gives a characterization of $T_{max}$. 
\begin{lem}
	\label{lem:blowup}
	Suppose that $u_0$ satisfies the assumptions of Theorem \ref{thm:local} and $T_{max}$ is defined as above. Then there is a solution $u(t)$ defined on $S\times [0,T_{max})$ such that $u$ and $\partial_t u$ lie in $\mathcal V^{2,\alpha,[0,T_{max})}$. Moreover, if $T_{max}< +\infty$, then
		\begin{equation}\label{eqn:Kblow}
			\limsup_{t\to T_{max}} \norm{K(t)}_{C^0(S)}=+\infty
		\end{equation}
		where $K(t)$ is the Gauss curvature of $e^{2u}\tilde{g}$.
\end{lem}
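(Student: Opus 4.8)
The plan is to splice the solutions on the intervals $[0,T']$, $T'\nearrow T_{max}$, into one solution on $[0,T_{max})$ and then to prove \eqref{eqn:Kblow} by contradiction. Throughout I use the identity that, writing $K$ for the Gauss curvature of $e^{2u}\tilde g$, equation \eqref{eqn:rfu} reads $\partial_t u=r/2-K$. For each $T'<T_{max}$ there is, by definition of $T_{max}$, a weak solution $u^{T'}\in\mathcal V^{2,\alpha,[0,T']}$ of \eqref{eqn:rfu} with $u^{T'}(0)=u_0$ and $\partial_t u^{T'}\in\mathcal V^{2,\alpha,[0,T']}$; since $\partial_t u^{T'}\in\mathcal P^{2,\alpha,[0,T']}$ is bounded and $K=r/2-\partial_t u^{T'}$, the curvature of $e^{2u^{T'}}\tilde g$ is bounded on $S\times[0,T']$, so Lemma \ref{lem:uniqueRF} gives $u^{T''}|_{[0,T']}=u^{T'}$ whenever $T'<T''<T_{max}$. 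Hence $u(t):=u^{T'}(t)$ for $t\le T'$ is well defined on $[0,T_{max})$ and $u,\partial_t u\in\mathcal V^{2,\alpha,[0,T']}$ for all $T'<T_{max}$, i.e. $u,\partial_t u\in\mathcal V^{2,\alpha,[0,T_{max})}$.

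Suppose now $T_{max}<\infty$ and $\limsup_{t\to T_{max}}\norm{K(t)}_{C^0(S)}=\Lambda<\infty$. Then $\partial_t u=r/2-K$ is bounded on $S\times[0,T_{max})$, and hence so is $u$ (integrating from $u_0$), by constants depending only on $\Lambda,\abs{r},T_{max},\norm{u_0}_{C^0(S)}$; consequently $a:=e^{-2u}$ is bounded above and below and $\partial_t a=-2a\,\partial_t u$ is bounded. I would then treat $u$ as a weak solution of the linear equation $\partial_t u=a\tilde{\triangle}u+f$ with $b\equiv0$, $f=r/2-e^{-2u}\tilde K$ — coefficients in $\mathcal V^{0,\alpha,[0,T']}$ with $\abs{\partial_t a}<\infty$ — so that Theorem \ref{thm:unique} and \eqref{eqn:linear1energy} give $\abs{u}_{\mathcal V^{[0,T']}}\le C$ with $C$ independent of $T'<T_{max}$, the constants in \eqref{eqn:linear1energy} depending only on the uniform $C^0$ bounds of $a,b,f,u$ and on the ellipticity constant. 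Differentiating \eqref{eqn:rfu} in time, $w:=\partial_t u$ solves $\partial_t w=a\tilde{\triangle}w+(r-2w)w$, a linear equation with bounded coefficients, bounded $\abs{\partial_t a}$, and $w(0)=r/2-K_0\in\mathcal W^{2,\alpha}$, so the same argument bounds $\abs{w}_{\mathcal V^{[0,T']}}$ uniformly; in particular $\sup_{t<T_{max}}\norm{\tilde{\nabla}u(t)}_{L^2(S,\tilde g)}$ and $\sup_{t<T_{max}}\norm{\tilde{\nabla}K(t)}_{L^2(S,\tilde g)}$ are finite.

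For the weighted H\"older norms, fix $\delta\in(0,T_{max})$. Near $p$ the background metric is the (flat) cone metric, so $\tilde K=0$ there; applying Theorem \ref{thm:dgspecial} near $p$ to $u$ and Theorem \ref{thm:dg} near $p$ to $w$ (legitimate since $\abs{\partial_t a}<\infty$) yields a $C^{\alpha'}$ bound near $p$ on $[\delta,T_{max})$ whose constant depends only on $\delta$ and on the $C^0$ data, hence is uniform up to $T_{max}$. Combining this with the parabolic-rescaling argument from the proof of Lemma \ref{lem:lp}, the Schauder-bootstrap from the proof of Lemma \ref{lem:schauder}, and interior Schauder estimates away from $p$, one gets $\norm{u(t_0)}_{\mathcal E^{2,\alpha}},\norm{w(t_0)}_{\mathcal E^{2,\alpha}}\le C$ for all $t_0\in[\delta,T_{max})$ with $C$ independent of $t_0$. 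Together with the $L^2$-gradient bounds of the previous paragraph — and the facts that $K=r/2-w$ and that constants belong to $\mathcal W^{2,\alpha}$ — this gives $\norm{u(t_0)}_{\mathcal W^{2,\alpha}}\le C$ and $\norm{K(t_0)}_{\mathcal W^{2,\alpha}}\le C$ for all $t_0\in[\delta,T_{max})$.

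Finally I would restart the flow: Theorem \ref{thm:local} applied with initial datum $u(t_0)$ produces $\tau>0$ depending only on the uniform bound $C$ above, hence independent of $t_0$, and a weak solution $\bar u\in\mathcal V^{2,\alpha,[t_0,t_0+\tau]}$ with $\bar u(t_0)=u(t_0)$ and $\partial_t\bar u\in\mathcal V^{2,\alpha,[t_0,t_0+\tau]}$. Choosing $t_0$ with $T_{max}-t_0<\tau$, the curvature of $\bar u$ is bounded (it equals $r/2-\partial_t\bar u$), so Lemma \ref{lem:uniqueRF} forces $\bar u=u$ on $[t_0,T_{max})$; splicing $u|_{[0,t_0]}$ with $\bar u$ then yields a weak solution of \eqref{eqn:rfu} on $[0,t_0+\tau]$ with $t_0+\tau>T_{max}$ whose time derivative is also in $\mathcal V^{2,\alpha,[0,t_0+\tau]}$, contradicting the maximality of $T_{max}$. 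Hence \eqref{eqn:Kblow} must hold. I expect the main obstacle to be the uniform weighted-H\"older control near the cone point in the third paragraph: the $L^2$-gradient bounds come essentially for free from \eqref{eqn:linear1energy} once one notices that a curvature bound makes $\partial_t a$ bounded, but the near-cone bounds require the scale-invariant estimates of Theorems \ref{thm:dgspecial} and \ref{thm:dg} together with a careful re-run of the rescaling-plus-Schauder bootstrap already used for Lemmas \ref{lem:lp} and \ref{lem:schauder}.
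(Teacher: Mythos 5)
Your argument is correct and takes essentially the same route as the paper: splice via Lemma \ref{lem:uniqueRF}, turn the assumed curvature bound into $C^0$, $\mathcal P^{2,\alpha}$ (Lemma \ref{lem:schauder}-style) and $\abs{\cdot}_{\mathcal V}$ bounds through Theorem \ref{thm:unique} (using that bounded curvature makes $\partial_t a$ bounded), and then restart with Theorem \ref{thm:local} to contradict the maximality of $T_{max}$. The only difference is cosmetic: the paper extends $u$ to $t=T_{max}$ with $u(T_{max})$, $\tilde{\triangle}u(T_{max})\in\mathcal W^{2,\alpha}$ and restarts exactly at $T_{max}$, whereas you restart at some $t_0<T_{max}$ using bounds uniform in $t_0$; both rest on the same estimates.
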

In the rest of this paper, we shall call the solution given in the above lemma the maximal solution starting from $u_0$. 
\begin{proof}
	By the definition of $T_{max}$, for each $T<T_{max}$, there is $u_T\in \mathcal V^{2,\alpha,[0,T]}$ solving \eqref{eqn:rfu} with $u_T(0)=u_0$ and $\partial_t u_T\in \mathcal V^{2,\alpha,[0,T]}$. For any $t<T_{max}$, we define $u(t)=u_T(t)$ for any $t<T<T_{max}$, which is well defined by Lemma \ref{lem:uniqueRF}. It remains to show \eqref{eqn:Kblow}.

	If the lemma is not true, then there is $C>0$ such that
	\begin{equation*}
		\sup_{S\times [0,T_{max})} \abs{K}\leq C.
	\end{equation*}
	Since \eqref{eqn:rfu} is equivalent to $\partial_t u = \frac{r}{2}-K$, we know
	\begin{equation*}
		\sup_{S\times [0,T_{max})} \abs{u}\leq C
	\end{equation*}
	for possibly another $C>0$.
	Using a proof similar to Lemma \ref{lem:schauder}, we have 
	\begin{equation}\label{eqn:ext1}
		\norm{u}_{\mathcal P^{2,\alpha,[0,T]}} +\norm{\partial_t u}_{\mathcal P^{2,\alpha,[0,T]}}< C(T_{max}),
	\end{equation}
	for all $T<T_{max}$.

	We claim that for all $T<T_{max}$,
	\begin{equation}\label{eqn:ext2}
		\abs{u}_{\mathcal V^{[0,T]}} + \abs{\partial_t u}_{\mathcal V^{[0,T]}} < C(T_{max}).
	\end{equation}
	To see the claim, we regard $u$ as the weak solution of the linear equation
	\begin{equation*}
		\partial_t u = a(x,t) \tilde{\triangle} u + f(x,t)
	\end{equation*}
	where $a=e^{-2u}$ and $f= \frac{r}{2}-e^{-2u}\tilde{K}$. Since $\partial_t a$ is bounded, we apply Theorem \ref{thm:unique} to bound $\abs{u}_{\mathcal V^{[0,T]}}$ by $T$, the $C^0$ norm of $a,f,u$ and $a^{-1}$ and the $\mathcal W^{2,\alpha}$ norm of $u_0$. The same argument applies to $\partial_t u$.

	With \eqref{eqn:ext1} and \eqref{eqn:ext2}, we can extend the definition of $u$ to $T_{max}$ such that
	\begin{equation*}
		\norm{u(T_{max})}_{\mathcal W^{2,\alpha}}+ \norm{\tilde{\triangle} u(T_{max})}_{\mathcal W^{2,\alpha}}< +\infty.
	\end{equation*}
	Now Theorem \ref{thm:local} shows that we can extend the domain of $u$ to $T_{max}+\delta$, while keeping $\norm{u}_{\mathcal V^{2,\alpha,[0,T_{max}+\delta}]}$ and $\norm{\partial_t u}_{\mathcal V^{2,\alpha,[0,T_{max}+\delta}]}$ finite. This is a contradiction to the definition of $T_{max}$. 
\end{proof}

Finally, to conclude this section, we prove that for a natural choice of $r$, the maximal solution of \eqref{eqn:rfu} preserves the volume and the Gauss-Bonnet formula remains true as long as the solution exists. Following \cite{troyanov1991prescribing}, we set
\begin{equation*}
	\chi(S,\beta)= \chi(S) + \sum \beta_i,
\end{equation*}
where $\chi(S)$ is the Euler number of the underlying Riemann surface $S$.

\begin{prop}
	\label{prop:normalize}
	Suppose that $u_0$ satisfies the assumptions of Theorem \ref{thm:local} and that $u(t)$ is the maximal solution starting from $u_0$. If
	\begin{equation*}
		r=\frac{4\pi \chi(S,\beta)}{V_0}
	\end{equation*}
	where $V_0=\int_S e^{2u_0} d\tilde{V}$ is the volume of the initial metric, then
	\begin{equation*}
		V(t):=\int_S e^{2u(t)} d\tilde{V}
	\end{equation*}
	is a constant for $t\in [0,T_{max})$ and 
		\begin{equation}\label{eqn:GB}
			2\pi \chi(S,\beta)= \int_{S} K_t dV_t.
		\end{equation}
		Here $K_t$ and $V_t$ are the Gauss curvature and the volume form of $g(t)=e^{2u(t)}\tilde{g}$.
\end{prop}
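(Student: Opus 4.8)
The plan is to differentiate $V(t)$ directly, with Lemma \ref{lem:basic2} legitimising the differentiation under the integral sign, and to evaluate the curvature integral by combining Lemma \ref{lem:basic} with the classical Gauss--Bonnet formula for conical surfaces.

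\emph{Rewriting the curvature integral.} On $S\setminus\{p\}$, equation \eqref{eqn:rfu} is equivalent to $\partial_t u=\frac{r}{2}-K_t$, and under the conformal change $g(t)=e^{2u}\tilde g$ one has the pointwise identity $K_t\,dV_t=(\tilde K-\tilde{\triangle}u)\,d\tilde V$; hence
\begin{equation*}
	\int_S K_t\,dV_t=\int_S\tilde K\,d\tilde V-\int_S\tilde{\triangle}u(t)\,d\tilde V.
\end{equation*}
For the maximal solution, Lemma \ref{lem:blowup} gives $u,\partial_t u\in\mathcal V^{2,\alpha,[0,T]}$ for every $T<T_{max}$; since $\mathcal V^{2,\alpha,[0,T]}\subset\mathcal P^{2,\alpha,[0,T]}$ and functions in $\mathcal P^{0,\alpha,[0,T]}$ are bounded, $\partial_t u$ is bounded on $S\times[0,T]$, so $\tilde{\triangle}u(t)=e^{2u}(\partial_t u-\frac{r}{2})+\tilde K$ is bounded, hence integrable on the finite-volume surface $(S,\tilde g)$; moreover $u(t)\in\mathcal W^{2,\alpha}$ for each fixed $t$. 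Lemma \ref{lem:basic} (with $v\equiv1$) then yields $\int_S\tilde{\triangle}u(t)\,d\tilde V=0$, so $\int_S K_t\,dV_t=\int_S\tilde K\,d\tilde V$ for every $t\in[0,T_{max})$. Since $\tilde g$ is itself a conical metric of order $\beta$, the Gauss--Bonnet formula for conical surfaces \cite{troyanov1991prescribing} gives $\int_S\tilde K\,d\tilde V=2\pi\chi(S,\beta)$, which is \eqref{eqn:GB}.

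\emph{Constancy of the volume.} Applying Lemma \ref{lem:basic2} with $\Psi(z)=e^{2z}$ and $\varphi\equiv1$ --- whose hypotheses hold because $u\in\mathcal P^{2,\alpha,[0,T]}$ and $\iint_{S\times[0,T]}|\partial_t u|\,d\tilde V\,ds<\infty$ --- shows that $V(t)=\int_S e^{2u}\,d\tilde V$ is absolutely continuous and that, for almost every $t$,
\begin{eqnarray*}
	\frac{dV}{dt}&=&\int_S 2e^{2u}\,\partial_t u\,d\tilde V \;=\; \int_S 2e^{2u}\left(\frac{r}{2}-K_t\right)\,d\tilde V\\
	&=& r\,V(t)-2\int_S K_t\,dV_t \;=\; r\,V(t)-4\pi\chi(S,\beta).
\end{eqnarray*}
With the normalization $r=4\pi\chi(S,\beta)/V_0$ this reads $\frac{dV}{dt}=r(V(t)-V_0)$; since $V(0)=V_0$ and $V$ is absolutely continuous, Gr\"onwall's inequality forces $V(t)\equiv V_0$ on each $[0,T]$ with $T<T_{max}$, hence on $[0,T_{max})$.

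\emph{Expected main obstacle.} There should be no serious difficulty: the only delicate points are that integrating $\tilde{\triangle}u$ over $S$ picks up no contribution from the cone point, and that $V(t)$ may be differentiated under the integral sign. Both are exactly what Lemmas \ref{lem:basic} and \ref{lem:basic2} were designed to supply, and the integrability hypotheses they need follow immediately from $u,\partial_t u\in\mathcal V^{2,\alpha,[0,T_{max})}$; so the argument amounts to careful bookkeeping on top of the weak-solution calculus of Section \ref{sec:weaksolution}.
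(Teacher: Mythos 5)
Your proposal is correct and follows essentially the same route as the paper: the conformal identity $K_t\,dV_t=(\tilde K-\tilde{\triangle}u)\,d\tilde V$ together with Lemma \ref{lem:basic} (using $u(t)\in\mathcal W^{2,\alpha}$ and boundedness of $\tilde{\triangle}u$) gives \eqref{eqn:GB}, and Lemma \ref{lem:basic2} justifies differentiating $V(t)$ to obtain the ODE $\frac{d}{dt}V=r(V-V_0)$, hence $V\equiv V_0$. The only cosmetic differences are that you invoke the conical Gauss--Bonnet formula of Troyanov where the paper verifies $\int_S\tilde K\,d\tilde V=2\pi\chi(S,\beta)$ directly via boundary geodesic curvature, and you reuse \eqref{eqn:GB} in the volume computation (and close with Gr\"onwall) where the paper integrates $2\tilde{\triangle}u+re^{2u}-2\tilde K$ termwise.
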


\begin{proof}
	Since $\tilde{g}$ is the standard cone metric near the cone point, we can check by using the Gauss-Bonnet theorem involving the geodesic curvature on the boundary that 
	\begin{equation*}
		2\pi\chi(S,\beta) = \int_S \tilde{K} d\tilde{V}.
	\end{equation*}
	For any $t\in [0,T_{max})$, by $K_t= e^{-2u}(-\tilde{\triangle} u + \tilde{K})$, we have
		\begin{equation*}
			\int_S K_t dV_t = \int_S - \tilde{\triangle} u + \tilde{K} d\tilde{V}.
		\end{equation*}
		Since $u(t)$ is in $\mathcal W^{2,\alpha}$ (as in the definition of maximal solution) and $\tilde{\triangle} u$ is bounded, Lemma \ref{lem:basic} implies that
		\begin{equation*}
			\int_S \tilde{\triangle} u d\tilde{V}=0,
		\end{equation*}
		which proves \eqref{eqn:GB}.

		Since $\abs{u}_{\mathcal V^{[0,T]}}$ is finite for any $T<T_{max}$, Lemma \ref{lem:basic2} and Lemma \ref{lem:basic} allow us to compute
		\begin{equation*}
			\frac{d}{dt} V(t) = \int_S 2\tilde{\triangle} u + re^{2u} -2\tilde{K} d\tilde{V}= rV(t)- rV_0.
		\end{equation*}
		Since $V(0)=V_0$ by definition, we have $V(t)=V_0$ for all $t\in [0, T_{max})$.
\end{proof}
\subsection{Apriori estimate for the conformal factor}\label{subsec:apriori}
Lemma \ref{lem:blowup} implies that if $T_{max}$ for a maximal solution is finite, then the curvature $K(t)$ blows up as $t\to T_{max}$. The next lemma implies that at least the $C^0$ norm of conformal factor $u$ will stay bounded for any finite time interval.
\begin{lem}\label{lem:c2}
	Suppose $u_0$ is some initial data satisfying the assumptions of Theorem \ref{thm:local}. Let $u(t)$ be the maximal solution given by Lemma \ref{lem:blowup} with $T_{max}< +\infty$. There exists $C>0$ depending on $T_{max}$ and $u_0$ such that
	\begin{equation*}
		\norm{u}_{C^0(S\times [0,T_{max}) )}\leq C.
	\end{equation*}
\end{lem}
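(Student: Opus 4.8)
Because $u$ and $\partial_t u$ already lie in $\mathcal V^{2,\alpha,[0,T]}$ for every $T<T_{max}$ by definition of the maximal solution, $\|u\|_{C^0(S\times[0,T])}$ is finite for each such $T$ and the only issue is the behaviour as $t\to T_{max}$; moreover the constant is allowed to depend on $T_{max}$, which I will use freely. So it suffices to produce, separately, a uniform upper bound and a uniform lower bound for $u$ on $S\times[0,T_{max})$. Throughout I use that \eqref{eqn:rfu} is equivalent to $\partial_t u=\tfrac r2-K$, that $V(t)=\int_S e^{2u}\,d\tilde V\equiv V_0$ by Proposition~\ref{prop:normalize}, and that on each slab $[0,T]$, $T<T_{max}$, the quantities $K=\tfrac r2-\partial_t u$ and $\partial_t(e^{-2u})$ are bounded, so that the linear machinery of Section~\ref{sec:weaksolution} applies to the linearisations of \eqref{eqn:rfu}.

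\textbf{Upper bound for $u$ (the easy half).} I would get this from the \emph{one–sided} curvature estimate $K\ge -B$. A conformal–change computation gives, along the flow, $\partial_t K=e^{-2u}\tilde\triangle K+(2K-r)K$ on $S\setminus\{p\}$; on each $[0,T]$ with $T<T_{max}$ this is a linear equation for $K$ with bounded coefficients, and $K$ satisfies the energy hypothesis \eqref{eqn:assmax} since $\tilde\nabla K=-\tilde\nabla\partial_t u$, $\partial_t K=-\partial_t^2 u$ and $\partial_t u\in\mathcal V^{2,\alpha,[0,T]}$. Comparing $K$ with the solution $h(t)$ of the scalar ODE $h'=(2h-r)h$, $h(0)=\min_S K_0$, by applying Lemma~\ref{lem:maximum} to $h-K$, yields $K(x,t)\ge h(t)$; a phase–line analysis of $h'=(2h-r)h$ shows that, as $h(0)$ lies below its relevant rest point, $h(t)\ge -B$ for \emph{all} $t\ge 0$ with $B$ depending only on $\min_S K_0$ and $r$, in particular not degenerating as $t\to T_{max}$. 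Hence $\partial_t u=\tfrac r2-K\le\tfrac r2+B$, and integrating in time, $u\le\|u_0\|_{C^0(S)}+(\tfrac{|r|}{2}+B)\,T_{max}$ on $S\times[0,T_{max})$.

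\textbf{Lower bound for $u$: reduction to the spatial average.} From $K\ge -B$ and the upper bound just obtained, $\tilde\triangle u=\tilde K-e^{2u}K\le\|\tilde K\|_{C^0(S)}+B\,e^{2\sup u}=:M<\infty$. Since $u(t)\in\mathcal W^{2,\alpha}$ and $\tilde\triangle u(t)$ is bounded, Lemma~\ref{lem:basic} gives $\int_S\tilde\triangle u(t)\,d\tilde V=0$; writing $\tilde\triangle u(t)=M-h$ with $h\ge0$ and $\int_S h\,d\tilde V=M|\tilde V|$, and representing $u(t)-\bar u(t)$ through the Green's function $G$ of $\tilde\triangle$ on $(S,\tilde g)$ (with $\bar u(t)=\tfrac1{|\tilde V|}\int_S u(t)\,d\tilde V$), the fact that $G$ is bounded above yields the pointwise estimate $u(t,x)\ge\bar u(t)-C$ with $C$ depending on $T_{max}$. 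Thus the lower bound for $u$ follows once $\bar u(t)$ is bounded below; as a byproduct this also gives $\|u(t)-\bar u(t)\|_{L^1(S,\tilde g)}\le C$.

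\textbf{Lower bound for the average (the Kähler–geometry step, and the main obstacle).} The plan is to run the energy functional
\begin{equation*}
\mathcal J(u(t)):=\tfrac12\int_S|\tilde\nabla u|^2\,d\tilde V+\int_S\tilde K u\,d\tilde V-\tfrac r4\int_S e^{2u}\,d\tilde V .
\end{equation*}
Differentiating in $t$ (using Lemma~\ref{lem:basic2}, with the integration by parts justified by Lemma~\ref{lem:basic} and $u,\partial_t u\in\mathcal V^{2,\alpha,[0,T]}$) and substituting $\partial_t u=e^{-2u}\tilde\triangle u+\tfrac r2-e^{-2u}\tilde K$ gives $\frac{d}{dt}\mathcal J(u(t))=-\int_S e^{2u}(\partial_t u)^2\,d\tilde V\le0$, so $\mathcal J(u(t))\le\mathcal J(u_0)$ for all $t<T_{max}$. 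Since $\int_S e^{2u}\,d\tilde V\equiv V_0$ and $\int_S\tilde K\,d\tilde V=2\pi\chi(S,\beta)$, this reads $\tfrac12\int_S|\tilde\nabla u|^2\,d\tilde V+2\pi\chi(S,\beta)\,\bar u(t)+\int_S\tilde K(u-\bar u)\,d\tilde V\le\mathcal J(u_0)+\tfrac r4 V_0$, with the last integral already controlled (by the previous step, or by the Poincaré inequality). On the other hand, a Moser--Trudinger (Onofri–type) inequality on the conical surface, which holds for every $\beta_i>-1$ with some positive constant, gives, under the constraint $\int_S e^{2u}\,d\tilde V=V_0$, a bound $\bar u(t)\ge -c\int_S|\tilde\nabla u|^2\,d\tilde V-C$; feeding this into the energy inequality closes an algebraic inequality for $\bar u(t)$ and forces $\bar u(t)\ge -C$. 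This runs cleanly when $\chi(S,\beta)\le0$, and via the non–sharp Moser--Trudinger constant when $0<\chi(S,\beta)$ is small relative to $\min_i(1,1+\beta_i)$ (the Troyanov sub–critical regime). \textbf{The hard part} is the coupling in the remaining cone configurations, where the static functional $\mathcal J$ need no longer be bounded below: there one must use the sharp Onofri–type inequality together with the flow structure and the finiteness of $T_{max}$ (which permits the constant to grow in $t$), and making this robust is where I expect the real work to lie.
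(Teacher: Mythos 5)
Your first two steps are essentially sound: the curvature lower bound $K\ge -B$ via the evolution equation \eqref{eqn:flowK} and ODE comparison is a legitimate slab-by-slab application of Lemma~\ref{lem:maximum} (the hypotheses \eqref{eqn:maxab} and \eqref{eqn:assmax} do hold on each $[0,T]$, $T<T_{max}$, for the reasons you give), and the Green's-function reduction of the lower bound of $u$ to a lower bound of the average $\bar u(t)$ is plausible, although the existence and upper boundedness of the Green's function of $\tilde\triangle$ on $(S,\tilde g)$ is an extra fact you would have to supply (the paper only provides Lemma~\ref{lem:poisson}). The genuine gap is your third step. The monotone functional $\mathcal J$ is in general \emph{not} bounded below on the constraint set $\int_S e^{2u}\,d\tilde V=V_0$, and the Onofri/Moser--Trudinger inequality you invoke to force $\bar u(t)\ge -c\int_S|\tilde\nabla u|^2 d\tilde V-C$ fails exactly in the supercritical cone configurations (e.g.\ $\chi(S,\beta)$ positive and large relative to $\min_i(1,1+\beta_i)$, where the Troyanov condition is violated, no constant-curvature conical metric exists, and the ``one-dimensional Hamilton--Tian'' behaviour mentioned in the introduction occurs). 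Since Lemma~\ref{lem:c2} and Theorem~\ref{thm:main1} are asserted for all $\beta_i>-1$ with no restriction on $\chi(S,\beta)$, the case you leave open is not a technical remainder but the case the lemma is really about; note also that your variational step never uses $T_{max}<\infty$, whereas the statement deliberately allows the constant to depend on $T_{max}$ --- a hint that a static variational lower bound is the wrong tool here.

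The paper's proof avoids all of this and produces the two-sided bound in one stroke by the parabolic potential estimate (Cao-style): with $h_0$ solving \eqref{eqn:h0} via Lemma~\ref{lem:poisson}, define $\varphi$ by the ODE \eqref{eqn:varphiode}, $\varphi'-r\varphi=2u+2h_0$, with $\tilde\triangle\varphi_0=e^{2u_0}-V_0/\tilde V$; one checks that $\tilde\triangle\varphi=e^{2u}-V_0/\tilde V$ is preserved and that $\partial_t\varphi$ is a weak solution of the homogeneous linear equation $\partial_t(\partial_t\varphi)=e^{-2u}\tilde\triangle(\partial_t\varphi)+r\,\partial_t\varphi$, so Theorem~\ref{thm:unique} together with \eqref{eqn:linear1c0} gives $\norm{\partial_t\varphi(t)}_{C^0(S)}\le e^{rt}\norm{\partial_t\varphi(0)}_{C^0(S)}$. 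Because $T_{max}<\infty$ this bounds $\partial_t\varphi$, hence $\varphi$ by integration in time, hence $2u=\partial_t\varphi-r\varphi-2h_0$ on $S\times[0,T_{max})$, with no sign condition on $\chi(S,\beta)$, no curvature bound, and no Moser--Trudinger input. To repair your outline you would need to replace the $\mathcal J$-plus-Onofri step by an argument of this kind (or prove a new flow-adapted lower bound for $\bar u$ in the supercritical regime, which is precisely what you have not done).
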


The proof follows some well known approach in K\"ahler geometry.  In the smooth case, if $\varphi(t)$ is the potential function in the sense that
\begin{equation*}
	\tilde{\triangle} \varphi(t)= e^{2u(t)}-\frac{V_t}{\tilde{V}} 
\end{equation*}
where $V_t$ is the volume of $g(t)$, then (up to some normalization) $\partial_t \varphi$ satisfies a linear parabolic equation, from which we obtain immediately $C^0$ apriori estimate of $u$. The rest of this section is to prove Lemma \ref{lem:c2} by showing that this argument works for conical surfaces as well. 

For the definition of potential function, we need
\begin{lem}
	\label{lem:poisson}
	Suppose that $f$ is a $\mathcal W^{2,\alpha}$ functions satisfying 
	\begin{equation*}
		\int_S f d\tilde{V} =0.
	\end{equation*}
	Then up to a constant, there is a unique $u\in \mathcal W^{4,\alpha}$ such that
	\begin{equation*}
		\tilde{\triangle} u = f.
	\end{equation*}
\end{lem}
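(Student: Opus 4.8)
The plan is to construct $u$ by the energy method, upgrade its regularity near the cone point by a rescaling argument, and obtain uniqueness directly from Lemma~\ref{lem:basic}. I would work in the Hilbert space $H^1(S,\tilde g)$ of functions with $\int_S \abs{\tilde\nabla u}^2\, d\tilde V<\infty$. Two elementary facts drive the existence step: $(S,\tilde g)$ has finite volume, since near $p$ one has $d\tilde V=(\beta+1)\rho\,d\rho\,d\theta$ and $\beta>-1$; and the Dirichlet energy is conformally invariant in dimension two, so that in the conformal coordinates $(x,y)$ near $p$, $\int\abs{\tilde\nabla u}^2\,d\tilde V=\int\abs{\nabla u}^2\,dx\,dy$. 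In particular $H^1(S,\tilde g)$ agrees, near $p$, with the Euclidean $H^1$, which embeds compactly into every $L^r(dx\,dy)$ with $r<\infty$; combined with $(x^2+y^2)^\beta\in L^s(dx\,dy)$ for some $s>1$ (because $\beta>-1$) and H\"older's inequality, this makes $H^1(S,\tilde g)\hookrightarrow L^2(S,\tilde g)$ compact, hence the Poincar\'e inequality $\norm{u-\bar u}_{L^2(S,\tilde g)}\le C\norm{\tilde\nabla u}_{L^2(S,\tilde g)}$ holds, with $\bar u$ the $\tilde g$-mean of $u$. Since $\int_S f\,d\tilde V=0$, the functional $\varphi\mapsto-\int_S f\varphi\,d\tilde V$ is bounded on $H^1(S,\tilde g)/\Real$ (using Poincar\'e and $f\in\mathcal W^{2,\alpha}\subset L^2$), while the Dirichlet form is coercive there, so Lax--Milgram (or minimization of $\frac12\int_S\abs{\tilde\nabla u}^2+\int_S fu$) produces $u$ with $\int_S u\,d\tilde V=0$ and $\int_S\tilde\nabla u\cdot\tilde\nabla\varphi\,d\tilde V=-\int_S f\varphi\,d\tilde V$ for all $\varphi\in H^1(S,\tilde g)$. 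Away from $p$ this is a classical elliptic equation with $C^{2,\alpha}$ right hand side, so $u\in C^{4,\alpha}$ there by interior Schauder.

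The substantial point is the behaviour near $p$. First I would show $u$ is bounded there, exactly as in the proof of Lemma~\ref{lem:schauder}: in the conformal coordinates the equation reads $\triangle u=(x^2+y^2)^\beta f$ with right hand side in $L^q$ for some $q>1$ (again because $\beta>-1$ and $f$ is bounded), so $u\in W^{2,q}_{\loc}\hookrightarrow C^0$ in two dimensions; hence $u\in C^0(S)$ with $\norm{u}_{C^0(S)}<\infty$. Then $u\in\mathcal E^{4,\alpha}$ follows from a rescaling argument. On the dyadic annuli, writing $\rho=2^{-k}\bar\rho$ and $\bar u(\bar\rho,\theta)=u(2^{-k}\bar\rho,\theta)$, $\bar f(\bar\rho,\theta)=f(2^{-k}\bar\rho,\theta)$, the scale invariance of the cone metric turns $\tilde\triangle u=f$ into $\tilde\triangle\bar u=2^{-2k}\bar f$ on the fixed annulus $B_1\setminus B_{1/2}$, where $\tilde\triangle$ is now the Laplacian of the fixed cone metric, which is smooth away from the tip. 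The interior Schauder estimate bounds $\norm{\bar u}_{C^{4,\alpha}}$ on a slightly smaller annulus by $C(\norm{\bar f}_{C^{2,\alpha}(B_1\setminus B_{1/2})}+\norm{\bar u}_{C^0})\le C(\norm{f}_{\mathcal E^{2,\alpha}}+\norm{u}_{C^0(S)})$, uniformly in $k$; a routine covering by such rescaled annuli over all scales gives $\norm{u}_{\mathcal E^{4,\alpha}}\le C(\norm{f}_{\mathcal E^{2,\alpha}}+\norm{u}_{C^0(S)})<\infty$. Since $u\in H^1(S,\tilde g)$ already, $\abs{u}_{\mathcal W}<\infty$, so $u\in\mathcal W^{4,\alpha}$. (This whole regularity step may alternatively be quoted from the elliptic theory of Section~2 of \cite{yin2010ricci}.)

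For uniqueness, if $u_1,u_2\in\mathcal W^{4,\alpha}$ both solve $\tilde\triangle u_i=f$, then $w:=u_1-u_2\in\mathcal W^{2,\alpha}$ satisfies $\tilde\triangle w\equiv0$; since $w\cdot\tilde\triangle w\equiv0$ is integrable, Lemma~\ref{lem:basic} applies with $v=w$ and gives $0=\int_S w\cdot\tilde\triangle w\,d\tilde V=-\int_S\abs{\tilde\nabla w}^2\,d\tilde V$, whence $\tilde\nabla w\equiv0$ and $w$ is constant, $S$ being connected. The step I expect to be the main obstacle is the regularity near the cone point, i.e.\ promoting the weak solution (a priori only in $H^1\cap C^{4,\alpha}_{\loc}(S\setminus\set{p})$) to $\mathcal E^{4,\alpha}$: the boundedness bootstrap rests on $\beta>-1$ making $(x^2+y^2)^\beta$ an $L^q$-weight, and the final estimate rests on the scale invariance of the cone metric --- this is where the conical nature of the problem really enters.
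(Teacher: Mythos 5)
Your argument is correct, but it reaches the existence statement by a genuinely different route than the paper. The paper's proof writes $\tilde{g}=w\bar{g}$ for a smooth background metric $\bar{g}$ on the closed surface, with $w=\rho^{2\beta}$ near $p$, and converts the problem into $\bar{\triangle}u=wf$; since $wf\in L^p(S,\bar{g})$ for some $p>1$ (this is where $\beta>-1$ enters) and $\int_S wf\,d\bar{V}=\int_S f\,d\tilde{V}=0$, the classical $W^{2,p}$ theory on a smooth closed surface produces $u$ at once, Sobolev embedding gives boundedness and finiteness of $\int_S\abs{\tilde{\nabla}u}^2\,d\tilde{V}$, and interior Schauder upgrades to $\mathcal E^{4,\alpha}$; uniqueness is argued by noting the difference of two solutions is a bounded harmonic function for both metrics, hence constant. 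You instead build the weak solution variationally on the cone itself (conformal invariance of the Dirichlet energy, compact embedding $H^1(S,\tilde g)\hookrightarrow L^2(S,\tilde g)$, Poincar\'e, Lax--Milgram), and then run the same two-step local bootstrap --- $L^q$ theory for $\triangle u=(x^2+y^2)^\beta f$ in conformal coordinates for boundedness, dyadic rescaled Schauder estimates for the $\mathcal E^{4,\alpha}$ bound --- with uniqueness obtained cleanly from Lemma \ref{lem:basic}. What the paper's conformal trick buys is brevity: existence, the Fredholm-type solvability condition, and the compactness input are all outsourced to the standard closed-surface theory. What your route buys is self-containedness on the conical surface (it would survive in situations without a global smooth conformal background) and a uniqueness argument that is arguably cleaner than the paper's. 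One point worth tightening: the assertion that $H^1(S,\tilde g)$ ``agrees near $p$ with Euclidean $H^1$'' needs a word when $\beta>0$, since there $d\tilde V=r^{2\beta}\,dx\,dy$ is a degenerate weight and the $L^2(d\tilde V)$ bound does not directly control the Euclidean $L^2$ norm; one recovers it from the Euclidean gradient bound together with the $L^2$ control on an outer annulus where the weight is bounded below, after which your compactness and Poincar\'e steps go through. The regularity step itself could also simply be quoted from the elliptic theory in Section 2 of \cite{yin2010ricci}, as you note.
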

The proof is elementary and not new (see Lemma 2.10 of \cite{yin2013ricci}) and hence is moved to the appendix.

By the choice of $r$ in Proposition \ref{prop:normalize}, Lemma \ref{lem:poisson} gives some $h_0 \in \mathcal W^{4,\alpha}$ satisfying
\begin{equation}\label{eqn:h0}
	\tilde{\triangle} h_0= \frac{r V_0}{2\tilde{V}}-\tilde{K}
\end{equation}
because
\begin{equation*}
	\int_S \frac{r V_0}{2 \tilde{V}} d\tilde{V} = \frac{r V_0}{2} = 2\pi \chi(S,\beta) = \int_S \tilde{K} d\tilde{V}.
\end{equation*}
Note that $h_0$ is determined only up to a constant.
The existence of potential function $\varphi(t)$ is given in the next lemma
\begin{lem}
	\label{lem:potential}
	Suppose that $u(t)$ is a maximal solution to \eqref{eqn:rfu} and that $h_0$ is defined as in \eqref{eqn:h0}. Then there exists $\varphi(t)$ such that
	\begin{equation}\label{eqn:varphiode}
		\varphi'- r\varphi= 2u(t)+ 2h_0
	\end{equation}
	and
	\begin{equation*}
		\tilde{\triangle} \varphi = e^{2u} -\frac{V_0}{\tilde{V}}.
	\end{equation*}
	Moreover, $\norm{\partial_t \varphi}_{C^0(S\times [0,T])}$ and $\abs{\partial_t \varphi}_{\mathcal V^{[0,T]}}$ are finite for any $T<T_{max}$.
\end{lem}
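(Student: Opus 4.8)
The plan is to \emph{define} $\varphi(t)$ by integrating the ODE \eqref{eqn:varphiode} in time from a carefully chosen initial function, and then to recover the Poisson equation $\tilde\triangle\varphi=e^{2u}-V_0/\tilde V$ afterwards by a uniqueness argument for a scalar ODE. Throughout, $\tilde V$ denotes the total $\tilde g$-volume of $S$.

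First I would fix the initial value. Since $V_0=\int_S e^{2u_0}\,d\tilde V$, the function $e^{2u_0}-V_0/\tilde V$ has vanishing $\tilde g$-integral, so Lemma \ref{lem:poisson} produces $\varphi_0\in\mathcal W^{4,\alpha}$, unique up to an additive constant, with $\tilde\triangle\varphi_0=e^{2u_0}-V_0/\tilde V$. I would then set
\begin{equation*}
	\varphi(t)=e^{rt}\varphi_0+2e^{rt}\int_0^t e^{-rs}\bigl(u(s)+h_0\bigr)\,ds .
\end{equation*}
For every fixed $x\in S\setminus\set{p}$ the map $t\mapsto\varphi(t,x)$ is $C^1$ on $[0,T_{max})$, and $\varphi$ is bounded on $S\times[0,T]$ for each $T<T_{max}$ since $\varphi_0$, $h_0$ are bounded and $u\in\mathcal V^{2,\alpha,[0,T]}$ is bounded on $S\times[0,T]$. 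Differentiating under the integral sign gives $\partial_t\varphi=r\varphi+2u+2h_0$, i.e. \eqref{eqn:varphiode} with $\varphi(0)=\varphi_0$.

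Next I would verify the Poisson equation on $S\setminus\set{p}$. Put $\psi(t)=\tilde\triangle\varphi(t)-\bigl(e^{2u(t)}-V_0/\tilde V\bigr)$. Away from the cone point $u$ and hence $\tilde\triangle u$ are $C^1$ in $t$, so $\tilde\triangle$ commutes with $\partial_t$ in the defining formula for $\varphi$; using \eqref{eqn:varphiode} and \eqref{eqn:h0},
\begin{equation*}
	\partial_t\tilde\triangle\varphi=r\tilde\triangle\varphi+2\tilde\triangle u+2\tilde\triangle h_0=r\tilde\triangle\varphi+2\tilde\triangle u+\frac{rV_0}{\tilde V}-2\tilde K ,
\end{equation*}
while differentiating $e^{2u}$ and using the Ricci flow equation \eqref{eqn:rfu} gives $\partial_t e^{2u}=2e^{2u}\partial_t u=2\tilde\triangle u+re^{2u}-2\tilde K$. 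Subtracting yields $\partial_t\psi=r\psi$, and $\psi(0)=0$ by the choice of $\varphi_0$, so $\psi\equiv 0$ on $(S\setminus\set{p})\times[0,T_{max})$. Since $e^{2u(t)}-V_0/\tilde V\in\mathcal W^{2,\alpha}$ has zero integral and $\varphi(t)$ is bounded, the uniqueness part of Lemma \ref{lem:poisson} then also gives $\varphi(t)\in\mathcal W^{4,\alpha}$, as it differs by a constant from the solution that lemma provides.

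Finally I would estimate $\partial_t\varphi$ on $S\times[0,T]$ for fixed $T<T_{max}$. The point to keep in mind is that Lemma \ref{lem:c2}, which this construction ultimately feeds, is \emph{not} available and is not needed: because $u\in\mathcal V^{2,\alpha,[0,T]}$, $u$ is already bounded on $S\times[0,T]$, and $\varphi_0,h_0$ are bounded, so the explicit formula gives $\norm{\varphi}_{C^0(S\times[0,T])}\le C(T)$ and hence $\norm{\partial_t\varphi}_{C^0(S\times[0,T])}=\norm{r\varphi+2u+2h_0}_{C^0(S\times[0,T])}\le C(T)$. For $\abs{\partial_t\varphi}_{\mathcal V^{[0,T]}}$ I would use the Poisson equation and Lemma \ref{lem:basic} (applicable since $\varphi(t)\in\mathcal W^{2,\alpha}$ and $\tilde\triangle\varphi(t)$ is bounded, hence integrable) to get $\int_S\abs{\tilde\nabla\varphi}^2\,d\tilde V=-\int_S\varphi\,(e^{2u}-V_0/\tilde V)\,d\tilde V\le C(T)$ uniformly in $t$; combined with $\max_{t}\abs{u(t)}_{\mathcal W}<\infty$ and $\abs{h_0}_{\mathcal W}<\infty$ this bounds $\max_{t\in[0,T]}\abs{\partial_t\varphi(t)}_{\mathcal W}$. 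Moreover $\partial_t^2\varphi=r\partial_t\varphi+2\partial_t u$ is square-integrable on $S\times[0,T]$, since $\partial_t\varphi$ is bounded, $S\times[0,T]$ has finite $\tilde g$-volume, and $\iint_{S\times[0,T]}\abs{\partial_t u}^2\,d\tilde V\,ds<\infty$ because $u\in\mathcal V^{2,\alpha,[0,T]}$; hence $\abs{\partial_t\varphi}_{\mathcal V^{[0,T]}}<\infty$. I expect no serious obstacle here; the only care needed is to perform the $\tilde\triangle$–$\partial_t$ commutation and the integrations by parts on $S\setminus\set{p}$, invoking Lemma \ref{lem:basic} for the resulting global identities.
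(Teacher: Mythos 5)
Your proposal is correct and follows essentially the same path as the paper's proof: take $\varphi_0\in\mathcal W^{4,\alpha}$ from Lemma \ref{lem:poisson}, define $\varphi$ by the explicit ODE formula \eqref{eqn:varphi}, show the defect $\psi=\tilde{\triangle}\varphi-(e^{2u}-V_0/\tilde{V})$ satisfies $\partial_t\psi=r\psi$ with $\psi(0)=0$ (using \eqref{eqn:rfu}, \eqref{eqn:h0}, \eqref{eqn:varphiode}), and then deduce the $C^0$ and $\mathcal V^{[0,T]}$ bounds on $\partial_t\varphi$ from the ODE together with the boundedness of $u$, $h_0$, $\varphi_0$ and $\abs{u}_{\mathcal V^{[0,T]}}<\infty$. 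The only cosmetic deviation is your Dirichlet-energy bound for $\varphi$, obtained by integrating by parts against the Poisson equation (which requires placing $\varphi(t)$ in $\mathcal W^{4,\alpha}$ via the uniqueness part of Lemma \ref{lem:poisson} and the volume normalization of Proposition \ref{prop:normalize}), whereas the paper gets the same bound more directly by differentiating the formula \eqref{eqn:varphi}; both arguments are valid.
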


\begin{proof}
	By the definition of $V_0$ in Proposition \ref{prop:normalize} and Lemma \ref{lem:poisson}, we have $\varphi_0\in \mathcal W^{4,\alpha}$ solving
	\begin{equation}\label{eqn:varphi0}
		\tilde{\triangle} \varphi_0= e^{2u_0}-\frac{V_0}{\tilde{V}}.
	\end{equation}
	For $t\in [0,T_{max})$, we can define $\varphi(t)$ to be the solution of  \eqref{eqn:varphiode} with $\varphi(0)=\varphi_0$.
		By solving the ODE \eqref{eqn:varphiode}, we get
\begin{equation}\label{eqn:varphi}
	\varphi(t)=e^{rt}\left( \varphi(0)+ \int_0^t ( 2 u(s)+ 2h_0)e^{-rs}ds \right).
\end{equation}
We claim that
\begin{equation}\label{eqn:potential}
	\tilde{\triangle} \varphi(t)= e^{2u}-\frac{V_0}{\tilde{V}},
\end{equation}
which is true for $t=0$ by \eqref{eqn:varphi0}. To see the claim is true for $t>0$, we compute 
\begin{eqnarray*}
	&& \partial_t (\tilde{\triangle} \varphi - e^{2u} +\frac{V_0}{\tilde{V}}) \\
	&=& \tilde{\triangle} \partial_t \varphi - e^{2u} 2 \partial_t u \\
	&=& \tilde{\triangle} \partial_t \varphi -2 \left( \tilde{\triangle} u + \frac{r}{2}e^{2u} - \frac{r V_0}{2 \tilde{V}}+ \frac{r V_0}{2\tilde{V}}-\tilde{K} \right) \\
	&=& \tilde{\triangle} \left( \partial_t \varphi -2u - 2h_0 -  r\varphi \right) + r \left( \tilde{\triangle} \varphi -e^{2u}+ \frac{V_0}{\tilde{V}} \right) \\
	&=&  r \left( \tilde{\triangle} \varphi -e^{2u}+\frac{V_0}{\tilde{V}} \right) .
\end{eqnarray*}
Here in the above computation, we used \eqref{eqn:rfu}, \eqref{eqn:h0} and \eqref{eqn:varphiode}.

It remains to check that $\abs{\partial_t \varphi}_{V^{[0,T]}}$ and $\norm{\partial_t \varphi}_{C^0(S\times [0,T])}$ are finite for any $T<T_{max}$.  By \eqref{eqn:varphiode}, it suffices to show $\abs{\varphi}_{\mathcal V^{[0,T]}}$ and $\norm{\varphi}_{C^0(S\times [0,T])}$ are finite  because $\norm{h_0}_{\mathcal W^{4,\alpha}}<\infty$ and $\abs{u}_{\mathcal V^{[0,T]}}< \infty$ by the definition of the maximal solution and $T<T_{max}$. 

For this purpose, we derive from \eqref{eqn:varphi}
\begin{equation*}
	\max_{t\in [0,T]} \int_S \abs{\tilde{\nabla} \varphi}^2 d\tilde{V} < \infty
\end{equation*}
and
\begin{equation*}
	\max_{t\in [0,T]} \norm{\varphi(t)}_{C^0(S)} < \infty
\end{equation*}
by using the fact that $\varphi_0, h_0$ are in $\mathcal W^{4,\alpha}$ and $u(t)$ is in $\mathcal V^{2,\alpha,[0,T]}$. 
Finally, \eqref{eqn:varphiode} shows that $\varphi'$ is bounded on $S\times [0,T]$, which is stronger than
\begin{equation*}
	\iint_{S\times [0,T]} \abs{\partial_t \varphi}^2 d\tilde{V} ds <+\infty.
\end{equation*}
\end{proof}

Given the existence of the potential function $\varphi$, we move on to derive a uniform upper bound of $\partial_t \varphi$ (up to $T_{max}$). Using (\ref{eqn:potential}), we compute the equation satisfied by $\partial_t \varphi$ as follows
\begin{eqnarray*}
	\partial_t (\partial_t \varphi) &=& 2 \partial_t u + r \partial_t \varphi \\
	&=& e^{-2u}\left( \tilde{\triangle} (2u) + r e^{2u}-\frac{rV_0}{\tilde{V}} +\frac{rV_0}{\tilde{V}} -2\tilde{K} \right) + r\partial_t \varphi \\
	&=& e^{-2u} \tilde{\triangle} (2u+ r\varphi + 2h_0) + r \partial_ t \varphi \\
	&=& e^{-2u} \tilde{\triangle} (\partial_t \varphi) + r \partial_t \varphi.
\end{eqnarray*}
With this equation and the fact that $u\in \mathcal P^{2,\alpha,[0,T]}$ and $\partial_t \varphi(0)=r\varphi_0+2u(0)+2h_0\in \mathcal W^{2,\alpha}$, the interior Schauder estimate implies that $\partial_t \varphi$ is in $\mathcal P^{2,\alpha,[0,T]}$. Together with the finiteness of $\abs{\partial_t \varphi}_{\mathcal V^{[0,T]}}$, we know $\partial_t \varphi$ is a weak solution to the linear equation
\begin{equation*}
	\partial_t (\partial_t \varphi) = e^{-2u} \tilde{\triangle} (\partial_t \varphi) +r \partial_t \varphi.
\end{equation*}

The final step in the proof of Lemma \ref{lem:c2} is to realize that for each $T<T_{max}$, Theorem \ref{thm:unique} applies to $\partial_t \varphi$ as a weak solution to the above equation to give the required apriori $C^0$ bound.

\subsection{Curvature bound and global existence}\label{subsec:curvature}
In this section, we prove Theorem \ref{thm:main1}. Suppose $u_0$ satisfies the assumption of Theorem \ref{thm:local} and $r$ is chosen as in Proposition \ref{prop:normalize}. Let $u(t)$ be the maximal solution given in Lemma \ref{lem:blowup}. It suffices to show that $T_{max}< +\infty$ is not possible. If otherwise, Lemma \ref{lem:c2} gives a constant $C_1$ depending on $T_{max}$ and $u_0$ such that
\begin{equation}\label{eqn:c0bound}
	\norm{u}_{C^0(S\times [0,T_{max}))}\leq C_1 < +\infty.
\end{equation}
We will show in this section that this $C^0$ norm bound of $u$ contradicts Lemma \ref{lem:blowup}, which asserts that 
		\begin{equation*}
			\limsup_{t\to T_{max}} \norm{K(t)}_{C^0(S)}=+\infty.
		\end{equation*}
By \eqref{eqn:c0bound}, this is equivalent to
\begin{equation*}
	\limsup_{t\to T_{max}} \norm{\partial_t u}_{C^0(S)} = +\infty.
\end{equation*}
Hence, we can choose $x_i\in S\setminus \set{p}$ and $t_i\to T_{max}$ such that
	\begin{equation*}
		\lim_{i\to \infty} \abs{\partial_t u(x_i,t_i)}= \infty.
	\end{equation*}
	By modifying $x_i$ and $t_i$ if necessary, we may assume
	\begin{equation}\label{eqn:assume}
		\abs{\partial_t u}(x_i,t_i)\geq \frac{1}{2}\sup_S \abs{\partial_t u(t_i)}=\frac{1}{2}\sup_{t\in [0,t_i]}\sup_S \abs{\partial_t u}.
	\end{equation}

	For any $T<T_{max}$, we can apply Theorem \ref{thm:dgspecial} directly to $u$ as a function on $S\times [0,T]$ to see that there is $\alpha'$ depending on $C_1$ such that for small $\delta>0$,
	\begin{equation}\label{eqn:ucalpha}
		\norm{u}_{C^{\alpha'}(S\times [\delta,T_{max}) )}\leq C.
	\end{equation}
	Note that on the smooth part of $S$, the $C^0$ norm of $u$ is enough to bound any derivative of $u$ by applying the known theory of quasilinear parabolic equation to \eqref{eqn:rfu}, so we may assume that $x_i$ converges to the unique singular point $p$.  Let
	\begin{equation*}
		\lambda_i= \abs{\partial_t u(x_i,t_i)}\to \infty.
	\end{equation*}
	We compare the speed of $x_i\to p$ and $\lambda_i\to \infty$ and distinguish three cases.

	Case one: $d_{\tilde{g}}(x_i,p)^2 \lambda_i=\infty$. In fact, this case never happens because we can apply the theory of quasilinear parabolic equation to \eqref{eqn:rfu} on a ball centered at $x_i$ with the radius being a small multiple (depending only on $\beta$) of $d_{\tilde{g}}(x_i,p)$ to see that  
	\begin{equation*}
		\abs{\partial_t u}(x_i,t_i) \leq \frac{C}{d_{\tilde{g}}(x_i,p)^2}.
	\end{equation*}

	Case two: $0<d_{\tilde{g}}(x_i,p)^2 \lambda_i< \infty$. Let $(\rho,\theta)$ be the polar coordinates around $p$. Suppose $x_i=(\rho_i,\theta_i)$. By passing to some subsequence, we may assume (without loss of generality) that
	\begin{equation*}
		({\rho_i}{\lambda_i^{1/2}},\theta_i)\to (1,0).
	\end{equation*}
Set
\begin{equation*}
	w_i(\rho,\theta,t)=u(\frac{\rho}{\lambda_i^{1/2}},\theta,t_i+\frac{t}{\lambda_i}),
\end{equation*}
which satisfies
\begin{equation}\label{eqn:nonzero}
	\abs{\partial_t w_i}(\rho_i \lambda_i^{1/2},\theta_i,0)=1
\end{equation}
and
\begin{equation}
	\pfrac{w_i}{t}(\rho,\theta,t)= e^{-2w_i} \tilde{\triangle} w_i (\rho,\theta,t)+ \frac{1}{\lambda_i}\left[ \frac{r}{2}-e^{-2w_i }\tilde{K} \right].
	\label{eqn:localw2}
\end{equation}
We can apply the Schauder estimate in a neighborhood of $(\rho,\theta,t)=(1,0,0)$ to see that $w_i$ converges in $C^2$ to a limit $w_\infty$ with
\begin{equation*}
	\partial_t w_\infty (1,0,0)=1.
\end{equation*}
This is a contradiction to \eqref{eqn:ucalpha}, which implies that $w_\infty$ must be a constant.

Case three: $d_{\tilde{g}}(x_i,x_0)^2\lambda_i=0$. Let $w_i$ be defined as in Case two so that \eqref{eqn:nonzero} holds. 
In this case, $\rho_i \lambda_i^{1/2}$ converges to zero. Taking $t$-derivative of the equation satisfied by $w_i$, we have
\begin{equation*}
	\partial_t(\partial_t w_i)= e^{-2w_i} \tilde{\triangle }(\partial_t w_i) + (-2 \partial_t w_i)\left[ \partial_t w_i -\frac{1}{\lambda_i}\left( \frac{r}{2}-e^{-2w_i}\tilde{K} \right) \right] + \frac{1}{\lambda_i}\left( 2\partial_t w_i e^{-2w_i}\tilde{K} \right).
\end{equation*}
By (\ref{eqn:assume}), the term
\begin{equation*}
	(-2 \partial_t w_i)\left[ \partial_t w_i -\frac{1}{\lambda_i}\left( \frac{r}{2}-e^{-2w_i}\tilde{K} \right) \right] + \frac{1}{\lambda_i}\left( 2\partial_t w_i e^{-2w_i}\tilde{K} \right)
\end{equation*}
is uniformly bounded on $\set{(\rho,\theta,t)|\, \rho<2, t\in [-1,0]}$. By the scaling invariance of $\abs{\cdot}_{\mathcal V}$ and the definition of maximal solution (Lemma \ref{lem:blowup}), we know $\partial_t w_i$ is a weak solution defined on $\set{(\rho,\theta,t)|\, \rho<2, t\in [-1,0]}$. Theorem \ref{thm:dg} then implies the existence of $\alpha'\in (0,1)$ and $C_1>0$ (independent of $i$) such that
\begin{equation*}
	\norm{\partial_t w_i(0)}_{C^{\alpha'}(\set{(\rho,\theta)|\, \rho<1})}\leq C_1.
\end{equation*}
\begin{rem}
	Note that we do not have uniform control over $\abs{\partial_t w_i}_{\mathcal V^{[-1,0]} (\set{\rho<2})}$. The point is that Theorem \ref{thm:dg} only requires that it is finite and the constant $C_1$ does not depend on the particular value of it.	
\end{rem}
This together with (\ref{eqn:nonzero}) gives (for $i$ large)
\begin{equation*}
	\abs{\partial_t w_i} (\tilde{\rho},0,0)\geq 1/2, 
\end{equation*}
where $\tilde{\rho}= \left( \frac{1}{4C_1} \right)^{1/\alpha'}$. We can then obtain a contradiction as in Case two.

\section{Higher regularity of conical Ricci flow}\label{sec:higher}
In previous sections, we proved the global existence of a Ricci flow solution. For any $T< \infty$, we know that $u$ and $\partial_t u$ (or equivalently $K$) are in $\mathcal V^{2,\alpha,[0,T]}$. In this section, we show that

\begin{lem}\label{lem:higher}
	Suppose $u$ is the solution in Theorem \ref{thm:main1}. If for some $C_1>0$ and $T>1$, we have
	\begin{equation*}
		\norm{u}_{\mathcal V^{2,\alpha,[T-1,T]}} + \norm{\partial_t u}_{\mathcal V^{2,\alpha,[T-1,T]}}\leq C_1,	
	\end{equation*}
	then for any $k>1$, there exists $C_2(k)$ depending only on $C_1$ (not on $T$) such that 
	\begin{equation*}
		\norm{\partial_t^k u}_{\mathcal V^{2,\alpha,[T-1/2,T]}}\leq C_2(k).	
	\end{equation*}
\end{lem}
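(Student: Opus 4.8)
The plan is to induct on $k$. First I would write down the equation satisfied by $w_k:=\partial_t^k u$: differentiating \eqref{eqn:rfu} repeatedly in time on the smooth locus $S\setminus\{p\}$ and using, at each stage, the relation $e^{-2u}\tilde{\triangle} w_j=w_{j+1}-b_j w_j-f_j$ to eliminate Laplacian terms of order $<k$, one checks by induction that
\begin{equation*}
	\partial_t w_k=e^{-2u}\tilde{\triangle} w_k+b_k w_k+f_k\qquad\text{on }(S\setminus\{p\})\times[T-1,T],
\end{equation*}
where $b_k$ and $f_k$ are polynomials in $w_1,\dots,w_{k-1}$ with constant coefficients (depending on $r$); in particular $b_1=r-2w_1$, $f_1=0$, so the $w_1$-equation is genuinely semilinear in $w_1$, whereas for $k\ge2$ the coefficients involve only strictly lower-order derivatives. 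The hypothesis gives $u,w_1\in\mathcal V^{2,\alpha,[T-1,T]}$ with norm $\le C_1$, hence $u(T-1),w_1(T-1)\in\mathcal W^{2,\alpha}$ and $e^{-2u}\ge\lambda>0$, and I would keep track that no constant below depends on $T$, since every time interval occurring has length $\le1$.

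For the inductive step I would assume $w_1,\dots,w_k\in\mathcal V^{2,\alpha,[T-1+\epsilon_k,T]}$ with norms controlled by $C_1$ and $k$, and apply Theorem \ref{thm:smoothing} to the $w_k$-equation with $a=e^{-2u}$. Indeed $a\ge\lambda>0$, $\partial_t a=-2w_1e^{-2u}\in\mathcal V^{0,\alpha}$, the coefficients $b_k,f_k$ are polynomials in $w_1,\dots,w_{k-1}\in\mathcal V^{2,\alpha}$ and so lie in $\mathcal V^{0,\alpha}$, and $\partial_t b_k,\partial_t f_k$ are polynomials in $w_1,\dots,w_k\in\mathcal V^{2,\alpha}$ and so again lie in $\mathcal V^{0,\alpha}$ (products of $\mathcal V^{2,\alpha}$ functions and their compositions with $\exp$ stay in $\mathcal V^{0,\alpha}$); moreover $w_k(T-1+\epsilon_k)\in\mathcal W^{2,\alpha}$. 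Theorem \ref{thm:smoothing} then gives $w_{k+1}=\partial_t w_k\in\mathcal V^{2,\alpha,[T-1+\epsilon_k+t_0,T]}$ with a bound depending only on $C_1,k,t_0$. Starting the induction at $k=1$ (the hypothesis) and $k=2$ (the base case below, which lives on $[T-\tfrac34,T]$) and choosing $t_0$ a suitably small fraction of $\tfrac14$ depending on $k$, finitely many steps keep the left endpoint above $T-\tfrac12$ and produce $\|\partial_t^k u\|_{\mathcal V^{2,\alpha,[T-1/2,T]}}\le C_2(k)$.

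The main obstacle is the base case $k=2$: the $w_1$-equation being quadratic in $w_1$, every way of writing it as $\partial_t w_1=a\tilde{\triangle} w_1+bw_1+f$ forces one of $\partial_t b,\partial_t f$ to contain $w_2$, so Theorem \ref{thm:smoothing} is not available and I would rerun its three-step proof by hand. \emph{Step 1} ($L^q$ bound at an intermediate time, as in Lemma \ref{lem:lp}): since $\partial_t(e^{-2u})=-2w_1e^{-2u}$ and the source $-2w_1^2+rw_1$ are bounded, Theorem \ref{thm:dg} gives $w_1(t)\in C^{\alpha'}(B)$ uniformly; rescaling the equation around points $(\rho_0,\theta_0)$ near $p$ and using interior parabolic estimates with H\"older interpolation then yields $|\tilde{\triangle} w_1|(\rho_0,\theta_0,t_1)\le C\rho_0^{-2+\alpha''}$ for $t_1:=T-\tfrac34$, whence $w_2(t_1)=e^{-2u}\tilde{\triangle} w_1(t_1)-2w_1^2(t_1)+rw_1(t_1)\in L^q(S,\tilde{g})$ for some $q>1$. \emph{Step 2} (construction from rough data, as in Lemma \ref{lem:growth}): the $w_2$-equation is a genuine linear equation with $b_2=r-6w_1$, $f_2=2rw_1^2-4w_1^3\in\mathcal V^{0,\alpha}$ (no time-derivative of $b_2,f_2$ is needed at this stage), so approximating $w_2(t_1)$ in $L^q$ by $\mathcal W^{2,\alpha}$ data and invoking Theorem \ref{thm:linear}, Lemma \ref{lem:fora}, Lemma \ref{lem:moser} and Theorem \ref{thm:dg} produces $\tilde w_2$ solving it with $\tilde w_2\in\mathcal V^{2,\alpha,[t_1+\delta,T]}$ for all $\delta>0$ and $\|\tilde w_2(t)\|_{C^0(S)}+\|\tilde{\nabla}\tilde w_2(t)\|_{L^2(S,\tilde{g})}\le C(t-t_1)^{-1/q}$. \emph{Step 3} (identification, as in Lemma \ref{lem:same}): setting $\tilde w_1(t)=w_1(t_1)+\int_{t_1}^t\tilde w_2\,ds$, which is bounded since $q>1$, the defect $H=\partial_t\tilde w_1-e^{-2u}\tilde{\triangle}\tilde w_1+2\tilde w_1^2-r\tilde w_1$ satisfies $H(t_1)=0$ and, after using the $w_2$-equation, $\partial_t H=-2w_1H+(\tilde w_1-w_1)g$ with $|g(t)|\le C(t-t_1)^{-1/q}$; combining the resulting bound on $H$ with the energy identity for $F(t)=\int_S(\tilde w_1-w_1)^2e^{2u}d\tilde{V}$ (absolutely continuous by the growth estimates of Step 2, with the integration by parts justified by Lemma \ref{lem:basic}) yields $\frac{d}{dt}F\le CF+C\int_{t_1}^t(s-t_1)^{-1/q}F(s)\,ds$ with $F(t_1)=0$, and a singular Gronwall argument forces $F\equiv0$. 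Thus $\tilde w_1=w_1$, hence $w_2=\partial_t w_1=\tilde w_2\in\mathcal V^{2,\alpha,[T-3/4,T]}$ with a bound depending only on $C_1$, which starts the induction.
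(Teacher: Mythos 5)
Your proposal is correct and takes essentially the same route as the paper: Section 5 there likewise establishes the base case $k=2$ (phrased for $K=\frac{r}{2}-\partial_t u$, Lemma \ref{lem:dtK}) by rerunning the three steps of Theorem \ref{thm:smoothing} by hand — the $L^q$ bound at an intermediate time as in Lemma \ref{lem:lp}, construction of $\tilde w$ via Lemma \ref{lem:growth} applied to the genuinely linear equation for $\partial_t K$, and an identification argument — and then treats $k\geq 3$ by applying Theorem \ref{thm:smoothing} inductively to the equations $(E_l)$ whose coefficients $P_l,Q_l$ involve only lower-order time derivatives. The only minor deviation is in the identification step, where the paper compares $\tilde K$ and $K$ in sup-norm via Lemma \ref{lem:odecompare} together with an iteration in $(t-t_0)$, while you run the $L^2$-energy singular Gronwall argument of Lemma \ref{lem:same}; both are valid and require the same justifications of absolute continuity and integration by parts, which you note.
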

\begin{rem}
	For any constant $\delta>0$, we may replace $T>1$ by $T>\delta$, $[T-1,T]$ by $[T-\delta,T]$ and $[T-1/2,T]$ by $[T-\delta/2,T]$ in the above lemma, which still holds with $C_2$ depending on $\delta$.
\end{rem}
Before we start the proof, we note that since $\partial_t u =\frac{r}{2}-K$, it is equivalent to bound $\partial_t^{k-1}K$.

\subsection{Regularity of $\partial_t K$}\label{subsec:dtK}

\begin{lem}\label{lem:dtK}
	Let $u$ be the solution in Lemma \ref{lem:higher}. For any $0<\delta<1$, we have
	\begin{equation*}
		\norm{\partial_t K}_{\mathcal V^{2,\alpha,[T-\delta,T]}}< C
	\end{equation*}
	for some $C$ depending on $C_1$ in Lemma \ref{lem:higher} and $\delta$.
\end{lem}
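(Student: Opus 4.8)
The plan is to differentiate the Ricci flow equation \eqref{eqn:rfu} in time once and recognize the resulting equation for $\partial_t K$ (equivalently, for $\partial_t^2 u$) as a linear parabolic equation of the type treated in Section \ref{sec:smoothing}, and then invoke Theorem \ref{thm:smoothing}. Concretely, write $w=\partial_t u = \frac{r}{2}-K$. We already know from the hypothesis of Lemma \ref{lem:higher} that $u\in \mathcal V^{2,\alpha,[T-1,T]}$ and $w=\partial_t u\in \mathcal V^{2,\alpha,[T-1,T]}$, both with norms bounded by $C_1$. Differentiating \eqref{eqn:rfu} in $t$ (valid classically away from $p$) gives, as in \eqref{eqn:dt}, that $w$ solves
\begin{equation*}
	\partial_t w = e^{-2u}\tilde{\triangle} w - 2w\Bigl(w-\frac{r}{2}\Bigr)\quad\text{on } S\setminus\set{p}\times [T-1,T],
\end{equation*}
i.e. a linear equation $\partial_t w = a\,\tilde{\triangle} w + b\,w + f$ with $a=e^{-2u}$, $b=-2w$ and $f=rw$ (or one may absorb things differently; the point is only the structural form). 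The coefficients $a,b,f$ and their time derivatives $\partial_t a = -2we^{-2u}$, $\partial_t b = -2\partial_t w$, $\partial_t f = r\partial_t w$ are all controlled in $\mathcal V^{0,\alpha,[T-1,T]}$ by $C_1$, because $u$ and $\partial_t u$ lie in $\mathcal V^{2,\alpha,[T-1,T]}$ (which controls the $\mathcal P^{2,\alpha}$ norm hence all the pointwise and Hölder bounds needed, and controls $\partial_t u$, hence $\partial_t a,\partial_t b,\partial_t f$, in the relevant norms). Moreover $a = e^{-2u} > e^{-2\norm{u}_{C^0}} =: \lambda > 0$ uniformly. Thus the hypotheses (1)–(3) of Theorem \ref{thm:smoothing} are met on the interval $[T-1,T]$ (after a harmless time-shift so the interval starts at $0$), with $w$ playing the role of the weak solution — here we use that $w=\partial_t u$ is the weak solution produced by Theorem \ref{thm:linear} applied to the linearized problem, a fact established during the construction in Section \ref{subsec:local} (Theorem \ref{thm:stronglinear} was used there to see $\partial_t u$ is such a weak solution).

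Applying Theorem \ref{thm:smoothing} with initial time $t_0$ corresponding to $T-1+\delta'$ for a suitable $\delta' < \delta$, we conclude that $\partial_t w = \partial_t^2 u = -\partial_t K$ is a weak solution, on $[T-\delta,T]$, to the once-more-differentiated linear equation \eqref{eqn:lineardt} (with the coefficients computed above), and that
\begin{equation*}
	\norm{\partial_t w}_{\mathcal V^{2,\alpha,[T-\delta,T]}} \leq C(C_1,\lambda,T-(T-1),\delta) = C(C_1,\delta),
\end{equation*}
where crucially the constant does not depend on $T$ because the length of the base interval is $1$ and all the input bounds depend only on $C_1$. Since $\partial_t K = -\partial_t^2 u = -\partial_t w$, this is exactly the asserted bound.

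The main obstacle — though it is more a bookkeeping point than a genuine difficulty — is verifying that $w=\partial_t u$ really is a weak solution in the precise sense required (finiteness of $\abs{w}_{\mathcal V^{[T-1,T]}}$ and the classical equation off the singularity), so that Theorem \ref{thm:smoothing} is applicable with $w$ rather than having to re-derive it. This is supplied by the hypothesis $\partial_t u\in\mathcal V^{2,\alpha,[T-1,T]}$ together with the fact, recorded in the construction of the solution in Section \ref{subsec:local}, that $\partial_t u$ satisfies the linearized equation as a weak solution. A secondary point is to double-check that $\partial_t a$, $\partial_t b$, $\partial_t f$ lie in $\mathcal V^{0,\alpha,[T-1,T]}$ and not merely in $C^0$: this follows because $\partial_t a = -2(\partial_t u)e^{-2u}$ is a product of $\partial_t u\in\mathcal P^{2,\alpha}$ and $e^{-2u}\in\mathcal P^{2,\alpha}$, and $\mathcal P^{0,\alpha}$ (hence $\mathcal V^{0,\alpha}$ once the integral part is accounted for, which here is immediate from boundedness of $\partial_t u$ in $\mathcal V$) is an algebra; similarly for $\partial_t b$ and $\partial_t f$, which are constant multiples of $\partial_t u$ and $\partial_t^2 u$ is not needed as a coefficient. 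With these checks in place the lemma is an immediate corollary of Theorem \ref{thm:smoothing}.
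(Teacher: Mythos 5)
Your reduction to Theorem \ref{thm:smoothing} is circular at the decisive point. You linearize at the level of $w=\partial_t u$, writing $\partial_t w = a\,\tilde{\triangle} w + b\,w+f$ with $a=e^{-2u}$, $b=-2w$, $f=rw$, and then claim that hypothesis (1) of Theorem \ref{thm:smoothing} holds because $\partial_t a,\partial_t b,\partial_t f$ are ``controlled in $\mathcal V^{0,\alpha,[T-1,T]}$ by $C_1$''. But $\partial_t b=-2\,\partial_t^2 u$ and $\partial_t f=r\,\partial_t^2 u$, i.e.\ they are, up to constants, exactly $\partial_t K$ --- the quantity the lemma is supposed to bound. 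The hypothesis of Lemma \ref{lem:higher} only bounds $u$ and $\partial_t u$ in $\mathcal V^{2,\alpha,[T-1,T]}$; this yields an $L^2$ space--time bound on $\partial_t^2 u$ and a weighted pointwise bound of order $\rho^{-2}$ near the cone point, but nothing close to the $\mathcal V^{0,\alpha}$ (or even $C^0$) control of $\partial_t b,\partial_t f$ that Theorem \ref{thm:smoothing} demands, and no choice of the splitting $bw+f$ avoids this, since the nonlinearity forces the coefficients to contain $w$ itself. (Your later remark that $\partial_t b$ and $\partial_t f$ are ``constant multiples of $\partial_t u$'' is simply incorrect.) So Theorem \ref{thm:smoothing} cannot be invoked as a black box here: verifying its hypotheses presupposes the conclusion.

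This is precisely why the paper does not quote Theorem \ref{thm:smoothing} but reruns its three-step scheme on the equation for $w=\partial_t K$: (i) the argument of Lemma \ref{lem:lp} gives $\partial_t K(t_0)\in L^q$ for some $q>1$; (ii) differentiating $\partial_t K=e^{-2u}\tilde{\triangle} K+K(2K-r)$ once gives \eqref{eqn:w}, whose coefficients $e^{-2u}$, $6K-2r$ and inhomogeneity $-K(2K-r)^2$ involve only $u$ and $K$ and hence are controlled by $C_1$, so Lemma \ref{lem:growth} (whose hypotheses do not involve time derivatives of the coefficients) produces a candidate $\tilde w$ with the desired $\mathcal V^{2,\alpha}$ bounds; (iii) one must then prove $\tilde w=\partial_t K$, and this identification is not the ``bookkeeping point'' you describe: since $\tilde w$ is only $O((t-t_0)^{-1/q})$ near $t_0$, the Gronwall-type argument of Lemma \ref{lem:same} must be redone using Lemma \ref{lem:odecompare} together with an iteration that absorbs the singular factor $(s-t_0)^{-1/q}$. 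Your proposal skips (iii) entirely (the obstacle you do flag --- that $\partial_t u$ is a weak solution --- is not the real issue) and fails at the hypothesis check in (ii). To salvage your formulation you would need to state and prove a variant of Theorem \ref{thm:smoothing} whose hypotheses refer only to the coefficients of the differentiated equation \eqref{eqn:lineardt}, which in substance amounts to carrying out the paper's argument.
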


We study the evolution equation of $K$ instead of $u$,
\begin{equation}
	\partial_t K = e^{-2u} \tilde{\triangle} K + K(2K -r).
	\label{eqn:flowK}
\end{equation}

The proof is very similar to that of Theorem \ref{thm:smoothing}. For $\delta\in (0,1)$ in Lemma \ref{lem:dtK}, set $t_0=T-\frac{1+\delta}{2}$. The same proof as in Lemma \ref{lem:lp} shows
\begin{equation}\label{eqn:wlp}
	\partial_t K(t_0)\in L^q(S,\tilde{g})
\end{equation}
for some $q>1$.

Next, we compute the evolution equation of $w=\partial_t K$. Taking $t$-derivative of \eqref{eqn:flowK} gives
\begin{equation*}
	\partial_t w = e^{-2u} \tilde{\triangle} w + e^{-2u}(-2\partial_t u) \tilde{\triangle} K + w(4K-r).
\end{equation*}
Using $\partial_t u = -K +r/2$ and \eqref{eqn:flowK}, we get
\begin{equation*}
	\partial_t w = e^{-2u} \tilde{\triangle} w + (w-K(2K-r))(2K-r) + w(4K-r),
\end{equation*}
which is simplified to
\begin{equation}\label{eqn:w}
	\partial_t w =e^{-2u} \tilde{\triangle} w + w(6K-2r) -K(2K-r)^2.
\end{equation}
We take \eqref{eqn:w} as a linear parabolic equation of $w$, while the coefficients are in $\mathcal V^{2,\alpha,[T-1,T]}$ and $\partial_t e^{-2u}$ lies also in $\mathcal V^{2,\alpha,[T-1,T]}$. Together with \eqref{eqn:wlp}, Lemma \ref{lem:growth}\footnote{We use here a different linear equation, but the coefficients satisfy the same assumption.} gives us a solution $\tilde{w}$ to the initial value problem 
\begin{equation*}
	\left\{
		\begin{array}[]{l}
			\partial_t \tilde{w} = e^{-2u} \tilde{\triangle} \tilde{w} + \tilde{w}(6K-2r) - K(2K-r)^2 \\
			\tilde{w}(t_0)= \partial_t K(t_0).
		\end{array}
		\right.
\end{equation*}
Moreover, Lemma \ref{lem:growth} guarantees that
\begin{equation}\label{eqn:weakw}
	\norm{\tilde{w}}_{\mathcal V^{2,\alpha,[t_0+\eta,T]}}\leq C(\eta) \quad \mbox{for } 0<\eta<T-t_0
\end{equation}
and
\begin{equation}\label{eqn:goodw}
	\norm{\tilde{w}(t)}_{C^0(S)} + \norm{\tilde{\nabla} \tilde{w}(t)}_{L^2(S,\tilde{g})}\leq \frac{C}{(t-t_0)^{1/q}}
\end{equation}
for $t\in (t_0,T]$.

We define for $t\in [t_0,T]$
\begin{equation*}
	\tilde{K}(t)=K(t_0)+ \int_{t_0}^t \tilde{w}(s)ds.
\end{equation*}
The proof of Lemma \ref{lem:dtK} is done if we can show that $\tilde{K}\equiv K$ for any $t\in [t_1,T]$. To show this, we follow the proof of Lemma \ref{lem:same}.

By the fact that $\tilde{w}(t_0)=\partial_t K(t_0)$ and $\tilde{K}(t_0)=K(t_0)$, $\tilde{K}$ satisfies the (\ref{eqn:flowK}) at $t_0$. For later time, we compute $\partial_t H$ for
\begin{eqnarray*}
	H&:=&   \partial_t \tilde{K} - e^{-2u} \tilde{\triangle}\tilde{K} -\tilde{K}(2\tilde{K}-r),
\end{eqnarray*}
\begin{eqnarray*}
	\partial_t H &=& \partial_t \tilde{w} - e^{-2u} \tilde{\triangle} \tilde{w} - \tilde{w}(4\tilde{K}-r) + e^{-2u} \tilde{\triangle} \tilde{K} (2 \partial_t u) \\
	&=& (2\partial_t u)(-H) + (r-2K)\left( \tilde{w}-\tilde{K}(2\tilde{K}-r) \right) + \partial_t \tilde{w} -e^{2u}\tilde{\triangle} \tilde{w} - \tilde{w}(4\tilde{K}-r) \\
	&=& (2\partial_t u) (-H) + 4 \tilde{w}(K-\tilde{K}) + (2K-r) \left( \tilde{K}(2\tilde{K}-r)-K(2K-r) \right).
\end{eqnarray*}
Here in the second line above, we used $2\partial_t u = r- 2K$ and the definition of $H$; in the last line above, we used the equation satisfied by $\tilde{w}$.  Due to \eqref{eqn:goodw}, we know $\tilde{K}$ is bounded on $S\times [t_1,T]$, while $K$ is bounded by the assumption of the lemma so that the last term in the above equation is bounded by
\begin{equation*}
	\abs{(2K-r) \left( \tilde{K}(2\tilde{K}-r)-K(2K-r) \right)}\leq C \abs{\tilde{K}-K}.
\end{equation*}
On the other hand, by \eqref{eqn:goodw},
\begin{equation*}
	\abs{4\tilde{w}(K-\tilde{K})}\leq \frac{C}{ (t-t_0)^{1/q}} \abs{\tilde{K}-K}.
\end{equation*}
In summary, we obtained
\begin{equation*}
	\abs{\partial_t H}\leq C \abs{H} + \frac{C}{(t-t_0)^{1/q}} \abs{\tilde{K}-K},
\end{equation*}
from which we get by integration (using $H(t_0)=0$)
\begin{equation}\label{eqn:H2}
	\abs{H}(t)\leq C \int_{t_0}^t \frac{1}{(s-t_0)^{1/q}}\abs{\tilde{K}-K} ds
\end{equation}
for $t\in [t_0,T]$. 
Set
\begin{equation*}
	F(t):= \sup_{S} \abs{\tilde{K}(t)-K(t)} \quad \mbox{and} \quad F_H(t):= \sup_S \abs{H}.
\end{equation*}
\eqref{eqn:H2} implies that
\begin{equation*}
	F_H(t) \leq C\int_{t_0}^t \frac{1}{(s-t_0)^{1/q}} F(s) ds.
\end{equation*}
By (\ref{eqn:flowK}) and the definition of $H$, we have
\begin{equation}\label{eqn:KK}
	\partial_t (\tilde{K}-K)=H + e^{-2u} \tilde{\triangle}(\tilde{K}-K) + (\tilde{K}-K)(2\tilde{K}+2K -r).
\end{equation}

In order to apply Lemma \ref{lem:odecompare} to \eqref{eqn:KK}, we check that \eqref{eqn:assmax} holds. In fact, as in Step 2 of the proof of Lemma \ref{lem:same}, we have
\begin{equation*}
	\norm{\tilde{\nabla} \tilde{K}}_{L^2(S,\tilde{g})}\leq C' \quad \mbox{for } t\in [t_1,T]
\end{equation*}
and
\begin{equation*}
	\int_{t_1}^T \int_S \abs{\partial_t \tilde{K}} d\tilde{V} dt < \infty.
\end{equation*}
Lemma \ref{lem:odecompare} then gives
\begin{equation}\label{eqn:iterate}
	F(t)\leq C_1 \int_{t_0}^t F_H(t) dt \leq C'_1 \int_{t_0}^t \int_{t_0}^s \frac{1}{(r-t_0)^{1/q}} F(r)dr ds.
\end{equation}
It follows from \eqref{eqn:iterate} that $F\equiv 0$ for $t\in [t_0,T]$. To see this, we notice $F\leq C_2$ for some $C_2$ and integrate the right hand side to see
\begin{equation*}
	F(t)\leq C'_1 C_2 \frac{1}{(1-1/q)(2-1/q)} (t-t_0)^{2-1/q}.
\end{equation*}
Plugging this back into \eqref{eqn:iterate} will give
\begin{equation*}
	F \leq (C'_1)^2 C_2 \frac{1}{(1-1/q)(2-1/q)(3-2/q)(4-2/q)} (t-t_0)^{4-2/q}.
\end{equation*}
Repeating this process gives $F\equiv 0$ and proves Lemma \ref{lem:dtK}.
\subsection{Higher order regularity}\label{subsec:higher}

In the previous section, we have shown that $\partial_t K$ (or equivalently, $\partial^2_t u$) is in $\mathcal V^{2,\alpha,*}$. For higher $t$-derivatives, we can apply Theorem \ref{thm:smoothing} directly, because for any $l\geq 2$, the evolution equation of $\partial_t^l u$ is a linear equation whose coefficients involve only lower $t$-derivatives, which we may assume to be in $\mathcal V^{2,\alpha,*}$ by induction.

To be precise, we claim that for $l\geq 2$
\begin{equation*}
	\partial_t (\partial^l_t u) = e^{-2u} \tilde{\triangle} (\partial_t^l u) + P_l\cdot \partial_t^l u + Q_l
	\eqno(E_l)
\end{equation*}
where $P_l$ and $Q_l$ are polynomials of $\partial_t u,\cdots, \partial_t^{l-1}u$ with constant coefficients.
To see this, we compute directly to get
\begin{equation*}
	\partial_t (\partial_t u) = e^{-2u} \tilde{\triangle} (\partial_t u) - 2\partial_t u (\partial_t u -\frac{r}{2}) 
\end{equation*}
and
\begin{equation*}
	\partial_t (\partial_t^2 u) = e^{-2u} \tilde{\triangle} (\partial_t^2 u) + \partial_t^2 u \left( -6 \partial_t u +r \right)  -2 (\partial_t u)^2 (2\partial_t u -r),
\end{equation*}
which confirms the claim for $l=2$.
Assume the claim is true for $l$. Taking one more $t$-derivative gives
\begin{eqnarray*}
	\partial_t (\partial_t^{l+1} u) &=&  e^{-2u} \tilde{\triangle}(\partial_t^{l+1} u) + (-2 \partial_t u) (e^{-2u} \tilde{\triangle} \partial_t^l u)  \\
	&& + P_l \cdot \partial_t^{l+1} u + (\partial_t P_l) \cdot \partial_t^l u + \partial_t Q_l \\
	&=& e^{-2u} \tilde{\triangle}(\partial_t^{l+1} u) + (-2 \partial_t u) (\partial_t^{l+1} u- P_l \cdot \partial_t^l u -Q_l)  \\
	&& + P_l \cdot \partial_t^{l+1} u + (\partial_t P_l) \cdot \partial_t^l u + \partial_t Q_l.
\end{eqnarray*}
Hence, the claim is proved if we take
\begin{equation*}
	P_{l+1}= -2\partial_t u + P_l
\end{equation*}
and
\begin{equation*}
	Q_{l+1} = 2\partial_t u (P_l \cdot \partial_t^l u + Q_l) + (\partial_t P_l) \cdot \partial_t^l u +\partial_t Q_l.
\end{equation*} 

Given ($E_l$), we may prove Lemma \ref{lem:higher} by induction. Starting with ($E_2$), we can apply Theorem \ref{thm:smoothing} directly to it, because $u,\partial_t u, \partial_t^2 u$ are in $\mathcal V^{2,\alpha, [T-\delta,T]}$. Hence, for any $\delta'<\delta$, we have
\begin{equation*}
	\partial_t^3 u \in \mathcal V^{2,\alpha,[T-\delta',T]}.
\end{equation*}
The proof for higher order derivatives is similar and omitted.

\section{Asymptotic expansion of the solution}\label{sec:expansion}
The aim of this section is to prove Theorem \ref{thm:main3}. The proof is built on the previous knowledge that $\partial_t^l u$ is bounded for all $l=0,1,2,\cdots$. All discussions in this section are local, hence we take the polar coordinates $(\rho,\theta)$ on $B$ and regard $u(t)$ as a function of $(\rho,\theta)$.

\subsection{Formal consideration}\label{subsec:formal}
Since our aim is to study the expansion of $u$, we must first decide what terms should be included in the expansion. On one hand, we need to include sufficiently many terms so that $u(t)$ can be expanded as a series of such terms. On the other hand, we do not want to include more than what is absolutely necessary, because that will weaken our understanding on the regularity. The consideration in this subsection is a little formal, but it shall be fully justified when we prove Theorem \ref{thm:main3} in later subsections and it explains the reason why a particular term appears in the expansion.

First, let's recall the expansion of bounded harmonic functions defined on $B\setminus \set{0}$.
\begin{lem}
	\label{lem:harmonic}
	Suppose $u$ is a bounded harmonic function defined on $B\setminus \set{0}$, i.e. $\tilde{\triangle} u=0$. Then we have
	\begin{equation*}
		u(\rho,\theta) = a_0 + \sum_{k=1}^\infty \left( a_k \rho^{\frac{k}{\beta+1}} \cos k\theta + b_k \rho^{\frac{k}{\beta+1}} \sin k\theta \right)
	\end{equation*}
	for $\rho\in (0,1)$. Here $a_k$ and $b_k$ are real numbers determined by $u$.
\end{lem}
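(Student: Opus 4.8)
The plan is to reduce the statement to the classical expansion of a bounded harmonic function on a Euclidean punctured disk. Recall that in the conformal coordinates $(x,y)$ around $p$ the background metric is $\tilde g=(x^2+y^2)^\beta(dx^2+dy^2)$, so that for any function $u$,
$$\tilde\triangle u=(x^2+y^2)^{-\beta}\triangle u,$$
where $\triangle=\partial_x^2+\partial_y^2$ is the Euclidean Laplacian. Since $(x^2+y^2)^{-\beta}>0$ away from the origin, the condition $\tilde\triangle u=0$ on $B\setminus\set{0}$ is equivalent to $\triangle u=0$ on the corresponding punctured Euclidean disk $\set{0<x^2+y^2<R^2}$, where $R$ is the Euclidean radius associated with $\rho=1$ via $\rho=\frac{1}{\beta+1}r^{\beta+1}$, $r=\sqrt{x^2+y^2}$.

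First I would use the boundedness of $u$ to remove the singularity at the origin: a bounded harmonic function on a punctured planar disk extends to a harmonic function on the full disk. (Alternatively, one may argue by separating variables, writing $u=\sum_k f_k(r)g_k(\theta)$ with $g_k\in\set{\cos k\theta,\sin k\theta}$; the radial factor $f_k$ then solves an Euler equation whose solution space near $r=0$ is spanned by $r^k$ and, for $k\ge1$, $r^{-k}$, and for $k=0$ by $1$ and $\log r$, and the boundedness of $u$ forces the singular modes $r^{-k}$ and $\log r$ to vanish.)

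Next, a harmonic function on a Euclidean disk has the classical uniformly (on compacta) convergent expansion into homogeneous harmonic polynomials,
$$u=a_0+\sum_{k=1}^\infty\left(c_k r^k\cos k\theta+d_k r^k\sin k\theta\right),$$
where I use that the angular coordinate $\theta$ of the polar coordinates $(\rho,\theta)$ is the same as the Euclidean one because of the convention $x=r\cos\theta$, $y=r\sin\theta$. It remains only to rewrite this in the cone variable: from $\rho=\frac{1}{\beta+1}r^{\beta+1}$ we get $r^k=(\beta+1)^{k/(\beta+1)}\rho^{k/(\beta+1)}$, and absorbing the constant into the coefficients gives
$$u(\rho,\theta)=a_0+\sum_{k=1}^\infty\left(a_k\rho^{\frac{k}{\beta+1}}\cos k\theta+b_k\rho^{\frac{k}{\beta+1}}\sin k\theta\right),\qquad a_k=(\beta+1)^{\frac{k}{\beta+1}}c_k,\ b_k=(\beta+1)^{\frac{k}{\beta+1}}d_k.$$
The series converges for $\rho\in(0,1)$ since the Euclidean series converges for $r<R$, and the $a_k,b_k$ are determined by $u$, being (up to the fixed factors) its Fourier coefficients along the circles $\set{\rho=\text{const}}$.

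There is no real obstacle in this argument; the only place where anything must be checked is the passage that discards the singular modes $r^{-k}$ and $\log r$, i.e.\ the application of the removable singularity theorem (equivalently, the argument via the Euler equation), and that is exactly where the hypothesis that $u$ is bounded enters.
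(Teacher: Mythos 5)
Your proposal is correct. Note that the paper does not actually write out a proof of this lemma: it simply states that the argument is a well known separation of variables and omits it. Your parenthetical alternative (Fourier decomposition in $\theta$, Euler equation for the radial factors, boundedness killing the $r^{-k}$ and $\log r$ modes) is precisely that intended argument. Your main route is a mild repackaging of it: since $\tilde g=(x^2+y^2)^\beta(dx^2+dy^2)$ is conformally flat in the chart and the Laplacian in two dimensions is conformally covariant, $\tilde\triangle u=0$ is equivalent to $\triangle u=0$ on the punctured Euclidean disk, so you can quote the classical removable singularity theorem and the standard expansion of a harmonic function into homogeneous harmonic polynomials, and then merely change variables $r^k=(\beta+1)^{k/(\beta+1)}\rho^{k/(\beta+1)}$ (the angular variable being unchanged by the paper's conventions). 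What this buys you is that all analytic input is off-the-shelf Euclidean theory, and the specific exponents $k/(\beta+1)$ tied to angular frequency $k$ come out automatically; the cost is only the bookkeeping of the coordinate change, and the domain of convergence $\rho\in(0,1)$ corresponds correctly to the Euclidean radius associated with $\rho=1$. The one step where the hypothesis of boundedness is genuinely used — discarding the singular modes, i.e.\ the removable singularity statement — is exactly where you flag it, so there is no gap.
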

The proof is a well known argument of separation of variables and is omitted.

This is the starting point of our consideration. Namely, we should consider linear combinations of the terms in 
\begin{equation*}
	\mathcal T_h = \set{ \rho^{\frac{k}{\beta+1}}\cos k\theta, \rho^{\frac{k}{\beta+1}}\sin k\theta \,|\, k=0,1,2,\cdots}.
\end{equation*}
We denote the set of finite linear combination of terms in $\mathcal T_h$ by $\mbox{Span}(\mathcal T_h)$ and similar conventions apply to $\mathcal T_a$ and $\mathcal T$ to be defined later.

Next, we would like to include more terms so that some basic algebraic operations are closed. We define
\begin{equation*}
	\mathcal T_a =\set{ \rho^{\frac{k}{\beta+1}}\cos l \theta, \rho^{\frac{k}{\beta+1}}\sin l \theta \,|\, l=0,1,2,\cdots; \frac{k-l}{2}\in \mathbb N \cup \set{0}}.
\end{equation*}
We characterize $\mathcal T_a$ in the following lemma.
\begin{lem}
	\label{lem:algebra}
	$\mbox{Span}(\mathcal T_a)$ is the smallest vector space of functions which contains $\mbox{Span}(\mathcal T_h)$ and is multiplicatively closed.
\end{lem}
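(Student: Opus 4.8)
The plan is to prove two things: first, that $\mathrm{Span}(\mathcal{T}_a)$ is multiplicatively closed and contains $\mathrm{Span}(\mathcal{T}_h)$; and second, that any vector space with these two properties must contain $\mathrm{Span}(\mathcal{T}_a)$. The containment $\mathrm{Span}(\mathcal{T}_h)\subset\mathrm{Span}(\mathcal{T}_a)$ is immediate: a term $\rho^{k/(\beta+1)}\cos k\theta$ in $\mathcal{T}_h$ has $l=k$, so $(k-l)/2=0\in\mathbb{N}\cup\{0\}$, hence it lies in $\mathcal{T}_a$.

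For multiplicative closure, I would first observe that it suffices to check products of two basis elements of $\mathcal{T}_a$. Take $\rho^{k_1/(\beta+1)}(\cos\text{ or }\sin)(l_1\theta)$ and $\rho^{k_2/(\beta+1)}(\cos\text{ or }\sin)(l_2\theta)$ with $k_i-l_i$ even and nonnegative. Their product has radial part $\rho^{(k_1+k_2)/(\beta+1)}$, and the angular part expands via the product-to-sum formulas (e.g. $\cos l_1\theta\cos l_2\theta=\tfrac12[\cos(l_1+l_2)\theta+\cos(l_1-l_2)\theta]$) into a combination of $\cos m\theta$ and $\sin m\theta$ with $m=l_1+l_2$ and $m=|l_1-l_2|$. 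The key arithmetic check is that for each such $m$, the quantity $(k_1+k_2)-m$ is even and nonnegative: evenness follows because $(k_1+k_2)-(l_1+l_2)=(k_1-l_1)+(k_2-l_2)$ is a sum of two even numbers, and $(k_1+k_2)-(l_1-l_2)$ differs from that by $2l_2$, still even; nonnegativity follows from $k_i\ge l_i\ge 0$ so $k_1+k_2\ge l_1+l_2\ge |l_1-l_2|$. Hence the product lies in $\mathrm{Span}(\mathcal{T}_a)$, proving closure.

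For minimality, I would show every basis element $\rho^{k/(\beta+1)}\cos l\theta$ (and the $\sin$ analogue) with $k-l=2j\ge 0$ lies in any vector space $V$ that contains $\mathrm{Span}(\mathcal{T}_h)$ and is multiplicatively closed. The building blocks in $\mathcal{T}_h$ are $h_1:=\rho^{1/(\beta+1)}\cos\theta$, $h_2:=\rho^{1/(\beta+1)}\sin\theta$, and the constant $1$; note $h_1^2+h_2^2=\rho^{2/(\beta+1)}$. The target is $\rho^{2j/(\beta+1)}\cdot\bigl(\rho^{l/(\beta+1)}\cos l\theta\bigr)$, and $\rho^{l/(\beta+1)}\cos l\theta\in\mathcal{T}_h\subset V$ directly. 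So it remains to produce the factor $\rho^{2j/(\beta+1)}=(h_1^2+h_2^2)^j$, which lies in $V$ by multiplicative closure, and then multiply; the product again lies in $V$. Thus every element of $\mathcal{T}_a$ lies in $V$, so $\mathrm{Span}(\mathcal{T}_a)\subset V$.

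I do not expect a serious obstacle here — the statement is essentially a bookkeeping exercise about the index set $\{(k,l): k-l\in 2(\mathbb{N}\cup\{0\})\}$ being closed under the operations $(k,l)\mapsto(k_1+k_2, l_1\pm l_2)$ and containing the diagonal $k=l$. The one point requiring a little care is making sure the product-to-sum expansion never produces an angular frequency $m$ whose parity or sign spoils membership; the computation above (sum and absolute difference of frequencies, with radial exponents dominating) handles both the $\cos\cos$, $\sin\sin$, and $\cos\sin$ cases uniformly, since in every case the radial exponent is $k_1+k_2$ and the frequencies appearing are $l_1+l_2$ and $|l_1-l_2|$, both $\le k_1+k_2$ and congruent to it mod $2$. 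If the paper prefers, one can also phrase the minimality direction as: $\mathrm{Span}(\mathcal{T}_a)$ is generated as an algebra by $h_1,h_2$ and $1$, which is visibly the minimal such algebra containing $\mathrm{Span}(\mathcal{T}_h)$.
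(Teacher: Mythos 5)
Your proposal is correct and follows essentially the same route as the paper: verify multiplicative closure by the product-to-sum computation, then obtain $\rho^{\frac{2}{\beta+1}}$ from products of the frequency-one harmonic terms (the paper uses $\rho^{\frac{1}{\beta+1}}\cos\theta\cdot\rho^{\frac{1}{\beta+1}}\cos\theta=\rho^{\frac{2}{\beta+1}}\frac{\cos 2\theta+1}{2}$, you use $h_1^2+h_2^2$, a trivial variant) and multiply it repeatedly against elements of $\mathcal T_h$ to generate all of $\mathcal T_a$. Your write-up just fills in the parity and nonnegativity bookkeeping that the paper calls ``straightforward to check.''
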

\begin{proof}
	It is straightforward to check that $\mbox{Span}(\mathcal T_a)$ is multiplicatively closed by computing the product of two terms in $\mathcal T_a$. Moreover, $\mathcal T_a$ contains $\mathcal T_h$ trivially. It suffices to show that it is the smallest set satisfying these properties. To see this, we compute
	\begin{equation*}
		\rho^{\frac{1}{\beta+1}}\cos \theta \cdot \rho^{\frac{1}{\beta+1}}\cos \theta = \rho^{\frac{2}{\beta+1}} \frac{\cos 2\theta +1}{2},
	\end{equation*}
	which implies that $\rho^{\frac{2}{\beta+1}}$ should be in $\mbox{Span}(\mathcal T_a)$. Multiplying $\rho^{\frac{2}{\beta+1}}$ to the terms in $\mathcal T_h$ repeatedly gives all terms in $\mathcal T_a$.
\end{proof}

Finally we define
\begin{equation*}
	\mathcal T =\set{ \rho^{2j+\frac{k}{\beta+1}}\cos l \theta, \rho^{2j+\frac{k}{\beta+1}}\sin l \theta \,|\, l,j,k=0,1,2,\cdots; \frac{k-l}{2}\in \mathbb N\cup\set{0}}.
\end{equation*}
Note that if $\beta\in \mathbb Q$, it is possible that there exists $j_1\ne j_2$ and $k_1\ne k_2$ such that
\begin{equation*}
	2j_1+\frac{k_1}{\beta+1} = 2j_2 + \frac{k_2}{\beta+1}.
\end{equation*}
The motivation behind the definition of $\mathcal T$ is explained in the next lemma.
\begin{lem}
	\label{lem:inverse} $\mbox{Span}(\mathcal T)$ is the smallest vector space of functions containing $\mbox{Span}(\mathcal T_h)$ such that (1) it is multiplicatively closed; (2)for each $u\in \mbox{Span}(\mathcal T)$, there is $v\in \mbox{Span}(\mathcal T)$ such that
	\begin{equation*}
		\tilde{\triangle} v = u.
	\end{equation*}
\end{lem}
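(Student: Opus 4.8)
The plan is to verify three things: that $\mathrm{Span}(\mathcal T)$ contains $\mathrm{Span}(\mathcal T_h)$, that it satisfies properties (1) and (2), and that any vector space with those properties must contain every element of $\mathcal T$. The first inclusion is immediate since $\mathcal T_h\subset\mathcal T$ (take $j=0$ and $k=l$). For property (1), I would compute the product of two generators $\rho^{2j_1+\frac{k_1}{\beta+1}}(\cos\text{ or }\sin)l_1\theta$ and $\rho^{2j_2+\frac{k_2}{\beta+1}}(\cos\text{ or }\sin)l_2\theta$; the radial exponent adds to $2(j_1+j_2)+\frac{k_1+k_2}{\beta+1}$, and the product of trigonometric factors is a combination of $\cos(l_1\pm l_2)\theta$, $\sin(l_1\pm l_2)\theta$. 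Writing $l'=l_1+l_2$ or $l'=|l_1-l_2|$ and $k'=k_1+k_2$, one checks $k'-l'$ is even and nonnegative in each case (using that $k_i-l_i$ is even and nonnegative), so each summand lies in $\mathrm{Span}(\mathcal T)$. This is essentially the computation already done in Lemma~\ref{lem:algebra}, adapted to allow the extra $\rho^{2j}$ factors.

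For property (2), the key computation is the action of $\tilde\triangle$ on a generator. In the polar coordinates $\tilde g=d\rho^2+\rho^2(\beta+1)^2d\theta^2$ one has $\tilde\triangle=\partial_\rho^2+\frac1\rho\partial_\rho+\frac{1}{\rho^2(\beta+1)^2}\partial_\theta^2$, so applying $\tilde\triangle$ to $\rho^{\mu}\cos l\theta$ with $\mu=2j+\frac{k}{\beta+1}$ gives $\left(\mu^2-\frac{l^2}{(\beta+1)^2}\right)\rho^{\mu-2}\cos l\theta$. Thus if $\mu^2\neq\frac{l^2}{(\beta+1)^2}$ we can invert: $v=\left(\mu^2-\frac{l^2}{(\beta+1)^2}\right)^{-1}\rho^{\mu+2}\cos l\theta$ works and has exponent $2(j+1)+\frac{k}{\beta+1}$, still with $k-l$ even and nonnegative, so $v\in\mathrm{Span}(\mathcal T)$. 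The exceptional case is $\mu=\frac{l}{\beta+1}$, i.e. $j=0$ and $k=l$ — but those are exactly the harmonic generators of $\mathcal T_h$, for which $\tilde\triangle$ returns $0$, so no preimage is needed (or rather, $0$ has the obvious preimage). One must be a little careful that $\mu$ could equal $\frac{l}{\beta+1}$ with $j>0,k\neq l$ when $\beta$ is rational; in that degenerate case $\tilde\triangle(\rho^\mu\cos l\theta)=0$ as well, and since we only need \emph{existence} of a preimage for a given right-hand side, one checks that any element of $\mathrm{Span}(\mathcal T)$ decomposes into generators and we invert $\tilde\triangle$ termwise on the non-kernel part while the kernel part (a finite combination of functions annihilated by $\tilde\triangle$) is harmonic, hence already the Laplacian of something in $\mathcal T_h\subset\mathrm{Span}(\mathcal T)$ up to adjusting by such harmonic terms — I expect this bookkeeping to be the one mildly delicate point.

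For minimality, let $W$ be any vector space of functions containing $\mathrm{Span}(\mathcal T_h)$ and closed under (1) and (2); I must show $\rho^{2j+\frac{k}{\beta+1}}\cos l\theta\in W$ (and similarly with $\sin$) for all admissible $j,k,l$. First, $W\supseteq\mathrm{Span}(\mathcal T_a)$: by Lemma~\ref{lem:algebra}, $\mathrm{Span}(\mathcal T_a)$ is the smallest multiplicatively closed space containing $\mathrm{Span}(\mathcal T_h)$, and $W$ is such a space. So it remains to produce the extra $\rho^{2j}$ factors using property (2). The idea is that $\tilde\triangle^{-1}$ raises the radial exponent by $2$: starting from $\rho^{\frac{k}{\beta+1}}\cos l\theta\in\mathrm{Span}(\mathcal T_a)\subseteq W$ with $k>l$ (so it is not harmonic, $\mu\neq\frac{l}{\beta+1}$), property (2) gives some $v\in W$ with $\tilde\triangle v=\rho^{\frac{k}{\beta+1}}\cos l\theta$; subtracting the harmonic solution computed above, $v$ differs from $c\,\rho^{2+\frac{k}{\beta+1}}\cos l\theta$ by an element of $\ker\tilde\triangle$ — but the bounded elements of the kernel are in $\mathrm{Span}(\mathcal T_h)\subset W$, and one argues the unbounded kernel elements cannot occur (or can be subtracted off within $W$), so $\rho^{2+\frac{k}{\beta+1}}\cos l\theta\in W$. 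Iterating raises the exponent by $2$ each time, reaching every $\rho^{2j+\frac{k}{\beta+1}}\cos l\theta$ with $k>l$. For the harmonic generators $k=l$ one needs a separate argument: multiply $\rho^{\frac{l}{\beta+1}}\cos l\theta$ by $\rho^{\frac{2}{\beta+1}}\in W$ (which lies in $\mathrm{Span}(\mathcal T_a)\subseteq W$ as in Lemma~\ref{lem:algebra}) to enter the $k>l$ regime, or directly note $\tilde\triangle(\rho^2\cdot\text{const})=\text{const}\cdot 4$ type identities to generate $\rho^2$ and then multiply. Assembling these, $W\supseteq\mathrm{Span}(\mathcal T)$, completing the proof. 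The main obstacle I anticipate is the careful handling of the kernel of $\tilde\triangle$ in property (2) and in the minimality argument — ensuring that the preimages produced by (2) can always be normalized to lie in $\mathrm{Span}(\mathcal T)$ despite the ambiguity modulo harmonic functions, especially when $\beta$ is rational and distinct $(j,k)$ collide.
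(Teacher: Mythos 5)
Your overall route is the same as the paper's: check multiplicative closure by direct computation, invert $\tilde{\triangle}$ termwise on generators by raising the radial exponent by $2$, and get minimality from Lemma \ref{lem:algebra} plus iterating the inversion. However, your verification of property (2) contains a concrete error at exactly the point that carries the content of the paper's proof. To solve $\tilde{\triangle}v=\rho^{\mu}\cos l\theta$ with $v=c\,\rho^{\mu+2}\cos l\theta$, the relevant coefficient is $(\mu+2)^2-\frac{l^2}{(\beta+1)^2}$, not $\mu^2-\frac{l^2}{(\beta+1)^2}$ as you wrote; hence the correct non-resonance condition is $\mu+2\neq\frac{l}{\beta+1}$, and this holds \emph{automatically} for every generator of $\mathcal T$, since $\mu=2j+\frac{k}{\beta+1}\geq\frac{l}{\beta+1}$ (this is precisely the paper's one-line observation that $\sigma+2>\frac{l}{\beta+1}$, so the coefficient never vanishes). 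Because of this index slip you misidentify the harmonic generators $\rho^{\frac{l}{\beta+1}}\cos l\theta$ as an exceptional case and then assert that ``no preimage is needed'' for them. That is a misreading of property (2): it requires a $v\in\mbox{Span}(\mathcal T)$ with $\tilde{\triangle}v=u$ for \emph{every} $u\in\mbox{Span}(\mathcal T)$, including the harmonic ones, and the point is that such a $v$ does exist (namely the generator with the same $k=l$ and $j=1$, suitably normalized), not that it is unnecessary. As written, your argument produces no valid preimage for the elements of $\mathcal T_h\subset\mathcal T$. Two smaller points: the coincidence $\mu=\frac{l}{\beta+1}$ with $j>0$, $k\neq l$ that worries you cannot occur at all, since $k\geq l$ and $j\geq 1$ would force $2j(\beta+1)=l-k\leq 0$; and your fallback of multiplying $\rho^{\frac{l}{\beta+1}}\cos l\theta$ by $\rho^{\frac{2}{\beta+1}}$ yields the generator with $k=l+2$, $j=0$, not the needed $\rho^{2+\frac{l}{\beta+1}}\cos l\theta$, so it does not repair the harmonic case in the minimality step either (the $\tilde{\triangle}(\rho^2)=4$ route does, modulo the kernel issue below).

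Your concern in the minimality direction about the preimage being determined only modulo $\ker\tilde{\triangle}$ (which contains $\log\rho$ and negative powers, not in $\mbox{Span}(\mathcal T)$) is legitimate, but the paper's own proof is equally terse there, asserting that ``the above computation also shows'' minimality; the intended reading is that every element of $\mathcal T$ is reached from $\mathcal T_h$ by multiplication and the explicit inversion, so relative to the paper's standard this is not a gap on your part, just the genuinely loose point of the statement. The substantive defect to fix is the property (2) computation described above.
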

\begin{proof}
(1) can be checked with direct computation. For (2), for each $u=\rho^\sigma \cos l\theta$ in $\mathcal T$, we compute
\begin{eqnarray*}
	\tilde{\triangle} \rho^{\sigma+2} \cos l\theta &=& (\partial_\rho^2 +\frac{1}{\rho}\partial_\rho - \frac{l^2}{\rho^2(\beta+1)^2}) \rho^{\sigma+2} \cos l\theta \\
	&=& ( (\sigma+2)^2 - \frac{l^2}{(\beta+1)^2}) \rho^\sigma \cos l\theta. 
\end{eqnarray*}
	By the definition of $\mathcal T$, $\sigma+2> \frac{l}{\beta+1}$ so the right hand side above in not zero, hence we may take
	\begin{equation*}
		v=( (\sigma+2)^2 - \frac{l^2}{(\beta+1)^2})^{-1}  \rho^{\sigma+2} \cos l\theta.
	\end{equation*}
	The computation works as well if we replace $\cos$ by $\sin$. Obviously, $\mathcal T_h\subset \mathcal T$ and the above computation also shows  that $Span(\mathcal T)$ is the smallest vector space with the required properties. 
\end{proof}

\subsection{Finite expansion}\label{subsec:finite}

As in the formulation of a Taylor expansion of a smooth function on $\Real^n$, we need to be precise about the difference between a smooth function and a Taylor polynomial. For that purpose, we shall define a class of functions $\tilde{O}(q)$ for any nonnegative real number $q$. In addition to the restriction on the decay of the function itself, we put some restrictions to the derivatives of the function, which is quite natural in our setting.

\begin{defn}\label{defn:O}
	A function $u$ defined in $B_{1/2}\setminus \set{0}$ is said to be in $\tilde{O}(q)$ for $q\in [0,\infty)$ if and only if there are constants $C_k$ for each $k=0,1,2,\cdots$ such that
	\begin{equation}\label{eqn:qk}
		\abs{\tilde{\nabla}^k u} \leq C_k \rho^{q - k} \quad\mbox{on}\quad B_{1/2}\setminus \set{0}.
	\end{equation}
\end{defn}
\begin{rem}
	We note that \eqref{eqn:qk} is equivalent to 
	\begin{equation*}
		\abs{(\rho\partial_\rho)^{k_1} \partial_{\theta}^{k_2} u}\leq C(k_1,k_2) \rho^q.
	\end{equation*}
\end{rem}

To define an expansion up to order $q>0$, we consider only the linear combination of functions in 
\begin{equation*}
	\mathcal T^q =\set{ \rho^{2j+\frac{k}{\beta+1}}\cos l \theta, \rho^{2j+\frac{k}{\beta+1}}\sin l \theta \,|\, l,j,k=0,1,2,\cdots; \frac{k-l}{2}\in \mathbb N \cup \set{0}; 2j+\frac{k}{\beta+1}< q}.
\end{equation*}
In other words, it is the subset of $\mathcal T$ which decays strictly faster than $\rho^q$ when $\rho\to 0$. 

\begin{defn}\label{defn:expansion}
	A function $u$ is said to have an expansion up to order $q$ if and only if there is a set of real numbers $a_v$ for each $v\in \mathcal T^q$ such that
	\begin{equation*}
		u= \sum_{v\in \mathcal T^q} a_v v + \tilde{O}(q) \quad\mbox{on}\quad B_{1/2}\setminus \set{0}.
	\end{equation*}
\end{defn}

As an example, we prove
\begin{lem}
	\label{lem:harmonicexp} Each bounded harmonic function $u$ on $B\setminus \set{0}$ has expansion up to order $q$ for any $q>0$.
\end{lem}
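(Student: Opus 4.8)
The plan is to take the explicit series for bounded harmonic functions from Lemma \ref{lem:harmonic}, truncate it at the exponent $q$, and show that the tail is an admissible error term in the sense of Definition \ref{defn:O}.

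First I would write, via Lemma \ref{lem:harmonic},
\[
	u(\rho,\theta) = a_0 + \sum_{k\ge 1}\left( a_k\rho^{k/(\beta+1)}\cos k\theta + b_k\rho^{k/(\beta+1)}\sin k\theta\right),\qquad 0<\rho<1.
\]
Every term here lies in $\mathcal T_h\subset\mathcal T$ (take $j=0$, $l=k$, so $\tfrac{k-l}{2}=0$), and it belongs to $\mathcal T^q$ exactly when $k/(\beta+1)<q$. Collecting the finitely many such terms gives an element of $\mathrm{Span}(\mathcal T^q)$ — these supply the coefficients $a_v$ of Definition \ref{defn:expansion} — and leaves a tail $R$, the sum over all $k$ with $k/(\beta+1)\ge q$ (together with the constant, which is included since $0<q$). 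It remains to check $R\in\tilde O(q)$.

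For that I would first bound the Fourier coefficients: since $a_k\rho^{k/(\beta+1)}=\pi^{-1}\int_0^{2\pi}u(\rho,\theta)\cos k\theta\,d\theta$ for $k\ge 1$, we get $|a_k|\rho^{k/(\beta+1)}\le 2M$ for all $\rho\in(0,1)$, where $M=\sup_B|u|$; letting $\rho\to1^-$ yields $|a_k|\le 2M$, and similarly $|b_k|\le 2M$. Now let $k_0$ be the least integer with $k_0/(\beta+1)\ge q$. On $B_{1/2}$ one has $\rho^{1/(\beta+1)}\le c:=2^{-1/(\beta+1)}<1$, and the series together with all its termwise $(\rho\partial_\rho)$- and $\partial_\theta$-derivatives converges uniformly there. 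Since $(\rho\partial_\rho)$ and $\partial_\theta$ act on the $k$-th term by multiplication with $k/(\beta+1)$ and $\pm k$ respectively, factoring out $\rho^{k_0/(\beta+1)}$ gives
\[
	\abs{(\rho\partial_\rho)^{k_1}\partial_\theta^{k_2}R} \le 4M\,\rho^{k_0/(\beta+1)}\sum_{k\ge k_0}\left(\tfrac{k}{\beta+1}\right)^{k_1}k^{k_2}c^{\,k-k_0}=:C(k_1,k_2)\,\rho^{k_0/(\beta+1)}\le C(k_1,k_2)\,\rho^{q},
\]
the series converging because a polynomial in $k$ times $c^k$ is summable, and the last step using $\rho<1$ and $k_0/(\beta+1)\ge q$. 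By the Remark after Definition \ref{defn:O} these bounds are equivalent to $\abs{\tilde\nabla^k R}\le C_k\rho^{q-k}$, so $R\in\tilde O(q)$ and the lemma follows.

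I do not expect a genuine obstacle here: this is the harmonic prototype of the general expansion, and the only points requiring care are the uniform convergence of the differentiated tail on $B_{1/2}$ (routine, since $c<1$) and the observation that the $j=0$ slice of $\mathcal T$ is precisely $\mathcal T_h$, so that possible exponent coincidences inside $\mathcal T^q$ when $\beta\in\mathbb Q$ play no role. The same three-move scheme — isolate the finite part, bound Fourier coefficients, estimate the tail by a geometric series with polynomial weights — is what will recur in the later subsections, with the harmonic model replaced by the actual solution of the nonlinear equation.
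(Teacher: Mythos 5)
Your proof is correct; the decomposition itself (truncate the series of Lemma \ref{lem:harmonic} at exponent $q$, put the finitely many low-order terms into $\mathrm{Span}(\mathcal T^q)$, and show the tail lies in $\tilde O(q)$) is the same as the paper's, but the key analytic step is handled by a different device. The paper does not bound the Fourier coefficients at all: it writes the differentiated tail at radius $\rho$ as the corresponding series at the fixed radius $\rho_0=1/2$ multiplied by the factors $(\rho/\rho_0)^{l/(\beta+1)-q}$, invokes the smoothness of $u$ away from the origin to get uniform convergence of the series at $\rho_0$, and then applies Abel's uniform convergence test using that the extra factors are uniformly bounded and monotone in $l$. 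You instead extract the explicit bound $\abs{a_k},\abs{b_k}\le 2M$ from the $L^\infty$ bound of $u$ via the Fourier coefficient formula at radius $\rho\to 1^-$, and then control the termwise-differentiated tail on $B_{1/2}$ by a Weierstrass-type estimate, a geometric series $c^{k-k_0}$ with polynomial weights $(k/(\beta+1))^{k_1}k^{k_2}$, after factoring out $\rho^{k_0/(\beta+1)}\le\rho^q$. Your route is more elementary and more quantitative: it yields explicit constants $C(k_1,k_2)$ depending only on $\sup_B\abs{u}$, $\beta$, $q$, and the orders of differentiation, and it justifies termwise differentiation directly by the M-test, whereas the paper's Abel argument trades the coefficient bound for the qualitative fact that Fourier series of smooth circle functions converge nicely. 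Both are complete; the only cosmetic point in your write-up is the parenthetical about the constant term, which should read unambiguously that $a_0$ belongs to the $\mathrm{Span}(\mathcal T^q)$ part (since its exponent $0<q$), not to the tail $R$.
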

\begin{proof}
	Let $l_0$ be the smallest integer such that $\frac{l_0}{\beta+1}\geq q$. By Lemma \ref{lem:harmonic}, it suffices to prove
	\begin{equation*}
		R:=\sum_{l\geq l_0} \left( a_l \rho^{\frac{l}{\beta+1}-q} \cos l\theta + b_l \rho^{\frac{l}{\beta+1}-q}\sin l\theta \right) \in \tilde{O}(0).
	\end{equation*} 
	For any $k_1$ and $k_2$ in $\mathbb N\cup \set{0}$, we need to show that $(\rho\partial_\rho)^{k_1} \partial_\theta^{k_2} R$ is bounded on $B_{1/2}\setminus \set{0}$. Since $u$ and hence $R$ is smooth away from $0$, the trigonometric series converges nicely so that for $\rho\in (0,1)$
	\begin{equation}\label{eqn:series}
		(\rho \partial_\rho^{k_1} \partial_\theta^{k_2})R:=\sum_{l\geq l_0} \left( \tilde{a}_l \rho^{\frac{l}{\beta+1}-q} \cos^{(k_2)} l\theta + \tilde{b}_l \rho^{\frac{l}{\beta+1}-q}\sin^{(k_2)} l\theta \right),
	\end{equation} 
	where the exact formula for $\tilde{a}_l$ and $\tilde{b}_l$ is not important.
	Each term in the series of \eqref{eqn:series} is a continuous function of $(\rho,\theta)$ defined on $[0,1/2]\times S^1$. If we can show that the series converges uniformly on $[0,1/2]\times S^1$, then we are done. To see this, we use Abel's uniform convergence test and write for $\rho_0=1/2$
	\begin{equation*}
		\tilde{a_l} \rho^{\frac{l}{\beta+1}-q} \cos^{(k_2)} l\theta = \tilde{a_l} \rho_0^{\frac{l}{\beta+1}-q} \cos^{(k_2)} l\theta \cdot (\rho/\rho_0)^{\frac{l}{\beta+1}-q}.
	\end{equation*}
	The series of $\tilde{a}_l \rho_0^{\frac{l}{\beta+1}-q}\cos^{(k_2)}l\theta$ (forgetting about $\sin$ for simplicity) converges uniformly as functions (trivial in $\rho$) defined on $[0,1/2]\times S^1$, while the sequence of functions $(\rho/\rho_0)^{\frac{l}{\beta+1}-q}$ is uniformly bounded and decreases in $l$.
\end{proof}

\begin{lem}
	\label{lem:goodxi}
	If $f_1$ and $f_2$ both have expansions up to order $q$, then so do $f_1\pm f_2$, $f_1\cdot f_2$ and $e^{f_1}$.
\end{lem}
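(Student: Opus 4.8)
The plan is to reduce everything to two ingredients: the closure of $\mathcal T^q$ under the relevant operations (up to error terms) and the fact that $\tilde O(q)$ is stable under addition, multiplication by bounded-derivative functions, and multiplication by elements of $\mathcal T$ of nonnegative order. First I would record the elementary facts about $\tilde O(q)$: if $u\in\tilde O(q)$ and $v\in\tilde O(q')$ then $u+v\in\tilde O(\min\{q,q'\})$ and $uv\in\tilde O(q+q')$ (by the Leibniz rule applied to the characterization $\abs{(\rho\partial_\rho)^{k_1}\partial_\theta^{k_2}u}\le C\rho^q$ in the remark after Definition \ref{defn:O}); and if $w\in\operatorname{Span}(\mathcal T)$ with every monomial of order $\ge \sigma\ge 0$, then $w\cdot\tilde O(q)\subset \tilde O(q+\sigma)\subset\tilde O(q)$, since each monomial $\rho^s\cos l\theta$ with $s\ge 0$ lies in $\tilde O(s)$ (its $(\rho\partial_\rho)^{k_1}\partial_\theta^{k_2}$-derivatives are again $\rho^s$ times a bounded trigonometric function on $B_{1/2}$). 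Also any finite linear combination of elements of $\mathcal T$ of order $\ge q$ lies in $\tilde O(q)$, which lets us freely discard high-order terms.

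For $f_1\pm f_2$ the statement is immediate: write $f_i=\sum_{v\in\mathcal T^q}a^{(i)}_v v+\tilde O(q)$ and add, using additivity of $\tilde O(q)$. For $f_1\cdot f_2$, I would expand the product $\left(\sum a^{(1)}_v v+\tilde O(q)\right)\left(\sum a^{(2)}_w w+\tilde O(q)\right)$. The cross terms $v\cdot\tilde O(q)$ and $\tilde O(q)\cdot\tilde O(q)$ all land in $\tilde O(q)$ by the facts above (note $v\in\mathcal T^q$ has order $>0$ unless $v\equiv 1$, and $1\cdot\tilde O(q)=\tilde O(q)$ trivially). The main term $\sum_{v,w}a^{(1)}_va^{(2)}_w\,vw$ is a finite linear combination of products of two elements of $\mathcal T$; by Lemma \ref{lem:algebra} (or a direct product-to-sum computation) each such product lies in $\operatorname{Span}(\mathcal T)$, hence is a finite linear combination of monomials in $\mathcal T$. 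Splitting those monomials into the ones of order $<q$ (which contribute to $\sum_{v\in\mathcal T^q}a_v v$) and the ones of order $\ge q$ (which lie in $\tilde O(q)$) gives the desired expansion.

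For $e^{f_1}$, write $f_1=P+R$ with $P=\sum_{v\in\mathcal T^q}a^{(1)}_v v\in\operatorname{Span}(\mathcal T^q)$ and $R\in\tilde O(q)$. Let $c=a^{(1)}_1$ be the constant term of $P$ and set $P_0=P-c$, so $P_0$ is a finite combination of elements of $\mathcal T$ of strictly positive order; in particular $P_0\in\tilde O(\sigma_0)$ for some $\sigma_0>0$, and $P_0,R\to 0$ as $\rho\to 0$. Then $e^{f_1}=e^{c}\,e^{P_0}\,e^{R}$. For $e^R$: since $R\in\tilde O(q)\subset\tilde O(0)$ is bounded with bounded $(\rho\partial_\rho)^{k_1}\partial_\theta^{k_2}$-derivatives, $e^R-1$ has the same property and in fact $e^R-1=R+\tilde O(2q)\in\tilde O(q)$ (the higher Taylor terms $R^k/k!$ are in $\tilde O(kq)\subset\tilde O(q)$, and one checks the tail converges in the $\tilde O(q)$-seminorms using the boundedness of all derivatives of $R$); so $e^R=1+\tilde O(q)$. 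For $e^{P_0}$: expand $e^{P_0}=\sum_{k\ge0}P_0^k/k!$. Each $P_0^k\in\operatorname{Span}(\mathcal T)$ by multiplicative closure (Lemma \ref{lem:algebra}), with all monomials of order $\ge k\sigma_0$; truncating the series at $k_*$ with $k_*\sigma_0\ge q$ gives $e^{P_0}=Q+\tilde O(q)$ where $Q=\sum_{k<k_*}P_0^k/k!\in\operatorname{Span}(\mathcal T)$, the tail being a uniformly (in $\rho$ and all derivatives) convergent sum of $\tilde O(\ge q)$ terms, hence in $\tilde O(q)$. Multiplying out $e^{f_1}=e^c\,Q\,(1+\tilde O(q))$ and again splitting $e^cQ\in\operatorname{Span}(\mathcal T)$ into orders $<q$ and $\ge q$ yields the expansion of $e^{f_1}$.

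The main obstacle is the convergence bookkeeping for the infinite series $e^{P_0}$ and $e^R$ in the $\tilde O(q)$-seminorms: one must check that the bounds $\abs{(\rho\partial_\rho)^{k_1}\partial_\theta^{k_2}(R^k/k!)}\le C_{k_1,k_2}^k\rho^{kq}/k!$ (which follow from Leibniz and the defining bounds for $R$) sum to something finite, uniformly on $B_{1/2}$, so that the tail genuinely defines an element of $\tilde O(q)$; this is where the boundedness of \emph{all} derivatives in Definition \ref{defn:O}, not just decay of the function itself, is essential. Everything else is routine algebra with $\mathcal T$ and the product-to-sum identities already used in Lemmas \ref{lem:algebra} and \ref{lem:inverse}.
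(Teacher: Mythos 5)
Your proposal is correct, but your treatment of $e^{f_1}$ follows a genuinely different route from the paper. The additive and multiplicative parts coincide with the paper's argument (the paper records precisely your three bullet facts about $\tilde O$ and uses multiplicative closure of $\mbox{Span}(\mathcal T)$ to split the product into orders $<q$ and $\geq q$). For the exponential, the paper avoids infinite series altogether: it proves the stronger statement that $F\circ f_1$ has an expansion for every smooth $F$, by first absorbing the constant term of the expansion (replacing $F(\cdot)$ by $F(c+\cdot)$, your $e^c$ factor) and then using the finite Taylor formula with integral remainder, choosing $n$ with $(n+1)\min\set{2,\tfrac{1}{\beta+1}}>q$; since every non-constant element of $\mathcal T^q$ has order at least $\min\set{2,\tfrac{1}{\beta+1}}$, the single power $f_1^{n+1}$ already lies in $\tilde O(q)$, and the remainder factor $\int_0^1 F^{(n+1)}(tf_1)(1-t)^n dt$ is only ever shown to be in $\tilde O(0)$, which is all that is needed. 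Your factorization $e^{f_1}=e^c e^{P_0}e^R$ with two exponential series instead requires summing tails in the $\tilde O$-seminorms; this does work, but the bound you state for $(\rho\partial_\rho)^{k_1}\partial_\theta^{k_2}(R^k/k!)$ should carry an extra combinatorial factor (roughly $k^{k_1+k_2}$ from distributing the $k_1+k_2$ derivatives among the $k$ factors), which is still killed by $1/k!$ together with $\rho^{kq}\leq \rho^q 2^{-(k-1)q}$ on $B_{1/2}$, so summability survives but the bookkeeping is heavier than in the paper. In short: your approach buys a self-contained multiplicative picture specific to the exponential, while the paper's finite-Taylor-with-remainder argument is shorter, dodges all convergence-in-seminorm issues, and generalizes at no cost to arbitrary smooth compositions $F\circ f_1$.
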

\begin{proof}
	The claim holds trivially for $f_1\pm f_2$. For $f_1\cdot f_2$, it suffices to notice that
	\begin{itemize}
		\item for any $\sigma\geq 0$ and $l\in \mathbb N\cup \set{0}$, $\rho^\sigma \cos l \theta$ is in $\tilde{O}(q)$ if and only if $\sigma\geq q$;
		\item for any $\sigma\geq 0$ and $l\in \mathbb N\cup \set{0}$, if $u\in \tilde{O}(q)$, then $\rho^\sigma \cos l \theta \cdot u \in \tilde{O}(q+\sigma)$;
		\item for $q_1,q_2\geq 0$, if $u_i\in \tilde{O}(q_i)$ for $i=1,2$, then $u_1\cdot u_2 \in \tilde{O}(q_1+q_2)$.
	\end{itemize}
	Instead of showing that $e^{f_1}$ has the required expansion, we prove something a little stronger. For any smooth function $F:\Real \to \Real$, we claim that $F\circ f_1$ has expansion up to order $q$. By changing $F(\cdot)$ to $F(c+\cdot)$, we may assume without loss of generality that the constant term in the expansion of $f_1$ vanishes. Namely, we can find $\xi \in Span(\mathcal T^q)$ with no constant term such that $f_1=\xi+\tilde{O}(q)$. 

	Recall that we have the Taylor expansion formula with the integral remainder,
	\begin{equation*}
		F(x)=\sum_{l=0}^n \frac{F^{(l)}(0)}{l!}x^l + \frac{1}{n!} \left( \int_0^1 F^{(n+1)}(tx) (1-t)^n dt \right) x^{n+1},
	\end{equation*}
	in which we choose $n$ so that $(n+1) \min\set{2,\frac{1}{\beta+1}}> q$. By what has been proved so far, we know
\begin{equation*}
	\sum_{l=0}^n \frac{F^{(l)}(0)}{l!} (f_1)^n
\end{equation*}
has an expansion up to order $q$. It remains to show 
\begin{equation}\label{eqn:remains}
	\left( \int_0^1 F^{(n+1)}(tf_1) (1-t)^n dt \right) (f_1)^{n+1}\in \tilde{O}(q).
\end{equation}
If $q< \min\set{2,\frac{1}{\beta+1}}$, then $\xi$ must be zero because $\mathcal T^q$ contains nothing but a constant function. In this case, $f_1\in \tilde{O}(q)$. If $q\geq \min\set{2,\frac{1}{\beta+1}}$, by our choice of $n$ and the fact that $f_1\in \tilde{O}( \min\set{2,\frac{1}{\beta+1}})$, we have $f_1^{n+1}$ is in $\tilde{O}(q)$. Hence, \eqref{eqn:remains} is reduced to
\begin{equation}\label{eqn:remains1}
	\left( \int_0^1 F^{(n+1)}(tf_1) (1-t)^n dt \right) \in \tilde{O}(0).
\end{equation}
By direct computation, one can check that \eqref{eqn:remains1} is true because of the smoothness of $F$ and the fact that $f_1\in \tilde{O}(0)$. 
\end{proof}

\subsection{Expansion of Conical Ricci flow solution}\label{subsec:expansion}
Assume that we have a solution $u$ given in Theorem \ref{thm:main1}.  By Theorem \ref{thm:main2}, for any $t>0$, we know that
\begin{equation*}
	\abs{\partial_t^l u(t)}\leq  C_k
\end{equation*}
for $l\in \mathbb N\cup\set{0}$.

Recall that we work on $B$ where $\tilde{K}=0$ and $\partial_t^l u$ satisfies the equations (after some rearrangement)
\begin{align*}
	\tilde{\triangle} u &= e^{2u}(\partial_t u -\frac{r}{2});
	\tag{$E_0$} \\
	\tilde{\triangle} (\partial_t u) &=e^{2u}\left( \partial_t (\partial_t u)+ \partial_t u (2\partial_t u -r)\right) ;
	\tag{$E_1$} \\
\intertext{and for $l\geq 2$} 
\tilde{\triangle} (\partial_t^l u)&=e^{2u}\left( \partial_t (\partial^l_t u) - P_l\cdot \partial_t^l u - Q_l\right),
	\tag{$E_l$}
\end{align*}
where $P_l$ and $Q_l$ are polynomials of $\partial_t u,\cdots, \partial_t^{l-1}u$ with constant coefficients.

We consider the following family of claims.

\noindent
{\bf Claim $\mathcal C^q$:} for each $l=0,1,2,\cdots$, $\partial_t^l u$ has an expansion up to order $q$.

Here are two easy observations. First, by Lemma \ref{lem:higher}, we know that the claim $\mathcal C^0$ is true. If $q_1<q_2$, then $\mathcal C^{q_2}$ is a stronger statement than the claim $\mathcal C^{q_1}$, hence, to show the claim $\mathcal C^q$ holds for any $q>0$, it suffices to justify $\mathcal C^{q_i}$ for a sequence $q_i$ going to $\infty$. This is done by a bootstrapping argument applied to the whole family of equations $(E_l)$.

\begin{lem}\label{lem:orderup}
	Suppose that $w$ and $f$ are smooth functions on $\bar{B}_1\setminus \set{0}$ and that $w$ is bounded and $f$ has an expansion up to order $q\geq 0$.
	If 
	\begin{equation*}
		\tilde{\triangle} w =f \qquad \mbox{on} \quad \bar{B}_1 \setminus \set{0},
	\end{equation*}
	then $w$ has an expansion up to order $q'$ for any $q'< 2+q$. When $q\ne \frac{k}{\beta+1}-2$ for any $k\in \mathbb N\cup \set{0}$, $w$ has an expansion up to order $q'=2+q$.
\end{lem}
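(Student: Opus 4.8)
The plan is to solve $\tilde{\triangle}w=f$ Fourier mode by Fourier mode in the angular variable $\theta$, after first removing an explicit partial inverse of the ``named'' part of $f$. Write $f=\sum_{v\in\mathcal{T}^q}a_v v+f_0$ with $f_0\in\tilde{O}(q)$. For each $v=\rho^{\sigma}(\cos\text{ or }\sin)l\theta\in\mathcal{T}^q$, with $\sigma=2j+k/(\beta+1)<q$ and $(k-l)/2\in\mathbb{N}\cup\{0\}$, the proof of Lemma \ref{lem:inverse} produces $\tilde{v}=\big((\sigma+2)^2-l^2/(\beta+1)^2\big)^{-1}\rho^{\sigma+2}(\cos\text{ or }\sin)l\theta$, which is well defined because $\sigma+2\ge 2+k/(\beta+1)>l/(\beta+1)$, and satisfies $\tilde{\triangle}\tilde{v}=v$. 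Set $w_1=\sum a_v\tilde{v}\in\mathrm{Span}(\mathcal{T})$; every term of $w_1$ has order in $[2,q+2)$, so $w_1$ is bounded and, for any $q'\le q+2$, equals a sum of finitely many terms of $\mathcal{T}^{q'}$ plus a member of $\tilde{O}(q')$. Since $w-w_1$ is again bounded, smooth on $\bar{B}_1\setminus\{0\}$, and $\tilde{\triangle}(w-w_1)=f_0\in\tilde{O}(q)$, it suffices to prove the statement when $f=f_0\in\tilde{O}(q)$.

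\textbf{Mode-by-mode analysis.} Expand $w(\rho,\theta)=\sum_l\big(w_l^c(\rho)\cos l\theta+w_l^s(\rho)\sin l\theta\big)$ and likewise for $f$, with coefficients smooth on $(0,1]$. Repeated integration by parts in $\theta$ turns $f\in\tilde{O}(q)$ into the bounds $|(\rho\partial_\rho)^{j}f_l^{c,s}(\rho)|\le C_{N,j}(1+l)^{-N}\rho^{q}$ for all $N,j$. Each $w_l$ solves the Euler-type ODE $w_l''+\rho^{-1}w_l'-l^2(\beta+1)^{-2}\rho^{-2}w_l=f_l$, whose homogeneous solutions are $\rho^{\pm\nu_l}$ with $\nu_l=l/(\beta+1)$ (and $1,\log\rho$ for $l=0$). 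Boundedness of $w_l$, which follows from $\|w\|_{L^\infty}$, forces the coefficient of the singular homogeneous solution to vanish, so $w_l=\alpha_l\rho^{\nu_l}+w_l^p$ where $w_l^p$ is the variation-of-parameters particular solution with base points chosen at $0$ or at $\rho=1$ according to whether $\nu_l<q+2$ or $\nu_l>q+2$. A direct estimate of the two resulting integrals gives $|(\rho\partial_\rho)^{j}w_l^p(\rho)|\le C_{N,j}(1+l)^{-N}\rho^{q+2}$ whenever $\nu_l\ne q+2$, with the constant staying bounded as $\nu_l\to\infty$; in the single resonant case $\nu_l=q+2$ — which occurs for at most one $l$, and for no $l$ when $(\beta+1)(q+2)\notin\mathbb{N}$ — one instead gets $C_{N,j}\rho^{q+2}(1+|\log\rho|)$, hence membership of that mode in $\tilde{O}(q+2-\varepsilon)$ for every $\varepsilon>0$.

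\textbf{Reassembly.} Let $R=\sum_l\big(w_l^{p,c}\cos l\theta+w_l^{p,s}\sin l\theta\big)$. The rapid decay in $l$ makes the series and all its $(\rho\partial_\rho)^{k_1}\partial_\theta^{k_2}$-derivatives converge, and termwise summation of the bounds above shows $R\in\tilde{O}(q+2)$ in the non-resonant case, and $R\in\tilde{O}(q')$ for every $q'<q+2$ in the resonant case; moreover $\tilde{\triangle}R=f$ since the $l$-th mode of $R$ solves the $l$-th ODE with right-hand side $f_l$. Consequently $\phi:=w-R$ is bounded (the difference of two bounded functions) and harmonic on $B\setminus\{0\}$, so by Lemma \ref{lem:harmonicexp} it has an expansion up to order $q'$ for every $q'>0$, with all terms lying in $\mathcal{T}_h\subseteq\mathcal{T}$. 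Adding $\phi$, $R$, and the previously removed $w_1$ back together and sorting the finitely many terms of order $<q'$ into $\mathcal{T}^{q'}$ and everything else into $\tilde{O}(q')$ yields the desired expansion of $w$: up to any $q'<q+2$ in general, and up to $q'=q+2$ precisely when $q\ne k/(\beta+1)-2$ for all $k\in\mathbb{N}\cup\{0\}$.

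\textbf{Main obstacle.} The delicate point is the last part of the mode-by-mode step: extracting from the variation-of-parameters formula estimates that are uniform in the Fourier index $l$ (so that the series for $R$ can be summed and placed in $\tilde{O}$), and identifying that the only obstruction to reaching order $q+2$ is the resonance $\nu_l=l/(\beta+1)=q+2$, which produces the logarithmic term and is responsible for the exceptional set $q=k/(\beta+1)-2$. The remaining arguments are bookkeeping with the definitions of $\tilde{O}(q)$, $\mathcal{T}^q$, and $\mathrm{Span}(\mathcal{T})$, together with the already-established expansions for bounded harmonic functions.
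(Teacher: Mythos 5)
Your proposal is correct and follows essentially the same route as the paper: you peel off the $\mathrm{Span}(\mathcal T^q)$ part of $f$ with the explicit inverse from Lemma \ref{lem:inverse}, solve the remaining $\tilde O(q)$ piece mode-by-mode with uniform-in-$l$ estimates on the Euler ODEs (this is exactly the content of the paper's Lemma \ref{lem:ode} and Lemma \ref{lem:lkvw}, including the case distinction $\nu_l\lessgtr q+2$ in the choice of base points), and identify the leftover as a bounded harmonic function handled by Lemma \ref{lem:harmonicexp}. The only cosmetic difference is that you treat the resonant mode $\nu_l=q+2$ directly via the logarithmic particular solution, whereas the paper simply excludes it in Lemma \ref{lem:lkvw} and lowers $q'$ slightly; both give the same conclusion.
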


Before the proof, we show how this lemma implies Theorem \ref{thm:main3}. 
If we know that the claim $\mathcal C^q$ is true, then Lemma \ref{lem:goodxi} implies that the right hand side of ($E_l$) for $l=0,1,\cdots$ has an expansion up to order $q$. By Lemma \ref{lem:orderup}, $\partial_t^l u$ has an expansion up to order $q'$ with $q'>q+1$ for all $l$. Hence, the claim $\mathcal C^{q'}$ for some $q'>q+1$ is true. Theorem \ref{thm:main3} then follows by repeatedly using the above argument.

The proof of Lemma \ref{lem:orderup} requires
\begin{lem}
	\label{lem:ode} If $f_o$ is a smooth function defined on $\bar{B}_1\setminus \set{0}$ and $f_o\in \tilde{O}(q)$ for some $q\geq 0$, then there exists another smooth function $w_o \in C^\infty(\bar{B}_1\setminus \set{0}) \cap \tilde{O}(q')$ with
	\begin{equation*}
		\tilde{\triangle} w_o =f_0.
	\end{equation*}
	Here $q'=2+q$ if $q\ne \frac{k}{\beta+1}-2$ for any $k\in \mathbb N\cup \set{0}$ and $q'$ can be any number smaller than $2+q$ if otherwise.
\end{lem}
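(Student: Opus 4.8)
The plan is to solve the equation one angular Fourier mode at a time. Writing
\[
f_o(\rho,\theta) = \sum_{l\ge 0} \bigl( f_l^c(\rho)\cos l\theta + f_l^s(\rho)\sin l\theta\bigr),
\]
the hypothesis $f_o\in\tilde{O}(q)$ gives, after integrating the defining bounds $\abs{(\rho\partial_\rho)^{k_1}\partial_\theta^{k_2}f_o}\le C(k_1,k_2)\rho^q$ against $\cos l\theta$, $\sin l\theta$ and using that $k_2$ is arbitrary, estimates $\abs{(\rho\partial_\rho)^{k_1}f_l^{c,s}(\rho)}\le C_{N,k_1}\,l^{-N}\rho^q$ on $(0,1]$ for every $N$ and $k_1$. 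Since
\[
\tilde{\triangle}\bigl(\rho^\sigma\cos l\theta\bigr) = \Bigl(\sigma^2 - \tfrac{l^2}{(\beta+1)^2}\Bigr)\rho^{\sigma-2}\cos l\theta,
\]
I look for $w_o = \sum_{l\ge 0}\bigl( w_l^c(\rho)\cos l\theta + w_l^s(\rho)\sin l\theta\bigr)$ with each $w_l$ solving the radial ODE
\[
w_l'' + \tfrac1\rho w_l' - \tfrac{\mu_l^2}{\rho^2}w_l = f_l,\qquad \mu_l:=\tfrac{l}{\beta+1},
\]
(for $l=0$ this reads $(\rho w_0')'=\rho f_0$, integrated twice from $0$). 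Once the series for $w_o$ and for its termwise Laplacian converge locally uniformly on $\bar B_1\setminus\set{0}$, such a $w_o$ solves $\tilde{\triangle}w_o=f_o$.

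For $l\ge 1$ the homogeneous solutions are $\rho^{\pm\mu_l}$, and variation of parameters produces
\[
w_l(\rho) = \frac{1}{2\mu_l}\Bigl( \rho^{\mu_l}\!\int_{a_l}^{\rho} s^{1-\mu_l}f_l(s)\,ds \;-\; \rho^{-\mu_l}\!\int_{0}^{\rho} s^{1+\mu_l}f_l(s)\,ds\Bigr),
\]
where I take $a_l=0$ when $\mu_l<q+2$ and $a_l=1$ when $\mu_l>q+2$, which makes every integrand of the form $s^{1\mp\mu_l+q}$ times an $O(1)$ factor with exponent $>-1$ over the relevant interval. Using $\abs{f_l(s)}\le C_N l^{-N}s^q$ one then obtains $\abs{w_l(\rho)}\le C_N\,l^{-N}\,\mu_l^{-1}\abs{q+2-\mu_l}^{-1}\,\rho^{q+2}$; differentiating the explicit formula (the $\rho\partial_\rho$ derivative of the integral terms is handled by the fundamental theorem of calculus, after which the $\rho^2 f_l(\rho)$ contributions cancel, leaving $(\rho\partial_\rho)w_l=\tfrac12(A+B)$ with $A,B$ the two terms above) yields in the same way $\abs{(\rho\partial_\rho)^{k_1}\partial_\theta^{k_2}\bigl(w_l^c\cos l\theta+w_l^s\sin l\theta\bigr)}\le C_{N,k_1,k_2}\,l^{k_2-N}\rho^{q+2}$, the only $\rho$-derivatives of $f_o$ entering being those already bounded by hypothesis. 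When $q\ne\frac{k}{\beta+1}-2$ for all $k\in\mathbb N\cup\set{0}$, the distance from $q+2$ to the discrete set $\set{\mu_l}$ is positive, so these constants are summable in $l$ for $N$ large, and $w_o\in\tilde{O}(q+2)$. In the resonant case $q=\frac{k_0}{\beta+1}-2$ only the single mode $l=k_0$ forces $a_l=1$ in the first integral, where now $\int_\rho^1 s^{-1}\abs{f_{k_0}(s)}\,ds = O(\abs{\log\rho})$; hence $w_{k_0}$ has size $\rho^{q+2}\abs{\log\rho}$ with analogous logarithmic bounds on its derivatives, which lies in $\tilde{O}(q')$ for every $q'<q+2$ but not for $q'=q+2$, while the remaining modes still obey the $\rho^{q+2}$ estimates. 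This gives exactly the stated dichotomy.

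The bounds above are uniform on each annulus $\set{\varepsilon\le\rho\le1}$ and summable in $l$, so the series defining $w_o$ converges in $C^\infty_{\mathrm{loc}}(\bar B_1\setminus\set{0})$; in particular $w_o$ is smooth there, $\tilde{\triangle}w_o=\sum_l\tilde{\triangle}(w_l^c\cos l\theta+w_l^s\sin l\theta)=f_o$, and $w_o\in\tilde{O}(q')$ with $q'$ as claimed. The step I expect to be the main obstacle, beyond the bookkeeping, is keeping everything uniform in $l$: one must extract from $f_o\in\tilde{O}(q)$ not only the rapid decay of the Fourier coefficients $f_l$ but also of their radial derivatives, and one must control the $\mu_l$-dependence of the variation-of-parameters constants --- the factor $\tfrac{1}{2\mu_l}$ helps, but the resonance denominator $(q+2)^2-\mu_l^2$ is precisely what produces the exceptional set and has to be bounded away from $0$ using the discreteness of $\set{\mu_l}$. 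Everything else is routine ODE analysis together with termwise differentiation of a rapidly convergent series.
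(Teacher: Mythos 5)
Your proposal is correct and follows essentially the same route as the paper: decompose $f_o$ into Fourier modes in $\theta$ (with rapid decay of the radial coefficients and their $\rho\partial_\rho$-derivatives extracted from the $\tilde{O}(q)$ bounds), solve the radial ODE $L_l A_l=a_l$ mode by mode with an explicit integral formula whose integration limits are chosen according to whether $\mu_l=\frac{l}{\beta+1}$ is below or above $q+2$, and sum using the discreteness of $\set{\mu_l}$ together with the $l^{-N}$ decay; your variation-of-parameters formula is just the paper's substitution $A_*=h\rho^c$ written out. The only (minor) difference is that you treat the resonant mode $\mu_{k_0}=q+2$ directly, obtaining the $\rho^{q+2}\abs{\log\rho}$ bound, whereas the paper's Lemma \ref{lem:lkvw} assumes non-resonance and leaves that case to the implicit reduction $\tilde{O}(q)\subset\tilde{O}(q-\eps)$; your version is a touch more explicit but substantively identical.
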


\begin{proof}[Proof of Lemma \ref{lem:orderup}] By Definition \ref{defn:expansion}, there is $\xi_f\in \mbox{Span}(\mathcal T^q)$ such that
	\begin{equation*}
		f=\xi_f + \tilde{O}(q).
	\end{equation*}
	Lemma \ref{lem:inverse} implies the existence of $\xi_w\in \mbox{Span}(\mathcal T^{q+2})$ with
	\begin{equation*}
		\tilde{\triangle} \xi_w = \xi_f.
	\end{equation*}
	Since $f_o:=f-\xi_f$ is in $\tilde{O}(q)$, Lemma \ref{lem:ode} gives $w_o\in \tilde{O}(q')$ with $\tilde{\triangle} w_o =f_o$. Since $w,\xi_w$ and $w_o$ are all bounded on $B_1\setminus \set{0}$, we know that $w_h:= w- \xi_w - w_o$ is a bounded harmonic function on $B_1\setminus \set{0}$. By Lemma \ref{lem:harmonicexp}, all bounded harmonic functions have expansion up to any order. Since $w=w_h+\xi_w+w_o$, $w$ has an expansion up to order $q'$ for $q'$ given in Lemma \ref{lem:ode}. 
\end{proof}

The rest of this section is devoted to the proof of Lemma \ref{lem:ode}.

Let's first do some formal computation to motivate the proof. We shall justify later that this gives the solution we want. Assume we have an expansion
\begin{equation*}
	w_o=\sum_{l=0}^\infty A_l(\rho) \cos (l\theta) + B_l(\rho) \sin (l\theta),
\end{equation*}
which is convergent in some suitable sense such that
\begin{equation*}
	\tilde{\triangle} w_o = \sum_{l=0}^{\infty} (L_l A_l) \cos (l\theta) + (L_l B_l) \sin (l\theta)
\end{equation*}
where
\begin{equation*}
	L_l := \partial_\rho^2 + \frac{1}{\rho}\partial_\rho - \frac{l^2}{\rho^2 (\beta+1)^2}.
\end{equation*}
Hence, we are motivated to solve the equations
\begin{equation*}
	L_l A_l = a_l\quad \mbox{and} \quad L_l B_l=b_l,
\end{equation*}
if $a_l$ and $b_l$ are given by
\begin{equation*}
	f_o= \sum_{l=0}^\infty a_l(\rho) \cos (l\theta) + b_l(\rho) \sin (l\theta).
\end{equation*}
Notice that it follows from the theory of trigonometric series that $f_o$ is in $\tilde{O}(q)$ if and only if 
\begin{equation}\label{eqn:goodfo}
	\abs{l^{k_2} (\rho\partial_\rho)^{k_1} a_l(\rho)}+\abs{l^{k_2} (\rho\partial_\rho)^{k_1} b_l(\rho)}\leq C(k_1,k_2) \rho^q
\end{equation}
for constants $C(k_1,k_2)$ depending on $f_o$.

\begin{lem}\label{lem:lkvw}
Suppose $a_*:(0,1]\to \Real$ satisfies that
\begin{equation}
	\abs{(\rho\partial_\rho)^{k_1} a_*(\rho)} \leq C_1(k_1) \rho^q\qquad \mbox{for} \quad k_1=0,1,2,\cdots.	
	\label{eqn:goodastar}
\end{equation}
If 
\begin{equation}
	-\frac{l}{\beta+1}+2+q \ne 0,
	\label{eqn:goodalpha}
\end{equation}
then there exist constants $C_2(k_1)$ depending on $\abs{-\frac{l}{\beta+1}+2+q}$ and $C_1(\cdot)$ and a function $A_*:(0,1]\to \Real$ such that
\begin{equation}\label{eqn:Lkv}
	L_l A_*=a_*
\end{equation}
and
\begin{equation}\label{eqn:goodAstar}
	\abs{(\rho\partial_\rho)^{k_1} A_*(\rho)} \leq C_2(k_1) \rho^{q+2}\qquad \mbox{for} \quad k_1=0,1,2,\cdots.	
\end{equation}
\end{lem}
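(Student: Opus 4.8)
The idea is to kill the $\rho$-dependence of the coefficients of $L_l$ by passing to the cylindrical variable $s=\log\rho\in(-\infty,0]$, and then to solve the resulting constant-coefficient ODE on the half-line $(-\infty,0]$ by explicit integrals whose lower limits are chosen to build in the required decay. Set $\mu=\tfrac{l}{\beta+1}\ge 0$. Since $\rho\partial_\rho=\partial_s$, a short computation gives the operator identity $\rho^2 L_l=\partial_s^2-\mu^2$, so with $\hat A(s)=A_*(e^s)$ and $\hat g(s)=e^{2s}a_*(e^s)$ equation \eqref{eqn:Lkv} becomes
\begin{equation*}
	(\partial_s^2-\mu^2)\hat A=\hat g\qquad\text{on }(-\infty,0].
\end{equation*}
Hypothesis \eqref{eqn:goodastar} reads $\abs{\partial_s^{k_1}(a_*(e^s))}\le C_1(k_1)e^{qs}$, hence by the Leibniz rule $\abs{\partial_s^{k_1}\hat g(s)}\le\tilde C(k_1)e^{(q+2)s}$ for all $k_1$ (here I use $q\ge 0$, so $q+2>0$); the conclusion \eqref{eqn:goodAstar} is precisely $\abs{\partial_s^{k_1}\hat A(s)}\le C_2(k_1)e^{(q+2)s}$, and the condition \eqref{eqn:goodalpha} reads $\mu\ne q+2$.

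I would then construct $\hat A$ by factoring $\partial_s^2-\mu^2=(\partial_s-\mu)(\partial_s+\mu)$ and solving two first-order equations in turn. First put $\hat v(s)=e^{-\mu s}\int_{-\infty}^s e^{\mu t}\hat g(t)\,dt$; because $\mu+q+2>0$ the integrand is $O(e^{(\mu+q+2)t})$ as $t\to-\infty$, so $\hat v\in C^\infty((-\infty,0])$, $(\partial_s+\mu)\hat v=\hat g$, and $\abs{\hat v(s)}\le\tfrac{\tilde C(0)}{\mu+q+2}e^{(q+2)s}$. Next solve $(\partial_s-\mu)\hat A=\hat v$: if $q+2-\mu>0$, the integrand $e^{-\mu t}\hat v(t)$ is again integrable at $-\infty$ and I take $\hat A(s)=e^{\mu s}\int_{-\infty}^s e^{-\mu t}\hat v(t)\,dt$; if $q+2-\mu<0$, I take instead $\hat A(s)=e^{\mu s}\int_0^s e^{-\mu t}\hat v(t)\,dt$. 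In either case a one-line estimate of the integral gives $\abs{\hat A(s)}\le\frac{C}{\abs{q+2-\mu}}e^{(q+2)s}$ on $(-\infty,0]$; in the second case one also uses that $e^{\mu s}\le e^{(q+2)s}$ for $s\le 0$ (since $\mu>q+2$), so the constant term produced by the lower limit $0$ decays at least as fast as the target and does no harm. This is the only place \eqref{eqn:goodalpha} enters, and it is manifestly a resonance condition: at $\mu=q+2$ the forcing rate $e^{(q+2)s}$ coincides with a homogeneous solution.

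For the derivative estimates I would not differentiate the integrals but use the differential relations $\partial_s\hat v=\hat g-\mu\hat v$ and $\partial_s\hat A=\hat v+\mu\hat A$, which give $\abs{\partial_s^k\hat v}\le\abs{\partial_s^{k-1}\hat g}+\mu\abs{\partial_s^{k-1}\hat v}$ and $\abs{\partial_s^k\hat A}\le\abs{\partial_s^{k-1}\hat v}+\mu\abs{\partial_s^{k-1}\hat A}$; a straightforward induction on $k$ then yields $\abs{\partial_s^k\hat A(s)}\le C_2(k)e^{(q+2)s}$ with $C_2(k)$ built from $k$, $\abs{q+2-\mu}$ and the $C_1(\cdot)$. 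Translating back via $s=\log\rho$, $\partial_s=\rho\partial_\rho$, $e^{(q+2)s}=\rho^{q+2}$ gives \eqref{eqn:goodAstar}, and $A_*(\rho):=\hat A(\log\rho)$ is smooth on $(0,1]$ and solves \eqref{eqn:Lkv} by construction. The case $l=0$, i.e. $\mu=0$, is just the subcase $q+2-\mu=q+2>0$ with $\hat v(s)=\int_{-\infty}^s\hat g$ and $\hat A(s)=\int_{-\infty}^s\hat v$; in particular no logarithms ever appear, which is the main payoff of the cylindrical set-up. I do not expect a genuine obstacle here; the only delicate point is the bookkeeping of how the constants blow up as $\mu\to q+2$, which the statement already anticipates by letting $C_2$ depend on $\abs{-\tfrac{l}{\beta+1}+2+q}$.
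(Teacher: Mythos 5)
Your proposal is correct and is essentially the paper's own argument transplanted to the cylindrical variable $s=\log\rho$: factoring $\partial_s^2-\mu^2=(\partial_s-\mu)(\partial_s+\mu)$ with $\mu=\frac{l}{\beta+1}$ and choosing the lower integration limit at $-\infty$ or at $0$ according to the sign of $q+2-\mu$ corresponds exactly to the paper's reduction-of-order ansatz $A_*=h(\rho)\rho^c$ with its choices of $h'(1)$ and $h(1)$, with the non-resonance condition \eqref{eqn:goodalpha} entering in the same way. Your derivative estimates via the first-order relations $\partial_s\hat v=\hat g-\mu\hat v$, $\partial_s\hat A=\hat v+\mu\hat A$ are likewise equivalent to the paper's use of $(\rho\partial_\rho)A_*=cA_*+\rho^{c+1}h'$ together with the rewritten equation $(\rho\partial_\rho)^2A_*-c^2A_*=\rho^2a_*$.
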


\begin{rem}\label{rem:c1c2}
The dependence of $C_2(\cdot)$ on $C_1(\cdot)$ is linear in the sense that if we multiply every one of $C_1(\cdot)$ by a positive constant $\lambda$, then $C_2(\cdot)$ is multiplied by the same constant. It follows either from the proof below by tracing the dependency of constants carefully or from the linearity of the statement of the lemma, i.e. we can apply it to $\lambda a_*$ instead of $a_*$.
\end{rem}

\begin{proof}
	For simplicity, we write $c$ for $\frac{l}{\beta+1}$. First, we note that the solution of \eqref{eqn:Lkv} can be explicitly written down. We assume that the solution is of the form $A_*(\rho)=h(\rho) \rho^c$, then \eqref{eqn:Lkv} is equivalent to
	\begin{equation*}
		(2c+1) h' \rho^{c-1} + h'' \rho^c = a_*.
	\end{equation*}
	Hence,
	\begin{equation*}
		h'(\rho)= \rho^{-2c-1}\left( h'(1) + \int_1^\rho a_*(t)t^{c+1}dt \right).
	\end{equation*}
	Here $h'(1)$ is some constant. We can choose it to be anything we want, since it suffices for the proof of the lemma to give one solution. We choose $h'(1)$ so that
	\begin{equation*}
		h'(1) + \int_1^0 a_*(t)t^{c+1}dt=0.
	\end{equation*}
Hence,
	\begin{equation*}
		h'(\rho)= \rho^{-2c-1}\int_0^{\rho} a_*(t) t^{c+1}dt.
	\end{equation*}
	By \eqref{eqn:goodastar}, we get
	\begin{equation}\label{eqn:hprime}
		\abs{h'(\rho)}\leq \frac{C_1(0)}{c+2+q } \rho^{-c+1+q}.
	\end{equation}
	On the other hand, we have
	\begin{equation*}
		A_*(\rho)=\rho^c \left( h(1)+\int_1^\rho h'(t)dt \right).
	\end{equation*}
	Here $h(1)$ is another constant at our disposal. 

	By \eqref{eqn:goodalpha}, we have two possible cases:

	{\bf Case 1:} $c>2+q$. Take $h(1)=0$ and compute
	\begin{eqnarray*}
		\abs{A_*}&\leq& \rho^c \frac{C_1(0)}{(c+2+q)\abs{-c+2+q}} \left( \rho^{-c+2+q}-1 \right)\\
		&\leq& C_2(0) \rho^{2+q},
	\end{eqnarray*}
	where $C_2(0)=\frac{C_1(0)}{(c+2+q)\abs{-c+2+q}} $.

	{\bf Case 2:} $c<2+q$. Take $h(1)$ satisfying
	\begin{equation*}
		h(1)+\int_1^0 h'(t) dt =0.
	\end{equation*}
Hence,
\begin{equation*}
	\abs{A_*}\leq \rho^c \frac{C_1(0)}{(c+2+q)\abs{-c+2+q}} \rho^{-c+2+q} \leq C_2(0)\rho^{2+q}
\end{equation*}
for the same $C_2(0)$ as in Case 1.

For the derivatives of $A_*$, we compute
\begin{equation*}
	(\rho \partial_\rho) A_*(\rho)= c A_* + \rho^{c+1} h'(\rho),
\end{equation*}
which has the correct order of decay by \eqref{eqn:hprime}. Moreover, $C_2(1)$ in \eqref{eqn:goodAstar} is a linear combination of $C_2(0)$ and $C_1(0)$.

For $k_1>1$, we rewrite the equation $L_l A_*=a_*$ as 
\begin{equation*}
	(\rho\partial_\rho)^2 A_* - \frac{l^2}{(\beta+1)^2} A_* = \rho^2 a_*,
\end{equation*}
The estimate \eqref{eqn:goodAstar} for $k_1=2$ follows from the above equation and \eqref{eqn:goodastar} directly, while for the case $k_1>2$, we take $\rho\partial_\rho$ repeatedly on both sides of the above equation. 
\end{proof}

Now, we apply Lemma \ref{lem:lkvw} to both $a_l$ and $b_l$ to get $A_l$ and $B_l$. More precisely, for $a_l$ and any $k_2$, \eqref{eqn:goodfo} implies
\begin{equation*}
	\abs{(\rho\partial_\rho)^{k_1} a_l(\rho)}\leq \frac{C(k_1,k_2)}{l^{k_2}} \rho^q.
\end{equation*}
By Lemma \ref{lem:lkvw} and Remark \ref{rem:c1c2}, we have $A_l(\rho)$ such that
\begin{equation*}
	L_l A_l=a_l
\end{equation*}
and
\begin{equation*}
	\abs{(\rho\partial_\rho)^{k_1} A_l(\rho)}\leq \frac{C'(k_1,k_2)}{l^{k_2}} \rho^q.
\end{equation*}
Similar arguments work for $b_l$ as well. In summary, we get
\begin{equation}\label{eqn:goodwo}
	\abs{l^{k_2} (\rho\partial_\rho)^{k_1} A_l(\rho)}+\abs{l^{k_2} (\rho\partial_\rho)^{k_1} B_l(\rho)}\leq C'(k_1,k_2) \rho^q.
\end{equation}
This not only justifies the convergence of the series
\begin{equation*}
	w_o= \sum_{l=0}^\infty A_l(\rho) \cos l\theta + B_l(\rho)\sin l\theta,
\end{equation*}
but also the computation
\begin{equation*}
	\tilde{\triangle} w_o = \sum_{l=0}^\infty (L_l A_l) \cos l\theta + (L_l B_l) \sin l\theta,
\end{equation*}
so that $\tilde{\triangle }w_o=f_o$. Moreover, the decay of $w_o$, $ (\rho \partial_\rho)^{k_1}\partial_\theta^{k_2} w_o$ also follows from \eqref{eqn:goodwo}.

\appendix

\section{Proof of Theorem \ref{thm:dg1}}\label{app:dg1}
Similar to the proof of Theorem 10.1 in Chapter III of \cite{ladyzhenskaya1968linear}, the proof relies on the results in Section 7 Chapter II in the same book. In fact, the authors wrapped up the argument of Di Giorgi iteration in Theorem 7.1 there and the assumptions needed for the argument were summarized in the definition of the class of functions, $\mathcal B_2(Q_T,M,\gamma,r,\delta,\kappa)$. Our strategy of proving Theorem \ref{thm:dg1} is to show that $u$ is a function in $\mathcal B_2(Q_T,M,\gamma,r,\delta,\kappa)$ and apply Theorem 7.1 there.

To be precise, we write down the definition of $\mathcal B_2$ and Theorem 7.1 explicitly and simplify the definition and statement a little since we are not interested in equations of the most general form. For the convenience of readers who may want to consult the book \cite{ladyzhenskaya1968linear}, we use the notations there as much as possible. 

\subsection{The class of functions $\mathcal B_2$}

Suppose $\Omega$ is an open set in $\Real^2$, which is just $B$ for our purpose. Let $Q_T=\Omega\times [0,T]$ for some $T>0$. For fixed $(x_0,t_0)\in Q_t$, $K_\rho$ is the set $\set{\abs{x-x_0}<\rho}$ and $Q(\rho,\tau)=K_\rho \times (t_0,t_0+\tau)$ and it is said to have a vertex at $(x_0,t_0)$.  Following \cite{ladyzhenskaya1968linear}, we define
\begin{equation*}
	\norm{u}_{2,\Omega}= \left( \int_\Omega u^2 dx \right)^{1/2}
\end{equation*}
and
\begin{equation*}
	\norm{u}_{Q_T} = \sup_{0\leq t\leq T} \norm{u(t)}_{2,\Omega} + \norm{u_x}_{2,Q_T}.
\end{equation*}
Here $u_x$ means the partial derivatives of $u$ with respect to $x_1$ and $x_2$.
\begin{defn}
	\label{defn:b2}
	A function $u:Q_T\to \Real$ is said to be in the class $\mathcal B_2(Q_T,M,\gamma)$ if it satisfies
	\begin{enumerate}[(a)]
		\item there is constant $C>0$ such that
			\begin{equation*}
				\norm{u}_{Q_T}\leq C
			\end{equation*}
			and that $\int_\Omega u^2 dx$ is a continuous function of $t$;
		\item $\sup_{Q_T} \abs{u}\leq M$;

		\item for $w(x,t)=\pm u(x,t)$, we have 
			\begin{eqnarray}\label{eqn:71}
				&& \max_{t_0\leq t\leq t_0+\tau} \norm{w^{(k)(x,t)}}_{2,K_{(1-\sigma_1)\rho}}^2\\ \nonumber
				&&\leq \norm{w^{(k)}(x,t)}_{2,K_\rho}^2
				 + \gamma \left[ (\sigma_1 \rho)^2 \norm{ w^{(k)}}_{2,Q(\rho,\tau)}^2 + \mu^{3/4}(k,\rho,\tau) \right]
			\end{eqnarray}
			and
			\begin{eqnarray}
				\label{eqn:72}
				&&\norm{w^{(k)}}^2_{Q( (1-\sigma_1)\rho,(1-\sigma_2)\tau)} \\ \nonumber
				&\leq& \gamma \left( [ (\sigma_1 \rho)^{-2} + (\sigma_2\tau)^{-1}] \norm{w^{(k)}}^2_{2,Q(\rho,\tau)} + \mu^{3/4}(k,\rho,\tau) \right),
			\end{eqnarray}
			Here in the above $w^{(k)}=\max\set{w-k,0}$, $\sigma_1,\sigma_2$ are any number in $(0,1)$, $(x_0,t_0)\in Q_T$, $\rho,\tau$ are any positive number such that $Q(\rho,\tau)\subset Q_T$ and
			\begin{equation*}
				\mu(k,\rho,\tau)= \int_{t_0}^{t_0+\tau} \abs{A_{k,\rho}(t)} dt
			\end{equation*}
			where $A_{k,\rho}(t)=K_\rho\cap \set{w(x,t)>k}$ and $\abs{A_{k,\rho}(t)}$ is the measure of $A_{k,\rho}(t)$.
	\end{enumerate}
\end{defn}

\begin{rem}
	This is the same as the definition in Section 7 Chapter III of \cite{ladyzhenskaya1968linear} except that we have chosen $n=2$, $\delta=+\infty$, $r=q=4$ and $\kappa=1/2$.
\end{rem}

Here is a simplified version of Theorem 7.1 in \cite{ladyzhenskaya1968linear}.
\begin{thm}\label{thm:71}
	[Theorem 7.1 in Chapter II of \cite{ladyzhenskaya1968linear}] Suppose $u$ is in $\mathcal B_2(Q_T,M,\gamma)$. There are $\theta>0$ and $\alpha\in (0,1)$ depending only on $\gamma$ such that the following holds. For some $\rho_0>0$ and $(x_0,t_0)\in Q_T$ such that $Q(\rho_0,\theta \rho_0^2)$ with vertex at $(x_0,t_0)$ lies in $Q_T$, we have for $\rho<\rho_0$,
	\begin{equation*}
		\mbox{osc}\set{u,Q_\rho}\leq c\rho^\alpha \rho_0^{-\alpha},
	\end{equation*}
	where $\mbox{osc}\set{u,Q_\rho}$ is the oscillation of $u$ on $Q_\rho:=Q(\rho,\theta\rho^2)$.
	Here $c$ depends on $\gamma, \rho_0$ and $M$.
\end{thm}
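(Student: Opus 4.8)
The plan is to reproduce the classical De Giorgi iteration, for which the energy inequalities \eqref{eqn:71} and \eqref{eqn:72} built into the definition of $\mathcal B_2$ are exactly the required input; indeed the whole statement reduces to a single \emph{oscillation decay} step. Concretely, I would show that there is $\delta\in(0,1)$ depending only on $\gamma$ such that whenever $Q(\rho_0,\theta\rho_0^2)\subset Q_T$ one has
\[
	\mbox{osc}\set{u,Q(\rho_0/2,\theta\rho_0^2/4)}\ \leq\ (1-\delta)\,\mbox{osc}\set{u,Q(\rho_0,\theta\rho_0^2)}.
\]
Granting this, applying it along the nested family $Q_j:=Q(2^{-j}\rho_0,\theta 4^{-j}\rho_0^2)$ gives $\mbox{osc}\set{u,Q_j}\leq (1-\delta)^j\cdot 2M$; for general $\rho<\rho_0$ one picks $j$ with $2^{-j-1}\rho_0\leq \rho< 2^{-j}\rho_0$ and reads off $\mbox{osc}\set{u,Q_\rho}\leq c(\rho/\rho_0)^\alpha$ with $\alpha=-\log_2(1-\delta)$ and $c$ absorbing $2M$ and the geometric constants, which is the asserted bound.

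For the oscillation decay I would first establish a \emph{local boundedness lemma} (the first De Giorgi lemma): for $w=\pm u$, levels $k_n=k_\infty-d\,2^{-n}$, and the shrinking cylinders $\widehat Q_n:=Q\big((1+2^{-n})\rho_0/2,\,(1+2^{-n})\theta\rho_0^2/4\big)$, set $Y_n:=\norm{w^{(k_n)}}_{2,\widehat Q_n}^2+\mu(k_n,\cdot,\cdot)^{3/4}$. Feeding \eqref{eqn:71} and \eqref{eqn:72} into the parabolic Sobolev embedding (controlling $\norm{w^{(k_n)}}_{L^4}$ by $\norm{w^{(k_n)}}_{\widehat Q_n}$) produces a nonlinear recursion $Y_{n+1}\leq C\,b^{\,n}Y_n^{1+\varkappa}$ with $C,b,\varkappa>0$ depending only on $\gamma$; hence $Y_n\to0$ provided $Y_0\leq\varepsilon_0(\gamma)$. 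In particular, if the level set $\set{w>k_0}$ occupies a sufficiently small fraction of $Q(\rho_0,\theta\rho_0^2)$, then $w\leq k_\infty$ a.e.\ on the half cylinder.

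The heart of the matter — and the step I expect to be the main obstacle — is the \emph{measure-shrinking lemma} that supplies the hypothesis of the boundedness lemma. Writing $\omega:=\mbox{osc}\set{u,Q(\rho_0,\theta\rho_0^2)}$ and $\mu^\pm$ for the essential supremum and infimum of $u$ there, at least one of $\set{u>\mu^+-\omega/2}$, $\set{u<\mu^-+\omega/2}$ occupies at most half the cylinder; say the first. One then runs De Giorgi's isoperimetric inequality across the levels $m=1,\dots,m_0$: the measures of $\set{u>\mu^+-\omega 2^{-m}}$ and $\set{u>\mu^+-\omega 2^{-m-1}}$ are linked through the Dirichlet energy of a truncation of $u$, which \eqref{eqn:71} bounds, so for $m_0=m_0(\gamma)$ large the set $\set{u>\mu^+-\omega 2^{-m_0}}$ drops below the threshold $\varepsilon_0$ from the previous paragraph. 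The delicate points are the handling of the time direction (one must locate a good time slice, or else exploit the $\sup_t L^2$ control that is part of $\norm{\cdot}_{Q_T}$) and the calibration of the geometric ratio $\theta$ between the spatial radius and the temporal length so that all embeddings stay uniform. Applying the boundedness lemma with $w=u$, $k_0=\mu^+-\omega 2^{-m_0}$, $d=\omega 2^{-m_0-1}$ then gives $u\leq \mu^+-\omega 2^{-m_0-1}$ on the half cylinder, i.e.\ the oscillation decay with $1-\delta=1-2^{-m_0-1}$ (if the second set was the small one, repeat with $w=-u$). Since our only aim is to invoke this result, I would record these steps and refer to Chapter II, \S 7 of \cite{ladyzhenskaya1968linear} for the computations.
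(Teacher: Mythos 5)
This statement is not proved in the paper at all: it is quoted verbatim (up to the specialization $n=2$, $r=q=4$, $\kappa=1/2$, $\delta=\infty$) from Theorem 7.1 in Chapter II of \cite{ladyzhenskaya1968linear}, and the paper's own work in Appendix A consists only of verifying that the relevant solution lies in $\mathcal B_2$ so that the theorem can be invoked. Your proposal is a faithful outline of exactly that cited De Giorgi argument (local boundedness, measure-shrinking via the isoperimetric lemma, oscillation decay and iteration) and in the end defers to the same reference, so it is correct and takes essentially the same route as the paper.
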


There is a remark after that theorem in \cite{ladyzhenskaya1968linear}: if in the definition of $\mathcal B_2(Q_T,M,\gamma)$, \eqref{eqn:71} and \eqref{eqn:72} holds only for $Q(\rho,\tau)$ with $\tau\leq \theta_2 \rho^2 \leq \theta_2 \rho_0^2$, then the result of the above theorem only holds for $\rho\leq \rho_0$ with the $\theta$ in the definition of $Q_\rho$ replaced by $\min \set{\theta_2,\theta}$.

\subsection{Check the assumptions in Definition \ref{defn:b2}}
Let $u$ be as in Theorem \ref{thm:dg1}. It is easy to see that (a) and (b) in Definition \ref{defn:b2} holds, except that we need to apply Lemma \ref{lem:basic2} to show $\int_{\Omega} u^2 dx$ is an absolutely continuous function of $t$. The rest of this section is devoted to check (c). Moreover, we want to make sure that $\gamma$ there depends only on $\lambda$.

By \eqref{eqn:weak}, we apply Lemma \ref{lem:basic2} with $\Psi(u)=(u^{(k)})^2$ where $u^{(k)}:=\max\set{u-k,0}$ and $\varphi(x,t)= \xi^2(x,t) (\sqrt{g})^{-1}$ to get
\begin{eqnarray}\label{eqn:b2weak}
	 &&\frac{d}{dt} \int_{B} (u^{(k)}\xi)^2 (\sqrt{g}) dx \\ \nonumber 
	&=& \int_B 2 u^{(k)} (\partial_i (g^{ij}\sqrt{g}\partial_j u) +(\sqrt{g}) b u+  (\sqrt{g}) f) \xi^2  dx \\ \nonumber
	&& + \int_B (u^{(k)}\xi )^2 (\partial_t (\sqrt{g})) dx + \int_B (u^{(k)})^2 2 \xi \partial_t \xi (\sqrt{g}) dx.
\end{eqnarray}
We study each term above separately.
\begin{eqnarray}\label{eqn:b21}
	&& \int_B 2 u^{(k)} \partial_i (g^{ij}\sqrt{g} \partial_j u) \xi^2 dx\\ \nonumber
	&=&  - \int_B g^{ij}\partial_i u^{(k)} \partial_j u \sqrt{g} \xi^2 +4 u^{(k)} g^{ij} \sqrt{g} \partial_j u \xi \partial_i \xi dx \\ \nonumber
	&\leq& - c_1 \int_B \abs{\partial_i u^{(k)}}^2 \xi^2 dx + C_3 \int_B (u^{(k)})^2 \abs{\partial_i \xi}^2 dx.
\end{eqnarray}
Here $c_1$ and $C_3$ are two constants depending only on $\lambda$. We also note that the integration by parts are justified by the fact that $\int_B \abs{\partial_i u}^2 dx < \infty$ and a similar argument as in Lemma \ref{lem:basic}.

\begin{eqnarray}
	\label{eqn:b22}
	&& \int_B 2u^{(k)} \sqrt{g} (bu+f) \xi^2 + ( u^{(k)} \xi)^2 (\partial_t \sqrt{g}) dx
	\\ \nonumber &\leq& C_4 (M^2+1) \int_{\set{u\geq k}} \xi dx. 
\end{eqnarray}
Here $C_4$ depends on $\lambda$ and $C^0$ norm of $\partial_t \sqrt{g}$, $b$ and $f$ and $M$ is $\norm{u}_{C^0(B\setminus \set{0}\times [0,T])}$.

Putting \eqref{eqn:b2weak}, \eqref{eqn:b21} and \eqref{eqn:b22} together yields
\begin{eqnarray*}
	&& \frac{d}{dt} \int_{B} (u^{(k)}\xi)^2 (\sqrt{g}) dx \\
	&\leq& - c_1 \int_B \abs{\partial_i u^{(k)}}^2 \xi^2 dx + C_5 \int_B (u^{(k)})^2 ( \abs{\partial_i \xi}^2+ \xi \abs{\partial_t \xi}) dx\\
	&&   + C_4 (M^2+1) \int_{\set{u\geq k}}\xi dx.
\end{eqnarray*}
Here $C_5$ depends only on $\lambda$.

Let $\rho_0$ be a positive number to be determined later. Consider $K_\rho$ as in Definition \ref{defn:b2} with $\rho\leq \rho_0$ and assume that $\xi:\Omega \times [t_0,t_0+\tau] \to [0,1]$ vanishes in a neighborhood of $\partial K_\rho \times [t_0,t_0+\tau]$ and outside $K_\rho\times [t_0,t_0+\tau]$. Now, we integrate the above inequality from $t_0$ to $t_0+\tau$ to get
\begin{eqnarray*}
	&& \int_{K_\rho} ( u^{(k)}(x,t_0+\tau)\xi(x,t_0+\tau))^2 dx + c'_1 \iint_{Q(\rho,\tau)} \abs{\partial_i u^{(k)}}^2 \xi^2 dx \\ 
	&\leq& \int_{K_\rho} ( u^{(k)}(x,t_0)\xi(x,t_0))^2 dx + C'_5 \iint_{Q(\rho,\tau)} (u^{(k)})^2 ( \abs{\partial_i \xi}^2+ \xi \abs{\partial_t \xi}) dx \\
	&& + C'_4 (M^2+1) \int_{t_0}^{t_0+\tau} \int_{\set{u\geq k}} \xi dx dt.
\end{eqnarray*}
Here $c'_1$, $C'_4$ and $C'_5$ is obtained from $c_1$, $C_4$ and $C_5$ by multiplying a constant depending only on $\lambda$.
Now we pick $\rho_0$ satisfying 
\begin{equation*}
	C'_4 (M^2+1) (T \abs{K_{\rho_0}})^{1/4} =1,
\end{equation*}
so that
\begin{eqnarray*}
	&& \int_{K_\rho} ( u^{(k)}(x,t_0+\tau)\xi(x,t_0+\tau))^2 dx + c'_1 \iint_{Q(\rho,\tau)} \abs{\partial_i u^{(k)}}^2 \xi^2 dx \\ 
	&\leq& \int_{K_\rho} ( u^{(k)}(x,t_0)\xi(x,t_0))^2 dx + C'_5 \iint_{Q(\rho,\tau)} (u^{(k)})^2 ( \abs{\partial_i \xi}^2+ \xi \abs{\partial_t \xi}) dx \\
	&& + \left( \int_{t_0}^{t_0+\tau} \int_{\set{u\geq k}} \xi dx dt\right)^{3/4}.
\end{eqnarray*}
This is exactly (7.5) in Chapter II of \cite{ladyzhenskaya1968linear} for our choice of $r,q,\kappa$. By Remark 7.2 there, \eqref{eqn:71} and \eqref{eqn:72} follows from it directly. Now Theorem \ref{thm:71} (Theorem 7.1 in Chapter II of \cite{ladyzhenskaya1968linear} and the remark following it) implies Theorem \ref{thm:dg1}.


\section{Proof of Lemma \ref{lem:moser}}\label{app:moser}
First of all, we note that it suffices to prove for $B_1$, because $B_i (i>1)$ contains no singular point and the result should be considered known. On the other hand, the proof given below for $B_1$ works for $B_i(i>1)$ as well with no modification at all. 

Recall that on $B_1$ for $t<1$, the metric is standard cone metric and both \eqref{eqn:moser1} and \eqref{eqn:moser2} are invariant under parabolic scaling, so it suffices to prove the lemma on $B$, i.e. the unit disc on $\Real^2$ equipped with the metric
\begin{equation*}
	d\rho^2+ \rho^2(1+\beta)^2 d\theta^2.
\end{equation*}
Note that when we scale up from $B_1=B_{\sqrt{t}}(p)$ to $B$, the norm of $b$ and $\partial_t a$ becomes smaller so the assumptions remain true. More precisely, we prove 
\begin{lem}
	Suppose that $a,b\in \mathcal V^{0,\alpha,[0,1]}(B)$ and $u\in \mathcal V^{2,\alpha,[0,1]}(B)$ is a weak solution to
	\begin{equation*}
		\partial_t u =a(x,t) \tilde{\triangle} u + bu.
	\end{equation*}
	Assume 
	\begin{equation*}
		\max_{B \times [0,1]} \abs{\partial_t a} + \abs{a^{-1}} < C_1
	\end{equation*}
	for some $C_1$. Then for any $q>1$, there is $C_2$ depending only on $C_1$ and the $\mathcal V^{0,\alpha,[0,1]}$ norms of $a$ and $b$ such that
	\begin{equation}\label{eqn:app1}
		\max_{B_{1/2} \times [3/4,1]} \abs{u} \leq {C_2} \left( \int_0^1 \int_{B} \abs{u}^q d\tilde{V} dt\right)^{1/q}
	\end{equation}
	and
	\begin{equation}\label{eqn:app2}
		\norm{\tilde{\nabla} u(1)}_{L^2(B_{1/4},\tilde{g})} \leq {C_2}  \left( \int_0^1 \int_{B} \abs{u}^q d\tilde{V} dt\right)^{1/q}.
	\end{equation}
\end{lem}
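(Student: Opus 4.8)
The plan is to prove \eqref{eqn:app1} by the parabolic Moser iteration and \eqref{eqn:app2} by a Caccioppoli-type energy estimate built on top of it. Only two features are special to the conical setting. First, on $B$ the cone metric $d\rho^2+\rho^2(\beta+1)^2d\theta^2$ is bi-Lipschitz to the flat disk, so the Sobolev inequality $\|f\|_{L^{2\kappa}(B,\tilde g)}\le C(\beta)\big(\|\tilde\nabla f\|_{L^2(B,\tilde g)}+\|f\|_{L^2(B,\tilde g)}\big)$ holds for, say, $\kappa=2$. Second, the cut-offs used in the iteration are identically $1$ near the cone tip, so every integration by parts against such a cut-off must be routed through Lemma \ref{lem:basic} and Lemma \ref{lem:basic2}. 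Since $u$ solves a linear equation, $v:=u_+$ and $u_-$ are weak subsolutions of $\partial_t v\le a\tilde\triangle v+|b|v$; and because $q>1$ the starting exponent $p_0:=q/2$ satisfies $2p_0>1$, so $\Psi(z)=(z_+)^{2p_0}$ is $C^1$ and Lemma \ref{lem:basic2}, applied with $\varphi=a^{-1}\phi^2$ for a space-time cut-off $\phi$ supported away from $\partial B$, makes $t\mapsto\int_B a^{-1}v^{2p_0}\phi^2\,d\tilde V$ absolutely continuous, which is all one needs to start.

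The first step would be the energy inequality, valid for every $p\ge q/2$,
\begin{equation*}
	\frac{d}{dt}\int_B a^{-1}v^{2p}\phi^2\,d\tilde V+c\int_B\big|\tilde\nabla(v^p)\big|^2\phi^2\,d\tilde V\le C(p)\int_B v^{2p}\big(|\tilde\nabla\phi|^2+\phi|\partial_t\phi|+\phi^2\big)\,d\tilde V,
\end{equation*}
with $c>0$ depending only on $\beta$ and $C(p)$ growing at most polynomially in $p$. This is obtained by testing the equation with $v^{2p-1}\phi^2$, writing $\int_B a^{-1}(\partial_t v)v^{2p-1}\phi^2=\frac{1}{2p}\int_B a^{-1}\partial_t(v^{2p})\phi^2$ and using $|\partial_t(a^{-1})|\le C_1^3$ --- this is exactly where the hypothesis $|\partial_t a|<C_1$ enters --- while the integration by parts near the tip is the one justified by Lemma \ref{lem:basic}, with $\tilde\triangle u=a^{-1}(\partial_t u-bu)\in L^2$ near the tip and $v^{2p-1}$ bounded because $u\in\mathcal P^{2,\alpha}$. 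I would then combine the resulting bounds on $\sup_t\int\phi^2v^{2p}$ and on $\iint|\tilde\nabla(\phi v^p)|^2$ with the Sobolev inequality in the standard way to get $\|v\|_{L^{2\kappa p}(Q')}\le(C^k p^m)^{1/p}\|v\|_{L^{2p}(Q)}$ on a nested family of parabolic cylinders with vertex at the tip shrinking from $B\times[0,1]$ to $B_{1/2}\times[3/4,1]$; iterating over $p_k=\kappa^k q/2$ and using $\sum_k k\kappa^{-k}<\infty$ and $\sum_k(\log p_k)/p_k<\infty$ gives $\sup_{B_{1/2}\times[3/4,1]}v\le C_2\big(\int_0^1\int_B v^q\,d\tilde V\,dt\big)^{1/q}$, and likewise for $u_-$. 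This proves \eqref{eqn:app1}, with $C_2$ depending only on $\beta$, $C_1$ and the $C^0$ (hence $\mathcal V^{0,\alpha}$) norm of $b$.

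For \eqref{eqn:app2} I would use \eqref{eqn:app1} to fix $M:=C_2\big(\int_0^1\int_B|u|^q\,d\tilde V\,dt\big)^{1/q}\ge\|u\|_{C^0(B_{1/2}\times[3/4,1])}$; if $M=0$ then $u\equiv0$ and there is nothing to prove, and otherwise, by linearity, I may replace $u$ by $\hat u:=u/M$, which solves the same equation with $\|\hat u\|_{C^0(B_{1/2}\times[3/4,1])}\le1$ and $\int_{3/4}^1\int_{B_{1/2}}|\hat u|^q\,d\tilde V\,dt\le C_2^{-q}$, a fixed constant. A Caccioppoli estimate --- test the equation with $a^{-1}\hat u\phi^2\chi$, where $\phi=1$ on $B_{3/8}$ is supported in $B_{1/2}$ and $\chi(t)$ vanishes for $t\le3/4$, the integrations by parts again justified by Lemma \ref{lem:basic} and Lemma \ref{lem:basic2} --- gives $\int_{3/4}^1\int_{B_{3/8}}|\tilde\nabla\hat u|^2\,d\tilde V\,dt\le C$, and a further energy step, testing with $a^{-1}\partial_t\hat u$ against a cut-off supported in $B_{3/8}\times[13/16,1]$ and equal to $1$ on $B_{1/4}\times[15/16,1]$ (the second time-derivative manipulations being legitimized by smooth approximation exactly as in the proof of Proposition \ref{prop:bvp}, after which everything reduces to the first and last lines of the computation), upgrades this to $\int_{B_{1/4}}|\tilde\nabla\hat u(1)|^2\,d\tilde V\le C$; scaling back gives $\|\tilde\nabla u(1)\|_{L^2(B_{1/4},\tilde g)}\le CM$. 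Equivalently, after normalization this last step is the content of the verification that $\hat u$ lies in the De Giorgi class $\mathcal B_2$ carried out in the proof of Theorem \ref{thm:dg1}.

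The hard part will be bookkeeping rather than anything conceptual: every cut-off above is $1$ at the cone tip, so each integration by parts must be legitimized through Lemma \ref{lem:basic} and Lemma \ref{lem:basic2}, using only what is encoded in $u\in\mathcal V^{2,\alpha}$ --- that $\tilde\nabla u(t)\in L^2$ and $\partial_t u\in L^2$ near the tip --- and the constants produced by the iteration must be kept dependent only on $\beta$, $C_1$ and the stated norms of $a$ and $b$, and in particular not on the a priori finite but uncontrolled quantity $\|u\|_{C^0}$. The normalization $\hat u=u/M$ and the fact that the only inhomogeneity in the equation is $bu$, which is absorbed at each stage, are what make the latter possible; away from the tip the argument is the classical De Giorgi--Nash--Moser one, transported to $B$ via the bi-Lipschitz equivalence with the flat disk.
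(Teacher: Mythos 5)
Your treatment of \eqref{eqn:app1} is essentially the paper's: parabolic Moser iteration with the cut-offs equal to $1$ at the tip, the integrations by parts routed through Lemma \ref{lem:basic}--\ref{lem:basic2}, and Sobolev via the bi-Lipschitz equivalence with the flat disk. One small unaddressed point there: for $1<q<2$ you start the iteration at $p_0=q/2$, but the spatial test function $u_+^{2p_0-1}\phi^2=u_+^{q-1}\phi^2$ is not Lipschitz in $u_+$ (its derivative behaves like $u_+^{q-2}$), so the Caccioppoli step needs a regularization such as $(u_++\epsilon)^{q-1}-\epsilon^{q-1}$ before you can invoke the integration-by-parts lemmas; the $C^1$-ness of $\Psi(z)=(z_+)^{2p_0}$ only legitimizes the time-derivative identity, not this step. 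The paper avoids the issue by running the iteration for $q\ge 2$ and then descending to $q\in(1,2)$ with the Li--Schoen interpolation trick, which you could also quote.

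For \eqref{eqn:app2} you take a genuinely different route, and it contains the real gap. Your final step tests the equation with $a^{-1}\partial_t u$ against a cut-off that is identically $1$ at the cone tip and extracts $\int_{B_{1/4}}|\tilde\nabla u(1)|^2$ from the term $\iint \partial_t u\,\tilde\triangle u\,\psi^2$. Justifying this identity ``by smooth approximation exactly as in Proposition \ref{prop:bvp}'' does not work here: in that proposition $u\in C^{2,\alpha}(M\times[0,T])$ uniformly on a smooth compact surface with boundary, whereas for a weak solution in $\mathcal V^{2,\alpha}$ the quantities $\partial_t u$ and $\tilde\nabla^2 u$ are only controlled like $\rho^{-2}$ near the tip, and the naive excision argument on $B\setminus B_\epsilon$ produces a boundary flux $\int_0^1\int_{\partial B_\epsilon}|\partial_t u||\partial_\nu u|\,d\sigma\,dt$ which the $\mathcal V$-bounds alone only show to be $o(1/\epsilon)$, not $o(1)$. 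A correct justification would need something like Steklov averaging in time (so the difference quotient is bounded by $2\|u\|_{C^0}/h$ at the tip) together with a good choice of radii and a limiting argument in $h$ --- precisely the kind of step this paper is organized to avoid; also, your remark that this step ``is the content of'' the $\mathcal B_2$ verification in the proof of Theorem \ref{thm:dg1} is inaccurate, since that verification yields only space-time Caccioppoli bounds for truncations, not a gradient bound at the top time slice. The paper's own proof of \eqref{eqn:app2} sidesteps all of this: it combines \eqref{eqn:app1} with the H\"older estimate of Theorem \ref{thm:dg} at the tip, interior Schauder estimates on balls $B_{\sigma\rho_0}(y)$ away from the tip, and the interpolation Lemma \ref{lem:D2}, to get the pointwise decay $|\tilde\nabla u(\rho,\theta,1)|\le C\rho^{-1+\alpha/2}\|u\|_{C^0(B_{1/2}\times[3/4,1])}$, which is then simply integrated over $B_{1/4}$. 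Either repair your energy step along the lines above or switch to this pointwise-decay argument.
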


We prove \eqref{eqn:app1} first for $q\geq 2$. The proof is well known except that we need to justify some integration by parts and switching of the order of integrals.  

Take some $\sigma\in (0,1)$ fixed. Let $q_i=(3/2)^{i-1}q$, $t_i=(2\sigma-\sigma^2)(1-\frac{1}{2^{i-1}})$ and $r_i=(1-\sigma)+ \frac{\sigma}{2^{i-1}}$. For simplicity, denote $Q_i=B_{r_i}\times [t_i,1]$. Suppose that  $\varphi_i(x,t)$ is a smooth cut-off function defined in $Q_i$ satisfying (1) $\varphi_i(x,t)\equiv 0$ for $t= t_i$ or $\abs{x}=r_i$; (2) $\varphi_i(t)\equiv 1$ for $(x,t)\in Q_{i+1}$; (3) $0\leq \partial_t \varphi_i + \varphi_i^{-1} \abs{\tilde{\nabla} \varphi_i}^2\leq \sigma^{-2}{4^{i+2}}$.

For $i\geq 1$, multiplying both sides of the equation by $\varphi_i u_+^{q_i-1} a^{-1}$ and integrating over $B_{r_i}\times [t_{i},t]$ for any $t\in [t_{i},1]$ gives
\begin{equation*}
	\iint_{B_{r_i}\times [t_i,t]} \partial_tu u_+^{q_i-1} \varphi_i a^{-1} d\tilde{V} ds = \iint_{B_{r_i}\times [t_i,t]} \varphi_i u_+^{q_i-1} \tilde{\triangle} u + \varphi_i  a^{-1} b u_+^{q_i} d\tilde{V} ds.
\end{equation*}
First, by the definition of $\mathcal V^{2,\alpha,[0,1]}$, the integral on the left hand side (hence on the right hand side) is absolutely integrable. Hence, the Fubini theorem allows us to write 
\begin{equation}\label{eqn:write}
	\int_{B_{r_i}} \int_{t_{i}}^t \partial_tu u_+^{q_i-1} \varphi_i a^{-1} ds d\tilde{V} = \int_{t_{i}}^t \int_{B_{r_i}} \varphi_i u_+^{q_i-1} \tilde{\triangle} u + \varphi_i  a^{-1} b u_+^{q_i} d\tilde{V} ds.
\end{equation}
An integration by parts by Lemma \ref{lem:basic} (see the footnote in the proof of Lemma \ref{lem:maximum}) gives
\begin{eqnarray*}
	\int_{t_{i}}^t \int_{B_{r_i}} \varphi_i u_+^{q_i-1} \tilde{\triangle}u d\tilde{V} ds &\leq &  - \frac{4(q_i-1)}{q_i^2} \int_{t_{i}}^t \int_{B_{r_i}}  \varphi_i \abs{\tilde{\nabla} u_+^{q_i/2}}^2 d\tilde{V} ds \\
	&& + \int_{t_{i}}^t\int_{B_{r_i}} \tilde{\nabla} \varphi_i \tilde{\nabla} u_+ u_+^{q_i-1} d\tilde{V} ds \\ 
	&\leq&  - \frac{2(q_i-1)}{q_i^2} \int_{t_{i}}^t \int_{B_{r_i}}  \varphi_i \abs{\tilde{\nabla} u_+^{q_i/2}}^2 d\tilde{V} ds \\
	&& + q_i^2 \int_{t_i}^t \int_{B_{r_i}} \varphi_i^{-1} \abs{\tilde{\nabla} \varphi_i}^2 u_+^{q_i} d\tilde{V}ds.
\end{eqnarray*}
Using the bound of $\partial_t a$ and Lemma \ref{lem:basic2}, the left hand side of \eqref{eqn:write} is estimated by
\begin{eqnarray*}
	\int_{B_{r_i}}\int_{t_i}^t \partial_t u u_+^{q_i-1} \varphi_i a^{-1} ds d\tilde{V} &\geq & \left. \int_{B_{r_i}} u_+^{q_i} \varphi_i a^{-1} d\tilde{V}\right|^t_{t_i} \\
	&& - C\sigma^{-2} 4^i \iint_{Q_i}  u_+^{q_i} d\tilde{V} ds.
\end{eqnarray*}
By the arbitrariness of $t\in [t_i,1]$ and the boundedness of $a,a^{-1},b$, we get
\begin{equation*}
	\max_{t\in [t_{i+1},1]} \int_{B_{r_{i+1}}} u_+^{q_i} d\tilde{V} + \int_{t_{i+1}}^{1} \int_{B_{r_{i+1}}} \frac{2(q_i-1)}{q_i^2} \abs{\tilde{\nabla} u_+^{q_i/2}}^2 d\tilde{V} ds \leq C(q) \sigma^{-2} (16)^i\iint_{Q_i}  u_+^{q_i} d\tilde{V} ds.
\end{equation*}
Here we have used the definition of $\varphi_i$ and estimated $q_i^2$ from above by $4^i q^2$.

For simplicity, we write $w_i= u_+^{q_i/2}$ and by the H\"older and the Sobolev inequalities,
\begin{eqnarray*}
	\int_{B_{r_{i+1}}} w_i^3 d\tilde{V} &\leq& \left( \int_{B_{r_{i+1}}} w_i^4 d\tilde{V} \right)^{1/2} \left( \int_{B_{r_{i+1}}} w_i^2 \right)^{1/2} \\
	&\leq& C\left( \int_{B_{r_{i+1}}} \abs{\tilde{\nabla} w_i}^2 + w_i^2 d\tilde{V}\right) \left( \int_{B_{r_{i+1}}} w_i^2 \right)^{1/2}.
\end{eqnarray*}
A simply way to see that the Sobolev inequality holds is to notice that the cone metric on $B_{r_i}$ is quasi-isometric to the flat metric on a ball of radius $r_i$ in $\Real^2$. Of cause the constant above depends on $\beta$.

Integrating from $t_{i+1}$ to $1$, we get
\begin{eqnarray*}
	\iint_{Q_{i+1}} u_+^{q_{i+1}} d\tilde{V} &\leq& \left(\int_{t_{i+1}}^{1}\int_{B_{r_{i+1}}} \abs{\tilde{\nabla} u_+^{q_i/2}}^2 + u_+^{q_i} d\tilde{V}\right) \cdot \left( \max_{t\in [t_{i+1},1]} \int_{B_{i+1}} u_+^{q_i} d\tilde{V}\right)^{1/2} \\
	&\leq&C(q) (256)^i \sigma^{-3} \left( \iint_{Q_i} u_+^{q_i} d\tilde{V} ds\right)^{3/2}.
\end{eqnarray*}
Some routine iteration yields that
\begin{equation*}
	\norm{u_+}_{C^0(B_{1-\sigma}\times [1-(1-\sigma)^2,1])}\leq C \sigma^{-6/q} \norm{u_+}_{L^q(B_1\times [0,1])}.
\end{equation*}
The same applies to the negative part so that (for $q\geq 2)$
\begin{equation}\label{eqn:q2}
	\norm{u}_{C^0(B_{1-\sigma}\times [1-(1-\sigma)^2,1])}\leq C \sigma^{-6/q} \norm{u}_{L^q(B_1\times [0,1])}.
\end{equation}
There is also a standard iteration process to prove the case for $q\in (0,2)$ as we learned from Li and Schoen \cite{li1984p}. For completeness, we outline the argument below. We recycle $r_i$ and $Q_i$ by setting for $i=1,2,\cdots$
\begin{equation*}
	r_i=1-\frac{1}{2^i}\qquad Q_i= B_{r_i}\times [1-r_i^2,1].
\end{equation*}
We take $\sigma_i$ such that
\begin{equation*}
	(1-\sigma_i) r_{i+1}= r_{i},
\end{equation*}
namely
\begin{equation*}
	\sigma_i =\frac{1}{2^{i+1}-1}.	
\end{equation*}
We apply \eqref{eqn:q2} with $q=2$ to get
\begin{eqnarray*}
	\norm{u}_{C^0(Q_1)}&\leq & C \sigma_1^{-3} \norm{u}_{L^2(Q_2)} \\
	&=& C \sigma_1^{-3} \norm{u}_{C^0(Q_2)}^{\frac{2-q}{2}} \norm{u}_{L^q(B_1\times [0,1])}^{\frac{q}{2}} \\
	&\leq& C (\sigma_1^{-3}) (\sigma_2^{-3})^{\frac{2-q}{2}}(\sigma_2^{-3})^{(\frac{2-q}{2})^2}  \cdots \norm{u}_{L^q(B_1\times [0,1])}^{\frac{q}{2}(1+ (\frac{2-q}{2}) +(\frac{2-q}{2})^2 \cdots )}.
\end{eqnarray*}
\eqref{eqn:app1} is proved by checking the series in the above equation converges.

With \eqref{eqn:app1}, we apply Theorem \ref{thm:dg} to see that there is some $\alpha'>0$ and $C>0$ such that
\begin{equation*}
	\norm{u(1)}_{C^{\alpha'}(B_{1/4})}\leq C \norm{u}_{C^0(B_{1/2}\times [3/4,1])}.
\end{equation*}
Using the $(\rho,\theta)$ coordinates,
\begin{equation*}
	\abs{u(\rho,\theta,1)- u(0,\theta,1)}\leq C \norm{u}_{C^0(B_{1/2}\times [3/4,1])} \rho^\alpha.
\end{equation*}
On the other hand, parabolic interior estimate shows that there is another $C>0$ depending on $\mathcal V^{0,\alpha}$ norm of $a$ and $b$ and $C^0$ norm of $a^{-1}$ such that for $\rho\leq 1/4$,
\begin{equation*}
	\abs{\tilde{\nabla}^2 u (\rho,\theta,1)}\leq C \rho^{-2}  \norm{u}_{C^0(B_{1/2}\times [3/4,1])}.
\end{equation*}
Interpolation gives 
\begin{equation*}
	\abs{\tilde{\nabla} u (\rho,\theta,1)} \leq \frac{C}{\rho^{1-\alpha/2}} \norm{u}_{C^0(B_{1/2}\times [3/4,1])},
\end{equation*}
which we integrate over $B_{1/4}$ to get \eqref{eqn:app2}.

\section{Proof of Lemma \ref{lem:poisson}}\label{app:poisson}

\begin{proof}
	By definition of $\tilde{g}$, there is a smooth metric $\bar{g}$ on $S$ such that
	\begin{equation*}
		\tilde{g}= w g \quad \mbox{on } S
	\end{equation*}
	and
	\begin{equation*}
		w= \rho^{2\beta}
	\end{equation*}
	in a neighborhood of $p$. The equation we want to solve is equivalent to
	\begin{equation}\label{eqn:barg}
		\bar{\triangle} u = w f,
	\end{equation}
	which lies in $L^p(S,\bar{g})$ for some $p>1$ because $f$ is bounded and $\beta>-1$. It is well known that there is $u$ in $W^{2,p}(S,\bar{g})$ solving \eqref{eqn:barg} because
	\begin{equation*}
		\int_S wf d\bar{V} = \int_S f d\tilde{V} =0.
	\end{equation*}
	Here $W^{2,p}(S,\bar{g})$ is the Sobolev space on $S$ with respect to $\bar{g}$. By the Sobolev embedding theorem, we know $u$ is bounded and 
	\begin{equation*}
		\int_S \abs{\tilde{\nabla} u}^2 d\tilde{V} =\int_S \abs{\bar{\nabla} u}^2 d\bar{V}=0.
	\end{equation*}
	The Schauder interior estimate then implies that $u\in \mathcal E^{4,\alpha}$, which concludes the proof of the existence part of the lemma. If $u_1$ and $u_2$ are two such solutions, then $u_1-u_2$ is a bounded harmonic function on $S$ with respect to both the conical metric $\tilde{g}$ and the smooth metric $\bar{g}$, which has to be a constant.  
\end{proof}

\section{An interpolation of H\"older norm}
We need several lemmas on the interpolation of H\"older norms, which should be well known. We include a proof for completeness.

\begin{lem}\label{lem:D1}
	There is a universal constant $C$ such that if $u$ is in $C^{2,\alpha}(B)$ and satisfies
	\begin{equation*}
		\norm{u}_{C^0(B)}\leq 1
	\end{equation*}
	and
	\begin{equation*}
		[u]_{2,\alpha,B}:= \sup_{x,y\in B} \frac{\abs{\nabla^2 u(x)- \nabla^2 u(y)}}{\abs{x-y}^\alpha}\leq 1,
	\end{equation*}
	then
	\begin{equation*}
		\norm{\nabla^2 u}_{C^0(B)}\leq C.
	\end{equation*}
\end{lem}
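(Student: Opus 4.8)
The plan is to bound $M:=\norm{\nabla^2 u}_{C^0(B)}$ --- which is finite since $u\in C^{2,\alpha}(B)$ --- by a universal constant, via the standard interpolation mechanism: wherever $\nabla^2 u$ is large, the bound $[u]_{2,\alpha,B}\le 1$ on the modulus of continuity of $\nabla^2 u$ forces $\nabla^2 u$ to stay large, with a fixed sign in a fixed direction, throughout a ball of definite radius; integrating the resulting bound on a pure second derivative twice along a short segment then produces an oscillation of $u$ of size $\sim M$, contradicting $\norm{u}_{C^0(B)}\le 1$. Throughout I would read $\abs{\cdot}$ as the operator norm on symmetric matrices (any norm works, at the cost of a dimensional constant).

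First I would reduce to the case $M\ge 4$ and pick $x_0\in\overline B$ with $\abs{\nabla^2 u(x_0)}\ge\tfrac34 M$. The step I expect to cause the only real friction is that $x_0$ may sit near $\partial B$, so one cannot center a symmetric segment at $x_0$ inside $B$; this is exactly what the $C^\alpha$ hypothesis is for. Using convexity of $B$, I would slide inward to a point $x_1$ with $\abs{x_1-x_0}\le\tfrac14$ and $\operatorname{dist}(x_1,\partial B)\ge\tfrac14$ (take $x_1=x_0-\tfrac14\,x_0/\abs{x_0}$ when $\abs{x_0}\ge\tfrac14$, and $x_1=x_0$ otherwise); then, since $[u]_{2,\alpha,B}\le 1$ and $(1/4)^\alpha\le 1\le M/4$, one gets $\abs{\nabla^2 u(x_1)}\ge\tfrac34 M-1\ge\tfrac12 M$.

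Next I would choose a unit vector $v$ with $\abs{v\cdot\nabla^2 u(x_1)v}\ge\tfrac12 M$. For $\abs{t}\le\tfrac14$ the point $x_1+tv$ lies in $B$, and the H\"older bound gives $\abs{v\cdot\nabla^2 u(x_1+tv)v-v\cdot\nabla^2 u(x_1)v}\le(1/4)^\alpha\le M/4$, so $t\mapsto v\cdot\nabla^2 u(x_1+tv)v$ keeps a fixed sign and absolute value $\ge\tfrac14 M$ on $[-\tfrac14,\tfrac14]$. Setting $g(t):=u(x_1+tv)$, so that $\abs{g}\le 1$ and $g''(t)=v\cdot\nabla^2 u(x_1+tv)v$, Taylor's formula with integral remainder gives
\[
 g(\tfrac14)+g(-\tfrac14)-2g(0)=\int_0^{1/4}\bigl(\tfrac14-s\bigr)\bigl(g''(s)+g''(-s)\bigr)\,ds ,
\]
whose modulus is at least $\tfrac12 M\int_0^{1/4}\bigl(\tfrac14-s\bigr)\,ds=M/64$, while the left-hand side is at most $4\norm{u}_{C^0(B)}\le 4$; hence $M\le 256$. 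Apart from the inward-sliding step everything is elementary calculus and tracking of absolute constants, and the exponent $\alpha\in(0,1)$ enters only through $(1/4)^\alpha\le 1$, so the constant does not degenerate as $\alpha\to 0$.
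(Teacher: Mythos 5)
Your proof is correct, but it follows a genuinely different route from the paper. The paper argues by contradiction and compactness: it normalizes a hypothetical sequence of counterexamples $v_i=u_i/\norm{\nabla^2 u_i}_{C^0(B)}$, notes that $\norm{v_i}_{C^0}\to 0$, $\norm{\nabla^2 v_i}_{C^0}=1$, $[v_i]_{2,\alpha}\to 0$, extracts a subsequence converging in $C^2$ by Arzel\`a--Ascoli, and derives a contradiction from the limit being constant yet having unit Hessian norm; this yields a non-explicit constant and leans on the compact embedding of H\"older spaces. You instead give a direct quantitative argument: locate a near-maximal point of $\abs{\nabla^2 u}$, slide it a distance $1/4$ into the interior (the $C^\alpha$ seminorm controls the loss), pick a direction in which the pure second derivative is at least $M/2$ (legitimate for the operator norm of a symmetric matrix, and any other norm only costs a dimensional factor), propagate the lower bound $M/4$ with fixed sign along a segment of half-length $1/4$, and then the centered second difference $g(\tfrac14)+g(-\tfrac14)-2g(0)$, written with the integral remainder, gives $M/64\leq 4\norm{u}_{C^0}\leq 4$, hence $M\leq 256$. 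The computations check out (the inward-sliding and sign-propagation steps both use $1\leq M/4$, which your reduction to $M\geq 4$ supplies, and $\int_0^{1/4}(\tfrac14-s)\,ds=\tfrac1{32}$). What your approach buys is an explicit constant, no compactness machinery, and uniformity as $\alpha\to 0$; the paper's argument is shorter to write but non-constructive. One cosmetic remark: you may as well take $x_0\in B$ (the supremum need not be attained, and a point with $\abs{\nabla^2 u(x_0)}\geq\tfrac34 M$ exists in the open ball), which keeps the whole segment strictly inside $B$ and avoids even the trivial boundary-touching discussion; alternatively, the finiteness of $[u]_{2,\alpha,B}$ lets $\nabla^2 u$ extend continuously to $\overline B$, so your version is also fine.
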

\begin{proof}
	If the lemma is not true, then there exists a sequence $u_i$ satisfying
	\begin{enumerate}[(1)]
		\item $\norm{u_i}_{C^0(B)}\leq 1$;
		\item $\norm{\nabla^2 u_i}_{C^0(B)}\geq i$;
		\item $[u_i]_{2,\alpha,B}\leq 1$.
	\end{enumerate}
	By setting
	\begin{equation*}
		v_i= \frac{u_i}{\norm{\nabla^2 u_i}_{C^0(B)}},
	\end{equation*}
	we obtain
	\begin{enumerate}[(a)]
		\item $\norm{v_i}_{C^0(B)}\to 0$;
		\item $\norm{\nabla^2 v_i}_{C^0(B)}=1$;
		\item $[v_i]_{2,\alpha,B}\to 0$,
	\end{enumerate}
	which altogether imply that $\norm{v_i}_{C^{2,\alpha}(B)}\leq C$ for $i$ large. By taking subsequence if necessary, we may assume that $v_i$ converges strongly to $v$ in $C^2(B)$ norm. We get a contradiction because (a) implies that $v$ is constant, while (b) implies that
	\begin{equation*}
		\norm{\nabla^2 v}_{C^0(B)}=1.
	\end{equation*}
\end{proof}

\begin{lem}\label{lem:D2}
	Suppose that $u$ is a $C^{2,\alpha}$ function from $B$ to $\Real$ satisfying $\norm{u}_{C^{2,\alpha}(B)}\leq 1$. We have
	\begin{equation*}
		\sup_{B_{1/2}} \abs{\nabla^2 u} \leq C \norm{u}_{C^0(B)}^{\frac{\alpha}{2+\alpha}}.
	\end{equation*}
\end{lem}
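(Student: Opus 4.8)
The plan is to deduce Lemma~\ref{lem:D2} from Lemma~\ref{lem:D1} by a rescaling argument, exactly the way second-order interior estimates are localized. Write $M=\norm{u}_{C^0(B)}$; we may assume $M>0$. Fix a point $x_0\in B_{1/2}$ and a radius $r\in(0,1/2]$, so that the Euclidean ball $B_r(x_0)$ is contained in $B$. Rescale by setting $v(x)=u(x_0+rx)$ for $x$ in the unit ball $B$. Then $\norm{v}_{C^0(B)}\leq M$, $\nabla^2 v(x)=r^2\,\nabla^2 u(x_0+rx)$, and $[v]_{2,\alpha,B}=r^{2+\alpha}[u]_{2,\alpha,B_r(x_0)}\leq r^{2+\alpha}$, using $\norm{u}_{C^{2,\alpha}(B)}\leq 1$.

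Next I would normalize. Put $\lambda=\bigl(\max\{M,\,r^{2+\alpha}\}\bigr)^{-1}$; then $\lambda v$ satisfies $\norm{\lambda v}_{C^0(B)}\leq 1$ and $[\lambda v]_{2,\alpha,B}\leq 1$, so Lemma~\ref{lem:D1} applies and gives $\norm{\nabla^2(\lambda v)}_{C^0(B)}\leq C$ for a universal $C$. Unravelling the scaling at $x=0$, $\lambda r^2\,\abs{\nabla^2 u(x_0)}\leq\norm{\nabla^2(\lambda v)}_{C^0(B)}\leq C$, hence
\begin{equation*}
	\abs{\nabla^2 u(x_0)}\leq C\,\frac{\max\{M,\,r^{2+\alpha}\}}{r^2}=C\,\max\Bigl\{\frac{M}{r^{2}},\,r^{\alpha}\Bigr\}\qquad\text{for every }r\in(0,1/2].
\end{equation*}

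Then it remains to optimize the free parameter $r$. If $M\leq(1/2)^{2+\alpha}$ I would take $r=M^{1/(2+\alpha)}\in(0,1/2]$, which makes the two terms equal to $M^{\alpha/(2+\alpha)}$ and yields $\abs{\nabla^2 u(x_0)}\leq C\, M^{\alpha/(2+\alpha)}$. If instead $M>(1/2)^{2+\alpha}$, then $M^{\alpha/(2+\alpha)}$ is bounded below by a positive constant, while $\abs{\nabla^2 u(x_0)}\leq\norm{u}_{C^{2,\alpha}(B)}\leq 1\leq 2^{\alpha}M^{\alpha/(2+\alpha)}$, so again $\abs{\nabla^2 u(x_0)}\leq C\, M^{\alpha/(2+\alpha)}$. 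Taking the supremum over $x_0\in B_{1/2}$ gives the claim.

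There is essentially no serious obstacle here: the only point requiring a little care is keeping the rescaled ball inside $B$ (which forces $r\leq 1/2$) and checking that the optimal $r$ respects this constraint, which is precisely why the regime $M$ bounded away from $0$ is disposed of separately by the trivial estimate $\abs{\nabla^2 u}\leq\norm{u}_{C^{2,\alpha}(B)}$. Alternatively, one can bypass Lemma~\ref{lem:D1} and argue directly: applying the second-difference identity $g(h)+g(-h)-2g(0)=g''(0)h^2+\int_{-h}^h(h-\abs{s})\bigl(g''(s)-g''(0)\bigr)\,ds$ to $g(s)=u(x_0+se)$ for a unit vector $e$ gives $\abs{\partial_e^2 u(x_0)}\leq 4M h^{-2}+C_\alpha h^{\alpha}$ for all $h\in(0,1/2]$, and since $\abs{\nabla^2 u(x_0)}$ is controlled by $\sup_{\abs{e}=1}\abs{\partial_e^2 u(x_0)}$, optimizing in $h$ as above finishes the proof the same way.
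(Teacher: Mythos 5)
Your main argument is correct and is essentially the paper's proof: the paper rescales $u$ by $\lambda^{-1}u(\lambda^{1/(2+\alpha)}x)$ with $\lambda=\norm{u}_{C^0(B)}$ and applies Lemma~\ref{lem:D1} on unit balls, which is exactly your local zoom at the optimal radius $r=M^{1/(2+\alpha)}$, with the regime $M$ bounded away from $0$ disposed of by the same trivial estimate. (Your second-difference alternative is also fine, but the route via Lemma~\ref{lem:D1} is what the paper does.)
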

\begin{proof}
	We may assume that $\lambda:=\norm{u}_{C^0(B)}$ is smaller than $1/8$, otherwise there is nothing to prove. Set
	\begin{equation*}
		v(x) = \lambda^{-1} u(\lambda^{\frac{1}{2+\alpha}} x),
	\end{equation*}
	which is defined on $B':=B_{\lambda^{-\frac{1}{2+\alpha}}}$. Notice that $B'\supset B_2$ since we have assumed that $\lambda<1/8$. The advantage of $v$ is that
	\begin{equation}\label{eqn:vc0}
		\norm{v}_{C^0(B')}=1
	\end{equation}
	and
	\begin{equation}\label{eqn:vc2alpha}
		\sup_{x,y\in B'} \frac{\abs{\nabla^2 v(x)- \nabla^2 v(y)}}{\abs{x-y}^\alpha}= \sup_{\tilde{x},\tilde{y}\in B} \frac{\abs{\nabla^2 u(\tilde{x})- \nabla^2 u(\tilde{y})}}{\abs{\tilde{x}-\tilde{y}}^\alpha} \leq 1.
	\end{equation}
	We may apply Lemma \ref{lem:D1} to $v$ (restricted to $B_1(x)$ for each $x\in B_{\frac{1}{2}\lambda^{-\frac{1}{2+\alpha}}}$) to show
	\begin{equation*}
		\norm{\nabla^2 v}_{C^0(B_{\frac{1}{2}\lambda^{-\frac{1}{2+\alpha}}})}\leq C,
	\end{equation*}
	which is exactly what we need when translated back into the inequality of $u$.
\end{proof}

\bibliography{foo}

\begin{thebibliography}{10}

\bibitem{bahuaud2014yamabe}
E.~Bahuaud and B.~Vertman.
\newblock Yamabe flow on manifolds with edges.
\newblock {\em Mathematische Nachrichten}, 287(2-3):127--159, 2014.

\bibitem{chen2014long}
X.~Chen and Y.~Wang.
\newblock {On the long time behaviour of the Conical K\"ahler-Ricci flows}.
\newblock {\em arXiv:1402.6689}, 2014.

\bibitem{chen2015bessel}
X.~Chen and Y.~Wang.
\newblock {Bessel Functions, heat kernel and the conical K{\"a}hler--Ricci
  flow}.
\newblock {\em Journal of Functional Analysis}, 269(2):551--632, 2015.

\bibitem{donaldson2012kahler}
S.~Donaldson.
\newblock K{\"a}hler metrics with cone singularities along a divisor.
\newblock In {\em Essays in mathematics and its applications}, pages 49--79.
  Springer, 2012.

\bibitem{trudinger1983elliptic}
D.~Gilbarg and N.~Trudinger.
\newblock {\em Elliptic partial differential equations of second order}.
\newblock Springer-Verlag Berlin, 1983.

\bibitem{hamilton1988ricci}
R.~Hamilton.
\newblock {The Ricci flow on surfaces}.
\newblock {\em Contemp. Math}, 71(1):237--261, 1988.

\bibitem{jeffres2000uniqueness}
T.~Jeffres.
\newblock {Uniqueness of K\"ahler-Einstein cone metrics}.
\newblock {\em Publicacions matematiques}, 44(2):437--448, 2000.

\bibitem{jeffres2003regularity}
T.~Jeffres and P.~Loya.
\newblock Regularity of solutions of the heat equation on a cone.
\newblock {\em International Mathematics Research Notices}, 2003(3):161--178,
  2003.

\bibitem{jeffres2011k}
T.~Jeffres, R.~Mazzeo, and Y.~Rubinstein.
\newblock {K\"ahler-Einstein metrics with edge singularities}.
\newblock {\em Annals of Mathematics}, 183:95--176, 2016.

\bibitem{ladyzhenskaya1968linear}
O.~A. Ladyzhenskaya, V.~A. Solonnikov, and N.~N. Ural'tseva.
\newblock Linear and quasilinear equations of parabolic type, {Translations of
  Mathematical Monographs, Vol. 23}.
\newblock {\em American Mathematical Society, Providence}, 1968.

\bibitem{li1984p}
P.~Li and R.~Schoen.
\newblock {$L^p$ and mean value properties of subharmonic functions on
  Riemannian manifolds}.
\newblock {\em Acta Mathematica}, 153(1):279--301, 1984.

\bibitem{lieberman1996second}
G.~Lieberman.
\newblock {\em Second order parabolic differential equations}.
\newblock World scientific, 1996.

\bibitem{mazzeo1991elliptic}
R.~Mazzeo.
\newblock {Elliptic theory of differential edge operators I}.
\newblock {\em Communications in Partial Differential Equations},
  16(10):1615--1664, 1991.

\bibitem{mazzeo2013ricci}
R.~Mazzeo, Y.~Rubinstein, and N.~Sesum.
\newblock Ricci flow on surfaces with conic singularities.
\newblock {\em Analysis and PDE}, 8(4):839--882, 2015.

\bibitem{mooers1999heat}
E.~Mooers.
\newblock Heat kernel asymptotics on manifolds with conic singularities.
\newblock {\em Journal d'Analyse Mathematique}, 78(1):1--36, 1999.

\bibitem{phong2014ricci}
D.~Phong, J.~Song, J.~Sturm, and X.~Wang.
\newblock {The Ricci flow on the sphere with marked points}.
\newblock {\em arXiv:1407.1118}, 2014.

\bibitem{phong2015convergence}
D.~Phong, J.~Song, J.~Sturm, and X.~Wang.
\newblock {Convergence of the conical Ricci flow on $S^2$ to a soliton}.
\newblock {\em arXiv:1503.04488}, 2015.

\bibitem{ramos2011smoothening}
D.~Ramos.
\newblock Smoothening cone points with ricci flow.
\newblock {\em arXiv:1109.5554}, 2011.

\bibitem{simon2001deformation}
M.~Simon.
\newblock {Deformation of Lipschitz Riemannian metrics in the direction of
  their Ricci curvature}.
\newblock In {\em Proceedings of the International Conference held to honour
  the 60th Birthday of AM Naveira}, volume~8, page~14. World Scientific, 2001.

\bibitem{simon2013local}
M.~Simon.
\newblock {Local smoothing results for the Ricci flow in dimensions two and
  three}.
\newblock {\em Geometry \& Topology}, 17(4):2263--2287, 2013.

\bibitem{topping2010ricci}
P.~Topping.
\newblock Ricci flow compactness via pseudolocality, and flows with incomplete
  initial metrics.
\newblock {\em Journal of the European Mathematical Society}, 12(6):1429--1451,
  2010.

\bibitem{troyanov1991prescribing}
M.~Troyanov.
\newblock Prescribing curvature on compact surfaces with conical singularities.
\newblock {\em Transactions of the American Mathematical Society},
  324(2):793--821, 1991.

\bibitem{wang2014smooth}
Y.~Wang.
\newblock {Smooth approximations of the Conical K\"ahler-Ricci flows}.
\newblock {\em arXiv:1401.5040}, 2014.

\bibitem{yin2013ricci}
H.~Yin.
\newblock {Ricci flow on surfaces with conical singularities, II}.
\newblock {\em arXiv:1305.4355}, 2013.

\bibitem{yin2016}
H.~Yin and K.~Zheng.
\newblock {Expansion formula for complex Monge-Amp\`ere equation along cone
  singuarlties}.
\newblock {\em preprint}, 2016.

\bibitem{yin2010ricci}
Hao Yin.
\newblock Ricci flow on surfaces with conical singularities.
\newblock {\em Journal of Geometric Analysis}, 20(4):970--995, 2010.

\end{thebibliography}
\bibliographystyle{plain}
\end{document}